\documentclass[a4paper,11pt]{amsart}
\usepackage{amssymb}
\usepackage{amscd}
\usepackage{comment}
\usepackage{amsmath,amsthm}
\usepackage[colorlinks=true]{hyperref}
\usepackage{enumerate}
\usepackage{booktabs,multirow}
\usepackage{tikz}
\usepackage{rotating}
\usepackage{ytableau}
\usetikzlibrary{patterns}
\usetikzlibrary{decorations.pathreplacing}
\usetikzlibrary{calc,through}

\allowdisplaybreaks[1]
\numberwithin{figure}{section}
\numberwithin{equation}{section}

\title{Promotion and rowmotion in rational Catalan combinatorics}
\author[K.~Shigechi]{Keiichi~Shigechi}
\email{k1.shigechi AT gmail.com}
\date{\today}

\newcommand\tikzpic[2]{
\raisebox{#1\totalheight}{
\begin{tikzpicture}
#2
\end{tikzpicture}
}}

\newtheorem{theorem}[figure]{Theorem}
\newtheorem{example}[figure]{Example}
\newtheorem{lemma}[figure]{Lemma}
\newtheorem{defn}[figure]{Definition}
\newtheorem{prop}[figure]{Proposition}
\newtheorem{cor}[figure]{Corollary}

\newtheorem{remark}[figure]{Remark}
\begin{document}

\begin{abstract}
We study four bijections, which are promotion, evacuation, rowmotion, and rowvacuation, 
on generalized Dyck paths in rational Catalan combinatorics.
We define the maps on generalized Dyck paths, which have their origins in maps on Dyck paths and non-crossing partitions.
They include rotation, Kreweras complement map, Simion--Ullman involution on non-crossing partitions, and Lalanne--Kreweras involution
on Dyck paths.
These maps have an expression in terms of the four combinatorial bijections.
By extending the bijection studied by D.~Armstrong, C.~Stump, and H.~Thomas on one hand, and the correspondence 
of RSK type studied by B.~Adenbaum and S.~Elizalde on the other, we present the equivalence 
between the two bijections, promotion and rowmotion, on generalized Dyck paths
through these bijection and correspondence. 
For this purpose, we provide an alternative description of the correspondence of RSK type 
in terms of Dyck tilings.
\end{abstract}

\maketitle

\setcounter{tocdepth}{1}
\tableofcontents

\section{Introduction}
{\it Evacuation} was introduced by M.-P. Sch\"utzenberger \cite{Schu61} in the study 
of the Robinson--Schensted--Knuth (RSK) correspondence, which relates a permutation 
in the symmetric group to a pair of Young tableaux of the same shape. 
In subsequent papers, Sch\"utzenberger generalized the definition of evacuation
to linear extensions of a finite poset \cite{Schu72,Schu76}.
Evacuation can be described by another combinatorial operation called {\it promotion}, which 
is simpler than evacuation.
Promotion and evacuation are fundamental bijections to study the set of 
linear extensions of a finite poset.
A.~Brouwer and A.~Schrijver \cite{BroSch74} defined an action on a poset, which is called 
{\it rowmotion} in \cite{StrWil12}, by generalizing an action on hypergraphs \cite{Duc74}.
As in the case of promotion and evacuation, we have another operation called {\it rowvacuation}.
These four operations are described by a composition of involutions called {\it toggles} by a general 
theory (see e.g. \cite{CamFon95,Sta09}).
The similarities between promotion and evacuation, and rowmotion and rowvacuation by the general theory 
suggest a unified treatment of these four operations.

A Dyck path and a non-crossing partition are fundamental combinatorial objects in Catalan combinatorics. 
The number of these objects of size $n$ is given by the $n$-th Catalan number.
Let $a$ and $b$ be coprime positive integers.
A generalized Dyck path, or an $(a,b)$-Dyck path, is a natural generalization of a Dyck path, and 
belongs to the rational Catalan combinatorics.
The combinatorial object corresponding to non-crossing partitions in the rational Catalan 
combinatorics is a $k$-chain of non-crossing partitions if $(a,b)=(1,k)$.
In this paper, we study the four combinatorial operations in a unified manner 
on the poset of generalized Dyck paths and $k$-chains.
We define and study various maps on generalized Dyck paths, which are generalizations of the maps 
on Dyck paths, and describe them in terms of promotion, evacuation, rowmotion and rowvacuation.

Let us first review the case of classical Dyck paths.
Following \cite{AdeEli23}, we introduce a correspondence between a Dyck path
and a $321$-avoiding permutation, and another correspondence using the RSK correspondence.
By using the correspondence between a Dyck path and a $321$-avoiding permutation,
we introduce a bijection on Dyck paths, known as {\it Lalanne--Kreweras} involution \cite{Kre70,Lal92}.  
This involution has been studied in terms of rowvacuation in \cite{HopJos22,Pan09}.
In \cite{AdeEli23}, the correspondence of RSK type is shown to be equivalent 
to another map, the Armstrong--Stump--Thomas bijection, which we call {\it matching map}, on Dyck paths given in \cite{ArmStuTho13}.

We study the above mentioned four combinatorial operations on the poset of rational 
Dyck paths, which we call $(a,b)$-Dyck paths with $a$ and $b$ coprime.
We mainly focus on the case $(a,b)=(1,k)$ with $k\ge2$.
The number of the $k$-Dyck paths of size $n$ is given by the well-known number, the $n$-th Fuss--Catalan
number.
To study $k$-Dyck paths, we introduce the notion of non-crossing partitions following \cite{Kre72}.
By \cite{Ede80,Ede82}, the number of $k$-chains in the lattice of non-crossing partitions is also 
given by the Fuss--Catalan number. 
In \cite{Ede80,Ede82}, Edelman gave a bijection between a $k$-Dyck path and a $k$-chain 
of non-crossing partitions by use of $k$-ary trees.
On the other hand, C.~Stump introduce a bijection between Dyck paths and non-crossing partitions
in the case of $k=1$ in \cite{Stu13}. 
We adopt the bijection studied by Stump since it reflects a combinatorial structure of a $k$-chain 
of non-crossing partition, and generalize it to general $k\ge2$.
To have the bijection between $k$-Dyck paths and $k$-chains of non-crossing partitions in hand,
the maps on non-crossing partitions can be generalized to define the maps 
on $k$-chains of non-crossing partitions.
In what follows, we focus on four maps on non-crossing partitions. 

There are several distinguished actions on non-crossing partitions.
Four of them are the rotation, the {\it Kreweras complement map} \cite{Kre72}, {\it Simion--Ullman involution} \cite{SimUll91},
and {\it Lalanne--Kreweras involution} \cite{Kre70,Lal92}.
The first three maps come from the circular representation of non-crossing partitions.
The last involution comes from the Lalanne--Kreweras involution on Dyck paths. 
The key is that we make use of the conjugation of Lalanne--Kreweras involution on Dyck paths 
by the correspondence of RSK type to define the Lalanne--Kreweras involution on
non-crossing partitions.
This construction allows us to define the Lalanne--Kreweras involution on non-crossing partitions 
that is compatible with the correspondence between Dyck paths and non-crossing partitions.
Under the correspondence between a Dyck path and a non-crossing partition, 
we show that these four maps on non-crossing partitions have equivalent descriptions 
in terms of promotion and evacuation (Proposition \ref{prop:NCpro}).

To study $k$-Dyck paths with $k\ge2$, we will generalize the bijection by Stump to the case of $k\ge2$.
Since a $k$-Dyck path corresponds to a $k$-chain of non-crossing partitions, we can 
define analogues of the four maps in the previous paragraph.
We show that three maps except the Kreweras complement map can be described in terms of 
promotion and evacuation under the correspondence between $k$-Dyck paths and $k$-chains 
(Proposition \ref{prop:kDyckNC}). 
In the case of $k=1$, it is well-known that the Kreweras complement map is equivalent to the promotion. 
However, this is not true for $k\ge2$.
We introduce a new map on $k$-chains, which we call {\it lift}, by using the circular representation
of $k$-chains, and show that the lift corresponds to the promotion (Proposition \ref{prop:Lift}).

Another interesting map in Catalan combinatorics is the matching map on Dyck paths introduced by Armstrong--Stump--Thomas 
in \cite{ArmStuTho13}. 
We generalize the notion of the matching map to a general $(a,b)$ with $a$ and $b$ coprime, and 
study it in detail in the case of $(a,b)=(1,k)$ with $k\ge2$.
In \cite{AdeEli23}, they show the equivalence between the correspondence $\widehat{RSK}$ of RSK type 
and the matching map $\mathtt{Mat}$ for $k=1$.  

For $k=1$, it is well-known that a Dyck path corresponds to a two-row Young tableau. 
However, a $k$-Dyck path corresponds 
to two-row $(1,k)$-Young tableau, which is a variant of a set-valued two-row Young 
tableau (see Section \ref{sec:Bg}). By using another realization of a $k$-Dyck path called a perfect matching, 
a two-row $(1,k)$-Young tableau can be interpreted as a Young tableau with $k+1$ rows which 
satisfies a certain condition.
At the moment, we have no analogue of the correspondence of RSK type even for $(1,k)$ with $k\ge2$.
The equivalence in the previous paragraph implies that one can define an analogue of 
the correspondence of RSK type by using the matching map for general $(a,b)\neq(1,1)$.

We take a closer look at this equivalence.
We first study the relation between the correspondence $\widehat{RSK}$ of RSK type and the 
matching map for $k=1$ (Theorem \ref{thrm:RSKMat}, and see also Theorem 5.2 in \cite{AdeEli23}).
The map $\widehat{RSK}$ can be expressed in terms of promotion $\partial$ and the matching map $\mathtt{Mat}$:
\begin{align}
\label{eq:RSKMat0}
\widehat{RSK}=\partial^{-(n-1)}\circ\mathtt{Mat},
\end{align}
where $n$ is the size of a Dyck path.

In Section \ref{sec:MatRSK}, we study another description of $\widehat{RSK}$ for $k=1$ by using 
a perfect matching. This description uses the property of a $321$-avoiding permutation, therefore 
it is not straightforward to generalize this description to the case $k\ge2$.
On the other hand, we have alternative description of $\widehat{RSK}^{-1}$ by using another combinatorial 
object which we call a {\it Dyck tiling}.
This description depends only on the Young diagram which is determined by a Dyck path, and 
we do not use the property of a $321$-avoiding permutation.
This allows us to define $\widehat{RSK}^{-1}$ in terms of a Young diagram and a Dyck tiling on it.
We have no analogue of $\widehat{RSK}$ for $k\ge2$ as already mentioned, 
but we can make use of $k$-Dyck tilings, which is a generalization of Dyck tilings, 
to define an analogue of $\widehat{RSK}^{-1}$ for $k\ge2$.
We show that $\widehat{RSK}^{-1}$ for $k\ge2$ defined through $k$-Dyck tilings satisfies 
the same relation as Eq. (\ref{eq:RSKMat0}) (Proposition \ref{prop:RSKMatk}).
This implies that one can define an analogue of $\widehat{RSK}$ for general $(a,b)$
through the relation (\ref{eq:RSKMat0}).

The matching map $\mathtt{Mat}$ or equivalently $\widehat{RSK}$ plays the role 
to relate promotion and evacuation to rowmotion and rowvacuation, and vice versa.
More precisely, if we write promotion and rowmotion as $\partial$ and $\delta$, we have 
\begin{align}
\label{eq:Xprorow}
\partial^{-1}\circ X=X\circ\delta,
\end{align}
where $X\in\{\widehat{RSK},\mathtt{Mat}\}$ (see Propositions \ref{prop:cd1} and \ref{prop:RSKk}).
Recall that we have several maps on $k$-chains of non-crossing partitions.
By using the correspondence between $k$-Dyck paths and $k$-chains, 
we show that the rotation, Simion--Ullman involution, and Lalanne--Kreweras involution
are described in terms of rowmotion and rowvacuation through the matching map (Proposition \ref{prop:Matdelta}).

For general $(a,b)$, we define the matching map, and show that it satisfies the same 
relation as Eq. (\ref{eq:Xprorow}).
If $(a,b)\neq(1,k)$ with $k\ge1$, we have no analogue of a description of $(a,b)$-Dyck paths in 
terms of non-crossing partitions. However, we have the actions of promotion, evacuation, rowmotion and rowvacuation 
on the poset of $(a,b)$-Dyck paths by a general theory. 
To find a nice combinatorial description of $(a,b)$-Dyck paths 
similar to non-crossing partitions remains as an open problem.
If such description exists, then one can define analogues of the maps on non-crossing partitions and 
describe them in terms of both promotion and rowmotion.
Further, the existence of the matching map and $(a,b)$-Dyck tiling implies the existence of 
the correspondence of RSK type. To find a correspondence of RSK type for $(a,b)\neq(1,1)$ is also
an interesting problem.

The paper is organized as follows.
In Section \ref{sec:Bg}, we briefly review the notions of $(a,b)$-Dyck paths, perfect matchings,
the Lalanne--Kreweras involution on Dyck paths, $321$-avoiding permutations, and the map $\widehat{RSK}$.
In Section \ref{sec:proeva}, we collect basic results on promotion and evacuation, and study 
their actions on Dyck paths.
In Section \ref{sec:row}, we review rowmotion and rowvacuation, and study their actions on Dyck paths.
In Section \ref{sec:MatRSK}, we study the relation between the map of RSK type and the matching map.
A description of the inverse map of RSK type in terms of Dyck tilings is given.
We study the weighted non-crossing partitions, which are equivalent to $k$-chains of non-crossing partitions,
in Section \ref{sec:NCPTL}.
We study the maps on $k$-chains: the rotation, the Kreweras complement map, the Simion--Ullman involution, 
and the Lalanne--Kreweras involution. 
A description of the maps on $k$-chains of non-crossing partitions in terms of promotion and evacuation
is given. 
Similarly in Section \ref{sec:prorow}, by studying the matching map, a description of the maps on $k$-chains in terms of 
rowmotion and rowvacuation is given.
In Section \ref{sec:ex}, we give an example where $(a,b)=(1,2)$ and $n=3$.

\section{Background}
\label{sec:Bg}
\subsection{Dyck paths and rational Dyck paths}
Let $a,b$ be coprime positive integers. 
A {\it rational Dyck path}, or equivalently an $(a,b)$-Dcyk path, of size $n$
is a lattice path $p$ from $(0,0)$ to $(bn,an)$ which satisfies the following 
conditions.
\begin{enumerate}
\item The path $p$ consists of up steps $(0,1)$ and right steps $(1,0)$. 
\item The path $p$ is above $y=ax/b$.
\end{enumerate}
By definition, we have $an$ up steps and $bn$ right steps in a rational 
Dyck path of size $n$.

\begin{defn}
We denote by $\mathtt{Dyck}_{(a,b)}(n)$ the set of $(a,b)$-Dyck paths 
of size $n$.
\end{defn}

When $(a,b)=(1,1)$, rational $(1,1)$-Dyck paths coincide with the standard 
notion of the Dyck paths.
When $(a,b)=(1,k)$ with $k\ge2$, we call $(1,k)$-Dyck paths $k$-Dyck paths.

\begin{defn}
An $(a,b)$-Dyck path $p$ is said to be prime if $p$ touches the line $y=ax/b$
twice, i.e., $p$ is strictly above the line except two points $(0,0)$ and $(bn,an)$.
\end{defn}

The total number of $(a,b)$-Dyck paths of size $n$ is given by Grossman's 
formula \cite{Biz54,Gro50}:
\begin{align*}
|\mathtt{Dyck}_{(a,b)}(n)|
=
\sum \prod_{j\ge1}\genfrac{}{}{}{}{A_{j}^{k_{j}}}{k_{j}!},
\end{align*}
where 
\begin{align*}
A_{j}=\genfrac{}{}{}{}{1}{j(a+b)}\genfrac{(}{)}{0pt}{}{(a+b)j}{aj},
\end{align*}
and the sum is taken overall positive integral $k_{j}$ such that 
$k_j\ge0$ and $\sum j k_{j}=n$.
When $(a,b)=(1,1)$, the formula is reduced to Catalan number 
\begin{align*}
|\mathtt{Dyck}_{(1,1)}(n)|=\genfrac{}{}{}{}{1}{n+1}\genfrac{(}{)}{0pt}{}{2n}{n}.
\end{align*}
Similarly, when $(a,b)=(1,k)$, we have Fuss--Catalan number 
\begin{align*}
|\mathtt{Dyck}_{(1,k)}(n)|=\genfrac{}{}{}{}{1}{kn+1}\genfrac{(}{)}{0pt}{}{(k+1)n}{n}.
\end{align*}

By definition of rational Dyck paths, we have $an$ up steps in an $(a,b)$-Dyck path.
Suppose that the $i$-th step from $(0,0)$ is the $j$-th up step.
Then, we define the step sequence $\mathbf{u}:=(u_1,\ldots, u_{an})$
by $u_{j}=i$.
The step sequence satisfies the following conditions: $\mathbf{u}$ is 
strictly increasing positive integers in $[1,(a+b)(n-1)+1]$, and $1\le u_j\le \lfloor(j-1)b/a\rfloor+j$, where 
$\lfloor x\rfloor$ is  the floor function.

\begin{figure}[ht]
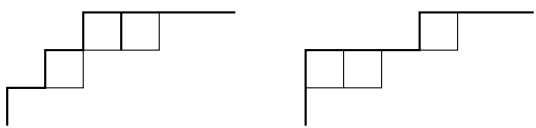

\tikzpic{-0.5}{[scale=0.5]
\draw(0,0)--(0,1)--(2,1)--(2,2)--(4,2)--(4,3)--(6,3);
\draw[thick](0,0)--(0,1)--(1,1)--(1,2)--(2,2)--(2,3)--(6,3)(3,2)--(3,3);
}
\quad
\tikzpic{-0.5}{[scale=0.5]
\draw(0,0)--(0,1)--(2,1)--(2,2)--(4,2)--(4,3)--(6,3)(1,1)--(1,2);
\draw[thick](0,0)--(0,2)--(3,2)--(3,3)--(6,3);
}
\caption{Examples of  $2$-Dyck paths}
\label{fig:GDP}
\end{figure}
Two examples of generalized Dyck paths of size $3$ with $k=2$ are shown 
in Figure \ref{fig:GDP}.
The step sequences for these generalized Dyck paths are $(1,3,5)$ and 
$(1,2,6)$ respectively.
Similarly, in the case of $(a,b)=(2,3)$ and $n=2$, we have $23$ rational 
Dyck paths. The step sequences of the lowest path and the top path are 
$(1,3,6,8)$ and $(1,2,3,4)$ respectively.

For simplicity, we often write a step sequence $\mathbf{u}=u_1\ldots u_{an}$ as a one-line notation.

It is well-known that a Dyck path of size $n$ is bijective to a Young tableau of 
shape $2\times n$.
Let $\mathbf{u}(p)=(u_1,\ldots,u_n)$ be the step sequence of a Dyck path $p$.
The Young tableau $Y$ can be constructed from $\mathbf{u}(p)$ as follows.
The contents of the first row of $Y$ is $\{u_1,\ldots,u_n\}$ and 
the contents of the second row of $Y$ is $[1,2n]\setminus\{u_1,\ldots,u_n\}$.

\begin{example}
\label{ex:Dyck}
We have five Dyck paths for $n=3$.
They are 
\begin{align*}
\tikzpic{-0.5}{[scale=0.5]
\draw[thick](0,0)--(0,1)--(1,1)--(1,2)--(2,2)--(2,3)--(3,3);
}\quad
\tikzpic{-0.5}{[scale=0.5]
\draw(0,0)--(0,1)--(1,1)--(1,2)--(2,2)--(2,3)--(3,3);
\draw[thick](0,0)--(0,1)--(1,1)--(1,3)--(3,3);
}\quad
\tikzpic{-0.5}{[scale=0.5]
\draw(0,0)--(0,1)--(1,1)--(1,2)--(2,2)--(2,3)--(3,3);
\draw[thick](0,0)--(0,2)--(2,2)--(2,3)--(3,3);
}\quad
\tikzpic{-0.5}{[scale=0.5]
\draw(0,0)--(0,1)--(1,1)--(1,2)--(2,2)--(2,3)--(3,3);
\draw[thick](0,0)--(0,2)--(1,2)--(1,3)--(3,3);
}\quad
\tikzpic{-0.5}{[scale=0.5]
\draw(0,0)--(0,1)--(1,1)--(1,2)--(2,2)--(2,3)--(3,3);
\draw(0,2)--(1,2)--(1,3);
\draw[thick](0,0)--(0,3)--(3,3);
}
\end{align*}
The step sequences of five Dyck paths are 
\begin{align*}
135 \quad 134 \quad 125 \quad 124 \quad 123.
\end{align*}
The Young tableaux corresponding to these Dyck paths are 
\begin{align*}
\begin{ytableau}
1 & 3 & 5  \\
2 & 4 & 6 
\end{ytableau}\quad
\begin{ytableau}
1 & 3 & 4  \\
2 & 5 & 6 
\end{ytableau}\quad
\begin{ytableau}
1 & 2 & 5  \\
3 & 4 & 6 
\end{ytableau}\quad
\begin{ytableau}
1 & 2 & 4  \\
3 & 5 & 6 
\end{ytableau}\quad
\begin{ytableau}
1 & 2 & 3 \\
4 & 5 & 6
\end{ytableau}
\end{align*}
\end{example}

We generalize the correspondence between Dyck paths and Young tableaux with two rows 
to the case of rational $(a,b)$-Dyck paths.
For this purpose, we generalize a Young tableau as follows.
An {\it $(a,b)$-Young diagram} of size $n$ is a diagram consisting of $(a+b)n$ cells 
such that there are $an$ cells of shape $1\times b$ in the first row and 
$bn$ cells of shape $1\times a$ in the second row.
An {\it $(a,b)$-Young tableau} is a filling of an $(a,b)$-Young diagram by integers 
in $\{1,\ldots,(a+b)n\}$.
The integers are increasing from left to right and top to bottom.
Let $p$ be an $(a,b)$-Dyck path of size $n$ and suppose $\mathbf{u}(p)=(u_1,\ldots,u_{an})$ 
is the step sequence of $p$.
An $(a,b)$-Young tableau $T$ for $p$ is given in the same way as a Dyck path.
The contents of the cells in the first row of $T$ are $\{u_1,\ldots,u_{an}\}$, and 
the contents of the cells in the second row are $\{1,\ldots,(a+b)n\}\setminus\{u_1,\ldots,u_{an}\}$.

\begin{defn}
We denote by $\mathtt{SYT}_{(a,b)}(n)$ the set of standard $(a,b)$-Young tableau of size $n$.
\end{defn}

The following proposition is a direct consequence of a construction of 
$(a,b)$-Young tableaux and the definition of $(a,b)$-Dyck paths, i.e, 
we have a natural bijection between an $(a,b)$-Young tableau and an $(a,b)$-Dyck path.
\begin{prop}
We have $|\mathtt{SYT}_{(a,b)}(n)|=|\mathtt{Dyck}_{(a,b)}(n)|$.
\end{prop}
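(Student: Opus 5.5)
The plan is to make the ``natural bijection'' mentioned before the statement explicit and check that it is well defined in both directions. The map is the one already used for Dyck paths in Example \ref{ex:Dyck}. To $p\in\mathtt{Dyck}_{(a,b)}(n)$ with step sequence $\mathbf{u}(p)=(u_1,\ldots,u_{an})$ we associate the filling $T(p)$. Its first row contains $u_1<\cdots<u_{an}$, one entry in each $1\times b$ cell. Its second row contains $v_1<\cdots<v_{bn}$, the positions of the right steps, i.e.\ the elements of $\{1,\ldots,(a+b)n\}\setminus\{u_1,\ldots,u_{an}\}$, one entry in each $1\times a$ cell. Conversely, a filling determines a lattice path by declaring step $t$ to be an up step exactly when $t$ lies in the first row. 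These two constructions are clearly mutually inverse on the level of words in up and right steps, and rows increase automatically. So the whole content is to show that the column (``top to bottom'') condition on the tableau is equivalent to the condition that the path stays weakly above $y=ax/b$.

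First I will make the column condition explicit. Place both rows on a common horizontal axis of total length $abn$. The $i$-th first-row cell occupies columns $((i-1)b,\,ib]$ and the $j$-th second-row cell occupies columns $((j-1)a,\,ja]$. The $j$-th second-row cell lies below the first-row cells with $(i-1)b<ja$, that is, $i\le\lceil ja/b\rceil$. So standardness means
\begin{align*}
v_j>u_{\lceil ja/b\rceil}\quad\text{for all } 1\le j\le bn .
\end{align*}
This is equivalent to saying that at least $\lceil ja/b\rceil$ up steps occur before the $j$-th right step.

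Second, I will translate the Dyck condition. The $j$-th right step runs from $(j-1,y)$ to $(j,y)$, where $y$ is the number of up steps preceding it. Since right steps only move horizontally and up steps only increase $y$, the path is weakly above $y=ax/b$ if and only if, for every $j$, the endpoint $(j,y)$ satisfies $y\ge aj/b$. Because $y$ is an integer, this is $y\ge\lceil aj/b\rceil$, which is exactly the inequality of the first step. Hence $T(p)$ is standard if and only if $p$ is an $(a,b)$-Dyck path, and the cardinalities agree.

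The main obstacle is the first step: pinning down the intended meaning of ``increasing top to bottom'' for cells of unequal widths $b$ and $a$. If the convention were instead that a lower cell must exceed only the upper cells it overlaps fully, or only the cell at its left end, the index would change from $\lceil ja/b\rceil$ to a floor, and the boundary case would need rechecking. I would verify the overlap convention against the examples given: the $(1,2)$ paths with step sequences $135$ and $126$, and the count of $23$ paths for $(a,b)=(2,3)$, $n=2$. Coprimality of $a,b$ only matters in that equality $aj/b\in\mathbb{Z}$ occurs just at $j=bn$, so the touching points cause no ambiguity.
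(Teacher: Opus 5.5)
Your proof is correct and uses the same natural bijection the paper invokes (first row $=$ step sequence $\mathbf{u}(p)$); the paper asserts the correspondence without argument, and you supply the omitted check that the overlap reading of the column condition, $v_j>u_{\lceil ja/b\rceil}$, is exactly the requirement that at least $\lceil aj/b\rceil$ up steps precede the $j$-th right step. One side remark needs fixing: $aj/b\in\mathbb{Z}$ whenever $b\mid j$, not only at $j=bn$, but this does no harm because your ceiling argument never uses coprimality and already allows the path to touch the line.
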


Two generalized Dyck paths in Figure \ref{fig:GDP} correspond to the following 
$(1,2)$-Young tableaux of size $3$:
\begin{align}
\label{eq:12Yt135}
\tikzpic{-0.5}{[scale=0.5]
\draw(0,0)--(6,0)--(6,-2)--(0,-2)--(0,0);
\draw(0,-1)--(6,-1)(2,0)--(2,-2)(4,0)--(4,-2);
\draw(1,-1)--(1,-2)(3,-1)--(3,-2)(5,-1)--(5,-2);
\draw(1,-0.5)node{$1$}(3,-0.5)node{$3$}(5,-0.5)node{$5$};
\draw(0.5,-1.5)node{$2$}(1.5,-1.5)node{$4$}(2.5,-1.5)node{$6$}(3.5,-1.5)node{$7$}(4.5,-1.5)node{$8$}
(5.5,-1.5)node{$9$};
}\quad
\tikzpic{-0.5}{[scale=0.5]
\draw(0,0)--(6,0)--(6,-2)--(0,-2)--(0,0);
\draw(0,-1)--(6,-1)(2,0)--(2,-2)(4,0)--(4,-2);
\draw(1,-1)--(1,-2)(3,-1)--(3,-2)(5,-1)--(5,-2);
\draw(1,-0.5)node{$1$}(3,-0.5)node{$2$}(5,-0.5)node{$6$};
\draw(0.5,-1.5)node{$3$}(1.5,-1.5)node{$4$}(2.5,-1.5)node{$5$}(3.5,-1.5)node{$7$}(4.5,-1.5)node{$8$}
(5.5,-1.5)node{$9$};
}
\end{align}

\begin{remark}
\label{remark:svSYT}
A Dyck path  corresponds to a two-row Young tableau.
A $(1,k)$-Young tableau can be regarded as a set-valued two-row Young tableau,
or a Young tableau with $k+1$ rows.
For example, the left $(1,2)$-Young tableau in Eq. (\ref{eq:12Yt135}) corresponds to
the following tableaux:
\begin{align*}
\tikzpic{-0.5}{[scale=0.5]
\draw(0,0)--(6,0)--(6,-2)--(0,-2)--(0,0);
\draw(0,-1)--(6,-1)(2,0)--(2,-2)(4,0)--(4,-2);
\draw(1,-0.5)node{$1$}(3,-0.5)node{$3$}(5,-0.5)node{$5$};
\draw(1,-1.5)node{$2,4$}(3,-1.5)node{$6,7$}(5,-1.5)node{$8,9$};
}\qquad
\tikzpic{-0.5}{[scale=0.5]
\foreach \x in {0,1,2,3} \draw(\x,0)--(\x,3)(0,\x)--(3,\x);
\draw(0.5,2.5)node{$1$}(0.5,1.5)node{$2$}(0.5,0.5)node{$4$};
\draw(1.5,2.5)node{$3$}(1.5,1.5)node{$6$}(1.5,0.5)node{$7$};
\draw(2.5,2.5)node{$5$}(2.5,1.5)node{$8$}(2.5,0.5)node{$9$};
}\qquad
\tikzpic{-0.5}{[scale=0.5]
\foreach \x in {0,1,2,3} \draw(\x,0)--(\x,3)(0,\x)--(3,\x);
\draw(0.5,2.5)node{$1$}(0.5,1.5)node{$2$}(0.5,0.5)node{$7$};
\draw(1.5,2.5)node{$3$}(1.5,1.5)node{$4$}(1.5,0.5)node{$8$};
\draw(2.5,2.5)node{$5$}(2.5,1.5)node{$6$}(2.5,0.5)node{$9$};
}
\end{align*}
The left tableau is a set-valued two-row Young tableau obtained from 
the $(1,2)$-Young tableau, and the middle and right tableau is
Young tableaux with three rows.
The middle tableau is obtained from the set-valued two-row Young tableau
by putting the integers in a column on the cells in the same column of the 
three-row Young diagram.
The right tableau is obtained from the perfect matching of a $(1,k)$-Dyck 
path discussed later.

Note that not all $(k+1)\times n$ Young tableaux correspond to 
$(1,2)$-Dyck paths.
\end{remark}

For simplicity, we first introduce perfect matchings for Dyck paths.
Let $D$ be a Dyck path of size $n$. 
A {\it non-crossing perfect matching} of the path $p$ is defined as follows.
Suppose that $\mathbf{u}(p)$ is the step sequence of $p$ and 
$Y(p)$ is the corresponding Young tableau. 
We denote by $\mathbf{u}^{\vee}(p)$ the set of integers in the 
second row of $Y(p)$.
We construct a collection $S(p)$ of the integer sets as follows.
\begin{enumerate}
\item Set $\mathbf{v}:=\mathbf{u}(p)$, $\mathbf{v}^{\vee}:=\mathbf{u}^{\vee}(p)$, and $S(p)=\emptyset$.
\item Take the maximum element $l$ in $\mathbf{v}$,  {\it i.e.}, $l:=\max(\mathbf{v})$.
\item Take the element $r$ such that $r\in\mathbf{v}^{\vee}$ and $r$ is minimum among $l<r$.
\item Replace $S(p)$ by $S(p)\cup\{\{l,r\}\}$, $\mathbf{v}$ by $\mathbf{v}\setminus\{l\}$, 
and $\mathbf{v}^{\vee}$ by $\mathbf{v}^{\vee}\setminus\{r\}$.
\item Go to (2) and the algorithm stops when $\mathbf{v}, \mathbf{v}^{\vee}=\emptyset$.
\end{enumerate}
A collection $S(p)$ consists of the set of $n$ pairs of integers.
We call a collection $S(p)$ a perfect matching of $p$.

By construction, $S(p)$ is a collection of sets of integers.
Consider a circle with $2n$ points which are numbered clockwise.
In the case of Dyck paths, the collection $S(p)$ is a collection of 
$n$ pairs of two integers. We denote them $S_{1},S_{2},\ldots,S_{n}$. 
To depict the collection $S(p)$ visually, we connect 
integers in $S_{j}$, $1\le j\le n$, by arcs.

For example, we consider the middle Dyck path in Example \ref{ex:Dyck}.
We have the following correspondence:
\begin{align*}
\begin{ytableau}
1 & 2 & 5  \\
3 & 4 & 6 
\end{ytableau}
\quad \Leftrightarrow \quad	
\{\{1,4\},\{2,3\},\{5,6\}\}\quad \Leftrightarrow \quad
\tikzpic{-0.5}{[scale=0.4]
\draw circle(3cm);
\foreach \a in {60,0,-60,-120,-180,-240}
\filldraw [black](\a:3cm)circle(1.5pt);
\draw(60:3cm)node[anchor=south west]{$1$};
\draw(0:3cm)node[anchor=west]{$2$};
\draw(-60:3cm)node[anchor=north west]{$3$};
\draw(-120:3cm)node[anchor=north]{$4$};
\draw(-180:3cm)node[anchor=east]{$5$};
\draw(-240:3cm)node[anchor=south east]{$6$};
\draw(60:3cm)to(240:3cm);
\draw(0:3cm)to[bend right=30](-60:3cm);
\draw(120:3cm)to[bend left=30](180:3cm);
}
\end{align*}

By construction of $S(p)$, the arcs in the circle representation are 
non-crossing.

We generalize the notion of perfect matchings for Dyck paths to the case of 
rational Dyck paths.
Let $p\in\mathtt{Dyck}_{(a,b)}(n)$ and $\mathbf{u}(p)=(u_1,\ldots,u_{an})$ 
be the step sequence of $p$.
By definition of a step sequence, the $u_{i}$-th step in $p$ is an up step.
This step is an edge from $(x(i),i-1)$ to $(x(i),i)$ with some $x(i)\in\{0,1,\ldots,bn-1\}$.
We construct a collection $S(p)$ of $an$ sets of integers as follows.
\begin{enumerate}
\item Set $\mathbf{v}^{\vee}:=\{1,\ldots,(a+b)n\}\setminus\mathbf{u}(p)$, $\mathbf{v}:=\mathbf{u}(p)$, 
$S(p):=\emptyset$, and $m=an$.
\item Take the maximal element $u_{m}$ from $\mathbf{v}$.
\item Consider a visualization of $p$ by up and down steps. 
We depict a line $l$ whose tangent is $a/b$ from the vertex $(x(m),m-1)$ to right where 
$x(m)$ is the $x$-coordinate of the $m$-th up step.
Let $(s,t)$ be the first intersection of $l$ and the $(a,b)$-Dyck path $p$.
We define an integer $s':=\lfloor s \rfloor$.  
\item We define a set $S_{m}$ of integers by 
\begin{align*}
S_{m}:=\{u_{m}\}\cup\left(\mathbf{v}^{\vee}\cap[x(m)+m+1,s'+t]\right).
\end{align*} 
Replace $S(p)$ by $S(p)\cup\{S_{m}\}$, 
$\mathbf{v}^{\vee}$ by $\mathbf{v}^{\vee}\setminus S_{m}$, $\mathbf{v}$ by $\mathbf{v}\setminus\{u_{m}\}$ 
and $m$ by $m-1$.
\item Go to (2) and the algorithm stops when $m=0$.
\end{enumerate}
We call a collection $S(p)$ of sets of integers a perfect matching of $p$.
We visualize a perfect matching in the same manner as the case of 
Dyck paths.
We call each $S_{j}$, $1\le j\le an$, a matching block of $S(p)$. 
A block $S_{j}$ consists of at least one integer.
When $a>b$, a perfect matching of a rational Dyck path has a block of size one.
As in the case of Dyck paths, blocks are non-crossing in the circle representation.

\begin{defn}
\label{defn:PM}
We define $\mathtt{PM}: p\mapsto S(p)$ as a map from a path $p\in\mathtt{Dyck}_{(a,b)}(n)$ 
to its perfect matching $S(p)$.
\end{defn}

Figure \ref{fig:PM} shows an example of a perfect matching of a rational 
Dyck path. 
In the case of a Dyck path, a block $S_{j}$ consists of two integers.
Similarly, in the case of $k$-Dyck paths, a block $S_{j}$ consists of 
$k+1$ integers.
However, in general, the size $|S_{j}|$ of a block $S_{j}$ in a perfect matching 
of a rational Dyck path for $(a,b)\neq(1,k)$, $k\ge1$, may depend on a path.
In the case of the Dyck path in Figure \ref{fig:PM}, the sizes of blocks 
are two or three.

\begin{figure}[ht]
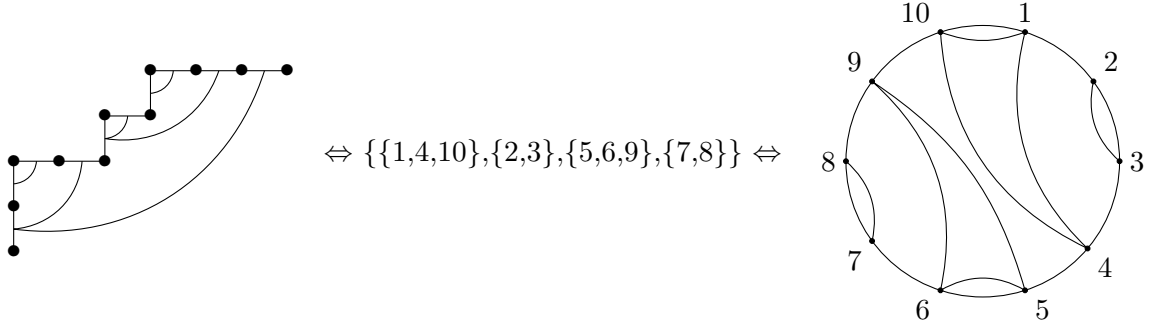

\tikzpic{-0.5}{[scale=0.6]
\draw(0,0)node{$\bullet$}--(0,2)node{$\bullet$}--(2,2)node{$\bullet$}
--(2,3)node{$\bullet$}--(3,3)node{$\bullet$}--(3,4)node{$\bullet$}--(6,4)node{$\bullet$};
\draw(0,1)node{$\bullet$}(1,2)node{$\bullet$};
\draw(4,4)node{$\bullet$}(5,4)node{$\bullet$};
\draw(0,1.5)to[bend right=40](0.5,2);
\draw(0,0.5)to[bend right=40](1.5,2)(0,0.5)to[bend right=40](5.5,4);
\draw(2,2.5)to[bend right=40](2.5,3)(2,2.5)to[bend right=40](4.5,4);
\draw(3,3.5)to[bend right=40](3.5,4);
}
$\Leftrightarrow$
\{\{1,4,10\},\{2,3\},\{5,6,9\},\{7,8\}\}
$\Leftrightarrow$
\tikzpic{-0.5}{[scale=0.6]
\draw circle(3cm);
\foreach \a in {0,36,72,108,144,180,216,252,288,320}
\filldraw [black](\a:3cm)circle(1.5pt);
\draw(72:3cm)node[anchor=south]{$1$};
\draw(36:3cm)node[anchor=south west]{$2$};
\draw(0:3cm)node[anchor=west]{$3$};
\draw(320:3cm)node[anchor=north west]{$4$};
\draw(288:3cm)node[anchor=north west]{$5$};
\draw(252:3cm)node[anchor=north east]{$6$};
\draw(216:3cm)node[anchor=north east]{$7$};
\draw(180:3cm)node[anchor=east]{$8$};
\draw(144:3cm)node[anchor=south east]{$9$};
\draw(108:3cm)node[anchor=south east]{$10$};
\draw(72:3cm)to[bend right=30](320:3cm)to[bend left=30](108:3cm)to[bend right=20](72:3cm);
\draw(36:3cm)to[bend right=30](0:3cm);
\draw(288:3cm)to[bend right=30](252:3cm)to[bend right=30](144:3cm)to[bend left=20](288:3cm);
\draw(216:3cm)to[bend right=30](180:3cm);
}
\caption{A perfect matching of a $(2,3)$-Dyck path.
The rational Dyck path on the left corresponds to the 
perfect matching $\{\{1,4,10\},\{2,3\},\{5,6,9\},\{7,8\}\}$.}
\label{fig:PM}
\end{figure}

\begin{remark}
Let $(a,b)=(1,k)$ and $S(p)$ be a perfect matching of a $(1,k)$-Dyck path.
Recall that $S(p)$ is a collection of the sets of $k+1$ integers, i.e., 
$S(p):=\{S_1,\ldots,S_{n}\}$ where $|S_{i}|=k+1$ for $1\le i\le n$.
We associate a $(k+1)\times n$ standard Young tableau to $S(p)$ in such a way that
we put the integer $p\in S_{i}$ on the $q$-th row if $p$ is the $q$-th smallest in $S_{i}$.
For example, a $(1,2)$-Dyck path whose step sequence is $(1,3,5)$ gives 
the perfect matching $\{\{1,2,9\},\{3,4,8\},\{5,6,7\}\}$.
It is easy to verify that we have the right Young tableau in Remark \ref{remark:svSYT}.
\end{remark}

One can obtain a $(b,a)$-Dyck path from an $(a,b)$-Dyck path 
by taking the mirror image along the diagonal.
To state this relation precisely, we introduce two operations 
$\ast$, $\overline{\bullet}$ on generalized Dyck paths.

First, we define the operator $\ast: \mathtt{SYT}_{(a,b)}(N)\rightarrow\mathtt{SYT}_{(b,a)}(N)$
as follows.
Let $R_1$ and $R_2$ be the first and second rows in $\mathtt{SYT}_{(a,b)}(N)$. 
The operation $\ast$ exchanges the role of $R_1$ and $R_2$ by replacing $i$ by 
$(a+b)N+1-i$.
For example, we have 
\begin{align*}
\ast: \tikzpic{-0.5}{[scale=0.5]
\draw(0,0)--(12,0)--(12,-2)--(0,-2)--(0,0);
\draw(0,-1)--(12,-1);
\draw(3,0)--(3,-1)(6,0)--(6,-1)(9,0)--(9,-1)(12,0)--(12,-1);
\draw(2,-1)--(2,-2)(4,-1)--(4,-2)(6,-1)--(6,-2)(8,-1)--(8,-2)
(10,-1)--(10,-2);
\draw(1.5,-0.5)node{$1$}(4.5,-0.5)node{$2$}(7.5,-0.5)node{$5$}(10.5,-0.5)node{$7$};
\draw(1,-1.5)node{$3$}(3,-1.5)node{$4$}(5,-1.5)node{$6$}(7,-1.5)node{$8$}(9,-1.5)node{$9$}(11,-1.5)node{$10$};
}
\rightarrow
\tikzpic{-0.5}{[scale=0.5]
\draw(0,0)--(12,0)--(12,-2)--(0,-2)--(0,0);
\draw(0,-1)--(12,-1);
\draw(3,-1)--(3,-2)(6,-1)--(6,-2)(9,-1)--(9,-2)(12,-1)--(12,-2);
\draw(2,0)--(2,-1)(4,0)--(4,-1)(6,0)--(6,-1)(8,0)--(8,-1)
(10,0)--(10,-1);
\draw(1.5,-1.5)node{$4$}(4.5,-1.5)node{$6$}(7.5,-1.5)node{$9$}(10.5,-1.5)node{$10$};
\draw(1,-0.5)node{$1$}(3,-0.5)node{$2$}(5,-0.5)node{$3$}(7,-0.5)node{$5$}(9,-0.5)node{$7$}(11,-0.5)node{$8$};
}
\end{align*}

A bar operation on a perfect matching $C(p)$ is defined by replacing $i$ by $(a+b)n+1-i$ for 
$1\le i\le (a+b)n$. We denote it by $C(p)\mapsto\overline{C(p)}$.
We introduce another map to construct a perfect matching:
\begin{defn}
Let $\mathtt{PM}$, $\ast$ and $\overline{\bullet}$ be as above.
We define 
\begin{align*}
\mathtt{dPM}:=\overline{\mathtt{PM}\circ\ast}.
\end{align*}
We say that $\mathtt{dPM}$ is dual to the map $\mathtt{PM}$.
\end{defn}

Figure \ref{fig:dPM} is an example of the map $\mathtt{dPM}$.
When we obtain a perfect matching of an $(a,b)$-Dyck path, we make use of a line with tangent $a/b$.
On the contrary, we make use of a line with tangent $b/a$ for $\mathtt{dPM}$. 
\begin{figure}[ht]
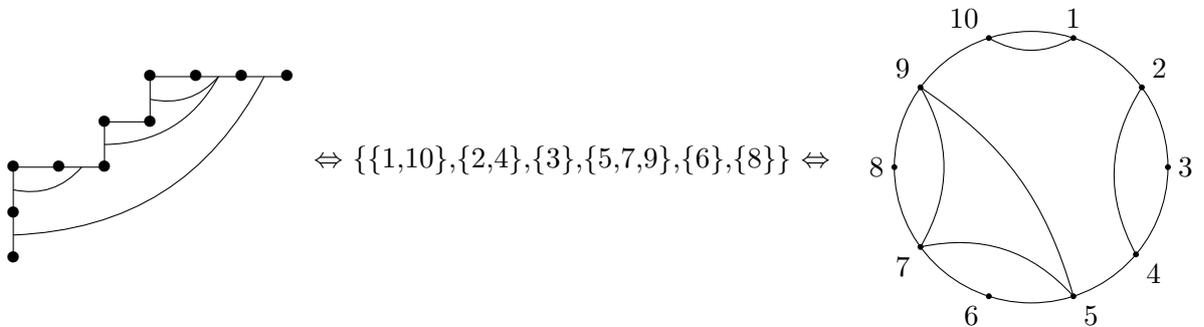

\tikzpic{-0.5}{[scale=0.6]
\draw(0,0)node{$\bullet$}--(0,2)node{$\bullet$}--(2,2)node{$\bullet$}
--(2,3)node{$\bullet$}--(3,3)node{$\bullet$}--(3,4)node{$\bullet$}--(6,4)node{$\bullet$};
\draw(0,1)node{$\bullet$}(1,2)node{$\bullet$};
\draw(4,4)node{$\bullet$}(5,4)node{$\bullet$};
\draw(0,0.5)to[bend right=30](5.5,4);
\draw(0,1.5)to[bend right=30](1.5,2);
\draw(2,2.5)to[bend right=30](4.5,4);
\draw(3,3.5)to[bend right=30](4.5,4);
}$\Leftrightarrow$
\{\{1,10\},\{2,4\},\{3\},\{5,7,9\},\{6\},\{8\}\}
$\Leftrightarrow$
\tikzpic{-0.5}{[scale=0.6]
\draw circle(3cm);
\foreach \a in {0,36,72,108,144,180,216,252,288,320}
\filldraw [black](\a:3cm)circle(1.5pt);
\draw(72:3cm)node[anchor=south]{$1$};
\draw(36:3cm)node[anchor=south west]{$2$};
\draw(0:3cm)node[anchor=west]{$3$};
\draw(320:3cm)node[anchor=north west]{$4$};
\draw(288:3cm)node[anchor=north west]{$5$};
\draw(252:3cm)node[anchor=north east]{$6$};
\draw(216:3cm)node[anchor=north east]{$7$};
\draw(180:3cm)node[anchor=east]{$8$};
\draw(144:3cm)node[anchor=south east]{$9$};
\draw(108:3cm)node[anchor=south east]{$10$};
\draw(108:3cm)to[bend right=30](72:3cm);
\draw(36:3cm)to[bend right=30](320:3cm);
\draw(288:3cm)to[bend right=30](216:3cm)to[bend right=30](144:3cm)to[bend left=20](288:3cm);
}
\caption{The map $\mathtt{dPM}$ on a $(2,3)$-Dyck path.
The rational Dyck path on the left corresponds to the 
perfect matching $\{\{1,10\},\{2,4\},\{3\},\{5,7,9\},\{6\},\{8\}\}$}
\label{fig:dPM}
\end{figure}

Note that the map $\mathtt{dPM}$ coincides with the map $\mathtt{PM}$ in the case 
of Dyck paths, {\it i.e.}, $(a,b)=(1,1)$. This is because we have $a/b=b/a=1$, and 
we apply $\ast$ and $\overline{\bullet}$ successively to a Dyck path.

\subsection{Maps on a Dyck path}
Let $\mathcal{S}_{n}$ be the symmetric group of $[n]:=\{1,2,\ldots,n\}$, and 
$\mathcal{S}_{n}(321)$ be the set of $321$-avoiding permutations in $\mathcal{S}_{n}$.
A permutation $\mathcal{S}_{n}\ni\omega=\omega_1\ldots \omega_{n}$ is said to be $321$-avoiding 
if there is no triplet $i<j<k$ such that $\omega_{i}>\omega_j>\omega_{k}$.
A {\it Rothe diagram} of a permutation $\omega$ is a visualization of $\omega$.
In an $n\times n$ grid, we put a mark on the $\omega_{i}$-th row from bottom and the $i$-th 
column from left.
The main diagonal is the grids from the southwest corner to the northeast corner 
in the Rothe diagram.
Some marks are above or below the main diagonal, and the rest marks are on the main diagonal.
Let $M^{\uparrow}(\omega)$ (resp. $M^{\downarrow}(\omega)$) be 
the set of marks above (resp. below) the main diagonal.
Similarly, $M^{d}(\omega)$ is the set of marks on the main diagonal.

Suppose $\omega\in\mathcal{S}_{n}(321)$. 
We consider three Dyck paths from $(0,0)$ to $(n,n)$ in 
the Rothe diagram of $\omega$ as follows.

A Dyck path $p(\omega)$ is defined to be a path above the main diagonal, whose peaks are 
characterized by the set $M^{\uparrow}(\omega)\cup M^{d}(\omega)$.
Here, when $(x,y)$-th grid $g$ has a mark, the path $p(\omega)$ has an up step followed by 
a down step at the northwest corner of $g$.

We will introduce three Dyck paths $v(\omega), q(\omega)$, and $w(\omega)$ constructed 
from the Dyck path $p(\omega)$.
\begin{enumerate}
\item 
Let $p(\omega)$ be a Dyck path obtained as above.
A Dyck path $v(\omega)$ is obtained from $p(\omega)$ such that 
the peaks of $v(\omega)$ are characterized by the valleys of $p(\omega)$.
Suppose that the south and east edges of the $(x,y)$-th grid $g$ corresponds 
to a valley of $p(\omega)$. 
A peak of $v(\omega)$ is characterized by the grid $g$ as in the case of $p(\omega)$.
If necessary, we make use of the cells on the main diagonal to characterize 
the peaks of $v(\omega)$.

For example, if $p(\omega)=URU^3R^3UR$, then we have two valleys in $p(\omega)$.
These two valleys define the peaks of $v(\omega)$, and we need to include 
a cell on the main diagonal to characterize a peak of $v(\omega)$.
As a result, we obtain $v(\omega)=U^2R^2URU^2R^2$.

If there is no valleys in $p(\omega)$, i.e., $p(\omega)=U^{n}R^{n}$ as a Dyck path,
we define $v(\omega)=(UR)^{n}$ as a Dyck path.
\item
We construct a Dyck path $q'(\omega)$ below the main diagonal consisting of right steps and 
up steps. Note that $q'$ starts from a right step and ends with a up step.
The valley of $q'(\omega)$ is characterized by the set $M^{\downarrow}(\omega)$.
This means that if the $(x,y)$-th grid $g$ has a mark, we have an up step followed by a right step 
at the northwest corner of $g$.
We reflect $q'(\omega)$ along the main diagonal to obtain a Dyck path $q(\omega)$ in our convention.
\item
A Dyck path $w'(\omega)$ below the main diagonal is a path whose peaks are characterized by 
the set $M^{\downarrow}(\omega)\cup M^{d}(\omega)$. The path $w'(\omega)$ is constructed 
as in the case of $p(\omega)$. 
We reflect $w'(\omega)$ along the main diagonal to obtain a Dyck path $w(\omega)$.
\end{enumerate}

\begin{defn}
Suppose $\omega\in\mathcal{S}_{n}(321)$.
Let $p(\omega)$, $v(\omega)$, $q(\omega)$ and $w(\omega)$ be Dyck paths as above. 
We define three maps $E_{p}:\omega\mapsto p(\omega)$, $E_{v}:\omega\mapsto v(\omega)$, 
$E_{q}:\omega\mapsto q(\omega)$ and $E_{w}:\omega\mapsto w(\omega)$.
We define three maps 
\begin{align*}
\mathtt{Dyck}_{1}:=E_{v}\circ E_{p}^{-1}, \quad 
\mathtt{Dyck}_{2}:=E_{q}\circ E_{p}^{-1}, \quad
\mathtt{Dyck}_{3}:=E_{w}\circ E_{p}^{-1}.
\end{align*} 
\end{defn}

We give an example of the paths $p(\omega), v(\omega)$ and $q(\omega)$ for 
$\omega=13425\in\mathcal{S}_{5}(321)$ in Figure \ref{fig:DycktoDyck}.
The stars are the marks corresponding to the permutation $13425$.
The Dyck paths are $v(\omega)$, $p(\omega)$, $q'(\omega)$, and $w'(\omega)$
from left to right.

\begin{figure}[ht]
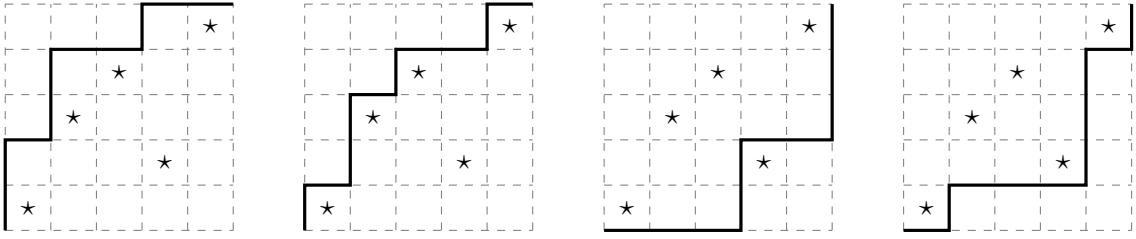

\tikzpic{-0.5}{[scale=0.6]
\foreach \a in {0,1,2,3,4,5}
\draw[gray,dashed](\a,0)--(\a,5)(0,\a)--(5,\a);
\draw(0.5,0.5)node{$\star$}(1.5,2.5)node{$\star$}(2.5,3.5)node{$\star$}
(3.5,1.5)node{$\star$}(4.5,4.5)node{$\star$};
\draw[very thick](0,0)--(0,2)--(1,2)--(1,4)--(3,4)--(3,5)--(5,5);
}
\quad
\tikzpic{-0.5}{[scale=0.6]
\foreach \a in {0,1,2,3,4,5}
\draw[gray,dashed](\a,0)--(\a,5)(0,\a)--(5,\a);
\draw(0.5,0.5)node{$\star$}(1.5,2.5)node{$\star$}(2.5,3.5)node{$\star$}
(3.5,1.5)node{$\star$}(4.5,4.5)node{$\star$};
\draw[very thick](0,0)--(0,1)--(1,1)--(1,3)--(2,3)--(2,4)--(4,4)--(4,5)--(5,5);
}
\quad
\tikzpic{-0.5}{[scale=0.6]
\foreach \a in {0,1,2,3,4,5}
\draw[gray,dashed](\a,0)--(\a,5)(0,\a)--(5,\a);
\draw(0.5,0.5)node{$\star$}(1.5,2.5)node{$\star$}(2.5,3.5)node{$\star$}
(3.5,1.5)node{$\star$}(4.5,4.5)node{$\star$};
\draw[very thick](0,0)--(3,0)--(3,2)--(5,2)--(5,5);
}
\quad
\tikzpic{-0.5}{[scale=0.6]
\foreach \a in {0,1,2,3,4,5}
\draw[gray,dashed](\a,0)--(\a,5)(0,\a)--(5,\a);
\draw(0.5,0.5)node{$\star$}(1.5,2.5)node{$\star$}(2.5,3.5)node{$\star$}
(3.5,1.5)node{$\star$}(4.5,4.5)node{$\star$};
\draw[very thick](0,0)--(1,0)--(1,1)--(4,1)--(4,4)--(5,4)--(5,5);
}
\caption{Three Dyck paths for the $321$-avoiding permutation $\omega=13425$.
The paths are $v(\omega)$, $p(\omega)$, $q'(\omega)$ and $w'(\omega)$ from left to right.}
\label{fig:DycktoDyck}
\end{figure} 

\begin{lemma}
The map $\mathtt{Dyck}_{3}$ is an involution.
\end{lemma}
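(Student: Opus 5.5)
The plan is to show that $\mathtt{Dyck}_{3}$ is conjugate, via $E_{p}$, to the inversion map $\omega\mapsto\omega^{-1}$ on $\mathcal{S}_{n}(321)$. Concretely, I aim to establish
\begin{align*}
\mathtt{Dyck}_{3}=E_{p}\circ \iota\circ E_{p}^{-1},\qquad \iota(\omega):=\omega^{-1}.
\end{align*}
Since $\iota$ is an involution that preserves $\mathcal{S}_{n}(321)$, this gives $\mathtt{Dyck}_{3}^{2}=\mathrm{id}$ at once. The argument has three steps.

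\emph{Step 1: $E_{p}$ is a bijection.} The definition $\mathtt{Dyck}_{3}=E_{w}\circ E_{p}^{-1}$ already presupposes this, but I would make it explicit. A $321$-avoiding permutation is determined by its weak excedances, namely the pairs $(i,\omega_i)$ with $\omega_i\ge i$, which are exactly the marks in $M^{\uparrow}(\omega)\cup M^{d}(\omega)$. The reason is that the remaining values must fill the remaining positions in increasing order, since a $321$-avoiding permutation is a union of two increasing subsequences. The peaks of $p(\omega)$ record exactly these marks, so $p(\omega)$ determines $\omega$. Conversely, every Dyck path arises from some $\omega$: one places marks at its peaks and completes increasingly. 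This is the standard Krattenthaler/Billey--Jockusch--Stanley type bijection.

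\emph{Step 2: the key identity $w(\omega)=p(\omega^{-1})$.} In the Rothe diagram, $\omega$ has a mark in column $i$ and row $\omega_i$, while $\omega^{-1}$ has a mark in column $\omega_i$ and row $i$. Hence the diagram of $\omega^{-1}$ is the reflection of that of $\omega$ in the main (SW--NE) diagonal. This reflection has three consequences:
\begin{align*}
M^{\uparrow}(\omega^{-1})=\text{refl}\,M^{\downarrow}(\omega),\qquad M^{\downarrow}(\omega^{-1})=\text{refl}\,M^{\uparrow}(\omega),\qquad M^{d}(\omega^{-1})=M^{d}(\omega).
\end{align*}
Now $w'(\omega)$ is the path below the diagonal whose corners are given by $M^{\downarrow}(\omega)\cup M^{d}(\omega)$. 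Reflecting it exchanges up and right steps, so it becomes a path above the diagonal whose peaks are at the reflected marks. These are precisely $M^{\uparrow}(\omega^{-1})\cup M^{d}(\omega^{-1})$. Therefore $w(\omega)=p(\omega^{-1})$, and by the same symmetry $p(\omega)=w(\omega^{-1})$, i.e.\ $E_{w}=E_{p}\circ\iota$.

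\emph{Step 3: conclusion.} Substituting gives
\begin{align*}
\mathtt{Dyck}_{3}=E_{w}\circ E_{p}^{-1}=E_{p}\circ\iota\circ E_{p}^{-1},
\end{align*}
hence $\mathtt{Dyck}_{3}^{2}=E_{p}\circ\iota^{2}\circ E_{p}^{-1}=\mathrm{id}$. We also need $\iota$ to preserve $\mathcal{S}_{n}(321)$: a $321$ pattern in $\omega^{-1}$ corresponds to a $321$ pattern in $\omega$, because the reflection maps a decreasing triple of marks to a decreasing triple.

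The main obstacle is Step 2, specifically checking the corner conventions. The peak of $p$ is placed at the northwest corner of the marked cell, and the corner of $w'$ is placed at the corresponding corner below the diagonal. I must verify that reflecting a corner of $w'$ yields exactly the corner convention used for $p$, and not a shifted one, including the case of diagonal marks, which lie on both paths. I would first check this on the example $\omega=13425$ in Figure \ref{fig:DycktoDyck}, where $\omega^{-1}=14235$, and then argue cell by cell.
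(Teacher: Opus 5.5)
Your proposal is correct and is essentially the paper's argument. The paper's one-line proof, that $p(\omega)$ and $w'(\omega)$ are read off the same marks, is the observation that reflecting the Rothe diagram, i.e.\ passing to $\omega^{-1}$, exchanges the roles of $p$ and $w$; the paper later states this explicitly when it says $\mathtt{Dyck}_3$ gives the inverse permutation. The corner check you flag does go through: the corners of $w'(\omega)$ sit at the southeast corners of the marked cells (as in Figure \ref{fig:DycktoDyck}), and the southeast corner of the cell in column $c$, row $r$ reflects to the northwest corner of the cell in column $r$, row $c$.
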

\begin{proof}
Since $p(\omega)$ and $w'(\omega)$ has the same positions of marks, it is clear that 
$\mathtt{Dyck}_{3}$ is an involution.
\end{proof}

\begin{lemma}
\label{lemma:D2inv}
The map $\mathtt{Dyck}_2$ is an involution.
\end{lemma}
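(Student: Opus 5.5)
The plan is to give an intrinsic description of $\mathtt{Dyck}_2$ directly on Dyck paths, with no reference to $\omega$, and then check that applying this description twice returns the original path.

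The first step is to recall the standard structure of a $321$-avoiding permutation $\omega$. Let $A\subset[n]$ be the set of columns of the marks in $M^{\uparrow}(\omega)\cup M^{d}(\omega)$, and let $B$ be the set of their rows. Because $\omega$ is $321$-avoiding, both the weak excedances and the remaining entries are increasing subsequences. Hence the marks of $M^{\downarrow}(\omega)$ are obtained by matching $[n]\setminus A$ with $[n]\setminus B$ in increasing order. The pair $(A,B)$, matched increasingly, is exactly the set of peaks of $p(\omega)$, and this is why $E_p$ is a bijection. So, writing $D=p(\omega)$ with peak columns $A$ and peak rows $B$, the marks below the diagonal are the increasing matching of $[n]\setminus A$ with $[n]\setminus B$.

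The second step reads off $q(\omega)$. The path $q'(\omega)$ has its valleys at the northwest corners of these complementary marks. Reflecting across the diagonal swaps the coordinates. Therefore $\mathtt{Dyck}_2(D)$ is the unique Dyck path whose valleys sit at the cells $(b,a)$, where $b$ runs over $[n]\setminus B$ and $a$ over $[n]\setminus A$, matched increasingly. A Dyck path is determined by its valley set as well as by its peak set, so this gives a well-defined description of $\mathtt{Dyck}_2$ purely in terms of the peaks of $D$.

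The third step, and the main obstacle, is a purely path-theoretic lemma. For any Dyck path $D$ with peak columns $A$ and peak rows $B$, the valleys of $D$ are described by the complements: after the appropriate index shift coming from the northwest-corner convention, they are the cells determined by $[n]\setminus A$ and $[n]\setminus B$ matched increasingly. In words, a column is not a peak column exactly when a valley sits on it, and similarly for rows.

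I would prove this lemma by reading $D$ as a word in $U$ and $R$. The $i$-th right step either follows an up step, which makes it a peak, or follows a right step, which makes it a valley-free continuation. The corresponding dichotomy holds for up steps. Counting columns and rows then gives the complements. The care needed is in the off-by-one conventions: the paper places marks at northwest corners, and the $q'$ convention uses "up step followed by right step". I expect most of the work to go into pinning these down, for instance by checking against the example $\omega=13425$ in Figure \ref{fig:DycktoDyck}.

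Given the lemma, I apply the description from the second step to $\mathtt{Dyck}_2(D)$. Its peaks are the complements of its valleys, by the lemma applied to $\mathtt{Dyck}_2(D)$. Its valleys are given by the complements of the peaks of $D$ with coordinates swapped. Hence its peaks are exactly those of $D$ with coordinates swapped. Applying $\mathtt{Dyck}_2$ again takes complements and swaps coordinates once more, and this returns the valleys of $D$. Since a Dyck path is determined by its valleys, $\mathtt{Dyck}_2\circ\mathtt{Dyck}_2(D)=D$, so $\mathtt{Dyck}_2$ is an involution.
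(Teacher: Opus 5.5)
\textbf{There is a genuine gap in your third step.} Your first two steps are fine. The $321$-avoiding structure does give $M^{\downarrow}(\omega)$ as the increasing matching of $[n]\setminus A$ with $[n]\setminus B$, and this does describe the valleys of $\mathtt{Dyck}_2(D)$ in terms of the peaks of $D$. The trouble is the lemma in step three, which is false.

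A Dyck path with $m$ peaks has exactly $m-1$ valleys, since peaks and valleys alternate. On the other hand $|[n]\setminus A|=|[n]\setminus B|=n-m$. So no index shift can make the valleys of $D$ correspond to the complements of its own peak coordinates. Concrete failures:
\begin{itemize}
\item $U^nR^n$ has $A=\{1\}$, $B=\{n\}$, but no valleys.
\item $(UR)^n$ has $A=B=[n]$, but $n-1$ valleys.
\item For $\omega=13425$ in Figure~\ref{fig:DycktoDyck}, $p(\omega)$ has three valleys while $[5]\setminus A=\{4\}$.
\end{itemize}
Your heuristic confuses right steps \emph{preceded} by a right step (the non-peak columns) with right steps \emph{followed} by an up step (the valleys).

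The error propagates. Your intermediate claim that the peaks of $\mathtt{Dyck}_2(D)$ are those of $D$ with coordinates swapped would make $\mathtt{Dyck}_2$ preserve the number of peaks. In fact it sends $m$ peaks to $n+1-m$; for example $\mathtt{Dyck}_2(U^nR^n)=(UR)^n$, as noted in the proof of Proposition~\ref{prop:rv}.

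\textbf{How to repair it.} The correct relation inside a single path involves no complementation. Suppose the peaks of $D$ are at the lattice points $(a_j,b_j)$, $1\le j\le m$, with $a_1=0$ and $b_m=n$. Then the valleys are at $(a_{j+1},b_j)$ for $1\le j\le m-1$. Writing $X=\{a_j\}$ and $Y=\{b_j\}$, the valley abscissas are $X\setminus\{0\}$ and the valley ordinates are $Y\setminus\{n\}$.

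In lattice points, your step two places the valley for the mark in column $a$, row $b$ at $(b,a-1)$. So it says that $E=\mathtt{Dyck}_2(D)$ has valley abscissas $[n]\setminus Y$ and valley ordinates $\{0,\dots,n-1\}\setminus X$, matched increasingly. Hence $E$ has peak abscissas $\{0\}\cup([n]\setminus Y)$ and peak ordinates $(\{0,\dots,n-1\}\setminus X)\cup\{n\}$.

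Applying step two once more gives valley abscissas $X\setminus\{0\}$ and valley ordinates $Y\setminus\{n\}$. These are exactly the valleys of $D$, and a Dyck path is determined by its valleys, so $\mathtt{Dyck}_2\circ\mathtt{Dyck}_2(D)=D$. The two complementations that cancel are the ones performed by the two applications of $\mathtt{Dyck}_2$; the endpoints $0$ and $n$ absorb the index shift.

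With this repair your argument is a valid direct proof. It is a different route from the paper's, which argues by induction on $n$ through the insertion recursions (\ref{eq:Dyck2w1}) and (\ref{eq:Dyck2w2}).
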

\begin{proof}
Suppose that $\omega=(n,1,2,\ldots,n-1)$. Then, by a simple calculation, we have 
$\mathtt{Dyck}_{2}(\omega)=(1,2,\ldots,n)$ and $\mathtt{Dyck}_2(1,2,\ldots,n)=\omega$.
The map $\mathtt{Dyck}_{2}$ is an involution on $\omega$.

Below, we assume that $\omega\neq(n,1,2,\ldots,n-1)$ is $321$-avoiding permutation.
We denote by $\mathrm{st}(\omega')$ 
the standardized sequence of $\omega'$.
We write $v\xleftarrow{p}q$ as an insertion of $q$ into $v$ at the position $p$ 
and its standardization. For example, $1234\xleftarrow{3}2=13245$.

Let $\omega=\alpha \omega'$ with $\alpha\in[1,n-1]$. 
By construction of $w'(\omega)$, we have 
\begin{align}
\label{eq:Dyck2w1}
\mathtt{Dyck}_{2}(\omega)=\mathtt{Dyck}_{2}(\mathrm{st}(\omega'))\xleftarrow{\alpha+1}1.
\end{align}
Similarly, if $\omega=\omega'1\omega''$ where the length of $\omega'$ is $\alpha-1$, then we have 
\begin{align}
\label{eq:Dyck2w2}
\mathtt{Dyck}_{2}(\omega)=\mathtt{Dyck}_{2}(\mathrm{st}(\omega'\omega''))\xleftarrow{1}\alpha-1.
\end{align}

We prove the statement by induction on the length of $\omega$.
From these, if $\omega=\alpha \omega'$ with $\alpha\in[1,n-1]$, we have 
\begin{align*}
\mathtt{Dyck}_{2}(\mathtt{Dyck}_{2}(\omega))
&=\mathtt{Dyck}_{2}(\mathtt{Dyck}_{2}(\mathrm{st}(\omega'))\xleftarrow{\alpha+1}1), \\
&=\mathtt{Dyck}_{2}(\mathtt{Dyck}_{2}(\mathrm{st}(\omega')))\xleftarrow{1}\alpha, \\
&=\mathrm{st}(\omega')\xleftarrow{1}\alpha, \\
&=\omega,
\end{align*}
where we have used Eqs. (\ref{eq:Dyck2w1}) and (\ref{eq:Dyck2w2}), and the induction hypothesis on $\omega'$.
This completes the proof.
\end{proof}

\begin{remark}
The $\mathtt{Dyck}_2$ is called Lalanne--Kreweras (LK) involution \cite{Eli12,Kre70,Lal92} on Dyck paths.
The LK involution is further studied in \cite{Eli25,HopJos22}.
\end{remark}

\subsection{RSK and \texorpdfstring{$321$}{321}-avoiding permutations}
We briefly review the correspondence between $321$-avoiding permutations and Dyck paths
via Robinson--Schensted--Knuth (RSK) correspondence following \cite{AdeEli23}.

The RSK correspondence is a bijection between a permutation and a pair of Young tableaux 
of the same shape. 
Let $\pi$ be a permutation. 
The action $\mathrm{RSK}(\pi)$ of RSK correspondence on $\pi$ is given by 
$\mathrm{RSK}(\pi):=(P,Q)$ where $P$ and $Q$ is Young tableaux of the same shape.
The tableau $P$ (resp. $Q$) is called the insertion (resp. recording) tableaux.
If and only if the tableaux $P$ and $Q$ have at most two rows, 
the permutation $\pi$ is $321$-avoiding.
Then, we introduce a bijection from a $321$-avoiding permutation to 
a Dyck path following \cite{AdeEli23,EliPak04}.

\begin{defn}[Definition 2 in \cite{AdeEli23}]
Let $\pi$ be a $321$-avoiding permutation and $\mathrm{RSK}(\pi)=(P,Q)$.
We define a Dyck path $\widehat{RSK}(\pi)$ as follows.
For $1\le i\le n$, let the $i$-th step be an up step if $i$ is in the first row of $P$, and 
a down step if $i$ is the second row of $P$.
Similarly, let the $(2n+1-i)$-th step be a down step if $i$ is in the first row of $Q$, 
and a up step if $i$ is in the second row of $Q$.
\end{defn}

\begin{example}
Consider the $321$-avoiding permutation $\pi=13524$.
The action of $\widehat{RSK}(\pi)$ gives the Dyck path whose step sequence 
is $(1,2,4,6,7)$:
\begin{align*}
\widehat{RSK}: 
13524\xrightarrow{RSK}
\left(
\begin{matrix}
3 & 5 \\
1 & 2 & 4
\end{matrix}\quad,\quad
\begin{matrix}
4 & 5 \\
1 & 2 & 3
\end{matrix}
\right)
\rightarrow
\tikzpic{-0.5}{[scale=0.5]
\draw(0,0)node{$\bullet$}--(0,1)node{$\bullet$}--(0,2)node{$\bullet$}--(1,2)node{$\bullet$}
--(1,3)node{$\bullet$}--(2,3)node{$\bullet$}--(2,4)node{$\bullet$}--(2,5)node{$\bullet$}
--(3,5)node{$\bullet$}--(4,5)node{$\bullet$}--(5,5)node{$\bullet$};
}
\end{align*}
\end{example}

By construction of the map $\widehat{RSK}$, the inverse permutation $\pi^{-1}$
is also $321$-avoiding if $\pi$ is $321$-avoiding.
In general, from \cite[Theorem 2]{Sch61}, we have 
$\mathrm{RSK}(\pi^{-1})=(Q,P)$ if $\mathrm{RSK}(\pi)=(P,Q)$.

\section{Promotion and evacuation}
\label{sec:proeva}
\subsection{General theory}

We summarize the properties of promotion and evacuations on a poset
following \cite{Sta09}.

Let $P$ be a poset with $p$ elements.
We write $t\lessdot u$ if $u$ covers $t$.
We denote by $\mathcal{L}(P)$ the set of all linear 
extensions of $P$.
We consider a bijection $f:P\rightarrow[p]:=\{1,2,\ldots,p\}$ 
such that if $t<u$ in $P$, then $f(t)<f(u)$.
We define a bijection $\partial:\mathcal{L}(P)\rightarrow\mathcal{L}(P)$, 
called a {\it promotion} as follows.

Let $t_1\in P$ satisfy $f(t_1)=1$. 
Take an element $t_2\in P$ such that the label $f(t_2)$ is the least 
among the elements which cover $t_1$.
Delete this label $f(t_2)$ and place it at $t_1$.
Among the elements in $P$ which cover $t_2$, let $t_3\in P$ be the element
which has the least label $f(t_3)$.
Now we delete this label $f(t_3)$ from $t_3$ and place it at $t_2$.
We continue this process until we slide the label of a maximal element $t_{m}$ in $P$.
After we place $f(t_m)$ at $t_{m-1}$, label the element $t_m$ with $p+1$.
By construction, all elements in $P$ are newly labeled by a unique integer in $[2,p+1]$.
We decrease the labels of elements by one.
Then, we obtain a new linear extension $\partial f$.
Note that we let $\partial$ operate on the right.
 
We define the {\it dual promotion} $\partial^{\ast}$ as in the case of the promotion $\partial$.
Instead of taking an element with the minimal label, we take an element with the maximal label.
Let $u_1\in P$ satisfy $f(u_1)=p$. We fist delete the label of $u_1$.
Take an element $u_2$ with the maximal label such that $u_2$ is covered by $u_1$.
We slide the label $f(u_2)$ to $u_1$.
We slide up the labels one-by-one, and arrive at a minimal element $u_m$ in $P$. 
After we slide $f(u_m)$ to $u_{m-1}$, we label $u_m$ by $0$.
By construction, all elements in $P$ are newly labeled by a unique integer in $[0,p-1]$.
We increase the labels of the elements by one, and 
obtain a new linear extension $\partial^{\ast}f$.  

We next define two involutions on $\mathcal{L}(P)$, called {\it evacuation} and 
{\it dual evacuation}.
The evacuation is a variant of promotion.
We follow the same process as the promotion until we place the label $f(t_m)$ at 
$t_{m-1}$.
Now, we have no label on $t_{m}$. We put the integer $p$ on $t_{m}$ and we say that 
the element $t_m$ is frozen.
We continue to the same process as the promotion on the $p-1$ elements which are not 
frozen. We put the integer $p-1$ on the maximal element and freeze this element.
We continue this process until all the labels of the elements become frozen.
The new linear extension $\mathtt{ev}(f)$ is defined by the frozen labels.

The dual evacuation $\mathtt{ev}^{\ast}$ is defined similarly to the case of 
evacuation $\mathtt{ev}$.
Instead of taking the minimal element, we take a maximal element and apply 
$\partial^{\ast}$. We put the integer $1$ on a minimal element $u_{m}$, and 
freeze it.
As in the case of evacuation, we put an integer $1,2,\ldots,p$ one-by-one 
on minimal elements and freeze it.
Then, we define the new linear extension $\mathtt{ev}^{\ast}(f)$ as the 
frozen labels.

It is known \cite{Sta09} that the promotion and the evacuations can be 
defined as a composition of toggles.
{\it Toggles} form the group with generators $t_1,\ldots,t_{N-1}$ and 
have relations 
\begin{align*}
t_i^2=1, \quad 1\le i\le N-1, \\
t_it_j=t_jt_i, \text{ if } |i-j|>1.
\end{align*}
We regard the linear extension $f\in\mathcal{L}(P)$ as the word 
$f^{-1}(1),\ldots,f^{-1}(p)$. Then, the action of a toggle $t_{i}$, 
$1\le i\le n-1$, $t_{i}:\mathcal{L}(P)\rightarrow\mathcal{L}(P)$
is given by \cite{Sta09}: 
\begin{align}
t_{i}(u_1u_2\ldots u_{p})=
\begin{cases}
u_1u_2\ldots u_{p}, & \text{ if $u_i$ and $u_i+1$ are comparable in $P$}, \\
u_1u_2\ldots u_{i+1}u_i\ldots u_{p}, & \text{ otherwise}. 
\end{cases}
\end{align}

When a partially ordered set has $N$ elements, the promotion and the evacuations
can be expressed in terms of toggles.
\begin{prop}[\cite{Sta09}]
\label{prop:Pevtoggle}
The promotion and evacuations are expressed in terms of toggles as follows:
\begin{align*}
\partial&=t_{N-1}t_{N-2}\ldots t_{1}, \\
\partial^{\ast}&=t_{1}t_{2}\ldots t_{N-1}, \\
\mathtt{ev}&=t_1(t_2t_1)\ldots(t_{N-2}\ldots t_{2}t_{1})(t_{N-1}\ldots t_{2}t_{1}) \\
\mathtt{ev}^{\ast}&=t_{N-1}(t_{N-2}t_{N-1})\ldots(t_2\ldots t_{N-2}t_{N-1})(t_1\ldots t_{N-2}t_{N-1}).
\end{align*}
\end{prop}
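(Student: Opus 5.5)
The plan is to work with the word model of linear extensions. I write $f$ as the word $u_1u_2\ldots u_N$ with $u_j=f^{-1}(j)$, and let $t_i$ act by swapping positions $i$ and $i+1$ when $u_i,u_{i+1}$ are incomparable. One convention must be fixed first. Since $\partial$ acts on the right, a product $t_{N-1}\cdots t_1$ means: apply $t_1$ first, then $t_2$, and so on. I will check this reading against the formula for $\partial$; the other three formulas are then interpreted in the same way.

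\textbf{Promotion.} Let $u_1=t_1$ be the element labelled $1$. I claim that applying $t_1,t_2,\ldots,t_{N-1}$ in order carries the letter $u_1$ rightward along the word. It stops being swapped exactly when it meets a letter comparable to it. That letter is necessarily greater, and since only covering relations matter at the first comparable encounter, it is the least-labelled cover $t_2$ of $t_1$. In the word picture, the letters between $t_1$ and $t_2$ that were jumped over shift left by one, which decreases their labels by $1$.

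From that point the toggles continue with $t_2$ as the moving letter; more precisely, the next toggles swap $t_2$ past incomparable letters until the first cover of $t_2$, namely $t_3$. The key lemma to prove is that at each stage the letter currently being swapped right is exactly the element $t_j$ of the promotion chain, and that its final position agrees with the label it receives after sliding and renormalisation. This gives the promotion path, ending with the maximal element $t_m$ at position $N$ (label $p+1$ decremented to $p$).

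I expect the main obstacle to be the bookkeeping of this step. One must show that the letter the toggles "carry" switches from $t_j$ to $t_{j+1}$ precisely at a comparable pair. One must also show that every other letter ends up shifted left by exactly one, so that its label drops by one, as in jeu de taquin. I would do this by induction on $i$, tracking the invariant that after $t_1,\ldots,t_{i-1}$ have been applied, positions $1,\ldots,i-1$ hold the relabelled elements, and position $i$ holds the current chain element.

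\textbf{Dual promotion.} The statement for $\partial^\ast=t_1\cdots t_{N-1}$ follows by the same argument applied to the dual poset, reading the word in reverse.

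\textbf{Evacuation.} Here I use the factorisation $\mathtt{ev}=\partial_N\circ\partial_{N-1}\circ\cdots$, where $\partial_k=t_{k-1}\cdots t_1$ is promotion restricted to the first $k$ letters. After one full promotion, the element in the last position is frozen. The next block $t_{N-2}\cdots t_1$ is promotion on the remaining $N-1$ unfrozen elements, which occupy positions $1,\ldots,N-1$. Iterating this gives the stated product; the ordering of the blocks is determined by the right-action convention. Dual evacuation is handled in the same way, using $\partial^\ast$ on suffixes.
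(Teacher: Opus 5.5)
Your proof is correct and is the standard direct verification of this result; the paper itself gives no argument and only cites Stanley. You get $\partial$ by tracking the letter carried rightward by $t_1,\dots,t_{N-1}$ along the chain of least-labelled covers, $\partial^{\ast}$ by duality, and $\mathtt{ev}$ (resp.\ $\mathtt{ev}^{\ast}$) by iterating promotion on prefixes (resp.\ dual promotion on suffixes).

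The only thing to fix is the bookkeeping of conventions. Reading $t_{N-1}\cdots t_1$ as ``apply $t_1$ first'' is ordinary composition, not a right action; under a genuine right action this word would be $\partial^{-1}$. With your reading, the factorization must be $\mathtt{ev}=\partial_2\circ\partial_3\circ\cdots\circ\partial_N$, so that the full promotion $\partial_N$ acts first. The order $\partial_N\circ\partial_{N-1}\circ\cdots$ as you wrote it gives, for example at $N=3$, $(t_2t_1)t_1=t_2$ instead of $t_1(t_2t_1)$.
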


In this paper, since we consider only rational Dyck paths, it is enough to give 
an action of a toggle on a rational Dyck path, equivalently on a standard 
$(a,b)$-Young tableau.
Before giving an explicit action of toggles on $(a,b)$-Young tableaux, 
we give examples of the actions of promotions and evacuations on $(a,b)$-Young tableaux.

\begin{example}
\label{ex:proeva}
An example of the promotions on the rational $(2,3)$-Dyck path of size $2$ 
are given by
\begin{align*}
\partial: \tikzpic{-0.5}{[scale=0.5]
\draw(0,0)--(12,0)--(12,-2)--(0,-2)--(0,0);
\draw(0,-1)--(12,-1);
\draw(3,0)--(3,-1)(6,0)--(6,-1)(9,0)--(9,-1)(12,0)--(12,-1);
\draw(2,-1)--(2,-2)(4,-1)--(4,-2)(6,-1)--(6,-2)(8,-1)--(8,-2)
(10,-1)--(10,-2);
\draw(1.5,-0.5)node{$1$}(4.5,-0.5)node{$2$}(7.5,-0.5)node{$5$}(10.5,-0.5)node{$7$};
\draw(1,-1.5)node{$3$}(3,-1.5)node{$4$}(5,-1.5)node{$6$}(7,-1.5)node{$8$}(9,-1.5)node{$9$}(11,-1.5)node{$10$};
}
\rightarrow
\tikzpic{-0.5}{[scale=0.5]
\draw(0,0)--(12,0)--(12,-2)--(0,-2)--(0,0);
\draw(0,-1)--(12,-1);
\draw(3,0)--(3,-1)(6,0)--(6,-1)(9,0)--(9,-1)(12,0)--(12,-1);
\draw(2,-1)--(2,-2)(4,-1)--(4,-2)(6,-1)--(6,-2)(8,-1)--(8,-2)
(10,-1)--(10,-2);
\draw(1.5,-0.5)node{$1$}(4.5,-0.5)node{$3$}(7.5,-0.5)node{$4$}(10.5,-0.5)node{$6$};
\draw(1,-1.5)node{$2$}(3,-1.5)node{$5$}(5,-1.5)node{$7$}(7,-1.5)node{$8$}(9,-1.5)node{$9$}(11,-1.5)node{$10$};
}
\end{align*}
and 
\begin{align*}
\partial^{\ast}: \tikzpic{-0.5}{[scale=0.5]
\draw(0,0)--(12,0)--(12,-2)--(0,-2)--(0,0);
\draw(0,-1)--(12,-1);
\draw(3,0)--(3,-1)(6,0)--(6,-1)(9,0)--(9,-1)(12,0)--(12,-1);
\draw(2,-1)--(2,-2)(4,-1)--(4,-2)(6,-1)--(6,-2)(8,-1)--(8,-2)
(10,-1)--(10,-2);
\draw(1.5,-0.5)node{$1$}(4.5,-0.5)node{$2$}(7.5,-0.5)node{$5$}(10.5,-0.5)node{$7$};
\draw(1,-1.5)node{$3$}(3,-1.5)node{$4$}(5,-1.5)node{$6$}(7,-1.5)node{$8$}(9,-1.5)node{$9$}(11,-1.5)node{$10$};
}
\rightarrow
\tikzpic{-0.5}{[scale=0.5]
\draw(0,0)--(12,0)--(12,-2)--(0,-2)--(0,0);
\draw(0,-1)--(12,-1);
\draw(3,0)--(3,-1)(6,0)--(6,-1)(9,0)--(9,-1)(12,0)--(12,-1);
\draw(2,-1)--(2,-2)(4,-1)--(4,-2)(6,-1)--(6,-2)(8,-1)--(8,-2)
(10,-1)--(10,-2);
\draw(1.5,-0.5)node{$1$}(4.5,-0.5)node{$3$}(7.5,-0.5)node{$6$}(10.5,-0.5)node{$8$};
\draw(1,-1.5)node{$2$}(3,-1.5)node{$4$}(5,-1.5)node{$5$}(7,-1.5)node{$7$}(9,-1.5)node{$9$}(11,-1.5)node{$10$};
}
\end{align*}
Similarly, the evacuations on this rational Dyck path are given by
\begin{align*}
\mathtt{ev}: \tikzpic{-0.5}{[scale=0.5]
\draw(0,0)--(12,0)--(12,-2)--(0,-2)--(0,0);
\draw(0,-1)--(12,-1);
\draw(3,0)--(3,-1)(6,0)--(6,-1)(9,0)--(9,-1)(12,0)--(12,-1);
\draw(2,-1)--(2,-2)(4,-1)--(4,-2)(6,-1)--(6,-2)(8,-1)--(8,-2)
(10,-1)--(10,-2);
\draw(1.5,-0.5)node{$1$}(4.5,-0.5)node{$2$}(7.5,-0.5)node{$5$}(10.5,-0.5)node{$7$};
\draw(1,-1.5)node{$3$}(3,-1.5)node{$4$}(5,-1.5)node{$6$}(7,-1.5)node{$8$}(9,-1.5)node{$9$}(11,-1.5)node{$10$};
}
\rightarrow
\tikzpic{-0.5}{[scale=0.5]
\draw(0,0)--(12,0)--(12,-2)--(0,-2)--(0,0);
\draw(0,-1)--(12,-1);
\draw(3,0)--(3,-1)(6,0)--(6,-1)(9,0)--(9,-1)(12,0)--(12,-1);
\draw(2,-1)--(2,-2)(4,-1)--(4,-2)(6,-1)--(6,-2)(8,-1)--(8,-2)
(10,-1)--(10,-2);
\draw(1.5,-0.5)node{$1$}(4.5,-0.5)node{$2$}(7.5,-0.5)node{$3$}(10.5,-0.5)node{$8$};
\draw(1,-1.5)node{$4$}(3,-1.5)node{$5$}(5,-1.5)node{$6$}(7,-1.5)node{$8$}(9,-1.5)node{$9$}(11,-1.5)node{$10$};
}
\end{align*}
and 
\begin{align*}
\mathtt{ev}^{\ast}: \tikzpic{-0.5}{[scale=0.5]
\draw(0,0)--(12,0)--(12,-2)--(0,-2)--(0,0);
\draw(0,-1)--(12,-1);
\draw(3,0)--(3,-1)(6,0)--(6,-1)(9,0)--(9,-1)(12,0)--(12,-1);
\draw(2,-1)--(2,-2)(4,-1)--(4,-2)(6,-1)--(6,-2)(8,-1)--(8,-2)
(10,-1)--(10,-2);
\draw(1.5,-0.5)node{$1$}(4.5,-0.5)node{$2$}(7.5,-0.5)node{$5$}(10.5,-0.5)node{$7$};
\draw(1,-1.5)node{$3$}(3,-1.5)node{$4$}(5,-1.5)node{$6$}(7,-1.5)node{$8$}(9,-1.5)node{$9$}(11,-1.5)node{$10$};
}
\rightarrow
\tikzpic{-0.5}{[scale=0.5]
\draw(0,0)--(12,0)--(12,-2)--(0,-2)--(0,0);
\draw(0,-1)--(12,-1);
\draw(3,0)--(3,-1)(6,0)--(6,-1)(9,0)--(9,-1)(12,0)--(12,-1);
\draw(2,-1)--(2,-2)(4,-1)--(4,-2)(6,-1)--(6,-2)(8,-1)--(8,-2)
(10,-1)--(10,-2);
\draw(1.5,-0.5)node{$1$}(4.5,-0.5)node{$2$}(7.5,-0.5)node{$4$}(10.5,-0.5)node{$7$};
\draw(1,-1.5)node{$3$}(3,-1.5)node{$5$}(5,-1.5)node{$6$}(7,-1.5)node{$8$}(9,-1.5)node{$9$}(11,-1.5)node{$10$};
}
\end{align*}
\end{example}

By use of the operation $\ast$, the dual promotion and dual evacuation are expressed 
in terms of promotion and evacuation.
The next lemma is a direct consequence of the operation $\ast$ and the definitions of 
promotion and evacuations.
\begin{lemma}
\label{lemma:dprodev}
We have 
\begin{align*}
&\partial^{\ast}=\ast\circ\partial\circ \ast, \\
&\mathtt{ev}^{\ast}=\ast\circ\mathtt{ev}\circ\ast.
\end{align*}
\end{lemma}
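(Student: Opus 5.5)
The plan is to realize $\ast$ as an anti-isomorphism of posets and then check that promotion and evacuation are carried to their duals. Let $P_{(a,b)}$ be the poset whose linear extensions are the standard $(a,b)$-Young tableaux of size $n$. Its elements are the cells of the $(a,b)$-Young diagram, ordered so that a filling is standard exactly when it is order preserving. Put $N=(a+b)n$.

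First I will check that $\ast$ comes from an order-reversing bijection $\phi:P_{(a,b)}\to P_{(b,a)}$. Under $\ast$ the first row is sent to the second row, and a label $i$ becomes $N+1-i$. A tableau $T$ is standard exactly when the step sequence of its first row defines a path above $y=ax/b$. After complementing $i\mapsto N+1-i$ and swapping rows, the path is reversed and its step types are exchanged. This is the reflection in the diagonal, which sends $(a,b)$-Dyck paths to $(b,a)$-Dyck paths. So $\ast$ is a well-defined bijection $\mathtt{SYT}_{(a,b)}(n)\to\mathtt{SYT}_{(b,a)}(n)$. In the language of linear extensions, $(\ast f)(\phi(t))=N+1-f(t)$, where $\phi$ reverses the order.

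Next I will work with toggles. Write a linear extension $f$ as the word $w=u_1\ldots u_N$ with $u_j=f^{-1}(j)$. Then $\ast f$ has the word $\phi(u_N)\ldots\phi(u_1)$. Two elements are comparable in $P_{(a,b)}$ exactly when their images under $\phi$ are comparable in $P_{(b,a)}$, since an anti-isomorphism preserves comparability. Hence the two words agree on which adjacent pairs may be swapped, with positions reflected, and
\begin{align*}
\ast\circ t_i\circ\ast=t_{N-i}.
\end{align*}
Now I apply Proposition~\ref{prop:Pevtoggle}. Conjugating $\partial=t_{N-1}\cdots t_1$ by $\ast$ gives $t_1\cdots t_{N-1}=\partial^{\ast}$. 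Conjugating the toggle word for $\mathtt{ev}$ by $\ast$ replaces each $t_i$ by $t_{N-i}$, and this produces exactly the toggle word stated for $\mathtt{ev}^{\ast}$. Since $\ast^{-1}$ is the $\ast$ for $(b,a)$, the conjugation is well defined, and both identities follow.

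The main obstacle is bookkeeping. The left/right action convention for $\partial$ (the paper notes it acts on the right) has to match the order in which the toggle words are read, so that conjugation sends $\partial$ to $\partial^{\ast}$ and not to its inverse. As a cross-check I will also argue directly from the sliding definitions. Under $\phi$ the minimal element labelled $1$ becomes a maximal element labelled $N$, the cover relations are reversed, and choosing the least label among covering elements becomes choosing the greatest label among covered elements. Finally, relabelling by $p+1$ and then subtracting $1$ turns into relabelling by $0$ and then adding $1$. This shows $\ast\partial\ast=\partial^{\ast}$ directly, and the same argument with freezing gives the evacuation identity. I expect no other difficulty.
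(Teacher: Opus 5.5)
Your proposal is correct. The paper gives no argument beyond saying the lemma is a direct consequence of the definitions of $\ast$, promotion and evacuation. That is precisely your sliding cross-check: the least label among covering elements becomes the greatest label among covered elements, relabelling by $p+1$ and subtracting $1$ becomes relabelling by $0$ and adding $1$, and the freezing steps transform in the same way.

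Your main route is a tidier packaging of the same idea. You isolate the single identity $\ast\circ t_i\circ\ast=t_{N-i}$ and read off both formulas from the toggle words in Proposition~\ref{prop:Pevtoggle}. This treats $\partial$ and $\mathtt{ev}$ uniformly. It also disposes of the convention worry you raise: conjugation by the involution $\ast$ sends a product of toggles to the product of the conjugated toggles in the same order. So whatever reading convention Proposition~\ref{prop:Pevtoggle} uses for $\partial$ is also the one it uses for $\partial^{\ast}$, and the formal substitution $t_i\mapsto t_{N-i}$ turns each word into the other.

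You can also skip the anti-isomorphism $\phi$ entirely. The paper defines $t_i$ on $\mathtt{SYT}_{(a,b)}(n)$ as ``swap $i,i+1$ if the result is standard''. Since $\ast(Y(i,i+1))=(\ast Y)(N-i,N-i+1)$ and $\ast$ preserves standardness, the conjugation identity is immediate. If you do keep $\phi$, its order-reversal deserves one line: the order of a poset is the intersection of its linear extensions. The price of the toggle route is reliance on the factorization cited from \cite{Sta09}, whereas the direct sliding argument is self-contained.
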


We denote by $Y(i,i+1)$ a tableau obtained from a tableau $Y$ by exchanging 
$i$ and $i+1$. Note that $Y(i,i+1)$ may not be in $\mathtt{SYT}_{(a,b)}(n)$ even 
if $Y$ is in $\mathtt{SYT}_{(a,b)}(n)$.
In our context, the action of a toggle $t_{i}$ on a standard $(a,b)$-Young tableau $Y$ is 
given by 
\begin{align*}
t_{i} Y=\begin{cases}
Y(i,i+1), & \text{ if } Y(i,i+1)\in\mathtt{SYT}_{(a,b)}(n), \\
Y, & \text{if } Y(i,i+1)\notin\mathtt{SYT}_{(a,b)}(n),
\end{cases}
\end{align*}
For example, consider the $(2,3)$-Dyck path of size $2$ whose 
step sequence is $1257$.
This $(2,3)$-Dyck path is invariant under the actions of $t_1,t_3,t_8$, and $t_9$.
The actions of $t_2,t_4,t_5,t_6$, and $t_7$ are $1357$, $1247$, $1267$, 
$1256$ and $1258$ respectively.

The next theorem directly follows from 
Proposition \ref{prop:Pevtoggle} and the defining relations of toggles. 
\begin{theorem}[\cite{Sta09}]
\label{thrm:relproev}
The bijections $\partial, \partial^{\ast}, \mathtt{ev}$ and $\mathtt{ev}^{\ast}$
on $\mathcal{L}(P)$ satisfy 
\begin{enumerate}
\item  $\partial^{\ast}=\partial^{-1}$.
\item $\mathtt{ev}^{2}=(\mathtt{ev}^{\ast})^2=\mathrm{id}$.
\item $\partial^{N}=\mathtt{ev}^{\ast}\circ\mathtt{ev}$.
\item $\mathtt{ev}\circ\partial=\partial^{-1}\circ\mathtt{ev}$.
\end{enumerate}
where $N$ is the number of elements in $\mathcal{L}(P)$.
\end{theorem}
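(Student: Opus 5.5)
We work entirely inside the group generated by the toggles, using Proposition \ref{prop:Pevtoggle}, the relations $t_i^2=1$, and $t_it_j=t_jt_i$ for $|i-j|>1$. Here $N$ should be read as the number of elements of the poset $P$, so that the toggles are $t_1,\ldots,t_{N-1}$. We also let maps act on the right, consistent with $\partial f$.

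For (1), the statement is immediate. We have $\partial=t_{N-1}\cdots t_1$ and $\partial^{\ast}=t_1\cdots t_{N-1}$. Multiplying them and cancelling the adjacent $t_1t_1$, then $t_2t_2$, and so on, using $t_i^2=1$, gives the identity in either order. Hence $\partial^{\ast}=\partial^{-1}$.

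For (2), write $\mathtt{ev}=\prod_{j=1}^{N-1}\partial_j$, where $\partial_j:=t_j t_{j-1}\cdots t_1$ is the promotion on the first $j+1$ labels. The plan is to prove, by induction on $N$, that $\mathtt{ev}_N$ equals the reversal word, that is, the longest element written in the ``$w_0$'' form. We then use the standard identity that $\mathtt{ev}_N$ can equally be written as $\partial_{N-1}\,\mathtt{ev}_{N-1}$ or as $\mathtt{ev}_{N-1}\,\partial_{N-1}$. Both words represent $w_0$ in the free Coxeter-like monoid modulo braid-free moves.

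This is the main obstacle, since we have only commutation relations and no braid relations. I would therefore avoid $w_0$ and instead verify $\mathtt{ev}^2=1$ directly. First, prove the conjugation identity $t_i\,\mathtt{ev}=\mathtt{ev}\,t_{N-i}$ using only commutations and involutions. This is done by pushing $t_i$ through each factor $\partial_j$: in $\partial_j$, the generator $t_i$ meets $t_{i}t_{i-1}$ and becomes $t_{i-1}$ after passing, which amounts to the reindexing $t_i\partial_j=\partial_j t_{i-1}$ for $1<i\le j$, and $t_1\partial_j$ is absorbed into the $t_1$ from the previous block. Granting this, $\mathtt{ev}^{-1}$ is the reversed word $\partial_{N-1}^{-1}\cdots\partial_1^{-1}$. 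Applying the identity to each generator shows that this reversed word equals $\mathtt{ev}$, so $\mathtt{ev}^2=1$. The same argument applies to $\mathtt{ev}^{\ast}$; alternatively, it follows from Lemma \ref{lemma:dprodev}, since $\ast$ is an involution.

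For (4), the same pushing identity, read off blockwise as $t_i\,\mathtt{ev}=\mathtt{ev}\,t_{N-i}$, gives
\[
\mathtt{ev}\,\partial\,\mathtt{ev}=t_{1}t_{2}\cdots t_{N-1}=\partial^{\ast}=\partial^{-1}.
\]
Combined with $\mathtt{ev}^2=1$, this yields (4). For (3), note that $\mathtt{ev}^{\ast}$ is obtained from $\mathtt{ev}$ by the index reversal $t_i\mapsto t_{N-i}$, so $\mathtt{ev}^{\ast}=\partial\,\mathtt{ev}^{\ast}_{\text{inner}}$. Inducting on the number of blocks and repeatedly applying (4) on the subposets, we obtain $\mathtt{ev}^{\ast}\mathtt{ev}=\partial^N$.

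The delicate point throughout is establishing $t_i\,\mathtt{ev}=\mathtt{ev}\,t_{N-i}$ using only the defining relations. I would try first to carry this out by explicit induction on $N$, splitting $\mathtt{ev}_N=\mathtt{ev}_{N-1}\partial_{N-1}$.
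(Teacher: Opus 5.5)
\textbf{There is a genuine gap.} Your framework is right, and it is what the paper intends: the paper simply asserts that the theorem follows from Proposition~\ref{prop:Pevtoggle} using only $t_i^2=1$ and $t_it_j=t_jt_i$ for $|i-j|>1$. Part (1) is fine, and you read $N$ correctly as $|P|$.

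The gap is your key lemma $t_i\,\mathtt{ev}=\mathtt{ev}\,t_{N-i}$, together with the reindexing step $t_i\partial_j=\partial_j t_{i-1}$ you use to prove it. The lemma is the Coxeter identity $w_0s_iw_0=s_{N-i}$. That identity depends on braid relations, which you yourself note are unavailable, and it is false for toggles. Already $t_2\partial_2=t_2t_2t_1=t_1$, whereas $\partial_2t_1=t_2t_1t_1=t_2$.

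Here is a concrete counterexample. Take $P=\{a,b,c\}$ with $a<b$ and $a<c$, so $\mathcal{L}(P)=\{abc,acb\}$. Then $t_1$ acts trivially and $t_2$ swaps the two extensions. Hence $\mathtt{ev}=t_1t_2t_1$ is the swap, so $t_1\,\mathtt{ev}$ is the swap while $\mathtt{ev}\,t_2$ is the identity.

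Your arguments for (2) and (4) are both read off from this lemma, so neither is established. Your (3) is deduced from (4) by an induction that is never specified. Moreover, the factorization $\mathtt{ev}^{\ast}=\partial\,\mathtt{ev}^{\ast}_{\mathrm{inner}}$ itself needs proof, since the last block of $\mathtt{ev}^{\ast}$ in Proposition~\ref{prop:Pevtoggle} is $t_1\cdots t_{N-1}=\partial^{-1}$. So nothing beyond (1) is actually proved.

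\textbf{What is needed.} You must verify each identity as an equality of words modulo commutation moves alone. Then use $t_i^2=1$ only to identify a reversed word with the inverse. For example, $t_1(t_2t_1)(t_3t_2t_1)=t_1t_2t_3\,t_1t_2\,t_1$ after commuting $t_1$ past $t_3$, and the right-hand side is the reversed word, i.e.\ $\mathtt{ev}^{-1}$.

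A systematic way is the standard projection criterion for partially commutative monoids: two words in $t_1,\dots,t_{N-1}$ are commutation-equivalent if and only if their projections onto every alphabet $\{t_i\}$ and $\{t_i,t_{i+1}\}$ coincide. The projections onto $\{t_i,t_{i+1}\}$ are:
\begin{itemize}
\item $\mathtt{ev}\mapsto t_i(t_{i+1}t_i)^{N-1-i}$;
\item $\mathtt{ev}^{\ast}\mapsto t_{i+1}(t_it_{i+1})^{i}$;
\item $\partial\mapsto t_{i+1}t_i$;
\item $\partial^{-1}=t_1\cdots t_{N-1}\mapsto t_it_{i+1}$.
\end{itemize}
The four identities then follow directly:
\begin{itemize}
\item For (2): the first two projections are palindromes, so $\mathtt{ev}$ and $\mathtt{ev}^{\ast}$ equal their reversed words, i.e.\ their inverses.
\item For (4): $\mathtt{ev}\,\partial$ and $\partial^{-1}\mathtt{ev}$ both project to $t_i(t_{i+1}t_i)^{N-i}$.
\item For (3): $\mathtt{ev}^{\ast}\mathtt{ev}$ projects to $t_{i+1}(t_it_{i+1})^{i}\,t_i(t_{i+1}t_i)^{N-1-i}=(t_{i+1}t_i)^{N}$, which is the projection of $\partial^{N}$.
\end{itemize}
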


\subsection{Promotion and evacuation on Dyck paths}
The actions of promotion and evacuation on $(1,1)$-Dyck paths of size $n$ can be rephrased 
in a simple way.
Let $\mathtt{Rot}$ be a $180/n$ degree counterclockwise rotation of a perfect matching in 
the circular presentation.
Recall that $\ast$ is an involution on Young tableaux.
\begin{prop}
\label{prop:DyckPMev}
Set $(a,b)=(1,1)$. Then, we have 
\begin{align}
\label{eq:PMRotDyck}
\mathtt{PM}\circ\partial&=\mathtt{Rot}\circ\mathtt{PM}, \\
\label{eq:evDyck}
\mathtt{ev}&=\mathtt{ev}^{\ast}=\ast.
\end{align} 
\end{prop}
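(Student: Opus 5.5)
Both identities come from the standard description of promotion on two-row rectangular standard tableaux together with the matching rule that builds $\mathtt{PM}$. For Dyck paths, $\mathtt{PM}$ pairs each $u_j$ in the first row with the nearest unmatched second-row entry to its right. This is the usual bracket matching: $u_j$ is an opening parenthesis, a second-row entry is a closing parenthesis, and the non-crossing arcs connect matched pairs.

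\textbf{Rotation identity.} The plan for Eq.~(\ref{eq:PMRotDyck}) is to follow the jeu de taquin path of $\partial$ on the $2\times n$ tableau $Y$.
\begin{itemize}
\item The entry $1$ sits in the first row and is deleted.
\item Entries slide left in the first row until the hole can be filled from below. This happens at the column containing the entry $r$ matched to $1$, the first position where the second-row entry is smaller than the next first-row entry.
\item The hole then runs along the second row to the end, and $2n+1$ is placed there.
\item After subtracting one, the word in $U,R$ becomes $w_2\cdots w_{2n}$ followed by a new letter, except that the letter at position $r$ changes from $R$ to $U$. Equivalently: remove the first $U$, change its matching $R$ into $U$, and append $R$.
\end{itemize}

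I would justify the third step by showing that the hole drops exactly at the first index where the prefix $w_2\cdots$ would go negative. This is the position $r$ of the closing partner of $1$, by the definition of the greedy matching.

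Next I check the effect on the matching. Pairs not involving $1$ or $r$ are unchanged after relabelling $i\mapsto i-1$. The letters strictly between $1$ and $r$ form a balanced word, and the letters after $r$ are matched among themselves or across $r$ consistently. The new opening letter at $r-1$ is closed by the new final letter at $2n$.

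So the matching $\{1,r\}$ becomes $\{r-1,2n\}$ and every other pair $\{i,j\}$ becomes $\{i-1,j-1\}$. This is exactly the rotation of the circle by one position, i.e.\ $180/n$ degrees, in the stated direction. The main obstacle is this matching verification, specifically that arcs nested over or lying after $r$ keep their partners once position $r$ flips. I would check it by the prefix-height argument: the heights of the new word are the old heights minus one before $r$, and the old heights plus one after $r$, relative to the appropriate base points.

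\textbf{Evacuation identity.} For Eq.~(\ref{eq:evDyck}) I would use that, for a rectangular shape, Schützenberger evacuation is rotation by $180^\circ$ with complementation $i\mapsto 2n+1-i$. This is classical for standard Young tableaux, and for $a=b=1$ the map $\ast$ is exactly this operation. Alternatively, one can argue inside the paper:
\begin{itemize}
\item By Theorem~\ref{thrm:relproev}, $\mathtt{ev}\circ\partial=\partial^{-1}\circ\mathtt{ev}$.
\item Also $\ast\circ\partial\circ\ast=\partial^{\ast}=\partial^{-1}$ by Lemma~\ref{lemma:dprodev}.
\item The map $\ast$ is the reflection of the circle, so $\ast\circ\mathtt{ev}$ commutes with $\partial$, i.e.\ with rotation.
\end{itemize}
However, commuting with rotation does not force this map to be the identity. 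I would therefore prove $\mathtt{ev}=\ast$ directly, by induction on the freezing steps.

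Each step of $\mathtt{ev}$ performs a promotion on the unfrozen part, and the frozen entries fill the bottom-right cells. The frozen cell is the last cell of whichever row the jeu de taquin path ends in. Following the bracket-path analysis above, I would show that at step $m$ the label $2n+1-m$ is frozen in the row containing $m$ in the original tableau, moving from the end toward the front. This is the reversed-complement description, so $\mathtt{ev}(Y)$ is $Y$ with rows swapped and $i\mapsto 2n+1-i$, which is $\ast Y$.

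Finally, $\mathtt{ev}^{\ast}=\ast\circ\mathtt{ev}\circ\ast=\ast\circ\ast\circ\ast=\ast$ by Lemma~\ref{lemma:dprodev}.
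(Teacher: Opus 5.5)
Your proposal is correct. For the rotation identity it uses a different mechanism from the paper.

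\textbf{Rotation identity.} The paper uses the toggle factorization $\partial=t_{2n-1}\cdots t_1$ and splits into two cases. In the prime case, $\partial$ lowers the heights of the first $n-1$ right steps. In the non-prime case, it applies $t_{a-1}\cdots t_1$ and then $t_{2n-1}\cdots t_a$, where $(1,a)$ is the arc at $1$. You instead follow the jeu de taquin hole and identify its drop column with the first return of the path. This gives one uniform rule: delete the first $U$, turn its partner at position $r$ into $U$, and append $R$. That rule covers both of the paper's cases and makes the new arc $(r-1,2n)$ immediate. The obstacle you flag is vacuous. Since $r$ is the first return to the diagonal, no arc is nested over $r$ or crosses it. So $w_{r+1}\cdots w_{2n}$ is itself a Dyck word and keeps its matching.

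\textbf{Evacuation identity.} Citing the classical fact that evacuation on a rectangle is $180^\circ$ rotation with complementation is legitimate and suffices, because $\ast$ is exactly that operation on $2\times n$ tableaux. The paper instead proves $\mathtt{ev}=\ast$ by the step-by-step freezing argument, which is the direct route you sketch.

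In that sketch the row is stated backwards. At step $m$ the label $2n+1-m$ is frozen in the row \emph{not} containing $m$. Already at $m=1$, the slide drops at the partner of $1$ and $2n$ lands at the end of the second row. Your version would put $2n$ in the first row, which is not standard and contradicts your conclusion that rows are swapped.

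Also, ``the bracket-path analysis above'' does not apply verbatim at later steps. The working tableau is no longer rectangular, and entries have moved between rows. The invariant you need is the following. At step $m$, the working word on $[m,2n]$ is the original word with every $R$ whose partner is $<m$ turned into $U$. These converted letters are unmatched, since by non-crossingness no surviving arc is nested over them. With this invariant the two cases follow:
\begin{itemize}
\item If $m$ is an $L$-vertex, its arc survives with a balanced interior. The hole drops at the partner, and the frozen cell is in row $2$.
\item If $m$ is an $R$-vertex, it has become an unmatched $U$. The height never returns to zero, and the frozen cell is in row $1$.
\end{itemize}
This is exactly the paper's $L$-/$R$-vertex argument.
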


Before proceeding to the proof of Proposition \ref{prop:DyckPMev}, 
we study the action of a toggle on a Dyck path and on a perfect matching.
Given a Dyck path $p$, we replace an up (resp. right) step in $p$ with 
a symbol $U$ (resp. $R$). Then, we have a word $w(p)$ of length $2n$ consisting
of two alphabets $U$ and $R$.  We call $w(p)$ a Dyck word. 
We have an obvious bijection between a Dyck path and a Dyck word.
\begin{lemma}
\label{lemma:togDyckp}
Let $p$ be a Dyck path and $w(p):=w_1\ldots w_{2n}$ be a Dyck word in $\{U,R\}^{2n}$.
Let $w_{i}(p)$ a word obtained from $w(p)$ by exchanging $w_i$ and $w_{i+1}$.
Then, we have 
\begin{align}
t_{i}\cdot w(p)=
\begin{cases}
w_{i}(p), & \text{ if $(w_{i},w_{i+1})=(U,R)$ or $(R,U)$ and $w_{i}(p)$ is a Dyck word}, \\
w(p), & \text{otherwise}.
\end{cases}
\end{align}
\end{lemma}
\begin{proof}
Suppose that a Dyck word $w(p)$ satisfies $w_{i}=w_{i+1}=U$ or $R$. Then, in the $(1,1)$-Young tableau
corresponding to $p$, the integers $i$ and $i+1$ are in the same row. 
By definition of the action of a toggle $t_{i}$, we have $t_{i}\cdot w(p)=w(p)$.
If $(w_{i},w_{i+1})=(U,R)$ or $(R,U)$, the integer $i$ is in the first row in the $(1,1)$-Young tableau
and the integer $i+1$ is in the second row.
If $i$ and $i+1$ is not in the same column, we have $t_{i}\cdot w(p)=w_{i}(p)$.
If $i$ and $i+1$ is in the same column, then $w_{i}=U$ and $w_{i+1}=R$.
In this case, the Dyck path $p$ touches the line $y=x$ at $(i-1,i-1)$ and $(i,i)$.
Then, $w_{i}(p)$ cannot be a Dyck path. Therefore, we have $t_{i}\cdot w(p)=w(p)$.
This completes the proof.
\end{proof}

Let $C$ be a circular representation of a perfect matching. If two integers $i<j$ in $C$ are connected by an arc,
we say that $i$ is an $L$-vertex, and $j$ is an $R$-vertex.
An arc $(i,j)$ is said to be an innermost arc if $j=i+1$.

\begin{lemma}
The action of a toggle $t_{i}$ on $C$ is given by
\begin{align}
t_i\cdot C=
\begin{cases}
C_{1}, & \text{ if $i$ is an $R$-vertex and $i+1$ is an $L$-vertex}, \\
C_{2}, & \text{ if $(i,i+1)$ is an inner-most arc and $i-1$ is an $L$-vertex}, \\
C, & \text{ otherwise}.
\end{cases}
\end{align}
The perfect matching $C_{1}$ is obtained from $C$ by reconnecting $(i,i+1)$  
and $(j',j'')$ by arcs if $C$ has arcs $(j',i)$ and $(i+1,j'')$.
The perfect matching $C_{2}$ is obtained from $C$ by reconnecting $(i-1,i)$ and 
$(i+1,j)$ by arcs if $C$ has arcs $(i-1,j)$ and $(i,i+1)$.
\end{lemma}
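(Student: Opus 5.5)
The plan is to transport Lemma \ref{lemma:togDyckp} through the map $\mathtt{PM}$. For a Dyck path, $i$ is an $L$-vertex of $C=\mathtt{PM}(p)$ exactly when $w_i=U$, and an $R$-vertex exactly when $w_i=R$. This holds because the matching algorithm pairs each up step with the first unmatched later right step, which is the usual parenthesis matching. Under this dictionary, $t_i$ acts nontrivially only when $(w_i,w_{i+1})\in\{(U,R),(R,U)\}$ and the swapped word is still a Dyck word. So the proof reduces to describing how the standard parenthesis matching changes when $i$ and $i+1$ are swapped, in each of the two cases.

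\emph{Case $(w_i,w_{i+1})=(R,U)$.} Here $i$ is an $R$-vertex and $i+1$ is an $L$-vertex, and swapping always yields a Dyck word, since the height at position $i$ only increases by $2$. Let $(j',i)$ and $(i+1,j'')$ be the arcs of $C$. After the swap, position $i$ is an $U$ immediately followed by the $R$ at $i+1$, so $(i,i+1)$ becomes an innermost arc. The remaining letters form the word with $w_i,w_{i+1}$ deleted, whose parenthesis matching pairs $j'$ with $j''$ and leaves every other arc of $C$ unchanged. To justify this, delete the matched pair $(i,i+1)$ first and compare the two matchings. Hence $t_i C=C_1$.

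\emph{Case $(w_i,w_{i+1})=(U,R)$.} Then $(i,i+1)$ is an innermost arc of $C$, and the swap gives a Dyck word iff the height before position $i$ is positive. I will check that this is equivalent to $w_{i-1}=U$, i.e.\ $i-1$ being an $L$-vertex. This equivalence is the step I expect to be the main obstacle, because it fails as literally stated. If $w_{i-1}=R$ but the height is positive, the swap $URUR\to\cdots RU$ is valid even though $i-1$ is an $R$-vertex. So I would first interpret the condition as ``$i$ is not at height zero''. In that reading, let $(i',j)$ be the arc enclosing $(i,i+1)$, namely the nearest unmatched $U$ to the left, which is $i-1$ exactly when $w_{i-1}=U$.

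After swapping, the $R$ at $i$ matches the $U$ at $i'$ and the $U$ at $i+1$ matches the $R$ at $j$. By the same deletion argument, all other arcs are unchanged. When $i'=i-1$ this is precisely $C_2$. I would then either verify that the paper's intended condition matches this, or note that the case $w_{i-1}=R$ is a reconnection of the same type with $i-1$ replaced by $i'$. Finally, if neither configuration occurs, the pair is $UU$ or $RR$, or it is $UR$ at height zero, and Lemma \ref{lemma:togDyckp} gives $t_iC=C$.
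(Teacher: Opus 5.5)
Your route is the same as the paper's: identify $L$-vertices and $R$-vertices with the letters $U$ and $R$ of the Dyck word, then read everything off Lemma~\ref{lemma:togDyckp}. The paper does this in three lines and says the translation to arcs is ``easy to see''. Your deletion argument (removing a matched innermost pair $UR$ leaves the parenthesis matching of the remaining letters unchanged) is what actually justifies the descriptions of $C_1$ and $C_2$.

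Your suspicion about the second case is correct, and you should commit to it rather than hedge: the lemma is false as literally stated. Take $n=3$, $w=UURURR$, so $C=\{\{1,6\},\{2,3\},\{4,5\}\}$, and $i=4$. Here $4$ is an $L$-vertex and $3$ is an $R$-vertex, so the lemma is in its ``otherwise'' case and predicts $t_4C=C$. But exchanging $4$ and $5$ in the tableau with rows $\{1,2,4\}$, $\{3,5,6\}$ gives the standard tableau with rows $\{1,2,5\}$, $\{3,4,6\}$, which is the word $UURRUR$. Hence $t_4C=\{\{1,4\},\{2,3\},\{5,6\}\}\neq C$.

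The paper's proof breaks exactly at its last sentence (``In other cases, the action of $t_i$ is trivial''). Lemma~\ref{lemma:togDyckp} says $t_i$ acts nontrivially whenever $(w_i,w_{i+1})=(U,R)$ and the path is strictly above the diagonal after its first $i-1$ steps, whatever $w_{i-1}$ is.

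The correct second case is the one you derive. Suppose $(i,i+1)$ is an innermost arc lying under some arc, and let $(i',j)$ be the arc immediately enclosing it. Then $t_iC$ replaces the arcs $(i',j)$ and $(i,i+1)$ by $(i',i)$ and $(i+1,j)$. The paper's $C_2$ is the special case $i'=i-1$. With this correction, your argument proves the lemma completely.
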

\begin{proof}
If $i$ is an $R$-vertex and $i+1$ is an $L$-vertex, the Dyck word $w$ satisfies 
$(w_{i},w_{i+1})=(R,U)$. By Lemma \ref{lemma:togDyckp}, it is easy to see that 
the action of $t_i$ is given by $t_i\cdot C=C_{1}$.

If $(i,i+1)$ is an innermost arc and $i-1$ is an $L$-vertex, the Dyck word $w$
satisfies $(w_{i},w_{i+1})=(U,R)$, and $w_{i-1}=U$.
By Lemma \ref{lemma:togDyckp}, a new Dyck word is obtained by exchanging 
$w_{i}$ and $w_{i+1}$. If we translate these in terms of a perfect matching,
the action of $t_i$ on $C$ is given by $C_2$.

In other cases, the action of $t_i$ is trivial by Lemma \ref{lemma:togDyckp}.
This completes the proof.
\end{proof}

\begin{proof}[Proof of Proposition \ref{prop:DyckPMev}]
We prove the proposition by induction on the size $n$.
When $n=1$, we have a unique perfect matching and the claim holds.
When $n=2$, we have two perfect matchings and it is easy to verify the claim.
We assume $n\ge3$.

We fist show Eq. (\ref{eq:PMRotDyck}).
Let $p$ be a Dyck path of size $n$.
Recall that the promotion $\partial$ can be expressed in terms of toggles as in Proposition \ref{prop:Pevtoggle}.
We consider the action of toggles on $p$.
First, we consider the case where $p$ is a prime Dyck path.
Let $\mathbf{h}(p):=(h_1,\ldots,h_{n})$ be a sequence of integers such that 
$h_{i}$ is the height of $i$-th right step in $p$.
Since $p$ is prime, $h_{i}>i$ for $1\le i\le n-1$.
Since $p$ is prime, Lemma \ref{lemma:togDyckp} implies that 
$\partial\cdot p$ is a path $p'$ such that 
$\mathbf{h}(p')=(h_1-1,h_2-1,\ldots,h_{n-1}-1,h_{n})$.
In terms of a perfect matching, the arc $(i,j)$, $i\ge2$ in $p$ corresponds 
to $(i-1,j-1)$ in $p'$, and the arc $(1,n)$ corresponds to $(n-1,n)$. 
Therefore, we have Eq. (\ref{eq:PMRotDyck}).

Suppose that $p$ is not prime. Then, we have an arc $(1,a)$ with $a\ge2$ 
in the perfect matching of $p$. 
The path $p_1\ldots p_{a}$ is prime.
The action of toggles $t_{a-1}t_{a-2}\ldots t_{1}$ on $p$ gives a new path $p'$
such that an arc $(i,j)$ with $i,j\in[2,a-1]$ in $p$ is replaced with an arc 
$(i-1,j-1)$ in $p'$, and an arc $(1,a)$ in $p'$ is replaced with an arc $(a-1,a)$ in $p'$.
By applying the toggles $t_{2n-1}t_{2n-2}\ldots t_{a}$ to $p'$ and repeating the above argument, 
we have a new path $p''$ such that an arc $(i,j)$ with $i,j\in[1,2n]\setminus\{1,a\}$ in $p$
corresponds to an arc $(i-1,j-1)$ in $p''$, and an arc $(1,a)$ in $p$ corresponds 
to $(a-1,2n)$ in $p''$. This implies that we have Eq. (\ref{eq:PMRotDyck}).

Secondly, we show Eq. (\ref{eq:evDyck}).
Recall that evacuation is a variant of promotion. 
We delete the integer $1$ from the tableau, move integers and add an integer to fill
a blank cell. We apply this algorithm to a two-row Young tableau.
We first delete the minimal label from the two-row Young tableau, move integers and add $n$ at the 
bottom right cell. We freeze the integer $n$.
We delete integers one-by-one until we obtain a tableau such that all the labels of cells
are frozen.
Suppose $i$ is an $L$-vertex. If we delete $i$ from the tableau, we have an integer $j$
which is an $R$-vertex of the pair $(i,j)$.
The integer $j$ is in the second row of the tableau. 
We move the integers larger than $i$ to fill the blank cell. Since $(i,j)$ is a pair 
of an $L$-vertex and $R$-vertex, we move, at some point, the integer $j$ to the 
first row of the tableau. This means that the newly frozen integer is in the second 
row.
Suppose that $i$ is an $R$-vertex. If we delete $i$ from the the diagram, 
the newly frozen integer is in the first row of the tableau. This is because we have 
already removed the $L$-vertex for $i$, and other labels larger than $i$ form arcs.
From these observations, if $i$ is in the first (resp. second) row in the tableau, 
then evacuation is to put the label $2n+1-i$ in the second (resp. first) row.
This simply means that $\mathtt{ev}=\ast$.
We have $\mathtt{ev}^{\ast}=\ast\circ\mathtt{ev}\circ\ast=\ast$.
This completes the proof.
\end{proof}

\subsection{Promotion and evacuation on rational Dyck paths}
We first consider the promotion in the case of $(a,b)=(1,k)$ with $k\ge2$.
As already mentioned, the size of a matching block of a perfect matching of a $k$-Dyck path 
is always $k+1$. 
As in the case of Dyck paths, we have 
\begin{align}
\label{eq:delRot}
\mathtt{PM}\circ\partial&=\mathtt{Rot}\circ\mathtt{PM}, 
\end{align}
where $\mathtt{Rot}$ is a $360/((k+1)n)$ degree counterclockwise rotation of a perfect matching. 

\begin{remark}
For a general $(a,b)$ with $a\ge2$, the relation Eq. (\ref{eq:delRot}) does not hold.
This is because the sizes of a block of a perfect matching of an $(a,b)$-Dyck path depend 
on the path.
For example, consider two $(2,3)$-Dyck paths of size $1$. The perfect matchings are 
\begin{align*}
\mathcal{S}_1:=\{\{1,4,5\},\{2,3\}\}, \quad \mathcal{S}_2:=\{\{1,2,5\},\{3,4\}\}.
\end{align*} 
The perfect matching $\mathcal{S}_1$ is obtained from $\mathcal{S}_2$ by $\mathtt{Rot}$.
Similarly, we have $\mathcal{S}_2=\partial(\mathcal{S}_{1})$, but 
$\mathcal{S}_{2}\neq\mathtt{Rot}(\mathcal{S}_1)$.
This phenomenon occurs since the size of arcs in a perfect matching depends on arcs.
\end{remark}

We consider evacuation on an $(a,b)$-Dyck path.
We first rephrase the relation $\mathtt{ev}=\mathtt{ev}^{\ast}=\ast$ in Proposition \ref{prop:DyckPMev}. 
Let $C(p)$ be a circular presentation of a perfect matching of a Dyck path $p$.
We denote by $\overline{C(p)}$ a perfect matching obtained from $C(p)$ by replacing 
$i$ by $2n+1-i$. We call this operation a bar operation.
Then, Proposition \ref{prop:DyckPMev} implies that the following diagram is commutative:
\begin{align}
\label{eq:evPM}
\tikzpic{-0.5}{
\node (0) at (0,0){$p$};
\node (1) at (3,0){$\mathtt{ev}(p)$};
\node (2) at (0,-2){$C(p)$};
\node (3) at (3,-2){$\overline{C(p)}$};
\draw[->,anchor=south] (0) to node {$\mathtt{ev}$} (1);
\draw[->,anchor=south] (2) to node {$bar$} (3);
\draw[->,anchor=east] (0) to node {$\mathtt{PM}$}(2);
\draw[->,anchor=west] (1) to node {$\mathtt{PM}$}(3);
}
\end{align}
We will show that the diagram (\ref{eq:evPM}) also holds for 
a general $(a,b)$-Dyck path.
More precisely, let $C(p)$ be a perfect matching of an $(a,b)$-Dyck 
path and define a bar operation $C(p)\mapsto\overline{C(p)}$ by 
replacing $i$ by $(a+b)n+1-i$ in $C(p)$.
\begin{prop}
\label{prop:abDyckev}
For an $(a,b)$-Dyck path $p$, the diagram (\ref{eq:evPM}) is commutative.
\end{prop}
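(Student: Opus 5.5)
We have to show $\mathtt{PM}(\mathtt{ev}(p))=\overline{\mathtt{PM}(p)}$ for every $p\in\mathtt{Dyck}_{(a,b)}(n)$. The plan follows the proof of Eq.~(\ref{eq:evDyck}) in Proposition~\ref{prop:DyckPMev}: compute $\mathtt{ev}$ on the $(a,b)$-Young tableau by successive promotion-type slides and freezing, and read off which row each frozen label lands in. Set $N=(a+b)n$.

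\textbf{Step 1: the slide path.} On the two-row poset, deleting the minimal label and sliding has a simple description. If the currently smallest unfrozen label sits in the first row, the hole travels along the first row until the entry below the next cell is smaller than the entry to its right; it then drops to the second row and finishes there. If the smallest label sits in the second row, the hole stays in the second row. Consequently the newly frozen label $N+1-i$ at step $i$ is placed at the last unfrozen cell of the second row, unless the second row has already been exhausted. The way the rows are consumed is equivalent to deleting the $i$-th letter of the word in $\{U,R\}$ and keeping track of which row the shortened, still-valid Dyck word gives up at its end.

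\textbf{Step 2: which row receives $N+1-i$.} This is done block by block of $\mathtt{PM}(p)=\{S_1,\dots,S_{an}\}$. I expect that when $i$ is the largest element of its block $S_m$, the frozen label $N+1-i$ goes to the first row, and that otherwise it goes to the second row. This matches the bar operation, since $\overline{S_m}$ has $N+1-\max S_m$ as its minimum, which is exactly the up-step element of a block of $\mathtt{PM}$ of the target path. The justification is an induction on $i$ paralleling the $L$/$R$-vertex argument for $(1,1)$. After the labels $1,\dots,i-1$ have been removed, the remaining unfrozen labels form a standard tableau for a smaller path. Its perfect matching is the restriction of $\mathtt{PM}(p)$ to $[i,N]$, with blocks truncated, so removing prefixes commutes with $\mathtt{PM}$. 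Deleting $i$ then shortens the first row by exactly one cell precisely when $i$ closes its block.

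\textbf{Step 3: recovering the blocks.} Once the row of every frozen label is known, $\mathtt{ev}(p)$ is determined as a path. It remains to check that running the $\mathtt{PM}$ algorithm on $\mathtt{ev}(p)$ returns the blocks $\overline{S_m}$. Since the up steps of $\mathtt{ev}(p)$ are $\{N+1-\max S_m\}$, this reduces to a geometric comparison. Reflecting the tableau and reversing the labels sends the ray of slope $a/b$ used in step (3) of the $\mathtt{PM}$ algorithm to the corresponding region in the reversed path. One must then show that the intersection point $(s,t)$ defining $S_m$ maps to the first intersection for the block of $N+1-\max S_m$.

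\textbf{Main obstacle.} The hard step is Step 3 together with the truncation claim in Step 2, because for $a\ge2$ the blocks have variable sizes and are defined through the floor $s'=\lfloor s\rfloor$. I would first treat $(1,k)$, where every block has size $k+1$ and $S_m$ is simply $u_m$ together with the $k$ nearest available second-row labels to its right. That case is a direct generalization of the $(1,1)$ argument. For general $(a,b)$ I would then verify the line-intersection bookkeeping using the coprimality of $a$ and $b$, which guarantees that the ray meets no lattice point before the endpoint, so the floor causes no ambiguity.
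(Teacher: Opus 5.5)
Your proposal has a genuine gap. The two steps that carry the content are asserted rather than proved, and Step 1 is wrong.

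\textbf{Step 1.} The first cell of the first row is the unique minimal element of the poset, and the unfrozen cells always form an order ideal. So the label being removed always sits in that cell, and your ``second-row'' case never occurs. Your conclusion that the frozen label goes to the second row unless that row is exhausted already fails for $(a,b)=(1,1)$, $p=URUR$. After the first slide the unfrozen first row carries $2,3$ and the second row carries $4$. Removing $2$ slides $3$ left, the hole stops in the first row, and that cell is frozen with label $3$ while $4$ is still unfrozen. This is exactly what you predict in Step 2, so Steps 1 and 2 contradict each other.

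\textbf{Step 2.} The real content is the invariant behind this step: before the $i$-th slide, the first-row labels of the unfrozen tableau are exactly the minima of the nonempty truncated blocks $S_m\cap[i,N]$. To propagate it you must show two things about the slide removing $i$:
\begin{enumerate}
\item the unique label moving from the second row into the first is the successor of $i$ in its block of $\mathtt{PM}(p)$;
\item no label moves up exactly when $i=\max S_m$.
\end{enumerate}
This is where the drop rule of the slide must be matched against the slope-$a/b$ ray that defines the blocks. The proposal only asserts it (``removing prefixes commutes with $\mathtt{PM}$''). As phrased it is not even well defined. The unfrozen region is an order ideal, not an $(a,b)$-diagram of smaller size; in the example above it has two first-row cells and one second-row cell. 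It corresponds to a path ending at an interior point, for which $\mathtt{PM}$ is not defined: the ray need not meet the path again, and truncated blocks such as $\{2\}$ are incomplete.

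\textbf{Step 3.} This step is also left open. Step 2 yields only the up-step set of $\mathtt{ev}(p)$, which is Proposition \ref{prop:evP}. The symmetry you invoke is not available: reversing labels and exchanging rows is $\ast$, which produces a $(b,a)$-path, and for $(a,b)\neq(1,1)$ the path $\mathtt{ev}(p)$ is not a reflection of $p$.

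For $(1,k)$ the step closes by uniqueness. A non-crossing partition of $[N]$ into blocks of size $k+1$ is determined by its set of minima: the block of the largest minimum consists of it and the $k$ integers following it, so delete that block and induct. Both $\mathtt{PM}(\mathtt{ev}(p))$ and $\overline{\mathtt{PM}(p)}$ are such partitions with the same minima.

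For general $(a,b)$ the block sizes vary and the minima do not determine the partition. For $(2,3)$, $n=1$, both $\{\{1,4,5\},\{2,3\}\}$ and $\{\{1,5\},\{2,3,4\}\}$ have minima $\{1,2\}$. So a genuine geometric argument is needed. Coprimality does not simplify it, since the ray does hit lattice points of the path before $(bn,an)$: the block $\{5,6,9\}$ in Figure \ref{fig:PM} comes from the intersection at $(5,4)$.

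\textbf{Comparison with the paper.} The paper avoids slides and rays altogether. It writes $\mathtt{ev}$ as the toggle word $t_1(t_2t_1)\cdots(t_{N-1}\cdots t_1)$. It then replaces the tableau toggles by toggles $\tilde t_i$ that swap the labels $i,i+1$ whenever they lie in different blocks. With the blocks behaving like disjoint chains, this word acts as $i\mapsto N+1-i$. The paper states this replacement without detailed justification, so on that route the justification is what you would have to supply.
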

\begin{proof}
Let $P$ be a perfect matching.
We define an action of a toggle $\tilde{t}_{i}$ on $P$ by 
\begin{align}
\tilde{t}_{i}\cdot P=
\begin{cases}
P', & \text{ if $i$ and $i+1$ are not in the same block}, \\
P, & \text{otherwise},
\end{cases}
\end{align}
where $P'$ is a perfect matching obtained from $P$ by exchanging $i$ and $i+1$.
We define $\widetilde{\mathtt{ev}}$ by 
\begin{align}
\widetilde{\mathtt{ev}}:=\tilde{t}_1(\tilde{t}_2\tilde{t}_{1})\ldots (\tilde{t}_{N-1}\ldots \tilde{t}_{2}\tilde{t}_1).
\end{align}
To prove the proposition, it is enough to show that 
$\widetilde{\mathtt{ev}}\cdot P=\overline{P}$.

The action of $\tilde{t}_{r-1}\tilde{t}_{r-2}\ldots \tilde{t}_1$ on $P$ gives a new 
matching $P'$ by replacing $1$ with $r$, $i$ with $i-1$ for $2\le i\le r$.
From this, $\widetilde{\mathtt{ev}}\cdot P$ is a perfect matching obtained from $P$
by replacing $i$ with $N+1-i$. This is nothing but $\overline{P}$.
Therefore, the diagram (\ref{eq:evPM}) is commutative.
\end{proof}

\begin{example}
Let $p$ be a $(2,3)$-Dyck path of size $2$ whose step sequence is $1257$.
Then, the perfect matching of $p$ and the action of the bar operation on it 
are given by 
\begin{align*}
\{\{1,4,10\},\{2,3\},\{5,6,9\},\{7,8\}\}
\xrightarrow{bar}
\{\{1,7,10\},\{2,5,6\},\{3,4\},\{8,9\}\}.
\end{align*}
From proposition \ref{prop:abDyckev},
the evacuation of the path $p$ is given by the path whose step sequence 
is $1238$. Note that this computation is compatible with the example just 
below Proposition \ref{prop:Pevtoggle}.
\end{example}

We study the evacuation on an $(a,b)$-Dyck path $P$ of size $n$.
Let $S(P)$ be the perfect matching of $P$ given by Definition \ref{defn:PM}.
The set $S(P)$ consists of $an$ sets of positive integers. We denote 
by $S_{i}(P)$ such sets for $1\le i\le an$.
We define the set of $an$ positive integers by
\begin{align}
S^{\mathtt{ev}}(P):=\{(a+b)n+1-\max(S_{i}(P)) |  1\le i\le an \}.
\end{align}
We rearrange the elements of $S^{\mathtt{ev}}(P)$ in ascending order, and 
denote it $S^{\mathtt{ev}}_{<}(P)$.
Then, the following proposition is a direct consequence of the definition 
of the evacuation and that of a perfect matching.
\begin{prop}
\label{prop:evP}
Let $S^{\mathtt{ev}}_{<}(P)$ be as above.
The step sequence of the $(a,b)$-Dyck path $\mathtt{ev}(P)$ is given 
by $S^{\mathtt{ev}}_{<}(P)$.
\end{prop}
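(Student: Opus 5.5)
Since the statement is phrased through the perfect matching, the plan is to reduce it to Proposition \ref{prop:abDyckev}. That proposition gives $\mathtt{PM}(\mathtt{ev}(P))=\overline{S(P)}$. The blocks of $\overline{S(P)}$ are the sets $\overline{S_i(P)}:=\{(a+b)n+1-x \mid x\in S_i(P)\}$. Their minima are exactly $(a+b)n+1-\max(S_i(P))$, so these minima form the set $S^{\mathtt{ev}}(P)$. It therefore suffices to prove a structural lemma about $\mathtt{PM}$.

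\emph{Lemma.} For any $Q\in\mathtt{Dyck}_{(a,b)}(n)$, the entries of the step sequence $\mathbf{u}(Q)$ are precisely the minima of the blocks of $S(Q)$.

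Applied to $Q=\mathtt{ev}(P)$, the lemma gives the step sequence as the sorted set of block minima of $\overline{S(P)}$, which is $S^{\mathtt{ev}}_{<}(P)$.

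To prove the lemma, read off the algorithm defining $S(p)$. The block $S_m$ is $\{u_m\}\cup(\mathbf{v}^{\vee}\cap[x(m)+m+1,s'+t])$. Here $x(m)+m$ is the index of the $m$-th up step, i.e.\ $u_m$, so every element of $\mathbf{v}^{\vee}$ placed in $S_m$ is strictly larger than $u_m$. Each block contains exactly one element of $\mathbf{u}(p)$, namely $u_m$, and $u_m$ is its minimum. The blocks are disjoint and cover $[1,(a+b)n]$, because each is removed from $\mathbf{v}^{\vee}$ after it is formed and the algorithm terminates with everything used. Hence the map $S_m\mapsto\min S_m$ is a bijection from the blocks onto $\{u_1,\dots,u_{an}\}$.

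The main point needing care is that $\mathtt{PM}$ is injective, or equivalently that the perfect matching determines the path. Proposition \ref{prop:abDyckev} only identifies $\mathtt{PM}(\mathtt{ev}(P))$, so we need injectivity to pass back to $\mathtt{ev}(P)$ itself. The lemma supplies this, since the path is recovered from its step sequence. We must also check that the covering claim holds, i.e.\ that the algorithm exhausts $\mathbf{v}^{\vee}$. This follows from counting: there are $bn$ right steps, and for the last remaining up step the line of slope $a/b$ reaches the endpoint. An alternative argument avoids covering entirely: the $an$ block minima are pairwise distinct elements of $\mathbf{u}(p)$, which has exactly $an$ elements, so they account for all of $\mathbf{u}(p)$.

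As a consistency check, the example after Proposition \ref{prop:abDyckev} has block maxima $10,3,9,8$, giving $S^{\mathtt{ev}}=\{1,8,2,3\}$. Sorted, this is $1238$, which matches the stated step sequence of the evacuated path.
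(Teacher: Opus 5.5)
Your proposal is correct and is essentially the argument the paper has in mind. The paper calls the statement a direct consequence of the definitions, namely Proposition~\ref{prop:abDyckev} ($\mathtt{PM}\circ\mathtt{ev}=\overline{\mathtt{PM}}$) together with the fact that the up-step positions are exactly the block minima of $\mathtt{PM}$, and you make exactly this explicit. (Your covering remark is unnecessary, and its justification is inaccurate for non-prime paths, where the line from $(0,0)$ stops at the first return to $y=ax/b$ rather than at the endpoint; your alternative counting argument already shows the lemma does not depend on it.)
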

\begin{example}
We consider the $(2,3)$-Dyck path $P$ in Figure \ref{fig:PM}. 
Since the maximal elements of the perfect matching is $\{10,3,9,8\}$, we have 
$S^{\mathtt{ev}}_{<}(P)=(1,2,3,8)$.
Compare the evacuation of $P$ in Example \ref{ex:proeva}. 
\end{example}

To rephrase the dual evacuation on an $(a,b)$-Dyck path in terms of a perfect 
matching, we consider the map $\mathtt{dPM}$.
Given a path $p\in\mathtt{Dyck}_{(a,b)}(n)$, we denote 
$\mathtt{dPM}(p)=\{S_{1},\ldots, S_{bn}\}$ where 
$S_{i}$, $1\le i\le bn$, is a matching block of the perfect matching.
Let $s^{\min}_{i}:=\overline{\min(S_{i})}$ for $1\le i\le bn$ where 
$\overline{h}$ is the bar operation of $h\in[1,(a+b)n]$.
We define $S^{\min}(p):=\{s^{\min}_{i}| 1\le i\le bn\}$.

\begin{prop}
\label{prop:dev}
Let $p\in\mathtt{Dyck}_{(a,b)}(n)$. 
The step sequence $\mathbf{u}(\mathtt{ev}^{\ast}(p))$ of the path $\mathtt{ev}^{\ast}(p)$ is given by 
\begin{align*}
\mathbf{u}(\mathtt{ev}^{\ast}(p))=[1,(a+b)n]\setminus S^{\min}(p).
\end{align*}
\end{prop}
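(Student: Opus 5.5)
The plan is to reduce the statement to Proposition \ref{prop:evP} applied to the $(b,a)$-Dyck path $\ast(p)$, using Lemma \ref{lemma:dprodev}, which gives $\mathtt{ev}^{\ast}=\ast\circ\mathtt{ev}\circ\ast$. Write $N=(a+b)n$ and $q:=\ast(p)\in\mathtt{Dyck}_{(b,a)}(n)$. Then $\mathtt{ev}^{\ast}(p)=\ast(\mathtt{ev}(q))$, and the argument has three steps.

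First, I compute $\mathtt{ev}(q)$. Proposition \ref{prop:evP} holds for arbitrary coprime $(a,b)$, so it applies to $q$ with the roles of $a$ and $b$ exchanged. The perfect matching $\mathtt{PM}(q)$ has $bn$ blocks, say $T_1,\ldots,T_{bn}$. The step sequence of $\mathtt{ev}(q)$ is the increasing rearrangement of
\begin{align*}
\{N+1-\max(T_i)\mid 1\le i\le bn\}.
\end{align*}

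Second, I translate this into the blocks of $\mathtt{dPM}(p)$. By definition $\mathtt{dPM}(p)=\overline{\mathtt{PM}(\ast p)}=\overline{\mathtt{PM}(q)}$, so its blocks are $S_i=\overline{T_i}=\{N+1-t\mid t\in T_i\}$. The bar operation reverses order, so $\min(S_i)=N+1-\max(T_i)$, which means $s_i^{\min}=\overline{\min(S_i)}=\max(T_i)$. Hence the first row of the $(b,a)$-tableau of $\mathtt{ev}(q)$ is $\{\overline{s}\mid s\in S^{\min}(p)\}$, and its second row is the complement of this set in $[1,N]$.

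Third, I apply $\ast$. By definition, $\ast$ exchanges the two rows and replaces each entry $i$ by $N+1-i$. So the first row of $\ast(\mathtt{ev}(q))$ is the bar image of the second row of $\mathtt{ev}(q)$. Since the bar operation is an involution on $[1,N]$ that commutes with taking complements, this equals $[1,N]\setminus S^{\min}(p)$, and this is exactly $\mathbf{u}(\mathtt{ev}^{\ast}(p))$.

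The main obstacle is bookkeeping rather than any real difficulty. I must check that $\ast$ really sends $\mathtt{SYT}_{(a,b)}$ to $\mathtt{SYT}_{(b,a)}$, so that $q$ is a genuine $(b,a)$-Dyck path and Proposition \ref{prop:evP} applies with parameters swapped. I must also confirm that $\mathtt{PM}$ on $(b,a)$-paths, which uses the slope $b/a$, is what appears in $\mathtt{dPM}$. Finally, I need to keep track of which row of the tableau corresponds to the step sequence after $\ast$, since the bar operation is applied twice and the two applications must cancel correctly.
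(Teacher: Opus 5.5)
Your proposal is correct and follows essentially the paper's route. The paper also writes $\mathtt{ev}^{\ast}=\ast\circ\mathtt{ev}\circ\ast$, uses Proposition~\ref{prop:abDyckev} to get $\mathtt{ev}^{\ast}=\ast\circ\mathtt{PM}^{-1}\circ\mathtt{dPM}$, reads the up steps of $\mathtt{ev}(\ast p)$ as the block minima of $\mathtt{dPM}(p)$, and then complements and bars under $\ast$. Your use of Proposition~\ref{prop:evP} for the $(b,a)$-path $\ast(p)$ is the same fact phrased through the block maxima of $\mathtt{PM}(\ast p)$, and your bookkeeping of the two bar operations is right.
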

\begin{proof}
Recall that the operation $\ast$ maps an $(a,b)$-Dyck path to a $(b,a)$-Dyck path.

Since $\mathtt{dPM}=\overline{\mathtt{PM}\circ\ast}$, $\mathtt{ev}^{\ast}=\ast\circ\mathtt{ev}\circ\ast$,
and $\mathtt{PM}\circ\mathtt{ev}=\overline{\mathtt{PM}}$ by Proposition \ref{prop:abDyckev},
we have 
\begin{align*}
\mathtt{ev}^{\ast}&=\ast\circ\mathtt{ev}\circ\ast, \\
&=\ast\circ\mathtt{PM}^{-1}\circ\mathtt{PM}\circ\mathtt{ev}\circ\ast, \\
&=\ast\circ\mathtt{PM}^{-1}\circ\mathtt{dPM}.
\end{align*} 
Since we have $\mathtt{dPM}(p)=\{S_1,\ldots,S_{bn}\}$, the integers in $S_{i}\setminus\{\min(S_{i})\}$
is a down path of the $(b,a)$-Dyck path. 
Then, it is clear that $[1,(a+b)n]\setminus S^{\min}(p)$ gives the step sequence of $\mathtt{ev}^{\ast}(p)$,
which completes the proof.
\end{proof}

Proposition \ref{prop:dev} is summarized as the following commutative diagram:
\begin{align}
\tikzpic{-0.5}{
\node (0) at (0,0){$p$};
\node (1) at (3,0){$\mathtt{dPM}(p)$};
\node (2) at (1.5,-2){$\mathtt{ev}^{\ast}(p)$};
\draw[->,anchor=south] (0) to node {$\mathtt{dPM}$} (1);
\draw[->,anchor=east] (0) to node {$\mathtt{ev}^{\ast}$}(2);
\draw[->,anchor=west] (1) to node {$\ast\circ\mathtt{PM}^{-1}$}(2);
}
\end{align}

\begin{example}
Let $\mathtt{Dyck}_{(2,3)}(2)\ni p$ be a path with the step sequence $1257$.
From Figure \ref{fig:dPM}, the perfect matching $\mathtt{dPM}(p)$ is given by
$\{\{1,10\},\{2,4\},\{3\},\{5,7,9\},\{6\},\{8\}\}$.
From Proposition \ref{prop:dev}, we have $S^{\min}(p)=\{3,5,6,8,9,10\}$ and 
the step sequence $\mathbf{u}(\mathtt{ev}^{\ast}(p))=\{1,2,4,7\}$.
Note that this coincides with an example of the dual evacuation just above 
Lemma \ref{lemma:dprodev}. 
\end{example}

\section{Rowmotion and rowvacuation}
\label{sec:row}
\subsection{General theory}
\label{sec:GTrow}
We briefly review a general theory of rowmotion and rowvacuation following \cite{CamFon95,StrWil12}.

Let $\mathcal{P}$ be a poset of rank $r$.
A poset $\mathcal{P}$ has a rank function 
$\mathtt{rk}:\mathcal{P}\rightarrow\mathbb{Z}_{\ge0}$ which satisfies
\begin{enumerate}
\item $\mathtt{rk}(x)=0$ for all minimum elements $x$; 
\item $\mathtt{rk}(y)=\mathtt{rk}(x)+1$ if $x\lessdot y$;
\item $\mathtt{rk}(x)=r$ for all maximal elements $x$.
\end{enumerate}
For $0\le i\le r$, we define the $i$-th rank of $\mathcal{P}$ 
as $\mathcal{P}_{i}:=\{x\in\mathcal{P}:\mathtt{rk}(x)=i\}$.

We study operations called rowmotion and rowvacuation on a poset 
which have similar properties to the promotion and the evacuation.

An {\it order filter} of $\mathcal{P}$ is a subset $F\subseteq\mathcal{P}$
such that if $x\in F$ and $y\ge x$ in $\mathcal{P}$, then $y\in F$.
We denote by $\mathcal{F}(\mathcal{P})$ the set of order filters of $\mathcal{P}$.
An {\it order ideal} of $\mathcal{P}$ is a subset $I\subseteq\mathcal{P}$ 
such that if $x\in I$ and $y\le x$ in $\mathcal{P}$, then $y\in I$.
We denote by $\mathcal{I}(\mathcal{P})$ the set of order ideals of $\mathcal{P}$.
An {\it antichain} of $\mathcal{P}$ is a subset $A\subseteq\mathcal{P}$ such that 
any two elements in $A$ are incomparable. We denote by $\mathcal{A}(\mathcal{P})$ 
the set of antichains of $\mathcal{P}$.

To define rowmotions, which is defined on $\mathcal{F}(\mathcal{P})$ or equivalently 
on $\mathcal{A}(\mathcal{P})$, 
we introduce three bijections as follows.

\begin{enumerate}
\item
{\it Complementation} $\Theta:2^{\mathcal{P}}\rightarrow2^{\mathcal{P}}$, 
$S\mapsto P\setminus S$.
The bijection $\Theta$ maps order ideals to filter ideals and vice versa.
\item
{\it Up-transfer} $\Delta:\mathcal{I}(\mathcal{P})\rightarrow\mathcal{A}(\mathcal{P})$,
where $\Delta(I)$ denotes the set of maximal elements of $I$.
\item
{\it Down-transfer} $\nabla:\mathcal{F}(\mathcal{P})\rightarrow\mathcal{A}(\mathcal{P})$,
where $\nabla(F)$ denotes the set of minimal elements of $F$. 
\end{enumerate}

The inverses of $\Theta,\Delta$ and $\nabla$ are given by 
$\Theta^{-1}=\Theta$, 
\begin{align*}
\Delta^{-1}(A)&=\{x\in\mathcal{P}: x\le y \text{ for some } y\in A\},\\
\nabla^{-1}(A)&=\{x\in\mathcal{P}: x\ge y \text{ for some } y\in A\},
\end{align*}
where $A\in\mathcal{A}(\mathcal{P})$ is an antichain.

\begin{defn}
We define two rowmotions:
\begin{enumerate}
\item {\it Order filter rowmotion} 
$\delta:=\mathtt{Row}_{\mathcal{F}}:\mathcal{F}(\mathcal{P})\rightarrow\mathcal{F}(\mathcal{P})$
is defined by $\mathtt{Row}_{\mathcal{F}}:=\Theta\circ\Delta^{-1}\circ\nabla$.
\item {\it Antichain rowmotion} 
$\mathtt{Row}_{\mathcal{A}}:\mathcal{A}(\mathcal{P})\rightarrow\mathcal{A}(\mathcal{P})$ 
is defined by $\mathtt{Row}_{\mathcal{A}}:=\nabla\circ\Theta\circ\Delta^{-1}$.
\end{enumerate}
\end{defn}
The two rowmotions are conjugated to one another by $\nabla$.
In this paper, we mainly use the rowmotion $\delta=\mathtt{Row}_{\mathcal{F}}$.

As in the case of the promotion and the evacuation studied in the previous section, 
the rowmotion and rowvacuations are expressed in terms of simple involutions called toggles.

\begin{defn}
Let $p\in\mathcal{P}$. The {\it order filter toggle} at $p$, denoted 
by $t_{p}:\mathcal{F}(\mathcal{P})\rightarrow\mathcal{F}(\mathcal{P})$, 
is defined by 
\begin{align*}
t_{p}(F):=
\begin{cases}
F\cup\{p\}, & \text{ if } p\notin F \text{ and } F\cup\{p\}\in\mathcal{F}(\mathcal{P}), \\
F\setminus\{p\}, & \text{ if } p\in F \text{ and } F\setminus\{p\}\in\mathcal{F}(\mathcal{P}), \\
F, & otherwise.
\end{cases}
\end{align*}
\end{defn}
For $0\le i\le r$, we define 
\begin{align*}
\mathbf{t}_{i}:=\prod_{p\in\mathcal{P}_{i}} t_{p}.
\end{align*}
A toggle $\mathbf{t}_i$ is called the order filter rank toggle.
By definition of toggles, $t_{p}t_q=t_qt_p$ if $p$ and $q$ have the same rank.
It is easy to see that 
\begin{align*}
&\mathbf{t}_{i}^2=\mathrm{id}, \\
&\mathbf{t}_{i}\mathbf{t}_{j}=\mathbf{t}_{j}\mathbf{t}_{i}, \quad |i-j|>1.
\end{align*}

\begin{lemma}
The rowmotion $\delta$ is expressed in terms of order filter toggles:
\begin{gather*}
\delta=\mathbf{t}_0\mathbf{t}_1\ldots \mathbf{t}_{r}.
\end{gather*}
\end{lemma}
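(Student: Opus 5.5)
We have to show that $\mathbf{t}_0\mathbf{t}_1\cdots\mathbf{t}_r$ equals $\Theta\circ\Delta^{-1}\circ\nabla$ on $\mathcal{F}(\mathcal{P})$. The product is applied right to left, so $\mathbf{t}_r$ acts first and $\mathbf{t}_0$ last. We would argue rank by rank, checking for each element $x$ that it lies in $\mathbf{t}_0\cdots\mathbf{t}_r(F)$ exactly when it lies in $\delta(F)$.

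First we unwind $\delta(F)$. The antichain $\nabla(F)$ consists of the minimal elements of $F$. Then $\Delta^{-1}(\nabla(F))$ is the order ideal generated by these minimal elements, and $\delta(F)$ is its complement. Hence
\begin{align*}
x\in\delta(F)\iff x\not\le y \text{ for every minimal element } y \text{ of } F.
\end{align*}
An equivalent recursive form is more convenient. We have $x\in\delta(F)$ if and only if $x$ is not a minimal element of $F$ and every element covered by $x$ lies in $\delta(F)$. Indeed, $x\le y$ with $y$ minimal in $F$ means either $x=y$ or $x<y$. In the second case $y$ is above some element covered by $y$, and that element is $\ge x$; so the condition passes down through covers, and an induction on rank gives the equivalence.

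Next we check that the composite of toggles satisfies the same recursion. The key observations are the following.
\begin{enumerate}
\item Toggles at elements of the same rank commute. Moreover, the toggle $t_x$ depends only on whether $x$ is in the current filter and on the membership of the elements covering $x$ and covered by $x$.
\item When $\mathbf{t}_i$ is applied, the ranks $>i$ have already been toggled, so they hold their new values. The ranks $<i$ still hold their values from $F$.
\item By the filter property and the order filter toggle rule, $t_x$ forces the new value at $x$ as follows. If some element covered by $x$ is in the current filter, then $x$ must be in (that case is \emph{otherwise}, so $x$ stays, and $x\in F$ already). If some element covering $x$ is out, then $x$ must be out. Otherwise $x$ flips.
\end{enumerate}

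We then prove by downward induction on rank that after $\mathbf{t}_i$, the ranks $\ge i$ agree with $\delta(F)$. In fact it is cleaner to establish an auxiliary claim by downward induction: after $\mathbf{t}_i$, an element $x$ of rank $i$ is in the filter if and only if $x$ is not minimal in $F$ and every element covered by $x$ is in $F$. One verifies this case by case.
\begin{enumerate}
\item If some element covered by $x$ is in $F$, then $x\in F$ is not minimal, and the toggle keeps $x$ in. This matches the claim.
\item If no element covered by $x$ is in $F$ and $x\in F$, then $x$ is minimal in $F$. All elements covering $x$ are in $F$, and by induction they stay in. So $x$ flips out, matching the claim.
\item If $x\notin F$, then the covered elements are also outside $F$. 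Here we need that every element covering $x$ is in the new filter. This depends on the upper ranks, which is where the induction is used.
\end{enumerate}

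The main obstacle is the bookkeeping that compares the toggle recursion, which reads upper ranks as new and lower ranks as old, with the description of $\delta(F)$ via ideals. We expect the cleanest route is to check the claimed description of $\delta(F)$ directly against the three cases above, handling the case $x\notin F$ via the cover relations.
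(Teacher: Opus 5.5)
Your strategy is the right one: toggle from $\mathbf{t}_r$ down to $\mathbf{t}_0$, using that when $\mathbf{t}_i$ acts the ranks above $i$ carry new values and the ranks below still carry those of $F$. Your description of a single toggle in item 3 is also correct. The gap is in the description of $\delta(F)$ you try to match, where the covers go the wrong way. Since $\Delta^{-1}(\nabla(F))$ is a down-set, membership in it propagates downward, so the correct recursion is
\[
x\in\delta(F)\iff x\notin\nabla(F)\ \text{and every element covering } x \text{ lies in } \delta(F).
\]
This is also what your chain argument from $x<y\in\nabla(F)$ actually yields.

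The version you state, using elements covered by $x$, always defines a down-closed set, so it cannot describe the filter $\delta(F)$. For the chain $a\lessdot b$ and $F=\{a,b\}$ it gives $\emptyset$, whereas $\delta(F)=\{b\}$. Your auxiliary claim is false as well:
\begin{enumerate}
\item With $F=\emptyset$ in the same chain, $t_b$ adds $b$, and indeed $\delta(\emptyset)=\mathcal{P}$. Your claim predicts that $b$ stays out because $a\notin F$.
\item For $x\in F$ the claim demands that \emph{every} lower cover of $x$ be in $F$, which contradicts your own case 1. Take $a,a'\lessdot b$ and $F=\{a,b\}$: $t_b$ keeps $b$, and $b\in\delta(F)=\{a',b\}$.
\end{enumerate}
So case 3 ($x\notin F$) cannot be closed with the claim as stated. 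It is the only case that really uses the induction, and it is the one you leave open.

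The repair is to use the following invariant: after $\mathbf{t}_r,\ldots,\mathbf{t}_i$ have acted, the current filter agrees with $\delta(F)$ on ranks $\ge i$ and with $F$ on ranks $<i$. Let $x$ have rank $i$.
\begin{enumerate}
\item If $x\in F$, then $t_x$ removes $x$ iff no element covered by $x$ is in the current filter, i.e.\ in $F$, i.e.\ iff $x\in\nabla(F)$. This agrees with $\delta(F)$, since $\nabla(F)\cap\delta(F)=\emptyset$ and $F\setminus\nabla(F)\subseteq\delta(F)$.
\item If $x\notin F$, then $x\notin\nabla(F)$. Here $t_x$ adds $x$ iff every element covering $x$ is in the current filter, i.e.\ (by induction) in $\delta(F)$. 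This is exactly the corrected recursion.
\end{enumerate}
The paper's short proof does not supply this case either. It only follows the removal of $\nabla(F)$ and identifies $\delta(F)$ with $F\setminus\nabla(F)$, which overlooks the elements added outside $F$; this already fails for $F=\emptyset$.
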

\begin{proof}
Let $F\in\mathcal{F}(\mathcal{P})$ be an order filter. 
Suppose that an element $x\in \nabla(F)$ and $\mathtt{rk}(x)=m$. 
Note that if $y\lessdot x$, then $y\notin F$ and $\mathtt{rk}(y)=m-1$.
By the definition of the order filter rowmotion, we have 
$\mathbf{t}_{m+1}\ldots\mathbf{t}_{r}(F)=F$, and 
$\mathbf{t}_{m}(F)=F\setminus \{x\}$.
Further $\mathbf{t}_{0}\ldots\mathbf{t}_{m-1}(F\setminus\{x\})=F\setminus\{x\}$.
On the other hand, the action of $\delta=\Theta\circ\Delta^{-1}\circ\nabla$ on $F$
is given by $F\setminus\nabla(F)$.
From these observation, we have $\delta=\mathbf{t}_0\mathbf{t}_1\ldots \mathbf{t}_{r}$.
\end{proof}

\begin{defn}
We define rowvacuation $\mathtt{Rvac}$ and dual rovacuation $\mathtt{DRvac}$ by  
\begin{gather*}
\mathtt{Rvac}:=(\mathbf{t}_r)(\mathbf{t}_{r-1}\mathbf{t}_{r})\ldots(\mathbf{t}_1\ldots \mathbf{t}_{r})
(\mathbf{t}_0\mathbf{t}_1\ldots \mathbf{t}_{r}), \\
\mathtt{DRvac}:=(\mathbf{t}_0)(\mathbf{t}_{1}\mathbf{t}_{0})\ldots(\mathbf{t}_{r-1}\ldots \mathbf{t}_{0})
(\mathbf{t}_r\mathbf{t}_{r-1}\ldots \mathbf{t}_{0}).
\end{gather*}
\end{defn}

As in the case of promotion and evacuation, the next proposition shows the 
relations among rowmotion and rowvacuations.
One can prove the proposition by using the expressions of $\mathtt{Rvac}$ 
and $\mathtt{DRvac}$ in terms of the order filter rank toggles.
\begin{prop}
\label{prop:Rvac}
Let $\mathcal{P}$ be a graded post of rank $r$.
Then, we have 
\begin{enumerate}
\item $\mathtt{Rvac}^2=\mathtt{DRvac}^2=\mathrm{id}$,
\item $\mathtt{Rvac}\circ\delta=\delta^{-1}\circ\mathtt{Rvac}$,
\item $\mathtt{DRvac}\circ\delta=\delta^{-1}\circ\mathtt{DRvac}$,
\item $\delta^{r+2}=\mathtt{DRvac}\circ\mathtt{Rvac}$.
\end{enumerate}
\end{prop}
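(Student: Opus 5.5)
The plan is to argue purely inside the group generated by the rank toggles $\mathbf{t}_0,\ldots,\mathbf{t}_r$, using only $\mathbf{t}_i^2=\mathrm{id}$ and $\mathbf{t}_i\mathbf{t}_j=\mathbf{t}_j\mathbf{t}_i$ for $|i-j|>1$. These are exactly the relations used for Theorem \ref{thrm:relproev}, with $\mathbf{t}_i$ in place of $t_i$. Indeed, after relabelling $s_j:=\mathbf{t}_{r-j}$ we get $\delta=\mathbf{t}_0\cdots\mathbf{t}_r=s_r s_{r-1}\cdots s_0$, which has the shape of $\partial$ in Proposition \ref{prop:Pevtoggle} on $N=r+2$ generators. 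Likewise $\mathtt{Rvac}$ becomes the word for $\mathtt{ev}$, and $\mathtt{DRvac}$ becomes the word for $\mathtt{ev}^{\ast}$. So one could simply quote Theorem \ref{thrm:relproev}. I would nevertheless write out the word manipulations, since that theorem is only stated for linear extensions.

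Write $\gamma_j:=\mathbf{t}_j\mathbf{t}_{j+1}\cdots\mathbf{t}_r$, so that $\gamma_0=\delta$ and $\mathtt{Rvac}=\gamma_r\gamma_{r-1}\cdots\gamma_0$. Similarly write $\eta_j:=\mathbf{t}_j\mathbf{t}_{j-1}\cdots\mathbf{t}_0$, so that $\mathtt{DRvac}=\eta_0\eta_1\cdots\eta_r$.

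The key lemma is the braid-free commutation identity
\begin{align*}
\mathbf{t}_i\,\gamma_{j}=\gamma_{j}\,\mathbf{t}_{i-1}\qquad (j<i\le r).
\end{align*}
To prove it, move $\mathbf{t}_i$ rightward through $\mathbf{t}_j\cdots\mathbf{t}_{i-2}$, with which it commutes. Then use $\mathbf{t}_i\mathbf{t}_{i-1}\mathbf{t}_i\mathbf{t}_{i+1}\cdots$. Note that no braid relation is available, so the lemma must be applied in the form $\gamma_{j}^{-1}\mathbf{t}_i\gamma_{j}=\mathbf{t}_{i-1}$, checked by writing $\gamma_j^{-1}=\mathbf{t}_r\cdots\mathbf{t}_j$ and cancelling. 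I expect verifying this conjugation identity cleanly to be the main obstacle. If a direct check fails, I would instead prove (2) by induction on $r$. That route uses $\mathtt{Rvac}_r=\mathtt{Rvac}'\cdot\delta$, where $\mathtt{Rvac}'$ is the rowvacuation word on the generators $\mathbf{t}_1,\ldots,\mathbf{t}_r$, together with the inductive relation $\mathtt{Rvac}'\gamma_1=\gamma_1^{-1}\mathtt{Rvac}'$.

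With the key lemma in hand, the four items follow in turn.

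\textbf{Item (1).} Each of $\mathtt{Rvac}$ and $\mathtt{DRvac}$ is a reduced-type word of "longest element" form. I would show the word is a palindrome up to commutations: $\mathtt{Rvac}=\gamma_r\cdots\gamma_0$ equals its reverse, which in turn equals $\mathtt{Rvac}^{-1}$ because each $\mathbf{t}_i$ is an involution. This is proved by induction on $r$ using only the commutation relations.

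\textbf{Item (2).} Compute $\mathtt{Rvac}\,\delta\,\mathtt{Rvac}^{-1}$ by pushing each generator of $\delta=\mathbf{t}_0\cdots\mathbf{t}_r$ through $\mathtt{Rvac}$. By the key lemma and induction on $r$, $\mathbf{t}_i$ is sent to $\mathbf{t}_{r-i}$. Hence $\mathtt{Rvac}\,\delta\,\mathtt{Rvac}=\mathbf{t}_r\cdots\mathbf{t}_0=\delta^{-1}$.

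\textbf{Item (3).} This is the same argument with the order of the rank toggles reversed, using the $\eta_j$.

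\textbf{Item (4).} Observe that $\eta_r=\delta^{-1}$ and $\gamma_0=\delta$. Peel factors off inductively: one shows $\mathtt{DRvac}\circ\mathtt{Rvac}=\delta^{r+2}$ by induction on $r$. The inductive step rewrites $\eta_j\gamma_{j}$-type products, using (2) and (3) to move copies of $\delta$ to one side. The count $r+2$ arises as $N$ in Theorem \ref{thrm:relproev}(3) with $N=r+2$, the number of generators plus one.
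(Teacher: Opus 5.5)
Your opening reduction is correct and is essentially the paper's whole argument. Under $t_i\leftrightarrow\mathbf{t}_{r+1-i}$ with $N=r+2$, the words for $\partial,\mathtt{ev},\mathtt{ev}^{\ast}$ in Proposition~\ref{prop:Pevtoggle} become $\delta,\mathtt{Rvac},\mathtt{DRvac}$. Theorem~\ref{thrm:relproev} follows from $t_i^2=1$ and far commutation alone, so it transfers.

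The problem is the self-contained derivation you say you would actually write: its key lemma is false. Take $j=i-1$ in $\mathbf{t}_i\gamma_{i-1}=\gamma_{i-1}\mathbf{t}_{i-1}$ and cancel the tail $\mathbf{t}_{i+1}\cdots\mathbf{t}_r$, which commutes with $\mathbf{t}_{i-1}$. What remains is the braid relation $\mathbf{t}_i\mathbf{t}_{i-1}\mathbf{t}_i=\mathbf{t}_{i-1}\mathbf{t}_i\mathbf{t}_{i-1}$. Rewriting it as $\gamma_j^{-1}\mathbf{t}_i\gamma_j=\mathbf{t}_{i-1}$ is the same identity.

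It fails for genuine rank toggles. On $\mathcal{B}(3)$ (two rank-$0$ boxes $b_1,b_2$ below one rank-$1$ box $c$):
\begin{itemize}
\item $\mathbf{t}_1$ swaps $\emptyset\leftrightarrow\{c\}$;
\item $\mathbf{t}_0$ swaps $\{c\}\leftrightarrow\{c,b_1,b_2\}$ and $\{c,b_1\}\leftrightarrow\{c,b_2\}$.
\end{itemize}
So $\mathbf{t}_0\mathbf{t}_1$ has order $6$ and $(\mathbf{t}_0\mathbf{t}_1)^3\neq\mathrm{id}$.

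Consequently your step in (2), that conjugation by $\mathtt{Rvac}$ sends $\mathbf{t}_i$ to $\mathbf{t}_{r-i}$, is also false. Here $\mathtt{Rvac}\,\mathbf{t}_0\,\mathtt{Rvac}=(\mathbf{t}_1\mathbf{t}_0)^3\mathbf{t}_1\neq\mathbf{t}_1$. That is the Coxeter identity $w_0s_iw_0=s_{n-i}$, which needs braid relations. Only the whole product $\delta$ is inverted, not its letters one by one. Item (3) inherits this, and item (4) as written ("peel factors, move copies of $\delta$") is not yet an argument.

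Your fallback induction is the right repair; the missing point is where far commutation enters. Let $A$ be the rowvacuation word in $\mathbf{t}_1,\dots,\mathbf{t}_r$ and $B$ the one in $\mathbf{t}_2,\dots,\mathbf{t}_r$. Then $\mathtt{Rvac}=A\delta$, $A=B\gamma_1$, and $B$ commutes with $\mathbf{t}_0$. Since $\delta=\mathbf{t}_0\gamma_1$,
\[
A\delta=B\gamma_1\,\mathbf{t}_0\gamma_1,\qquad \delta^{-1}A=\gamma_1^{-1}\mathbf{t}_0B\gamma_1=\gamma_1^{-1}B\,\mathbf{t}_0\gamma_1 .
\]
So $A\delta=\delta^{-1}A$ is equivalent to $B\gamma_1=\gamma_1^{-1}B$. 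That is the same statement one level down, and it is exactly what your relation $A\gamma_1=\gamma_1^{-1}A$ gives after substituting $A=B\gamma_1$.

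By induction $A\delta=\delta^{-1}A$. Hence $\mathtt{Rvac}\,\delta=A\delta^2=\delta^{-1}\mathtt{Rvac}$, and $\mathtt{Rvac}^2=\delta^{-1}A^2\delta=\mathrm{id}$, using $A^2=\mathrm{id}$ by induction on $r$.

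The symmetry $\mathbf{t}_i\mapsto\mathbf{t}_{r-i}$ of the relations exchanges $\mathtt{Rvac}\leftrightarrow\mathtt{DRvac}$ and $\delta\leftrightarrow\delta^{-1}$. This gives (1) for $\mathtt{DRvac}$ and gives (3).

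For (4), set $\gamma^{(i)}:=\mathbf{t}_0\cdots\mathbf{t}_i$, so that $\mathtt{DRvac}=\mathtt{DRvac}^{-1}=\gamma^{(r)}\gamma^{(r-1)}\cdots\gamma^{(0)}$. In the product $\gamma^{(r)}\cdots\gamma^{(0)}\gamma_r\cdots\gamma_0$, slide $\gamma_r,\gamma_{r-1},\dots,\gamma_1$ leftwards in turn. Each $\gamma_i$ passes $\gamma^{(i-2)},\dots,\gamma^{(0)}$ by far commutation, and then $\gamma^{(i-1)}\gamma_i=\delta$. What remains is $\gamma^{(r)}\delta^{r}\gamma_0=\delta^{r+2}$.
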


\subsection{Rowmotion and Rowvacuation on Dyck paths}
Recall that a Dyck path is a lattice path from $(0,0)$ to $(n,n)$ consisting 
of up and right steps. We write an up (resp. right) step as $U$ (resp. $R$).
There are several unit boxes above $(UR)^{n}$ and below the top path $U^{n}R^{n}$, and 
we denote by $\mathcal{B}(n)$ the set of such unit boxes.
The centers of these boxes in $\mathcal{B}(n)$ lie on the line $y=x+r+1$, $0\le r\le n-2$.
We define the rank function $\mathtt{rk}:\mathcal{B}(n)\rightarrow \mathbb{Z}_{\ge0}$,
$b\mapsto r(b)$ if $b$ is on the line $y=x+r(b)+1$.
Suppose that $b_1,b_2\in\mathcal{B}(n)$. 
The box $b_{2}$ covers $b_{1}$ if $\mathtt{rk}(b_{2})=\mathtt{rk}(b_1)+1$ and 
the two boxes $b_2$ and $b_1$ share an edge.

Given a Dyck path $p$ above $(UR)^{n}$ and below $U^nR^n$, 
the order ideal $I(p)$ is given by the set of the boxes in $\mathcal{B}(n)$ such that 
they are below $p$.
Similarly, the order filter $F(p)$ is given by the set of boxes in $\mathcal{B}(n)$ 
such that they are above $p$.
In this way, we can view the set of boxes $\mathcal{B}(n)$ as a graded poset of rank 
$n-2$.

By applying the general theory in Section \ref{sec:GTrow} to the poset $\mathcal{B}(n)$,
one can consider the action of rowmotion and rowvacuations on Dyck paths.

The next propositions connect the maps $\mathtt{Dyck}_{1}$ and $\mathtt{Dyck}_{2}$
with the rowmotion and rowvacuation on a Dyck path.
\begin{prop}
\label{prop:rmD1}
The rowmotion $\delta$ on a Dyck path is given by $\mathtt{Dyck}_{1}$ on a Dyck path,
{\it i.e.}, $\delta=\mathtt{Dyck}_1$.
\end{prop}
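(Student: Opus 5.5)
The plan is to compare both maps through the antichain of a Dyck path, meaning its set of valleys or peaks, using the description $\delta=\Theta\circ\Delta^{-1}\circ\nabla$.

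First I translate the three ingredients into path language. The order filter $F(p)$ is the set of boxes of $\mathcal{B}(n)$ above $p$. The set $\nabla(F(p))$ of its minimal elements consists of the boxes lying directly above the peaks of $p$ that are not on the lowest path. More precisely, a peak $UR$ of $p$ at a corner $c$ gives the box of $\mathcal{B}(n)$ whose southwest corner, or equivalently whose lower-left relevant corner, is $c$. This requires the peak not to be a ``low'' peak touching $(UR)^n$, and these boundary cases must be handled separately.

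Next I apply $\Delta^{-1}$. The ideal generated by these boxes is the set of boxes below a unique Dyck path $p'$, whose valleys sit exactly at those boxes. So $p'$ has its valleys at the peak positions of $p$. Taking $\Theta$ then gives $\delta(F(p))=F(p')$. In words, $\delta(p)$ is the unique Dyck path whose valleys, taken among those not on the diagonal, are in bijection with the peaks of $p$ lying above height one, placed at the same cells.

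Second, I translate $\mathtt{Dyck}_1=E_v\circ E_p^{-1}$ into the same language. Here the permutation $\omega=E_p^{-1}(p)$ is auxiliary, since by construction $v(\omega)$ depends only on $p(\omega)$. Its peaks are placed at the cells $g$ whose south and east edges form a valley of $p$, with diagonal cells added as needed. So $\mathtt{Dyck}_1$ reads: the peaks of the new path are the valleys of the old one. Comparing with the first step, where valleys of $\delta(p)$ are the peaks of $p$, this matches $\delta^{-1}$ rather than $\delta$ unless the conventions line up. I therefore expect the key check to be the direction. I would recompute on the paper's example $p(\omega)=URU^3R^3UR\mapsto v(\omega)=U^2R^2URU^2R^2$, and on $U^nR^n\mapsto(UR)^n$, against $\delta$ computed from filters. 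If the filter description naturally gives ``peaks of $\delta(p)$ equal valleys of $p$'', I redo the first step accordingly. Indeed, $\nabla(F)$ consists of the boxes just above the valleys-to-be, so I should recheck carefully which corner of $p$ each minimal box of $F(p)$ touches: the minimal boxes of the region above $p$ sit in the valleys of $p$. Then $\Delta^{-1}$ makes them the maximal boxes of an ideal, so they become the peaks of the new path. This gives exactly ``peaks of $\delta(p)$ are the valleys of $p$'', which is the rule defining $v(\omega)$.

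Finally I match the degenerate cases. When $p=U^nR^n$ the filter is empty, so $\delta$ gives the full filter, namely $(UR)^n$, in agreement with the convention $v=(UR)^n$. The other degenerate case is peaks of the new path that lie on the diagonal, corresponding to the ``use cells on the main diagonal if necessary'' clause. These are exactly the rank-zero positions not covered by $\Delta^{-1}$, so a Dyck path is determined by its set of non-diagonal peaks. The main obstacle is this bookkeeping: matching the cell $g$ attached to a valley with the box of $\mathcal{B}(n)$ and its rank, and verifying that the main-diagonal completion in the definition of $v(\omega)$ coincides with what $\Theta\circ\Delta^{-1}$ produces.
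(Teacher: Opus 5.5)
Your final argument is correct and is the same valley-antichain argument the paper uses. You show that the minimal boxes of $F(p)$ are exactly the cells sitting in the valleys of $p$, that $\Theta\circ\Delta^{-1}$ turns them into the boxes under those peaks of $\delta(p)$ which do not sit over a diagonal cell, and that a Dyck path is determined by those peaks (with $U^nR^n\mapsto(UR)^n$ checked separately). Your version is in fact more careful than the paper's one-line $\delta(p)=F(p)\setminus A(p)$, which is not literally right: for $p=U^nR^n$ it gives $F\setminus A=\emptyset$, i.e.\ fixes $p$, instead of $(UR)^n$.

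When you write it up, delete your first two paragraphs. They place $\nabla(F(p))$ over the peaks of $p$ and the generated ideal's boxes at the valleys of $p'$, and, as you yourself realized midway, both of those identifications are backwards.
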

\begin{proof}
Let $p$ be a Dyck path. We define the order filter $F(p)$ as the unit boxes 
above $p$ in the Rothe diagram.
Then, the antichain $A(p)$ is given by the set of unit boxes which are just above 
the valleys of $p$. 
The rowmotion $\delta$ acts on $p$ as $\delta(p)=F(p)\setminus A(p)$.
It is clear from the definition of the map $\mathtt{Dyck}_1$ that $\delta=\mathtt{Dyck}_1$.
\end{proof}

The next lemma connects the rowmotion and the evacuation on the Dyck paths of size $n$.
\begin{lemma}
\label{lemma:rmev}
We have $\delta^{n}=\mathtt{ev}$.
\end{lemma}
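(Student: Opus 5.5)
Since $\mathcal{B}(n)$ is graded of rank $r=n-2$, Proposition \ref{prop:Rvac}(4) gives
\begin{align*}
\delta^{n}=\delta^{r+2}=\mathtt{DRvac}\circ\mathtt{Rvac}.
\end{align*}
By Proposition \ref{prop:DyckPMev} we have $\mathtt{ev}=\ast$. On Dyck words, $\ast$ reverses the word and exchanges $U$ and $R$, so it is the reflection of the path in the anti-diagonal. It therefore suffices to prove $\mathtt{DRvac}\circ\mathtt{Rvac}=\ast$, and I would set this up in three steps.

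First, let $\sigma$ be the reflection of $\mathcal{B}(n)$ in the anti-diagonal of the $n\times n$ square. This is a poset automorphism that preserves rank, and it satisfies $F(\ast p)=\sigma(F(p))$. Consequently $\sigma$ conjugates each order filter toggle $t_b$ to $t_{\sigma(b)}$, and each rank toggle $\mathbf{t}_i$ to itself. Hence $\ast$ commutes with $\delta$, $\mathtt{Rvac}$ and $\mathtt{DRvac}$. This does not yet give the identity, but it shows that $\ast$ lies in the centralizer of the toggle group generated by the $\mathbf{t}_i$.

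Second, I would identify $\mathtt{Rvac}$ explicitly. The natural candidate is the Lalanne--Kreweras involution $\mathtt{Dyck}_2$, as in \cite{HopJos22,Pan09}. I would prove $\mathtt{Rvac}=\mathtt{Dyck}_2$ by induction on $n$, using the recursions (\ref{eq:Dyck2w1}) and (\ref{eq:Dyck2w2}) from the proof of Lemma \ref{lemma:D2inv}. Writing $\mathtt{Rvac}=\mathtt{Rvac}'\circ(\mathbf{t}_0\cdots\mathbf{t}_r)$, where $\mathtt{Rvac}'$ is the rowvacuation of the subposet of ranks $1,\dots,r$, the first factor is $\delta=\mathtt{Dyck}_1$ by Proposition \ref{prop:rmD1}. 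The ranks $\ge1$ of $\mathcal{B}(n)$ form a copy of $\mathcal{B}(n-1)$, so the remaining factor acts as the rowvacuation for size $n-1$, to which the induction hypothesis applies.

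Third, I would show $\mathtt{DRvac}=\ast\circ\mathtt{Dyck}_2$. The toggle words for $\mathtt{DRvac}$ and $\mathtt{Rvac}$ differ by the reversal $\mathbf{t}_i\leftrightarrow\mathbf{t}_{r-i}$ of rank order. I would realize this reversal through the correspondence $E_p,E_q,E_w$ with $321$-avoiding permutations, which exchanges marks above and below the diagonal and thereby reverses the roles of peaks and valleys. Once both identifications hold, $\mathtt{DRvac}\circ\mathtt{Rvac}=\ast\circ\mathtt{Dyck}_2^2=\ast$ by Lemma \ref{lemma:D2inv}, and hence $\delta^{n}=\mathtt{ev}$.

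I expect the third step to be the main obstacle, because no rank-reversing automorphism of $\mathcal{B}(n)$ is available to transport $\mathtt{Rvac}$ to $\mathtt{DRvac}$. If that step resists, I would instead prove $\delta^{n}=\ast$ directly by tracking the antichain $A(p)$ of valley boxes under $\mathtt{Dyck}_1$. The aim would be to show that each valley of $p$ at diagonal position $(i,j)$ becomes, after exactly $n$ applications, a valley of $\ast p$ at $(n-j,n-i)$. I would first verify this for prime paths, where $\delta$ shifts the boxes one rank down in a controlled way, and then extend it to a concatenation of prime paths by induction on the number of returns to the diagonal.
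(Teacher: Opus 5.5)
The reduction $\delta^{n}=\delta^{r+2}=\mathtt{DRvac}\circ\mathtt{Rvac}$ and your Step 1 are correct. Step 1 in fact justifies the commutation $\ast\circ\delta=\delta\circ\ast$, which the paper simply calls obvious. The gap is that the argument never reaches the actual content of the lemma.

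Since $\mathtt{DRvac}=\delta^{r+2}\circ\mathtt{Rvac}^{-1}$ is a formal identity in the toggle group, your Step 3 claim $\mathtt{DRvac}=\ast\circ\mathtt{Dyck}_2$ is, once Step 2 is known, literally equivalent to $\delta^{n}=\ast$. So the split into Steps 2 and 3 reduces nothing, and Step 2 is logically idle. You also cannot take Step 3 from Proposition \ref{prop:rv} (whose first half is your Step 2), because the paper deduces $\mathtt{DRvac}=\mathtt{ev}\circ\mathtt{Dyck}_2$ there from this very lemma. Your proposed mechanism, reversing the rank order through $E_p,E_q,E_w$, is not an argument. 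As you note yourself, no rank-reversing automorphism of $\mathcal{B}(n)$ exists to do the job.

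Separately, your induction for Step 2 is inaccurate. While $\mathtt{Rvac}'$ runs, the rank-$0$ part of the filter (the returns of $\delta(p)$ to the diagonal) is frozen and blocks removals at rank $1$. So $\mathtt{Rvac}'$ acts as the rowvacuation of $\mathcal{B}(n-1)$ only when $\delta(p)$ is prime; otherwise it acts separately on the prime blocks. For example, take $n=3$: $\delta(U^3R^3)=(UR)^3$ has filter all of $\mathcal{B}(3)$, and $\mathbf{t}_1$ fixes it, so $\mathtt{Rvac}(U^3R^3)=(UR)^3$. Your description would instead remove the top box and leave a set that is not an order filter.

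The fallback fails for a related reason: $\delta$ does not respect the decomposition into prime paths. Already $\delta(U^nR^n)=(UR)^n$, i.e.\ $\emptyset\mapsto\mathcal{B}(n)$, so the number of returns changes along a $\delta$-orbit and an induction on returns cannot be carried through. Describing $\delta$ on prime paths as a one-rank shift is also wrong: the top path is prime and is sent to the bottom path.

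The idea you are missing is the one the paper uses: transport rowmotion to promotion.
\begin{itemize}
\item Proposition \ref{prop:cd1} gives $\delta=\widehat{RSK}^{-1}\circ\partial^{-1}\circ\widehat{RSK}$.
\item On Dyck paths, $\partial^{2n}=\mathtt{ev}^{\ast}\circ\mathtt{ev}=\ast^{2}=\mathrm{id}$ by Theorem \ref{thrm:relproev} and Proposition \ref{prop:DyckPMev}. Equivalently, $\partial$ rotates the non-crossing matching by one of $2n$ positions.
\item Hence $\delta^{2n}=\mathrm{id}$, and $\mathtt{ev}\circ\delta^{-n}$ is an involution commuting with $\delta$. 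The paper then normalizes it by evaluating at $U^nR^n$.
\end{itemize}
Be aware that this last check only controls that involution on the $\delta$-orbit of $U^nR^n$. What remains to be proved along this route is $\widehat{RSK}\circ\ast=\partial^{n}\circ\widehat{RSK}$: the anti-diagonal reflection must correspond under $\widehat{RSK}$ to the half-turn of the non-crossing matching.
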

The proof of Lemma \ref{lemma:rmev} will be given 
in Section \ref{sec:MatRSK} as Corollary \ref{cor:rmev}.

\begin{prop}
\label{prop:rv}
The rowvacuation $\mathtt{Rvac}$ on a Dyck path is given by $\mathtt{Dyck}_2$ on a Dyck path,
{\it i.e.,} $\mathtt{Rvac}=\mathtt{Dyck}_2$.
Similarly, the dual rowvacuation $\mathtt{DRvac}$ on a Dyck path is given by 
$\mathtt{DRvac}=\mathtt{ev}\circ\mathtt{Dyck}_2$.
\end{prop}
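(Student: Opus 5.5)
The plan is to prove $\mathtt{Rvac}=\mathtt{Dyck}_2$ by induction on $n$, using the toggle factorization of rowvacuation together with the recursive structure of $\mathtt{Dyck}_2$ from the proof of Lemma \ref{lemma:D2inv}. Then I will deduce the statement for $\mathtt{DRvac}$ formally from Proposition \ref{prop:Rvac} and Lemma \ref{lemma:rmev}.

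For the first claim, write
\begin{align*}
\mathtt{Rvac}=(\mathbf{t}_r)(\mathbf{t}_{r-1}\mathbf{t}_r)\cdots(\mathbf{t}_1\cdots\mathbf{t}_r)\,\delta
\end{align*}
with $r=n-2$. The factor $(\mathbf{t}_r)(\mathbf{t}_{r-1}\mathbf{t}_r)\cdots(\mathbf{t}_1\cdots\mathbf{t}_r)$ is the rowvacuation of the subposet of $\mathcal{B}(n)$ of rank $\ge1$. This subposet is isomorphic to $\mathcal{B}(n-1)$, with the rank shifted by one. So $\mathtt{Rvac}_n=\mathtt{Rvac}_{n-1}'\circ\delta_n$, where $\mathtt{Rvac}_{n-1}'$ acts on the part of the path strictly above the lowest diagonal layer. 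Using Proposition \ref{prop:rmD1}, I identify $\delta_n$ with $\mathtt{Dyck}_1$, which is the valley-to-peak map in the Rothe diagram. By the induction hypothesis, $\mathtt{Rvac}_{n-1}'$ is $\mathtt{Dyck}_2$ on the truncated path.

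It then remains to check the following identity of Dyck paths:
\begin{align*}
\mathtt{Dyck}_2=\mathtt{Dyck}_2'\circ\mathtt{Dyck}_1 .
\end{align*}
I will work in the language of $321$-avoiding permutations. Write $p=p(\omega)$, so that $\mathtt{Dyck}_1(p)$ has its peaks at the valleys of $p$. Removing the bottom layer of rank-$0$ boxes corresponds to deleting the diagonal and standardizing. I then expect the recursions (\ref{eq:Dyck2w1}) and (\ref{eq:Dyck2w2}) to match the two sides: the marks below the diagonal, which determine $q(\omega)$, should be recoverable from the valley/peak data after one application of $\delta$. An alternative, if this bookkeeping becomes messy, is to verify the identity through its characterizing property instead. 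By Proposition \ref{prop:Rvac}, $\mathtt{Rvac}$ is the unique involution obtained by this staircase toggle product, so it would suffice to check directly that $\mathtt{Dyck}_2$ conjugates $\delta$ to $\delta^{-1}$ and agrees on a generating case. That criterion alone does not determine the map, however, so the induction is the primary route.

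For the second claim, I combine three facts: $\delta^{n}=\mathtt{DRvac}\circ\mathtt{Rvac}$ (Proposition \ref{prop:Rvac}(4), with $r+2=n$), $\delta^n=\mathtt{ev}$ (Lemma \ref{lemma:rmev}), and $\mathtt{Rvac}^2=\mathrm{id}$. Together they give
\begin{align*}
\mathtt{DRvac}=\mathtt{ev}\circ\mathtt{Rvac}=\mathtt{ev}\circ\mathtt{Dyck}_2 .
\end{align*}

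The main obstacle is the combinatorial identity $\mathtt{Dyck}_2=\mathtt{Dyck}_2'\circ\mathtt{Dyck}_1$: tracking how the marks below the diagonal transform under the truncation. I will first test it on $n=3,4$ to fix the correct identification of $\mathcal{B}(n)_{\ge1}$ with $\mathcal{B}(n-1)$.
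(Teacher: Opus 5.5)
Your derivation of $\mathtt{DRvac}=\mathtt{ev}\circ\mathtt{Dyck}_2$ is the paper's own argument and is fine: it uses Proposition~\ref{prop:Rvac}(4) with $r+2=n$, Lemma~\ref{lemma:rmev}, and $\mathtt{Rvac}^2=\mathrm{id}$. The first claim, however, is only reduced, not proved, and the reduction itself is misstated. After $\delta$, the toggles $\mathbf{t}_1,\dots,\mathbf{t}_r$ do not act as the rowvacuation of $\mathcal{B}(n-1)$ on the truncated path. They never change the rank-$0$ part of $\delta(F)$, and a box of rank $\ge 1$ lying above a rank-$0$ box of the filter can never be toggled out. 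The rank-$0$ boxes of $\delta(F)$ are exactly the interior returns of $\delta(P)$ to the diagonal. So what the factorization actually gives is this: if $\delta(P)=(UQ_1R)\cdots(UQ_mR)$ is the decomposition into prime pieces, then $\mathtt{Rvac}(P)=(U\,\mathtt{Rvac}(Q_1)\,R)\cdots(U\,\mathtt{Rvac}(Q_m)\,R)$. The naive version already fails for $n=3$ and $P=URU^2R^2$, where $\delta(P)=U^2R^2UR$. Toggling as in $\mathcal{B}(2)$ would remove the unique rank-$1$ box while keeping the rank-$0$ box at the return $(2,2)$, which is not an order filter. The true value is $\mathtt{Rvac}(P)=U^2R^2UR$.

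More seriously, the identity $\mathtt{Dyck}_2=\mathtt{Dyck}_2'\circ\mathtt{Dyck}_1$ is not bookkeeping. In its corrected form it reads $\mathtt{Dyck}_2(P)=(U\,\mathtt{Dyck}_2(Q_1)\,R)\cdots(U\,\mathtt{Dyck}_2(Q_m)\,R)$. Given the toggle factorization and the induction hypothesis, it is equivalent to the proposition itself. You offer no argument for it beyond the expectation that (\ref{eq:Dyck2w1}) and (\ref{eq:Dyck2w2}) will match. Those recursions delete the first letter of $\omega$ or the letter $1$, which has no evident relation to removing the bottom rank of $\mathcal{B}(n)$ or to the prime decomposition of $\delta(P)$. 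At a minimum you would have to show two things. First, $\mathtt{Dyck}_2(P)$ returns to the diagonal at $(i,i)$ exactly when no valley $(x,y)$ of $P$ satisfies $x\le i\le y$; these are precisely the returns of $\delta(P)$. Second, inside each block the sub-diagonal marks of $\omega$ match those of the permutation attached to $Q_j$.

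Your remark that the conjugation criterion does not determine the map is correct, and the paper's proof takes exactly that route. It proves $\mathtt{Dyck}_2\circ\delta=\delta^{-1}\circ\mathtt{Dyck}_2$ via the insertion recursions, asserts that $\mathtt{Dyck}_2\circ\mathtt{Rvac}$ must then be a power of $\delta$, and fixes the exponent at $U^nR^n$. Your objection applies there too: a map commuting with $\delta$ can act by different powers on different orbits, and for $n=3$ the $\delta$-orbits have sizes $3$ and $2$. So the missing step cannot be imported from the paper; you have to supply it by actually carrying out your induction.
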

\begin{proof}
We first show $\mathtt{Rvac}=\mathtt{Dyck}_2$.
Let $P$ be a Dyck path and $R(P)$ be the Rothe diagram for $P$.
We denote by $\mathrm{Val}(P)$ the set of cells $c$ in $R(P)$ such that 
the south and east edges of $c$ are a valley of $P$.
We consider the action of $\delta$ on $P$.
The $321$-avoiding permutation $w$ for $\delta(P)$ can be obtained by the 
following procedures on the Rothe diagram $R(P)$. 
Recall that the diagram $R(P)$ has $n$ marks corresponding to the $321$-avoiding
permutation for $P$.
We enumerate 
the columns in $R(P)$ by $1,2,\ldots,n$ from left to right.
\begin{enumerate}
\item Set $k=n$, and $D=R(P)$.
\item 
\begin{enumerate}
\item
If there is no cell $c$ in $\mathrm{Val}(P)$ such that $c$ is in the $k$-th column in $D$.
Then, go to (3).
\item
Suppose that the $k$-th column in $D$ has a mark at $c_1$, and a cell $c$ in $\mathrm{Val}(P)$.
We delete the mark in $c_k$, and put a mark on $c$. We freeze the mark on $c$.
Let $S(c_k)$ be the set of unfrozen marks in $D$ which are on the cells above and right to $c_k$.
We change the positions of the unfrozen marks in $S(c_k)$ such that they give a $321$-avoiding permutation.
\end{enumerate}
\item We decrease $k$ by one, and replace $D$ by a new diagram with marks. 
If $k=1$, then go to (4). Otherwise, go to (2).
\item 
\begin{enumerate}
\item
If the first column of $D$ has a cell $c$ in $\mathrm{Val}(P)$.
We apply (2b) to the first column and we obtain a $321$-avoiding permutation $w$.
\item The first column of $D$ does not have a cell $c$ in $\mathrm{Val}(P)$, and  
has a mark at the cell $c_{1}$.
Then, we move the mark from $c_1$ to the bottom cell in the first column.
We change the positions of unfrozen marks in $D$ which are in the cells below and right to $c_1$ 
such that they give a $321$-avoiding permutation $w$.
\end{enumerate}
\end{enumerate}
By construction, it is easy to see that the procedures above give the $321$-avoiding 
permutation for $\delta(P)$.

We first show that $\mathtt{Dyck}_2\circ\delta=\delta^{-1}\circ\mathtt{Dyck}_2$.
We use the notation $w\xleftarrow{p}q$ as in the proof of Lemma \ref{lemma:D2inv}.
We prove the claim by induction on the size of $w$. If the size of $w$ is two, 
it is straightforward to show the claim. We assume that the size of $w$ is larger than two.
We write $w$ as $w=w'\xleftarrow{k}1$.
We have three cases: 1) $k\ge3$, 2) $k=2$, and 3) $k=1$.

Case 1). We have $k\ge3$ and $w=w'\xleftarrow{k}1$.
Let $w_1,\ldots,w_{k-1}$ be the first $k-1$ integers in $w$. These $k-1$ integers 
are larger than $1$ and  increasing 
since $w=w'\xleftarrow{k}1$ and $w$ is a $321$-avoiding permutation.
Recall that $\delta$ is equivalent to $\mathtt{Dyck}_1$ by Proposition \ref{prop:rmD1}.
From this, we have $\delta(w)=\delta(w')\xleftarrow{k-1}1$.
By the definition of $\mathtt{Dyck}_2$, we have 
$\mathtt{Dyck}_2(\delta(w))=\mathtt{Dyck}_2(\delta(w'))\xleftarrow{1}k-2$.
On the other hand, we have $\mathtt{Dyck}_2(w)=\mathtt{Dyck}_2(w')\xleftarrow{1}k-1$.
From $\delta=\mathtt{Dyck}_1$, we have 
$\delta^{-1}(\mathtt{Dyck}_2(w))=\delta^{-1}(\mathtt{Dyck}_2(w'))\xleftarrow{1}k-2$.
From these and induction hypothesis, we have $\mathtt{Dyck}_2\circ\delta=\delta^{-1}\circ\mathtt{Dyck}_2$ if 
$k\ge3$.

Case 2). We have $w=w'\xleftarrow{2}1$. Then, we have $\mathtt{Dyck}_2(w)=\mathtt{Dyck}_2(w')\xleftarrow{1}1$, 
and $\delta^{-1}(\mathtt{Dyck}_{2}(w))=\delta^{-1}(\mathtt{Dyck}_2(w'))\xleftarrow{2}1$.
Similarly, we have $\delta(w)=\delta(w')\xleftarrow{1}1$, and 
$\mathtt{Dyck}_2(\delta(w))=\mathtt{Dyck}_2(\delta(w'))\xleftarrow{2}1$. 
By induction assumption, the claim holds.

Case 3). We have $w=w'\xleftarrow{1}1$.
We have $\mathtt{Dyck}_2(w)=\mathtt{Dyck}_2(w')\xleftarrow{2}1$, and 
$\delta^{-1}(\mathtt{Dyck}_2(w))=\delta^{-1}(\mathtt{Dyck}_2(w'))\xleftarrow{3}1$.
Similarly, we have $\delta(w)=\delta(w')\xleftarrow{1}2$, and 
$\mathtt{Dyck}_2(\delta(w))=\mathtt{Dyck}_2(\delta(w'))\xleftarrow{3}1$.
By induction assumption, the claim holds.

Therefore, we have $\mathtt{Dyck}_2\circ\delta=\delta^{-1}\circ\mathtt{Dyck}_2$.
Since the map $\mathtt{Dyck}_2$ is an involution, $\mathtt{Dyck}_2$ can be written 
as $\mathtt{Dyck}_2=C\circ\mathtt{Rvac}$.
Here, $C$ satisfies 
\begin{align*}
C\circ\delta=\delta\circ C, \qquad C\circ\mathtt{Rvac}=\mathtt{Rvac}\circ C^{-1}.
\end{align*}
This implies that $C=\delta^{X}$ with some $X$.
	
We will show that $X=0$. To see this, consider the top Dyck path $U^{n}R^{n}$.
By a simple calculation, we have $\mathtt{Dyck}_{2}(U^{n}R^{n})=(UR)^{n}$, 
and $\mathtt{Rvac}(U^{n}R^{n})=(UR)^{n}$.
Therefore, we have $\mathtt{Dyck}_2=\mathtt{Rvac}$ on $U^{n}R^{n}$,
which implies $X=0$, and we have $\mathtt{Dyck}_2=\mathtt{Rvac}$.

We will show that $\mathtt{DRvac}=\mathtt{ev}\circ\mathtt{Rvac}$.
From Proposition \ref{prop:Rvac}, we have $\mathtt{DRvac}=\delta^{r+2}\circ\mathtt{Rvac}$.
Since $r=n-2$, we have $\mathtt{DRvac}=\delta^{n}\circ\mathtt{Rvac}=\mathtt{ev}\circ\mathtt{Rvac}$
by Lemma \ref{lemma:rmev}, which completes the proof.
\end{proof}

\section{Matching map and RSK correspondence}
\label{sec:MatRSK}
\subsection{Matching map}
\label{sec:defMatch}
We introduce a map on Dyck paths which we call {\it matching map} 
following \cite{ArmStuTho13}.
Let $P$ be a Dyck path of size $n$.  A valley of $P$ is a pattern $RU$ in $P$.
We introduce a bared integer $\bar{i}=2n+1-i$ for $i\in[1,2n]$.

We define an integer sequence $K(P):=(k_1,\ldots,k_{n})$ which consists of 
integers with or without a bar.
Suppose we have a valley of $P$ in the $i$-th row from top and $j$-th 
column from left. We define $v_{i}=j$ if we have a valley at the $i$-th row
and $v_{i}=\emptyset$ otherwise.
The entry $k_{i}$ is given by
\begin{align}
k_{i}:=
\begin{cases}
n-v_{i}+1, & \text{ if } v_{i}\neq\emptyset, \\
\bar{i}, & \text{ if } v_{i}=\emptyset.
\end{cases}
\end{align}

Since $K(P)$ consists of $n$ entries, we construct a chord diagram of size $n$
from $K(P)$.

\begin{enumerate}
\item Set $i=1$ and $I=[1,2n]$.
\item Recall that $k_i$ is an integer with or without a bar.
Pick a maximal (resp. minimal) integer $l_{i}$ in $I$ which is smaller (resp. larger) 
than $k_{i}$ if $k_i$ is an integer without (resp. with) a bar.
\item 
The algorithm stops when $i=n$. 
Otherwise, replace $I$ by $I\setminus\{k_i,l_{i}\}$ and increase $i$ by one.
Then, go to (2). 
\end{enumerate}
The pairs of two integers $(k_i,l_{i})$ $1\le i\le n$, define 
$n$ arches of a chord diagram.
We denote by $P'$ the Dyck path obtained from $K(P)$.

\begin{defn}
We define the matching map $\mathtt{Mat}:\mathtt{Dyck}(n)\rightarrow\mathtt{Dyck}(n)$, 
$P\mapsto P'$.
\end{defn}

\begin{example}
Let $P=URUURURRUR$ be a Dyck path of size $5$. We have valleys of $P$ in the first, second,
and fourth rows. From this, we have 
$K(P)=(2,4,\bar{3},5,\bar{5})$. Since $\bar{3}=8$ and $\bar{5}=6$, 
we have five arches: $(1,2),(3,4),(8,9),(5,10)$ and $(6,7)$ in the chord diagram 
corresponding to $\mathtt{Mat}(P)$.
From these, $\mathtt{Mat}(P)=URURUURURR$. 
\end{example}

The inverse of the map $\mathtt{Mat}$ is explicitly given in Section \ref{sec:Matk} 
for a $k$-Dyck path with $k\ge1$.

\subsection{Crossing perfect matchings and RSK correspondence}
\label{sec:cPMRSK}
Let $w=w_1\ldots w_n$ be a $321$-avoiding permutation.
We associate $w$ with a perfect matching with crosses and nests.
We define $n$ pairs $p_i$, $1\le i\le n$, of integers in $[1,2n]$ by 
$p_{i}:=(w_{n+1-i},n+i)$.
We put $2n$ labeled points $1,2,\ldots,2n$ in line from left to right.
If $k,l\in p_{i}$ for some $i$, we connect the two points $k$ and $l$
by an arch.
We denote by $\pi'(w)$ the perfect matching obtained as above.

Given a perfect matching $\pi'$, we obtain a non-crossing perfect matching
by resolving crosses by 
\begin{align*}
\tikzpic{-0.5}{
\draw(0,0)node[anchor=north]{$i$}--(1,1)(1,0)node[anchor=north]{$i+1$}--(0,1);
}
\longrightarrow
\tikzpic{-0.5}{
\draw(0,0)node[anchor=north]{$i$}..controls(0.4,0.2)and(0.4,0.8)..(0,1)
(1,0)node[anchor=north]{$i+1$}..controls(0.6,0.2)and(0.6,0.8)..(1,1);
},
\end{align*}
where $(i,j_1)$ and $(j_2,i+1)$ are arches satisfying $j_1>i+1$ and $j_2<i$.
We resolve all the crosses in $\pi'$, and obtain a non-crossing perfect matching
$\pi$.

\begin{defn}
We define a map $\mathtt{PM}^{\times}$ from a $321$-avoiding permutation to a non-crossing perfect matching 
by the composition of maps: $\mathtt{PM}^{\times}: w\mapsto \pi'(w)\mapsto \pi$.
\end{defn}

\begin{example}
\label{ex:31425}
Let $w=31425$. Then we have five pairs $\{(5,6),(2,7),(4,8),(1,9),(3,10)\}$.
The corresponding perfect matching and the non-crossing perfect matching for $w$ are 
given as follows:
\begin{align*}
\tikzpic{-0.5}{[yscale=0.3,xscale=0.5]
\draw(5,0)..controls(5,1)and(6,1)..(6,0)
(2,0)..controls(2,4)and(7,4)..(7,0)(4,0)..controls(4,3.2)and(8,3.2)..(8,0)
(1,0)..controls(1,6.4)and(9,6.4)..(9,0)(3,0)..controls(3,6)and(10,6)..(10,0);
\foreach \x in {1,2,3,4,5,6,7,8,9,10} \draw(\x,0)node[anchor=north]{$\x$};
}
\longrightarrow
\tikzpic{-0.5}{[xscale=0.5,yscale=0.3]
\draw(1,0)..controls(1,3)and(4,3)..(4,0)
(2,0)..controls(2,1.2)and(3,1.2)..(3,0)
(5,0)..controls(5,1.2)and(6,1.2)..(6,0)
(7,0)..controls(7,1.2)and(8,1.2)..(8,0)(9,0)..controls(9,1.2)and(10,1.2)..(10,0);
\foreach \x in {1,2,...,10} \draw(\x,0)node[anchor=north]{$\x$};
}
\end{align*}
\end{example}

The map $\mathtt{PM}^{\times}$ is equivalent to the following operation on a Rothe diagram.
Let $w=w_1\ldots w_n$ be a permutation in one-line notation.
We consider an $n$-by-$n$ square and put a cross $\times$ at $w_{n+1-i}$-th row from bottom 
in the $i$-th column for $1\le i\le n$.
The set $D(w)$ of unit boxes in a Rothe diagram for $w$ is a set of unit boxes $c$ in the Rothe diagram 
such that there is no cross either above or to the left of $c$. 
We replace a unit box in the set $D(w)$ by the following tile:
\begin{align}
\label{eq:tile}
\tikzpic{-0.5}{
\draw(0,0)--(0,1)--(1,1)--(1,0)--(0,0);
\draw[red,thick](0.5,0)..controls(0.5,0.2)and(0.2,0.5)..(0,0.5);
\draw[red,thick](0.5,1)..controls(0.5,0.8)and(0.8,0.5)..(1,0.5);
}
\end{align}
For the remaining cells $c$, we put a horizontal (resp. vertical) line if 
$c$ has a cross right to (resp. below) it.
We number the left border of a Rothe diagram from the bottom by $1$ to $n$, and the top border 
from left by $n+1$ to $2n$.
In this way, we have a diagram with paths connecting $i$ and $j$ such that $1\le i<j\le 2n$.
Note that the number of crosses in a perfect matching is the same as the number of cells
in the set $D(w)$.

\begin{example}
Let $w=31425$. We have the following Rothe diagram:
\begin{align}
\tikzpic{-0.5}{[scale=0.6]
\draw(0,0)grid(5,5);
\foreach \x/\y in {1/2,1/3,3/2}
{\draw[red,thick](\x+0.5,\y+0)..controls(\x+0.5,\y+0.2)and(\x+0.2,\y+0.5)..(\x+0,\y+0.5);
\draw[red,thick](\x+0.5,\y+1)..controls(\x+0.5,\y+0.8)and(\x+0.8,\y+0.5)..(\x+1,\y+0.5);}
\foreach \x/\y in {0.5/4.5,1.5/1.5,2.5/3.5,3.5/0.5,4.5/2.5}
{\draw[red,thick](\x,\y)node{$\times$};
\draw[red,thick](\x-0.5,\y)--(\x,\y)--(\x,\y+0.5);}
\foreach \x/\y in {0.5/0.5,0.5/1.5,0.5/2.5,0.5/3.5,1.5/0.5,2.5/0.5,2.5/2.5} {\draw[red,thick](\x-0.5,\y)--(\x+0.5,\y);}
\foreach \x/\y in {1.5/4.5,2.5/4.5,3.5/4.5,3.5/3.5,3.5/1.5,4.5/4.5,4.5/3.5} {\draw[red,thick](\x,\y-0.5)--(\x,\y+0.5);}
\foreach \x in {1,2,3,4,5}{\draw(-0.5,\x-0.5)node{$\x$};}
\foreach \x in {6,7,8,9,10}{\draw(\x-5.5,5.5)node{$\x$};}
}
\end{align}
This Rothe diagram gives the non-crossing perfect matching $\{\{1,4\},\{2,3\},\{5,6\},\{7,8\},\{9,10\}\}$.
\end{example}

\begin{prop}
\label{prop:RSKPM}
Let $w$ be a $321$-avoiding permutation.
We have $\widehat{RSK}(w)=\mathtt{PM}^{\times}(w)$ as a Dyck path.
\end{prop}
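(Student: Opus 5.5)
The plan is to compare the two Dyck paths step by step. Each path is determined by which of the positions $1,\ldots,2n$ are up steps. For a non-crossing perfect matching, viewed as a Dyck path as in Section \ref{sec:Bg}, the up steps are exactly the $L$-vertices. So it is enough to show two things: for $1\le j\le n$, the point $j$ is an $L$-vertex of $\mathtt{PM}^{\times}(w)$ exactly when $j$ lies in the first row of $P$; and for $1\le i\le n$, the point $2n+1-i$ is an $L$-vertex exactly when $i$ lies in the second row of $Q$. Here $\mathrm{RSK}(w)=(P,Q)$.

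The first step is to describe the RSK data of a $321$-avoiding permutation without running the insertion, using that $P$ and $Q$ have at most two rows. A value $j$ is bumped out of the first row at some time exactly when some smaller value appears after it in $w$. Hence the first row of $P$ consists of the values that are right-to-left minima of $w$. Dually, the second row of $Q$ records the positions $i$ at which the shape grows in the second row. For a $321$-avoiding $w$ this happens exactly when $w_i$ is not a left-to-right maximum. I will justify both facts by the standard Greene-type argument and check them on $w=31425$: there $P$ has first row $125$ and $Q$ has second row $24$.

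The second step is to use the Rothe-diagram version of $\mathtt{PM}^{\times}$: crosses are placed as described, boxes of $D(w)$ receive the tile (\ref{eq:tile}), and the other cells receive straight horizontal or vertical lines. I trace the strand entering at left-border label $j\le n$. It runs horizontally until it either reaches its cross or enters a box of $D(w)$, where the tile turns it upward. If there is a cross lower in the same column, the strand turns to the left instead. The claim to prove is that the strand from $j$ exits through the top border, making $j$ an $L$-vertex, if and only if no cross lies below and to the right of the cross in row $j$. Up to the reversal $i\mapsto n+1-i$ built into the placement $w_{n+1-i}$, this says that $j$ is a right-to-left minimum. The symmetric statement for strands entering from the top border $n+i$ should identify the $R$-vertices there with positions at which $w$ has a left-to-right maximum. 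Once both statements are proved, they match the rows of $P$ and $Q$ found in the first step, with the reversal $2n+1-i$ of $\widehat{RSK}$ accounting for the reversed order on the top border. Comparing the two descriptions then gives $\widehat{RSK}(w)=\mathtt{PM}^{\times}(w)$.

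The main obstacle is the strand-tracing step. A strand may pass through several tiles, so I need an invariant for its movement. I would try induction on $n$: remove the cross in the first column, which carries the value $w_n$, and observe that the strands not touching that column change only by the shift of labels that $\mathrm{st}$ induces. Alternatively I would induct by removing the largest value, as in the insertion identities (\ref{eq:Dyck2w1}) and (\ref{eq:Dyck2w2}). In parallel, I would check that RSK behaves compatibly under the same deletion: deleting the last letter changes $Q$ by removing one cell, which corresponds to removing one arch of the matching. If the local tracing proves messy, this induction alone should suffice, with the base cases $n=1,2$ checked directly.
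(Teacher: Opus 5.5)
\textbf{There is a genuine gap: your main route rests on three characterizations that are false, and your fallback induction leaves out the facts that make it work.}

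\emph{The row characterizations.} The bumping claim already fails for the $321$-avoiding word $w=231$. The letter $3$ is followed by the smaller letter $1$, but inserting $1$ bumps $2$, not $3$. So $P$ has first row $\{1,3\}$, while the only right-to-left minimum is $1$. The second row of $Q$ is also not the set of positions of non-left-to-right-maxima. For $w=1423$ the second row of $Q$ is $\{3\}$, yet positions $3$ and $4$ both carry non-left-to-right-maxima; $w_4=3$ is appended to the first row because $4$ has already been bumped out. With your two descriptions, $1423$ would receive $3+2=5$ up steps, so the resulting word is not even a Dyck path.

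\emph{The strand-tracing claim.} A left-border label $j$ can be an $L$-vertex while being matched to another left-border label. In your own test case $31425$ the arcs $\{1,4\}$ and $\{2,3\}$ lie entirely on the left border. Only the strand from $5$ exits through the top, but the $L$-vertices are $1,2,5$. Moreover, ``no cross below and to the right of the cross in row $j$'' says there is no earlier smaller letter, i.e.\ $j$ is a left-to-right minimum. For $31425$ that gives $\{1,3\}$, not $\{1,2,5\}$. The agreement you found on $31425$ is a coincidence of that example.

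\emph{The fallback.} Inducting on $n$ by deleting $w_n$ (the cross in the first column) is the paper's route, but as written it is only a hope. On the diagram side, every cell above the cross in column~$1$ lies in $D(w)$. The tiles therefore join $w_n$ to $w_n+1$ and reroute all other strands. The result is the matching of $\mathrm{st}(w_1\cdots w_{n-1})$ with every label $\ge w_n$ increased by $2$; that is, $\mathtt{PM}^{\times}(w)$ is obtained by inserting $UR$ after step $w_n-1$.

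On the RSK side you must prove the same insertion for $\widehat{RSK}$. This is not just ``removing a cell of $Q$'', since the last insertion also moves a bumped letter within $P$. You need three facts:
\begin{enumerate}
\item $w_n$ ends in row~$1$ and $w_n+1$ ends in row~$2$ of $P(w)$.
\item By $321$-avoidance, the letters larger than $w_n$ occur in increasing order. Hence every value $>w_n$ in row~$2$ of $P(w_1\cdots w_{n-1})$ is smaller than every value $>w_n$ in row~$1$.
\item Consequently the row statuses of the values $w_n+2,\ldots,n$ in $P(w)$ are those of $w_n+1,\ldots,n-1$ in $P(w_1\cdots w_{n-1})$, shifted by one. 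Also $n$ enters row~$2$ of $Q(w)$ exactly when $n$ lies in row~$1$ of $P(w_1\cdots w_{n-1})$, which handles step $n+1$.
\end{enumerate}
Without these, the induction does not close.
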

\begin{proof}
Let $w=w_1\ldots w_n$ be a $321$-avoiding permutation, and $w_{\downarrow}=w_1\cdots w_{n-1}$.
To show the equivalence $\widehat{RSK}(w)=\mathtt{PM}^{\times}(w)$, it is enough to show that 
$\widehat{RSK}(w)$ gives the Dyck path which corresponds to a non-crossing perfect matching 
in the Rothe diagram for $w$.
 
We first study the $\widehat{RSK}(w)$.
We insert the integer $w_{n}$ into the pair of two-row Young tableau 
$(\mathtt{ins}(w_{\downarrow}),\mathtt{rec}(w_{\downarrow}))$
where $\mathtt{ins}(w_{\downarrow})$ (resp. $\mathtt{rec}(w_{\downarrow})$) is the insertion 
(resp. recording) tableau by the RSK correspondence for $w_{\downarrow}$.
We consider the following three cases: 1) $w_{n}<n$ and $w_{n}+1$ is in the first row in $\mathtt{ins}(w_{\downarrow})$,
2) $w_{n}<n$ and $w_{n}+1$ is in the second row in $\mathtt{ins}(w_{\downarrow})$, 
and 3) $w_{n}=n$.

Case 1). By inserting $w_{n}$ into the first row of $\mathtt{ins}(w_{\downarrow})$, we 
remove $w_{n}+1$ from the first row, and add $w_{n}+1$ in the second row.
Since $w$ is $321$-avoiding, the integer $w_{n}+1$ stays in the second row.
The other integers in $\mathtt{ins}(w)$ are in the same row as $\mathtt{ins}(w_{\downarrow})$.
Similarly, we put $n$ in the second row of $\mathtt{rec}(w_{\downarrow})$ since $w_{n}+1$ 
is placed in the second row in $\mathtt{ins}(w)$.

Case 2). Let $\alpha$ be the smallest integer in $\mathtt{ins}(w_{\downarrow})$ which is larger than $w_{n}$.
If there is no such an integer, we append $w_{n}$ in the first row of $\mathtt{ins}(w_{\downarrow})$, and 
similarly append $n$ in the first row of $\mathtt{rec}(w_{\downarrow})$.
The insertion of $w_{n}$ removes $\alpha$ in the first row of $\mathtt{ins}(w_{\downarrow})$ and 
places it on the second row. Similarly, we append $n$ in the second row of $\mathtt{rec}(w_{\downarrow})$.

Case 3). Since $w_n=n$, we append $n$ in the first row of $\mathtt{ins}(w_{\downarrow})$ 
and $\mathtt{rec}(w_{\downarrow})$.

Note that in the first two cases, $w_{n}$ is in the first row and $w_{n}+1$ (if it exists) 
is in the second row of $\mathtt{ins}(w)$, and $w_n=n$ is in the first row.

Secondly, we consider a non-crossing perfect matching in the Rothe diagram of $w$.
Obviously, the Rothe diagram has a mark $m$ in the $w_{n}$-th row from bottom in the first column.
All the cells above $m$ are in the set $D(w)$.
We replace a cell in the set $D(w)$ by the tile expressed in Eq. (\ref{eq:tile}).
This means that we have a matching between $w_{n}$ and $w_{n}+1$.
Further, if $\{i,j\}$ with $i<j$ is a matching in $w_{\downarrow}$, then we 
increase $i$ (resp. $j$) by two if $i>w_{n}$ (resp. $j>w_{n}$) and make a matching 
by new $i$ and $j$.	 
We compare this observation with the Dyck path $\widehat{RSK}(w)$, and 
it is straightforward to see $\widehat{RSK}(w)=\mathtt{PM}^{\times}(w)$.
\end{proof}

To have the inverse $\widehat{RSK}^{-1}$, we introduce the notion 
of maximal cover-inclusive Dyck tilings following \cite{SZJ12}.
The advantage of the use of Dyck tilings is that we need only a Young diagram above 
a Dyck path, and we do not use the property of a $321$-avoiding permutations 
as $\mathtt{PM}^{\times}$. 
This simplicity plays a role when we define $\widehat{RSK}^{-1}$ for $k$-Dyck 
paths with $k\ge2$ in Section \ref{sec:prorow}.

A Dyck tile is a connected skew shape which consists of several unit cells such that 
the centers of the cells form a Dyck path.
A Dyck tile $d$ is said to be trivial (resp. non-trivial) if $d$ consists of 
a single cell (resp. several cells).
Let $\lambda, \mu$ be two Dyck paths such that $\lambda\le\mu$, i.e., $\mu$
is above $\lambda$.
We consider a tiling by Dyck tiles in the skew shape $\lambda/\mu$.
We say that a tiiling $T$ is a cover-inclusive Dyck tiling if 
we move a Dyck tile $d_1$ in $T$ in a southeastern direction by one unit, then
$d_1$ is below $\lambda$, or contained another Dyck tile $d_2$.
Here, the size of $d_2$ may be the same as that of $d_1$.
A cover-inclusive Dyck tiling $D$ is maximal if $D$ contains Dyck tiles of 
maximum sizes. In other words, we cannot enlarge a Dyck tile in $D$ any more. 

We consider the maximal cover-inclusive Dyck tiling $T$ above $\lambda$ and 
below the top path $U^nR^n$.
The Dyck tiling $T$ consists of several Dyck tiles.
We consider the order of Dyck tiles to remove them one-by-one from bottom to top.
Given two Dyck tiles $d_1$ and $d_2$, we write $d_1\lessdot d_2$ if $d_1$ 
is removed before $d_2$.
We fix a removal order on Dyck tiles and write $d_1\lessdot d_2\lessdot \ldots\lessdot d_r$. 
Since a Dyck tile is characterized by a Dyck path, a Dyck tile has a unique south-most edge 
and a unique rightmost edge. 
Since the Dyck tile $d_1$ is just above the path $\lambda$,  its south-most and rightmost edges 
are on $\lambda$. If the south-most (resp. rightmost) edge of $d_1$ is the $i_1$-th (resp. $j_1$-th)
step from left in $\lambda$, we associate $d_1$ with the transposition $t_{i_1,j_1}$.
We obtain a new path $\lambda_1$ which is the lowest path above $\lambda$ and a Dyck tile $d_1$.
Then, the Dyck tile $d_2$ is just above $\lambda_{1}$.
As in the case of $d_1$, we associate $d_2$ with the transposition $t_{i_2,j_2}$ if 
the south-most (resp. rightmost) edge of $d_2$ is the $i_2$-th (resp. $j_2$-th) step in $\lambda_{1}$.
By repeating the above procedure, we associate a Dyck tile $d_{p}$ with the transposition $t_{i_p,j_p}$
for $1\le p\le r$.

Recall that a chord diagram corresponding to $\lambda$ consists of $n$ pairs of integers.
We act $r$ transpositions $t_{i_p,j_p}$, $1\le p\le r$, on the pairs of integers by exchanging the 
integers $i_p$ and $j_p$ starting from $p=1$.
Then, we obtain a perfect matching which may have crosses.
This perfect matching consists of $n$ pairs, and the integers in $[n+1,2n]$ belong to 
distinct pairs. We define the permutation $w=w_1\ldots w_{n}$ where the integer $w_i$ 
forms a pair with $n+i$, for $1\le i\le n$. 
In this way, we obtain a permutation $w$ from a Dyck path $\lambda$.

\begin{defn}
We define the map from a Dyck path to $321$-avoiding permutation by $\mathtt{DT}$, i.e., 
$\mathtt{DT}:\lambda\mapsto w$.
\end{defn}

\begin{lemma}
The map $\mathtt{DT}$ is well-defined. In other words, the non-crossing perfect 
matching $\mathtt{DT}(P)$ contains arches connecting $i$ and $j$ such that
$i\in[1,n]$ and $j\in[n+1,2n]$.
\end{lemma}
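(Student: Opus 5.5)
The plan is an induction on the tiles removed, following the construction of $\mathtt{DT}$, while tracking an invariant of the (possibly crossing) perfect matching. Start from the non-crossing perfect matching $\mathtt{PM}(\lambda)$, with steps labelled $1,\dots,2n$ along $\lambda$. The target is that, after all transpositions $t_{i_p,j_p}$ attached to the tiles $d_1\lessdot\cdots\lessdot d_r$ have been applied, every arch joins a point of $[1,n]$ to a point of $[n+1,2n]$. Equivalently, the final path $\lambda_r$, which is the top path $U^nR^n$ because $T$ tiles the whole region between $\lambda$ and $U^nR^n$, has all of its up steps matched to right steps by the accumulated permutation of labels.

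The invariant I would maintain is the following. After processing $d_1,\dots,d_p$, consider the matching $M_p$ obtained from $\mathtt{PM}(\lambda)$ by applying $t_{i_1,j_1},\dots,t_{i_p,j_p}$, and read it on the path $\lambda_p$. The claim is that in $M_p$ each arch joins a position carrying an up step of $\lambda_p$ to a later position carrying a right step of $\lambda_p$.

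For $p=0$ this is exactly the construction of $\mathtt{PM}$: each $L$-vertex is an up step and each $R$-vertex is a right step.

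For the inductive step, the tile $d_{p+1}$ sits directly on $\lambda_p$. Its south-most edge, at position $i=i_{p+1}$, is an up step of $\lambda_p$ that begins the portion under the tile, and its rightmost edge, at position $j=j_{p+1}$, is the right step that ends it. Adding the tile changes $\lambda_p$ into $\lambda_{p+1}$ by turning the $i$-th step into a right step and the $j$-th step into an up step. Everything strictly between $i$ and $j$ is a Dyck subword, which is just translated, so the step types at those positions are unchanged.

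Exchanging the labels $i$ and $j$ in $M_p$ therefore moves the up endpoint of one arch from $i$ to $j$ and the right endpoint of another arch from $j$ to $i$, in agreement with the new step types at $i$ and $j$. Two things then have to be checked. The arch that previously ended at the right step $j$ must now start at $i$ and end at some right step to the right of $i$. The arch that previously started at $i$ must now end later than $j$ if it still uses an up step. In other words, the point to verify is that in $M_p$ the partner of $i$ lies beyond $j$ and the partner of $j$ lies before $i$.

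This nesting property is where I expect the real work to lie, and it is where maximality and cover-inclusiveness of $T$ must enter. My first attempt would be to argue as follows. Because $d_{p+1}$ has maximal size, the up step at $i$ is not matched within the Dyck subword spanned by the tile. Because the tile is cover-inclusive, the arches of $M_p$ that meet the interval $[i,j]$ either stay inside it, or come from tiles lying below $d_{p+1}$, which are nested outside it. To make this rigorous I would strengthen the invariant: in addition, the arches of $M_p$ with both ends between $i$ and $j$ form the non-crossing matching of the corresponding Dyck subword of $\lambda_p$. This should hold because tiles removed earlier have already been pushed outward.

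With the invariant established, $\lambda_r=U^nR^n$, whose up steps are exactly $[1,n]$ and whose right steps are $[n+1,2n]$. So every arch of $M_r$ joins some $w_i\in[1,n]$ to $n+i$. Then $w$ is a well-defined permutation, and it is $321$-avoiding because it is obtained by resolving the arch structure in the manner of $\mathtt{PM}^{\times}$ and Proposition \ref{prop:RSKPM}.
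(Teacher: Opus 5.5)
\textbf{Gap: the inductive step rests on a reversed picture of the tile.} Your invariant is a good choice: each arch of $M_p$ joins an up step of $\lambda_p$ to a later right step of $\lambda_p$. Once corrected, it gives a cleaner argument than the paper's, which tracks layer by layer how the tiles convert the pair with maximal $j\in[n+1,2n]$ into a pair $(i',j)$ with $i'\in[1,n]$. However, the tiles lie above $\lambda_p$, and $\lambda_{p+1}$ is the lowest path above $\lambda_p$ and $d_{p+1}$. So the south-most edge of $d_{p+1}$ is horizontal, meaning step $i$ of $\lambda_p$ is a right step $R$. The rightmost edge is vertical, meaning step $j$ is an up step $U$. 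The portion of $\lambda_p$ under the tile is $R\,w\,U$ with $w$ a Dyck word, and adding the tile turns it into $U\,w\,R$. You have it the other way round.

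The paper's example $P=U^2R^2(UR)^3$ shows this. The first tile gives $t_{4,9}$, where step $4$ of $P$ is $R$ and step $9$ is $U$, and afterwards they become $U$ and $R$. As a result, the ``nesting property'' you single out as the real work is false in the actual setting. You want the partner of $i$ to lie beyond $j$ and the partner of $j$ to lie before $i$. But in $\mathtt{PM}(P)$ the partner of $4$ is $1$ and the partner of $9$ is $10$. So your key step is only sketched (maximality and cover-inclusiveness ``should'' give it), and it is aimed at a statement that fails.

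\textbf{The fix.} With the correct orientation the step is immediate and uses neither maximality nor cover-inclusiveness. By the invariant, the right step $i$ is the right endpoint of an arch $(a,i)$ with $a<i$. The up step $j$ is the left endpoint of an arch $(j,b)$ with $b>j$. Exchanging the labels $i$ and $j$ turns these into $(a,j)$ and $(i,b)$. We have $a<i<j<b$, and only positions $i$ and $j$ change type ($i$ becomes $U$, $j$ becomes $R$). Hence both new arches again join an up step to a later right step of $\lambda_{p+1}$, while all other arches and step types are unchanged. Since $\lambda_r=U^nR^n$, the conclusion follows exactly as you say.

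You should drop the proposed strengthening of the invariant. You should also drop the closing claim that $w$ is $321$-avoiding ``in the manner of $\mathtt{PM}^{\times}$''. It is not needed for this lemma, and as stated it is unjustified.
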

\begin{proof}
For the map $\mathtt{DT}$, we consider a Dyck tiling above $P$ and the top path 
$U^{n}R^{n}$. Suppose that we have a up step in $P$ at position $i$ with $i\ge n+1$.
We make use of the maximal cover-inclusive Dyck tiling below $U^{n}R^{n}$, and 
assign a transposition $t_{i_p,j_p}$ to each Dyck tile. 
The product of transpositions which are assigned to Dyck tiles just above $P$ 
changes the pair $(i_1,j_1)$ with the maximal $j_{1}\in[n+1,2n]$ to the pair 
$(i'_{1},j_{1})$ with $i'\in[1,n]$.
We remove the Dyck tiles just above $P$, then we obtain a new Dyck path $P'$.
The Dyck tiles just above $P'$ changes the pair $(i_{2},j_{2})$ with the second 
maximal $j_{2}\in[n+1,2n]$ to the pair $(i'_{2},j_{2})$ with $i'_2\in[1,n]$.
We continue this process until we obtain the top path $U^{n}R^{n}$.
Then, the all the pairs $(i,j)$ with $i,j\in[n+1,2n]$ are changed to 
the pairs $(i',j)$ with $i'\in[1,n]$.
The non-crossing perfect matching $\mathtt{DT}(P)$ consists of only the 
pairs $(i,j)$ with $i\in[1,n]$ and $j\in[n+1,2n]$, which completes the 
proof.
\end{proof}

The following proposition relates a Dyck tiling to the map $\widehat{RSK}^{-1}$.
\begin{prop}
\label{prop:RSKDT}
Let $P$ be a Dyck path of length $n$.
We have $\widehat{RSK}^{-1}(P)=\mathtt{DT}(P)$ as a $321$-avoiding permutation.
\end{prop}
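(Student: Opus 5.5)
By Proposition \ref{prop:RSKPM} we already know $\widehat{RSK}=\mathtt{PM}^{\times}$. It therefore suffices to show $\mathtt{PM}^{\times}\circ\mathtt{DT}=\mathrm{id}$ on Dyck paths. Both sides are bijections between sets of Catalan cardinality, so this would give $\mathtt{DT}=(\mathtt{PM}^{\times})^{-1}=\widehat{RSK}^{-1}$. Concretely, I will show that the crossing perfect matching built by $\mathtt{DT}$ from $\lambda$ is exactly $\pi'(w)$ for $w=\mathtt{DT}(\lambda)$. I will also show that resolving the crosses of $\pi'(w)$ returns the non-crossing matching of $\lambda$. The idea is to read the Rothe-diagram picture of $\mathtt{PM}^{\times}$ backwards: each cell of $D(w)$ carries the uncrossing tile (\ref{eq:tile}), and undoing one uncrossing is the transposition $t_{i,j}$ attached to a Dyck tile.

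The key steps, in order, are as follows.

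\textbf{Step 1.} Identify the region between $\lambda$ and $U^nR^n$ with a Young diagram, namely the cells above $\lambda$. Show that the number of cells equals the number of crossings of $\pi'(w)$, which is $|D(w)|$, as noted after Example \ref{ex:31425}. This is a sanity check that the cells above $\lambda$ correspond bijectively to crossings.

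\textbf{Step 2.} Analyse a single Dyck tile $d$ sitting directly on the current path $\lambda_{p-1}$, with south-most step $i$ and rightmost step $j$. Since the centers of $d$ form a Dyck path, the steps of $\lambda_{p-1}$ strictly between $i$ and $j$ form a balanced block. Hence in the current chord diagram, $i$ is matched to something left of $i$, $j$ is matched to something right of $j$, and the interior arcs are nested between them. Exchanging $i$ and $j$ produces a matching whose associated lowest path is $\lambda_p$. Counted appropriately, this introduces exactly as many new crossings as $d$ has cells. Cover-inclusiveness guarantees that later tiles are contained in shifts of earlier ones, so the crossings are never created and then destroyed.

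\textbf{Step 3.} Induct on the number of tiles, or equivalently on $n$, mirroring the proof of Proposition \ref{prop:RSKPM}. Remove the tiles touching the first column and the first step of $\lambda$; this corresponds to inserting $w_n$. Check that the remaining tiling is the maximal cover-inclusive tiling for the smaller path, with labels shifted by two, exactly as in the relabelling $i\mapsto i+2$ used there. Then the case analysis of Proposition \ref{prop:RSKPM} (cases $w_n<n$ with $w_n+1$ in the first or second row, and $w_n=n$) matches the shapes of the bottom tiles.

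The main obstacle is Step 2 together with its global consistency. One must prove that applying the transpositions in removal order yields a genuinely $321$-avoiding configuration, and that the configuration is independent of the chosen removal order. For independence, I would first check that tiles which are incomparable in the removal order have disjoint step intervals or nested ones, so that their transpositions commute. For the $321$-avoidance, I would use maximality of the tiling: a maximal tiling forces each tile to span a maximal balanced block, and this prevents a triple of mutually crossing arcs, which is the diagrammatic form of a $321$ pattern. If the direct argument is unwieldy, I would fall back on a cardinality argument. I would show only that $\mathtt{PM}^{\times}(\mathtt{DT}(\lambda))=\lambda$ via the induction, and then injectivity of $\mathtt{DT}$ follows for free.
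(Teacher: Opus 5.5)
Your reduction to $\mathtt{PM}^{\times}\circ\mathtt{DT}=\mathrm{id}$ via Proposition~\ref{prop:RSKPM} is a different route from the paper's, and it would suffice once $321$-avoidance is known. However, the mechanism you propose rests on a false identification: crossings of the matching produced by $\mathtt{DT}$ correspond to \emph{tiles}, not to cells above $\lambda$.

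Take the paper's example $P=U^2R^2(UR)^3$. There are $9$ cells above $P$ but only $3$ tiles, with transpositions $(4,9),(3,4),(4,7)$. The permutation $w=31425$ has $3$ inversions, i.e.\ $|D(w)|=3$ crossings. The $5$-cell tile $(4,9)$ turns $\{1,4\},\{9,10\}$ (with $\{5,6\},\{7,8\}$ nested in between) into $\{1,9\},\{4,10\}$, creating exactly one crossing. So the ``sanity check'' of Step~1 is false, and the count in Step~2 is wrong.

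Worse, the resulting matching $\{1,9\},\{2,3\},\{4,10\},\{5,6\},\{7,8\}$ has a crossing, yet no adjacent pair to which the local uncrossing rule defining $\mathtt{PM}^{\times}$ applies (arcs $(i,j_1)$, $(j_2,i+1)$ with $j_2<i<i+1<j_1$). So a non-trivial tile is not ``one uncrossing read backwards'', and the intermediate states of $\mathtt{DT}$ are not intermediate states of the Rothe-diagram resolution. What you would need is a correspondence between tiles and cells of $D(w)$ compatible with both orders. That is essentially the whole content of the statement, and you do not supply it.

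Step~3 also fails as stated. Here $w_n=5$, and deleting the peak $(5,6)$ of $P$ gives $U^2R^2(UR)^2$. Its maximal tiling has tiles of $3,1,1$ cells, with transpositions $(4,7),(3,4),(4,5)$, giving $3142=\widehat{RSK}^{-1}(U^2R^2(UR)^2)$. The tiling of $P$ instead has tiles of $5,1,3$ cells, with only one single cell. So the smaller tiling is not obtained by deleting tiles and relabeling, and cutting out the hook of the deleted peak disconnects the $5$-cell tile; the maximal tiling reorganizes globally.

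This is why the paper's induction tracks the \emph{permutation} rather than the tiling. It deletes the steps $\alpha$ and $\overline{n}=n+1$, and in Case~2 also turns the last right step $\beta\le n$ into an up step. By chasing transpositions it shows that $\mathtt{DT}(P)$ is $\mathtt{DT}(P_-)$ with $\alpha$ appended and standardized. It then matches this directly against row insertion of the last letter, never using $\mathtt{PM}^{\times}$.

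Finally, your fallback does not give $321$-avoidance. $\mathtt{PM}^{\times}=\widehat{RSK}$ is only known on $\mathcal{S}_n(321)$, and the uncrossing map is not injective on all of $\mathcal{S}_n$: for $n=3$, six permutations go to five matchings. So $\mathtt{PM}^{\times}(\mathtt{DT}(\lambda))=\lambda$ plus injectivity of $\mathtt{DT}$ does not force $\mathtt{DT}(\lambda)=\widehat{RSK}^{-1}(\lambda)$. $321$-avoidance has to come out of the induction itself, by building $\mathtt{DT}(\lambda)$ letter by letter.
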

\begin{proof}
We denote $\overline{i}:=2n+1-i$.
We prove the claim by induction on $n$. For $n=1,2$, the claim follows by direct calculations.
We assume that the claim holds up to $n-1$.
We consider two cases: 1) $\overline{n}$ is a right step, and 2) $\overline{n}$ is 
an up step.

Case 1). Let $\alpha$ be the maximal integer such that $\alpha$-th step is an up step and $\alpha\in[1,n]$.
Let $P_{-}$ be the Dyck path obtained from $P$ by deleting $\alpha$-th and $\overline{n}$-th steps.
This operation is well-defined since we have $2n$ steps in $P$ in total.
Let $w_{-}$ be the $321$-avoiding permutation obtained from $P_{-}$ by $\widehat{RSK}^{-1}$.
By induction hypothesis, $w_-=\mathtt{DT}(P_-)$.
The position of an up step above $\alpha$-th up step in $P$ is expressed as $\overline{i}$ 
with $i\in[1,n]$ due to the maximal property of $\alpha$.
Recall that we consider the maximal cover-inclusive Dyck tiling above $P$ and below $U^{n}R^{n}$.
Since $\overline{n}$ is a right step by assumption, 
we have a non-trivial Dyck tile just above the $\alpha$-th up step by the maximal property of the Dyck tiling
if there is a right step at the position $j$ with $1\le j\le \alpha-1$.
By chasing the transpositions corresponding to Dyck tiles, we have the following observations:
\begin{enumerate}[(a)]
\item We have the pair $(\alpha,\overline{n})$, 
\item The pairs $(i,\overline{j})$ in $\mathtt{DT}(P_{-})$ are changed to 
$(i,\overline{j+2})$ if $i<\alpha$, and $(i+1,\overline{j+2})$ if $i\ge\alpha$.
\end{enumerate}
These imply that we have a permutation $w$ obtained from $w_{-}$ by increasing 
$i$ in $w_{-}$ if $i>\alpha$ by one, and by appending $\alpha$ from right.
On the other hand, the action of $\widehat{RSK}^{-1}$ on $P$ gives the same 
permutation $w$ since we have $\alpha$ in the first row of the insertion tableau
and $n$ in the first row of the recording tableau.
From these, we have $\widehat{RSK}^{-1}(P)=\mathtt{DT}(P)$.
	
Case 2). The $\overline{n}$-th step is an up step in $P$.
Let $\beta\in[1,n]$ be the maximal integer such that the $\beta$-th step
is a right step, and $\alpha<\beta$ be the maximal integer such that 
$\alpha$-th step is an up step in $P$.
Let $P_{-}$ be a Dyck path of size $n-1$ obtained from $P$ by changing 
the $\beta$-th right step to an up step, and by deleting the $\alpha$-th 
and $\overline{n}$-th steps. 
By induction assumption, we have $w_{-}=\widehat{RSK^{-1}}(P_{-})=\mathtt{DT}(P_-)$.
As in Case 1), by chasing the transpositions corresponding to Dyck tiles,
$\mathtt{DT}(P)$ is obtained from $\mathtt{DT}(P_{-})$ by adding the pair 
$(\alpha,\overline{n})$, and by changing the pairs as in (b) of Case 1).
On the other hand, $\widehat{RSK}^{-1}(P)$ gives the pair of two-row tableaux, and 
$\alpha$ is the second row of the insertion tableau, and $n$ is the second row of the 
recording tableau. In $w_{-}=\widehat{RSK}^{-1}(P_{-})$, $\alpha-1$ is in the first row of the 
insertion tableau. From these, we insert $\beta$ into the two-row tableaux to obtain 
$\widehat{RSK}^{-1}(P)$.
By combining these observations, $\widehat{RSK}^{-1}(P)$ and $\mathtt{DT}(P)$ give the 
same permutation $w$ obtained from $w_-$ by adding $\alpha$ from right and by the standardization.
This completes the proof.
\end{proof}

\begin{example}
Let $P=U^2R^2(UR)^3$ be a Dyck path.
The maximal Dyck tiling for $P$ is given by
\begin{align*}
\tikzpic{-0.5}{[scale=0.5]
\draw[thick](0,0)--(0,2)--(2,2)--(2,3)--(3,3)--(3,4)--(4,4)--(4,5)--(5,5);
\draw(0,2)--(0,5)--(4,5)(1,2)--(1,4)--(2,4)--(2,5)(0,3)--(1,3);
\draw[red](1.5,2.5)--(1.5,3.5)--(2.5,3.5)--(2.5,4.5)--(3.5,4.5)
(0.5,3.5)--(0.5,4.5)--(1.5,4.5)(0.5,2.5)node{$\bullet$};
}
\end{align*}
We have three Dyck tiles and these tiles give the following 
transpositions $(4,9),(3,4)$ and $(4,7)$.

The Dyck path $P$ gives five pairs of integers, and if we act the transpositions on it,
we have 
\begin{align}
\label{eq:matcpm}
\begin{matrix}
1 & 2 & 5 & 7 & 9\\
4 & 3 & 6 & 8 & 10
\end{matrix}
\xrightarrow{(4,9)}
\begin{matrix}
1 & 2 & 5 & 7 & 4\\
9 & 3 & 6 & 8 & 10
\end{matrix}
\xrightarrow{(3,4)}
\begin{matrix}
1 & 2 & 5 & 7 & 3\\
9 & 4 & 6 & 8 & 10
\end{matrix}
\xrightarrow{(4,7)}
\begin{matrix}
1 & 2 & 5 & 4 & 3\\
9 & 7 & 6 & 8 & 10
\end{matrix}
\end{align}
The rightmost matrix in Eq. (\ref{eq:matcpm}) gives 
the crossing perfect matching by rearranging the order of pairs: 
\begin{align*}
\begin{matrix}
1 & 2 & 5 & 4 & 3\\
9 & 7 & 6 & 8 & 10
\end{matrix}
\rightarrow
\begin{matrix}
5 & 2 & 4 & 1 & 3\\
6 & 7 & 8 & 9 & 10
\end{matrix}
\end{align*}
Therefore, we have $31425$ from $P$. 
Compare this result with Example \ref{ex:31425}.
\end{example}

\subsection{Matching map and RSK correspondence}
In this section, we study the relation between the matching map $\mathtt{Mat}$
and the RSK correspondence $\widehat{RSK}$.

Let $P$ be a Dyck path and $w:=w(P)$ be the $321$-avoiding permutation 
corresponding to $P$.
Then, we have the following theorem.
\begin{theorem}
\label{thrm:RSKMat}
Let $P$, $w$ be as above.
Then, $\widehat{RSK}(w)=\partial\circ\mathtt{Mat}\circ\mathtt{ev}(P)$. 
\end{theorem}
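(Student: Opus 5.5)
The plan is to reduce the statement to a comparison of two explicit constructions on the Rothe diagram of $w$, using the descriptions already established. By Proposition \ref{prop:RSKPM}, $\widehat{RSK}(w)=\mathtt{PM}^{\times}(w)$. Here $w=w(P)$ is the $321$-avoiding permutation with $E_p(w)=P$. So the left-hand side is the non-crossing perfect matching obtained by resolving crossings in the Rothe diagram of $w$.

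On the right-hand side, Proposition \ref{prop:DyckPMev} gives $\mathtt{ev}=\ast$ for Dyck paths. In terms of chord diagrams, $\mathtt{ev}(P)$ is the bar image $\overline{C(P)}$. The same proposition gives $\mathtt{PM}\circ\partial=\mathtt{Rot}\circ\mathtt{PM}$. Thus the claim becomes the chord-diagram identity
\begin{align*}
\mathtt{PM}^{\times}(w)=\mathtt{Rot}\bigl(\mathtt{PM}(\mathtt{Mat}(\overline{P}))\bigr),
\end{align*}
where $\overline{P}$ denotes the reversed path $\mathtt{ev}(P)$. I would therefore first rewrite $\mathtt{Mat}\circ\mathtt{ev}$ directly in terms of $P$. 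Reversing $P$ turns its valleys into valleys read from the other end. Concretely, the valley in the $i$-th row from top of $\overline{P}$ sits in column $n-v+1$ when $P$ has a valley in column $v$ of the corresponding row from bottom. Hence the sequence $K(\overline{P})$ records, row by row from the bottom of $P$, either the column of a valley of $P$ or a barred row index.

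Next I would identify the pairs produced by the $\mathtt{Mat}$ algorithm with arcs of $\mathtt{PM}^{\times}(w)$. The valleys of $P$ are the cells of the rowmotion antichain. By the construction of $p(\omega)$, the peaks of $P$ are exactly the marks of $w$ on or above the diagonal. The valleys then sit in the rows and columns of the marks below the diagonal, which are the crossing marks. In $\mathtt{PM}^{\times}$ each such mark generates an arc whose left end is adjacent to the tile configuration to its left. The greedy rule of $\mathtt{Mat}$ (take the largest unused integer below $k_i$, or the smallest unused one above $\bar{i}$) should reproduce exactly the endpoint found by following the red path through the tiles (\ref{eq:tile}) in that row. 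The argument would proceed by induction on $n$, mirroring the proof of Proposition \ref{prop:RSKPM}: remove the last letter $w_n$, equivalently the first column of the Rothe diagram, and compare the effect on both sides. On the $\mathtt{PM}^{\times}$ side the new arc is $(w_n,w_n+1)$ and the other labels shift by two past $w_n$. On the $\mathtt{Mat}$ side, deleting the corresponding row of $P$ changes one entry of $K$ and shifts the others. The final $\mathtt{Rot}$ accounts for the index offset between the labelling $1,\dots,2n$ in $\mathtt{PM}^{\times}$ (left border from the bottom, then the top) and the labelling used by $\mathtt{Mat}$.

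The main obstacle is the inductive step where the removed column contains a mark below the diagonal, i.e.\ a valley. There the greedy choice in $\mathtt{Mat}$ interacts with previously chosen pairs, and one must check that the rotation by one unit is compatible with the shift of labels. I would try to handle it by splitting into the same three cases as in Proposition \ref{prop:RSKPM}: $w_n+1$ in the first row, $w_n+1$ in the second row, and $w_n=n$. In each case I would verify directly that the new pair in $K(\overline{P})$ rotates to $(w_n,w_n+1)$. Small cases, such as $n\le 3$ and the example $w=31425$, serve as a sanity check on the exact direction of the rotation.
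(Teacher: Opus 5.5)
Your reformulation through Proposition~\ref{prop:RSKPM} and $\mathtt{PM}\circ\partial=\mathtt{Rot}\circ\mathtt{PM}$, and your plan to induct on $n$ by deleting $w_n$, follow the paper's inductive strategy. The left-hand side of that induction is easy: $\widehat{RSK}(w)$ is $\widehat{RSK}(w_-)$ with all labels $\ge w_n$ raised by two, plus the arc $(w_n,w_n+1)$. The gap is on the $\mathtt{Mat}$ side, which carries all the content. There you only assert that the greedy rule ``should reproduce'' the tile paths, and two of the ingredients you propose are wrong.

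First, $\mathtt{ev}=\ast$ reverses and complements the Dyck word. It therefore reflects $P$ in the antidiagonal and exchanges rows and columns. The correct rule is: $K(\mathtt{ev}(P))_i=h_i+1$ if the right step of $P$ in column $i$ (at height $h_i$) is followed by an up step, and $K(\mathtt{ev}(P))_i=\bar{i}$ otherwise. Your description (indexed by rows of $P$, recording valley columns) is the transpose of this. For example, $P=URUURR$ gives $\mathtt{ev}(P)=UURRUR$ and $K(\mathtt{ev}(P))=(2,\bar{2},\bar{3})$.

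Second, the valleys of $P$ do not sit at the marks below the diagonal. For $w=12\cdots n$ there are no such marks, yet $P=(UR)^n$ has $n-1$ valleys. In general, let $(c_1,r_1),\dots,(c_m,r_m)$ be the marks with $w_c\ge c$, read left to right. The valleys are then the points $(c_{j+1}-1,r_j)$, so they are determined by the peaks alone.

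With the correct rule, deleting $w_n$ gives an explicit recursion, which is what the paper's proof works with.
\begin{itemize}
\item If $w_n<n$: $K(\mathtt{ev}(P))$ is $K(\mathtt{ev}(P_-))$ with its unbarred entries exceeding $w_n$ raised by one, barred entries keeping their index, and $\bar{n}$ appended.
\item If $w_n=n$: $P=P_-UR$, and the last entry $\overline{n-1}$ becomes an unbarred $n$ before $\bar{n}$ is appended.
\end{itemize}
So on this side the natural case split is $w_n=n$ versus $w_n<n$, not the three RSK-insertion cases.

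You must then show that the greedy algorithm outputs $\sigma(\mathtt{Mat}(\mathtt{ev}(P_-)))\cup\{(w_n+1,w_n+2)\}$, where $\sigma$ fixes labels $\le w_n$ and adds $2$ to the others. Applying $\partial$ to this yields exactly the left-hand side.

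Your proposed check, that the pair produced by the new entry of $K$ rotates to $(w_n,w_n+1)$, fails. Take $w=231$. Then $K(\mathtt{ev}(P_-))=(2,\bar{2})$ gives the arcs $(1,2),(3,4)$. Meanwhile $K(\mathtt{ev}(P))=(3,\bar{2},\bar{3})$ gives $(2,3)$ from $k_1$, $(5,6)$ from $k_2$, and $(1,4)$ from the new entry $\bar{3}$; this last arc rotates to $(3,6)$, not $(1,2)$. The new arc comes from an \emph{old} entry, and the new entry takes over a shifted old arc.

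So an entry-by-entry comparison cannot close the induction. You need an argument about the output of the greedy matching as a whole. The paper does this, tersely, by reading the increments as the insertion of an arc and the appended $\bar{n}$ as the renaming of an existing arc.
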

\begin{proof}
We prove the statement by induction on $n$. For $n=1,2$, it is obvious 
that we have $\widehat{RSK}(w)=\partial\circ\mathtt{Mat}\circ\mathtt{ev}(P)$ where 
$w$ is a $321$-avoiding permutation for $P$.
We assume the statement holds up to $n-1$.

Let $w=w_1\cdots w_{n}$ be the $321$-avoiding permutation for a Dyck path $P$.
We first study $\widehat{RSK}(w)$. 
We denote $w_{-}:=w_1\cdots w_{n-1}$.
Let $(\mathtt{ins}(w),\mathtt{rec}(w))$ be a pair of standard tableaux obtained from $w$
by the RSK correspondence. We call the first (resp. second) tableau an insertion (resp. recording) tableau.
We insert $w_{n}$ into the insertion tableau $\mathtt{ins}(w_{-})$.
Let $\alpha$ be the minimal element in the first row of $\mathtt{ins}(w_{-})$ which is larger than $w_{n}$.
If such $\alpha$ does not exist, we put $w_{n}$ in the first row of $\mathtt{ins}(w_{-})$ and obtain 
$\mathtt{ins}(w)$. In the recording tableau, we append $n$ in the first row of $\mathtt{rec}(w_{-})$ and 
obtain $\mathtt{rec}(w)$. 
If $\alpha\in[w_{n}+1,n]$, we remove $\alpha$ from $\mathtt{ins}(w_{-})$, put $w_{n}$ there, and 
put $\alpha$ in the second row of $\mathtt{ins}(w_{-})$. Since $w$ is $321$-avoiding, there is no
element $\beta$ which is larger than $\alpha$. 
Since we put $\alpha$ in the second row of $\mathtt{ins}(w_{-})$, we put $n$ in the second row 
of $\mathtt{rec}(w_{-})$ and obtain the recording tableau $\mathtt{rec}(w)$.
As a summary, $w_{n}$ (resp. $w_{n}+1$) is in the first (resp. second) row of $\mathtt{ins}(w)$, 
and $n$ is in the first (resp. second) row of $\mathtt{rec}(w)$ if $\alpha=\emptyset$ 
(resp. $\alpha\in[w_n+1,n]$).

Secondly, we study the composition $\partial\circ\mathtt{Mat}\circ\mathtt{ev}(P)$.
Recall that $\mathtt{Mat}(P)$ gives a sequence $K(P)$ of integers with or without a bar. 
We apply $\mathtt{Mat}$ to $P$ after the evacuation $\mathtt{ev}$, and 
as a result, the sequence $K(P)$ does not contain the integer $w_{n}+1$ but contains $w_{n}+2$.
The action of $\partial$ on $K(P)$ is given by replacing $i$ (resp. $\overline{i}$)
with $i-1$ (resp. $\overline{i+1}$). This is well-defined since the integer $i$
in $K(P)$ is in $[2,n]$ if $i$ is without a bar, and in $[1,n]$ if $i$ is with a bar.

Let $\overline{P}$ and $\overline{P_{-}}$ be the Dyck path corresponding to 
$\mathtt{ev}(P)$ and $\mathtt{ev}(P_{-})$ where $P_-$ corresponds to $w_-$. 
The integer sequence $K(\overline{P})$ is obtained from $K(\overline{P_{-}})$ 
by increasing the integers without a bar by one which are weakly larger than $w_{n}$, and 
by adding the bared integer $\overline{n}$. 
We obtain a non-crossing perfect matching from $K(P)$ as in Section \ref{sec:defMatch}. 
The previous paragraph implies that $K(\overline{P_-})$ always contains the integer $w_{n}$ without 
a bar. 
The increment of the integer $w_n$ by one is equivalent to insertion of an arch 
connecting $w_n$ and $w_{n}+1$ into the non-crossing perfect matching for $K(P_{-})$.
Further, we add $\overline{n}$ to $K(P_-)$. 
This corresponds to rename the arch connecting 
the integer $n-1$ and another integer into $\overline{n}$.
The action of $\partial$ simply moves the arches left by one.
By induction assumption, we have $\widehat{RSK}(w_{-})=\partial\circ\mathtt{Mat}\circ\mathtt{ev}(P_{-})$.

By combining these observations together, it is clear that $\widehat{RSK}(w)=\partial\circ\mathtt{Mat}\circ\mathtt{ev}(P)$,
which completes the proof.
\end{proof}

The following Corollary is a direct consequence of Proposition \ref{prop:RSKPM} and Theorem \ref{thrm:RSKMat}.
\begin{cor}
We have $\mathtt{Mat}=\partial^{-1}\circ\mathtt{PM}^{\times}\circ\mathtt{ev}$.
\end{cor}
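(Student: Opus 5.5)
The plan is to combine Theorem \ref{thrm:RSKMat} with Proposition \ref{prop:RSKPM} and then solve for $\mathtt{Mat}$ by pure algebra. The only extra inputs are that $\partial$ is a bijection and that $\mathtt{ev}$ is an involution (Theorem \ref{thrm:relproev}(2)).

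First, I fix the identifications. A Dyck path $P$ is identified with its $321$-avoiding permutation $w=w(P)$ via $\widehat{RSK}^{-1}$. Here $w(P)$ in Theorem \ref{thrm:RSKMat} is the permutation corresponding to $P$, and Proposition \ref{prop:RSKDT} shows that this agrees with $\mathtt{DT}$.

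Proposition \ref{prop:RSKPM} gives $\widehat{RSK}(w)=\mathtt{PM}^{\times}(w)$ as Dyck paths. Hence the identity of Theorem \ref{thrm:RSKMat} reads
\begin{align*}
\mathtt{PM}^{\times}(w)=\partial\circ\mathtt{Mat}\circ\mathtt{ev}(P).
\end{align*}
Regarding $\mathtt{PM}^{\times}$ as a map on Dyck paths through $P\mapsto w(P)$, this becomes the operator identity $\mathtt{PM}^{\times}=\partial\circ\mathtt{Mat}\circ\mathtt{ev}$ on $\mathtt{Dyck}(n)$.

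Next, I compose on the left with $\partial^{-1}$, which exists since promotion is a bijection with inverse $\partial^{\ast}$ (Theorem \ref{thrm:relproev}(1)). This gives
\begin{align*}
\partial^{-1}\circ\mathtt{PM}^{\times}=\mathtt{Mat}\circ\mathtt{ev}.
\end{align*}
Then I compose on the right with $\mathtt{ev}$ and use $\mathtt{ev}^{2}=\mathrm{id}$, obtaining
\begin{align*}
\mathtt{Mat}=\partial^{-1}\circ\mathtt{PM}^{\times}\circ\mathtt{ev}.
\end{align*}

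The one point needing care is bookkeeping rather than mathematics. In the corollary, $\mathtt{PM}^{\times}\circ\mathtt{ev}$ must be read as applying $\mathtt{ev}$ to the Dyck path, then passing to its $321$-avoiding permutation, then applying $\mathtt{PM}^{\times}$. This is exactly the identification fixed at the start, so the domains match and no further argument is needed.
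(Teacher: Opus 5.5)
You have the right algebra. Composing with $\partial^{-1}$ on the left and with $\mathtt{ev}$ on the right, and using $\mathtt{ev}^{2}=\mathrm{id}$, is exactly the derivation the paper means when it calls the corollary a direct consequence of Proposition \ref{prop:RSKPM} and Theorem \ref{thrm:RSKMat}. The gap is in the identification you fix in your first step and then dismiss as bookkeeping. The permutation $w(P)$ in Theorem \ref{thrm:RSKMat} is \emph{not} $\widehat{RSK}^{-1}(P)=\mathtt{DT}(P)$. If it were, Proposition \ref{prop:RSKPM} would give $\mathtt{PM}^{\times}(w(P))=\widehat{RSK}(\widehat{RSK}^{-1}(P))=P$. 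The theorem would then read $P=\partial\circ\mathtt{Mat}\circ\mathtt{ev}(P)$, and your final identity would collapse to $\mathtt{Mat}=\partial^{-1}\circ\mathtt{ev}$. That is already false for $n=2$: both $\mathtt{ev}$ and $\mathtt{Mat}$ fix $UURR$ and $URUR$, while $\partial$ interchanges them.

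The correspondence the theorem actually uses (as do Propositions \ref{prop:rmD1} and \ref{prop:cd1}) is the Rothe-diagram one from Section \ref{sec:Bg}, namely $w(P)=E_{p}^{-1}(P)$. Here $P$ is the path whose peaks sit at the marks of $w$ on or above the main diagonal. For example, $E_{p}(12)=URUR$, whereas $\widehat{RSK}(12)=UURR$. With this reading the theorem checks out for $n=2$:
\[
\widehat{RSK}(E_{p}^{-1}(URUR))=UURR=\partial(URUR),\qquad \widehat{RSK}(E_{p}^{-1}(UURR))=URUR=\partial(UURR).
\]
So $\mathtt{PM}^{\times}\circ E_{p}^{-1}$ is a nontrivial bijection on Dyck paths, and the corollary should be read as $\mathtt{Mat}=\partial^{-1}\circ\mathtt{PM}^{\times}\circ E_{p}^{-1}\circ\mathtt{ev}$. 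Once you replace your identification with this one, your two compositions give the statement verbatim. Proposition \ref{prop:RSKDT} plays no role here. In fact $\mathtt{DT}$ is the genuine inverse of $\widehat{RSK}$, which is precisely why it cannot be the $w(P)$ of the theorem.
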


\subsection{Lalanne--Kreweras involution and RSK correspondence}
To study the relation between the Lalanne--Kreweras involution and the RSK correspondence, 
we first study the relation between the rowmotion and RSK correspondence.

\begin{prop}
\label{prop:cd1}
We have the following commutative diagram on Dyck paths:
\begin{align}
\label{eq:cd1}
\tikzpic{-0.5}{[scale=0.8]
\node (0) at (0,0){$P$};
\node (1) at (3,0){$P'$};
\node (2) at (0,-2){$Q$};
\node (3) at (3,-2){$Q'$};
\draw[->,anchor=south] (0) to node {$\delta$} (1);
\draw[->,anchor=south] (2) to node {$\partial^{-1}$} (3);
\draw[->,anchor=east] (0) to node {$\widehat{RSK}$}(2);
\draw[->,anchor=west] (1) to node {$\widehat{RSK}$}(3);
}
\end{align}
\end{prop}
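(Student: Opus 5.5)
We need to show $\widehat{RSK}\circ\delta=\partial^{-1}\circ\widehat{RSK}$, where a Dyck path $P$ is identified with its $321$-avoiding permutation $w=E_p^{-1}(P)$. The plan is to reduce the claim to a statement about $\mathtt{Mat}$, using the factorization $\widehat{RSK}=\partial\circ\mathtt{Mat}\circ\mathtt{ev}$ of Theorem \ref{thrm:RSKMat}.

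First translate $\delta$ through $\mathtt{ev}$. By Proposition \ref{prop:rmD1}, $\delta=\mathtt{Dyck}_1$, and by Lemma \ref{lemma:rmev} (which has not been proved yet), $\mathtt{ev}=\delta^n$. Since $\mathtt{ev}$ is then a power of $\delta$, it commutes with $\delta$. To avoid circularity, I would instead argue directly from Proposition \ref{prop:DyckPMev} that $\mathtt{ev}=\ast$. One checks from the box poset $\mathcal{B}(n)$ that reflecting a Dyck path (the operation $\ast$) sends order filters to order ideals. It therefore conjugates $\delta$ to $\delta^{-1}$, that is, $\mathtt{ev}\circ\delta=\delta^{-1}\circ\mathtt{ev}$. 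Using Theorem \ref{thrm:relproev}(1), the target identity then becomes
\begin{align*}
\partial\circ\mathtt{Mat}\circ\mathtt{ev}\circ\delta=\partial^{-1}\circ\partial\circ\mathtt{Mat}\circ\mathtt{ev},
\end{align*}
which is equivalent to $\mathtt{Mat}\circ\delta^{-1}=\partial^{-2}\circ\mathtt{Mat}$ after substituting $Q=\mathtt{ev}(P)$. Since the exponent is suspicious, I would first test small cases $n=2,3$ and fix the correct exponent; most likely the needed identity is $\mathtt{Mat}\circ\delta=\partial^{-1}\circ\mathtt{Mat}$ (or $\partial$), up to the conjugation by $\ast$.

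The core step is the $\mathtt{Mat}$ identity, which I would check directly on chord diagrams. Under $\mathtt{PM}$, promotion is the rotation $\mathtt{Rot}$ by Proposition \ref{prop:DyckPMev}, so $\partial^{\pm1}$ shifts every label $i\mapsto i\mp1$ cyclically. Rowmotion replaces the valleys of $P$ by peaks, which changes the sequence $K(P)$ in a controlled way. A valley in row $i$ at column $v_i$ gives $k_i=n-v_i+1$. After $\delta=\mathtt{Dyck}_1$, the new valleys are located one row and one column shifted from the old peaks. So I expect each unbarred entry to shift by one, and the barred entries $\bar i$ to shift to $\overline{i\pm1}$, with the boundary row $i=1$ or $i=n$ toggling between barred and unbarred. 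This matches exactly the description of $\partial$ on $K$ used in the proof of Theorem \ref{thrm:RSKMat}, namely $i\mapsto i-1$ and $\bar i\mapsto\overline{i+1}$. The greedy pairing algorithm then produces a chord diagram rotated by one step.

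The main obstacle is this boundary behaviour: the wrap-around arc, and the order in which the greedy algorithm processes the entries $k_i$ once the indexing has changed. As an alternative, I would use induction on $n$ as in the proof of Theorem \ref{thrm:RSKMat}. Delete the last letter $w_n$ and track, in the Rothe diagram, how $\mathtt{Dyck}_1$ acts via the insertion notation $w\xleftarrow{k}1$ from the proof of Proposition \ref{prop:rv}, split into its three cases $k\ge3$, $k=2$, $k=1$. Then compare with Proposition \ref{prop:RSKPM}, where $\widehat{RSK}=\mathtt{PM}^{\times}$ and $\partial^{-1}$ is a rotation of the non-crossing matching. In each case, inserting $1$ at position $k$ adds the arc $(w_n,w_n+1)$ after rotation, so the three cases reduce to checking that the new arc lands in the right place.
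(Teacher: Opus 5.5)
Your reduction contains a false step. On Dyck paths $\mathtt{ev}=\ast$ reverses the word and swaps $U\leftrightarrow R$, which reflects the path in the antidiagonal $x+y=n$. This reflection preserves $y-x$, so it is a rank-preserving automorphism of $\mathcal{B}(n)$ with $\ast F(P)=F(\ast P)$. Hence it \emph{commutes} with $\delta$, as the paper also uses in Corollary~\ref{cor:rmev}; it does not send order filters to order ideals. Concretely, for $n=3$ you have $\mathtt{ev}(UUURRR)=UUURRR$ and $\delta(UUURRR)=URURUR$, whereas $\delta^{-1}(UUURRR)=UURURR$. Even from your own premise, the algebra gives $\mathtt{Mat}\circ\delta^{-1}=\partial^{-1}\circ\mathtt{Mat}$, not $\partial^{-2}$. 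With the correct commutation, Theorem~\ref{thrm:RSKMat} reduces the proposition exactly to $\mathtt{Mat}\circ\delta=\partial^{-1}\circ\mathtt{Mat}$. So your guessed target is right, and the route is legitimate in principle. But that identity carries all the content: it is the paper's Theorem~\ref{thrm:rowMat}, proved later by a separate induction. ``Testing small cases to fix the exponent'' does not prove it.

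The core step is left as a heuristic, and the heuristic is wrong: $\delta$ does not act on $K(P)$ by an entrywise shift with only the boundary rows toggling. For $n=3$, $K(UUURRR)=(\bar1,\bar2,\bar3)$ while $K(\delta(UUURRR))=K(URURUR)=(2,3,\bar3)$, so the interior row $2$ switches from barred to unbarred. What does hold is that the chord diagram rotates, but the entry of $K$ representing an arc can jump to the other endpoint. Here the rotation $i\mapsto i+1$ sends the entries $\bar1,\bar2,\bar3$ to $1,\bar1,\bar2$, yet the sequence that actually produces the rotated arcs is $2,3,\bar3$. So you must show that the greedy pairing, run on the new sequence, still reproduces the rotated arcs. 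The paper does this by re-selecting representatives with the function $H$ used for $\mathtt{Mat}^{-1}$, and by splitting into cases $k_1=\bar1$ versus $k_1$ unbarred (the latter with three subcases). You name this as the main obstacle but do not resolve it.

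Your fallback induction is not coherent as stated either. The recursion $w=w'\xleftarrow{k}1$ of Proposition~\ref{prop:rv} removes the letter $1$, whereas Proposition~\ref{prop:RSKPM} recurses on the last letter $w_n$ and the arc $(w_n,w_n+1)$, and you do not reconcile the two.

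For comparison, the paper proves this proposition without $\mathtt{Mat}$. After checking $U^nR^n$, it inducts on $n$ by taking the largest $i$ for which $Q=\widehat{RSK}(w)$ has a peak between steps $\overline{i+1}$ and $\bar i$. This peak is the descent $w_i>w_{i+1}$ read off the recording tableau. The paper then checks that rowmotion moves it to $w_{i-1}>w_i$, i.e.\ shifts the peak by one step, exactly as $\partial^{-1}$ does.
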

\begin{proof}
Let $Q$ be a Dyck path. 
We first show that we have the commutative diagram when $Q=U^nR^r$.
By a simple observation, we have $\widehat{RSK}(U^nR^n)=12\cdots n$.
The action of the rowmotion on $12\cdots n$ gives the permutation $23\cdots n1$.
By the RSK correspondence, this permutation gives the pair of two-row Young tableaux 
\begin{align}
\left(
\begin{ytableau}
2 \\
1 & 3 & \cdots & n
\end{ytableau}\quad, \quad
\begin{ytableau}
n \\
1 & 2 & \cdots & n'
\end{ytableau},
\right),
\end{align}
where $n'=n-1$.	This pair of two-row Young tableaux gives the permutation $(2,3\cdots,n-1,n,1)$.
The action of $\widehat{RSK}^{-1}$ on this pair of tableaux gives the Dyck path $URU^{n-1}R^{n-1}$.
On the other hand, the promotion of the Dyck path $U^nR^n$ is given by the permutation $(1,n,2,3\cdots,n-1)$,
and we have the Dyck path $URU^{n-1}R^{n-1}$.
Therefore, we have the commutative diagram (\ref{eq:cd1}) for $Q=U^nR^n$.

We prove the claim by induction on $n$. For $n=1,2$, it is obvious that we have the commutative diagram (\ref{eq:cd1}).
We assume that the diagram (\ref{eq:cd1}) holds up to $n-1$.
A Dyck path $Q\neq U^nR^n$ has several peaks between the $\overline{i}$-th and $\overline{i+1}$-th edges 
where $\overline{i}=n+1-i$ and $1\le i\le n$. We take the maximal $i$ among such $\overline{i}$. 
Then $Q$ is obtained from $Q_{-}$ by inserting the peak at the $\overline{i}$-th position where 
$Q_{-}$ is a Dyck path of size $n-1$. 
By induction hypothesis, it is enough to show that $Q'$ is obtained from $Q'_{-}=\partial^{-1}(Q_{-})$ by inserting 
the peak between the $\overline{i-1}$-th and $\overline{i}$-th edges.
Recall that the RSK gives two Young tableaux which are called an insertion tableau and a recording tableau.
The existence of the peak between the $\overline{i}$-th and $\overline{i+1}$ implies that 
the $\overline{i}$-th (resp. $\overline{i+1}$-th) step is $R$ (resp. $U$) in $Q$.
In terms of the recording tableau, $i$ (resp. $i+1$) is in the first (resp. second) row.
By the action of $\widehat{RSK}^{-1}$, the permutation $w:=\widehat{RSK}^{-1}(Q)$ satisfies 
$w_{i}>w_{i+1}$. 
It is easy to see that the action of the rowmotion $\delta$ on $w$ gives $w_{i-1}>w_{i}$. 
The action of $\widehat{RSK}$ gives the Dyck path which has a peak between the $\overline{i-1}$-th 
and $\overline{i}$-th edges.
These observations imply the commutative diagram (\ref{eq:cd1}), which completes the proof.
\end{proof}

Recall that the rowvacuation $\mathtt{Rvac}$ is expressed 
as $\mathtt{Rvac}=\delta^{(r)}\delta^{(r-1)}\cdots\delta^{(0)}$ where 
$\delta^{(i)}=\mathbf{t}_{i}\cdots\mathbf{t}_{n-1}$ is defined 
by a product of toggles.
Let $\widehat{\delta}:=\delta^{(r)}\cdots\delta^{(1)}$.

\begin{prop}
\label{prop:cd2}
We have the commutative diagram:
\begin{align}
\label{eq:cd2}
\tikzpic{-0.5}{[scale=0.8]
\node (0) at (0,0){$P$};
\node (1) at (3,0){$P'$};
\node (2) at (0,-2){$Q$};
\node (3) at (3,-2){$Q'$};
\draw[->,anchor=south] (0) to node {$\widehat{\delta}$}(1);
\draw[->,anchor=south] (2) to node {$\mathtt{ev}$}(3);
\draw[->,anchor=east] (0) to node {$\widehat{RSK}$}(2);
\draw[->,anchor=west] (1) to node {$\widehat{RSK}$}(3);
}
\end{align}
\end{prop}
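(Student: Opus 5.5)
The plan is to derive the diagram (\ref{eq:cd2}) formally from Proposition \ref{prop:cd1} together with the toggle identities for rowvacuation and evacuation. No new combinatorial induction should be needed.

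The first step is an identity for $\widehat{\delta}$. By definition, $\mathtt{Rvac}=\delta^{(r)}\cdots\delta^{(1)}\delta^{(0)}$ with $\delta^{(0)}=\mathbf{t}_0\cdots\mathbf{t}_r=\delta$. Hence
\begin{align*}
\widehat{\delta}=\mathtt{Rvac}\circ\delta^{-1}.
\end{align*}
Proposition \ref{prop:Rvac}(2) gives $\mathtt{Rvac}\circ\delta=\delta^{-1}\circ\mathtt{Rvac}$. Combined with $\mathtt{Rvac}^2=\mathrm{id}$, this shows that $\widehat{\delta}$ is again an involution conjugating $\delta$ to $\delta^{-1}$.

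On the other side, I will use Theorem \ref{thrm:relproev}(4), namely $\mathtt{ev}\circ\partial=\partial^{-1}\circ\mathtt{ev}$, and the fact $\mathtt{ev}^2=\mathrm{id}$. Conjugating by $\widehat{RSK}$ and using Proposition \ref{prop:cd1}, which says $\widehat{RSK}\circ\delta=\partial^{-1}\circ\widehat{RSK}$, I get that $\Phi:=\widehat{RSK}^{-1}\circ\mathtt{ev}\circ\widehat{RSK}$ is an involution satisfying $\Phi\circ\delta=\delta^{-1}\circ\Phi$. Then $C:=\Phi\circ\widehat{\delta}$ commutes with $\delta$. I intend to identify $C$ with a power $\delta^{X}$ by the same argument used at the end of the proof of Proposition \ref{prop:rv}. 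That argument has a gap: commuting with $\delta$ does not by itself force $C$ to be a power of $\delta$.

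To close the gap, I would first try the following computation. By the toggle identities, $\mathtt{Rvac}=\mathtt{DRvac}\circ\delta^{r+2}$, and the paper already has $\mathtt{DRvac}=\mathtt{ev}\circ\mathtt{Rvac}$ and $\delta^{n}=\mathtt{ev}$ on Dyck paths (Lemma \ref{lemma:rmev}, with $r=n-2$). Using these, I would express $\widehat{\delta}$ in terms of $\mathtt{Rvac}=\mathtt{Dyck}_2$ and powers of $\delta$. Transporting through $\widehat{RSK}$ would then reduce the diagram to an identity of the form
\begin{align*}
\widehat{RSK}\circ\mathtt{Dyck}_2=\partial^{m}\circ\mathtt{ev}\circ\widehat{RSK}
\end{align*}
for an explicit shift $m$.

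That remaining identity I would verify on permutations. By Lemma \ref{lemma:D2inv}, $\mathtt{Dyck}_2$ acts by the insertion recursions (\ref{eq:Dyck2w1}) and (\ref{eq:Dyck2w2}). By Proposition \ref{prop:DyckPMev}, $\mathtt{ev}=\ast$ acts on the two-row tableaux $(P,Q)$ by reversal and complementation. The plan is to show that the Lalanne--Kreweras map corresponds under RSK to a Schützenberger-type operation on the pair $(P,Q)$, proving this by induction on $n$ with the same case split on $w_n$ used in Proposition \ref{prop:RSKPM}. The exponent $m$ is then pinned down by checking a single orbit representative. The natural choice is $U^nR^n$: here $\widehat{RSK}(U^nR^n)=12\cdots n$, and $\widehat{\delta}$ is computable directly from the toggles.

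I expect the main obstacle to be the identification of $C$ as an exact power of $\delta$. Checking one path only fixes the exponent if $C$ is already known to lie in $\langle\delta\rangle$, so the inductive verification on permutations is where the real work sits. If it becomes unwieldy, the fallback is to argue directly by induction on $n$, removing the last peak as in the proof of Proposition \ref{prop:cd1}. In that approach I would track how each partial rowmotion $\delta^{(i)}$ affects the recording tableau, and how each step of evacuation removes and freezes one label.
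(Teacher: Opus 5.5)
Your proposal has a genuine gap: it restates the claim in an equivalent form and leaves that form unproved. The algebra itself is fine. From $\widehat{\delta}=\mathtt{Rvac}\circ\delta^{-1}$ and Proposition~\ref{prop:cd1}, diagram (\ref{eq:cd2}) is equivalent to $\widehat{RSK}\circ\mathtt{Dyck}_2=\mathtt{ev}\circ\partial^{-1}\circ\widehat{RSK}=\partial\circ\mathtt{ev}\circ\widehat{RSK}$.

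\begin{itemize}
\item The shift is forced to be $m=1$. There is nothing to pin down on $U^nR^n$, and proving the identity only up to a power of $\partial$ would return you to the gap you yourself flagged.
\item This identity is exactly Corollary~\ref{cor:RSKDyck2}, which the paper \emph{deduces from} the present proposition. The step you call the real work, an induction showing that Lalanne--Kreweras becomes a ``Sch\"utzenberger-type operation'' on $(P,Q)$, is never specified.
\item The $\mathtt{DRvac}$ detour is vacuous. Substituting $\mathtt{DRvac}=\mathtt{ev}\circ\mathtt{Rvac}$ and $\delta^n=\mathtt{ev}$ into $\mathtt{Rvac}=\mathtt{DRvac}\circ\delta^{n}$ only gives $\mathtt{Rvac}=\mathtt{ev}\circ\mathtt{Rvac}\circ\mathtt{ev}$.
\item You rely on $\mathtt{Rvac}=\mathtt{Dyck}_2$ and $\delta^n=\mathtt{ev}$. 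Their proofs in the paper use the same ``commutes with $\delta$, check one path'' device you criticize.
\end{itemize}

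The missing idea is on the RSK side, and it makes that side trivial. Since $\mathrm{RSK}(w^{-1})=(Q,P)$, and $\widehat{RSK}$ reads $P$ forwards in positions $1,\dots,n$ and $Q$ backwards in positions $2n,\dots,n+1$ with the opposite step convention, swapping $P$ and $Q$ reverses the Dyck word and exchanges $U\leftrightarrow R$. Hence $\widehat{RSK}(w^{-1})=\ast\,\widehat{RSK}(w)=\mathtt{ev}\,\widehat{RSK}(w)$ by Proposition~\ref{prop:DyckPMev}. Your remark that $\ast$ acts on $(P,Q)$ ``by reversal and complementation'' conflates the tableau of the path $\widehat{RSK}(w)$ with the RSK pair; on the pair, $\ast$ is the swap.

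So the proposition is equivalent to a statement purely about the poset: $\widehat{\delta}=\mathtt{Dyck}_3$. That is, $\widehat{\delta}$ sends the path of $w$ to the path of $w^{-1}$, which transposes the Rothe diagram. This is what the paper proves, by induction on $n$:
\begin{itemize}
\item For a non-prime path, the toggles in $\widehat{\delta}$ act independently on the two concatenated factors.
\item For a prime path $R$, no rank-$0$ cell lies in the filter, and $\widehat{\delta}=\delta^{(r)}\cdots\delta^{(1)}$ never toggles rank $0$. So $\widehat{\delta}$ acts as $\mathtt{Rvac}=\mathtt{Dyck}_2$ on the path $\widetilde{R}$ obtained by deleting the first and last steps.
\item A Young-diagram comparison then shows that the peaks of the image are the transposed marks.
\end{itemize}
Without this reduction, or an actual proof of Corollary~\ref{cor:RSKDyck2}, your argument does not establish the proposition.
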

\begin{proof}
To prove the statement, since $\mathtt{ev}$ reverses the order and the steps in a Dyck path,
it is enough to show that the action of $\widehat{\delta}$ on a Dyck path $R$ gives the Dyck path $R'$
such that the $321$-avoiding permutations of $R$ and $R'$ are inverse to each other.
Since we consider only the $321$-avoiding permutation, a permutation $w'$ which is inverse to $w$
is obtained by reflecting the Rothe diagram of $w$ along the diagonal line $y=x$.

We prove the statement by induction on $n$. 
For $n=1,2$, the diagram (\ref{eq:cd2}) is commutative.
We assume that (\ref{eq:cd2}) is commutative up to $n-1$.
We consider the two cases: 1) $R$ is not a prime Dyck path, and 2) $R$ is a prime Dyck path.

1) Since $R$ is not a prime Dyck path, $R$ can be decomposed into a concatenation of two 
Dyck paths $R_1$ and $R_2$ such that $R=R_1\circ R_2$.
We consider the action of $\widehat{\delta}$ on $R_1\circ R_2$. 
Since $R_1$ and $R_2$ are Dyck paths of size smaller than $n$, the action of $\widehat{\delta}$ on 
$R$ is locally equivalent to the actions of $\widehat{\delta}$ on $R_1$ and $R_2$.
By induction hypothesis, $\widehat{\delta}(R)$ is obtained by concatenating the two Dyck paths 
$R'_1$ and $R'_2$ where $R'_1$ (resp. $R'_2$) is $R'_1=\widehat{\delta}(R_1)$ (resp. $R'_2=\widehat{\delta}(R_2)$).
From these, we have $R'=\widehat{\delta}(R)$.

2) Since $R$ is a prime Dyck path, $R$ does not touch the line $y=x$ except the first and last end points.
Let $\widetilde{R}$ be a Dyck path obtained from $R$ by deleting the first and last edges.
The action of $\widehat{\delta}$ on $R$ is equivalent to the action of $\mathtt{Rvac}$ on $\widetilde{R}$.
From Proposition \ref{prop:rv}, the action of $\mathtt{Rvac}$ is equivalent to the action of 
$\mathtt{Dyck}_2$ on $\widetilde{R}$. The map $\mathtt{Dyck}_2$ defines a Dyck path in such a way that
the marked points give a valley on the new Dyck path $\widetilde{R}'$.
Recall the following two facts: 
a) a Dyck path in the Rothe diagram defines a Young diagram above it, and 
b) $R$ is obtained from $\widetilde{R}$ by adding an up step from left and a right step from right. 
From a) and b), it is easy to see that the Young diagrams for $\widetilde{R}'$ and $R'$ are the same.
This implies that the Young diagram for $\widetilde{R}'$ defines the positions of marked points 
for $R'$ in such a way that a peak gives a marked point.
Then, it is clear that the action of $\widehat{\delta}$ on $R$ gives the Dyck path $R'$ whose 
permutation is inverse of that of $R$. 
This completes the proof.
\end{proof}

Recall that the map $\mathtt{Dyck}_3$ on a Dyck path $P$ gives the inverse permutation for $P$.
The next corollary is a direct consequence of the proof of Proposition \ref{prop:cd2}.
\begin{cor}
\label{cor:cd3}
We have the commutative diagram:
\begin{align}
\label{eq:cd3}
\tikzpic{-0.5}{[scale=0.8]
\node (0) at (0,0){$P$};
\node (1) at (3,0){$P'$};
\node (2) at (0,-2){$Q$};
\node (3) at (3,-2){$Q'$};
\draw[->,anchor=south] (0) to node {$\mathtt{Dyck}_3$}(1);
\draw[->,anchor=south] (2) to node {$\mathtt{ev}$}(3);
\draw[->,anchor=east] (0) to node {$\widehat{RSK}$}(2);
\draw[->,anchor=west] (1) to node {$\widehat{RSK}$}(3);
}
\end{align}
\end{cor}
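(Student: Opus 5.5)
The plan is to reduce Corollary~\ref{cor:cd3} to the two facts already available: Proposition~\ref{prop:cd2} and the description of $\mathtt{Dyck}_3$ as inversion of the underlying $321$-avoiding permutation. By definition, $\mathtt{Dyck}_3=E_w\circ E_p^{-1}$. Here $p(\omega)$ is read from the marks on or above the diagonal, and $w(\omega)$ is the reflection of the path read from the marks on or below the diagonal. Reflecting the Rothe diagram of $\omega$ in the line $y=x$ gives the Rothe diagram of $\omega^{-1}$, and it exchanges $M^{\uparrow}$ with $M^{\downarrow}$ while fixing $M^{d}$. Hence $E_w(\omega)=E_p(\omega^{-1})$. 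In other words, if $P=E_p(\omega)$, then $\mathtt{Dyck}_3(P)=E_p(\omega^{-1})$, so $\mathtt{Dyck}_3$ corresponds to $\omega\mapsto\omega^{-1}$. This is exactly the remark made just before the statement.

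Next I invoke the key step inside the proof of Proposition~\ref{prop:cd2}. There the diagram was reduced to two claims. The first is that $\widehat{\delta}(R)$ has $321$-avoiding permutation inverse to that of $R$. The second is that inverting the permutation corresponds to $\mathtt{ev}$ on the $\widehat{RSK}$ side. For the second claim I would make the argument explicit. By \cite[Theorem 2]{Sch61}, $\mathrm{RSK}(\omega^{-1})=(Q,P)$ whenever $\mathrm{RSK}(\omega)=(P,Q)$. In $\widehat{RSK}$, the first $n$ steps are read from $P$ and the last $n$ steps are read from $Q$ in reversed order with $U$ and $R$ swapped. Swapping $P$ and $Q$ therefore reverses the whole word and exchanges $U$ with $R$. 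This is the operation $\ast$ on a Dyck path. By Proposition~\ref{prop:DyckPMev} we have $\ast=\mathtt{ev}$, so $\widehat{RSK}(\omega^{-1})=\mathtt{ev}(\widehat{RSK}(\omega))$.

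Combining the two steps gives the result. Start from $P$ with permutation $\omega$, so that $Q=\widehat{RSK}(\omega)$. Then $P'=\mathtt{Dyck}_3(P)$ has permutation $\omega^{-1}$, and therefore $Q'=\widehat{RSK}(\omega^{-1})=\mathtt{ev}(Q)$, which is the commutative diagram~(\ref{eq:cd3}).

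The only point needing care is the second step. I would check it directly on the definition of $\widehat{RSK}$ rather than citing the proof of Proposition~\ref{prop:cd2}. Concretely, position $i\le n$ and position $2n+1-i$ exchange roles, and the assignment of first row to $U$ is flipped, so the word becomes reversed and complemented. I would verify this step by step. As a sanity check I would also test it on $U^nR^n\leftrightarrow 12\cdots n$: there $P=Q$, and $\mathtt{ev}$ must fix the path, which it does.
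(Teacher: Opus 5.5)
Your proposal is correct and follows the paper's route. The paper also combines two facts: $\mathtt{Dyck}_3$ inverts the underlying $321$-avoiding permutation, and, as used at the start of the proof of Proposition~\ref{prop:cd2}, inverting the permutation corresponds via $\mathrm{RSK}(\omega^{-1})=(Q,P)$ to reversing and complementing the Dyck word, which is $\mathtt{ev}=\ast$. You make both steps explicit (the Rothe-diagram reflection and the reverse-complement computation), whereas the paper only cites them.
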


\begin{cor}
\label{cor:RSKDyck2}
We have the commutative diagram:
\begin{align}
\label{eq:cd4}
\tikzpic{-0.5}{[scale=0.8]
\node (0) at (0,0){$P$};
\node (1) at (3,0){$P'$};
\node (2) at (0,-2){$Q$};
\node (3) at (3,-2){$Q'$};
\draw[->,anchor=south] (0) to node {$\mathtt{Dyck}_2$}(1);
\draw[->,anchor=south] (2) to node {$\mathtt{ev}\circ\partial^{-1}$}(3);
\draw[->,anchor=east] (0) to node {$\widehat{RSK}$}(2);
\draw[->,anchor=west] (1) to node {$\widehat{RSK}$}(3);
}
\end{align}
\end{cor}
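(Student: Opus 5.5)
The plan is to reduce Corollary \ref{cor:RSKDyck2} to the two commutative diagrams already established, Proposition \ref{prop:cd1} and Proposition \ref{prop:cd2}, together with a factorization of $\mathtt{Dyck}_2$ in terms of rowmotion.

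\textbf{Step 1: factor $\mathtt{Dyck}_2$.} By Proposition \ref{prop:rv} we have $\mathtt{Dyck}_2=\mathtt{Rvac}$. From the toggle expression of rowvacuation, $\mathtt{Rvac}=\delta^{(r)}\delta^{(r-1)}\cdots\delta^{(0)}$, and $\delta^{(0)}=\mathbf{t}_0\cdots\mathbf{t}_r=\delta$. Hence
\begin{align*}
\mathtt{Dyck}_2=\widehat{\delta}\circ\delta ,
\end{align*}
with $\widehat{\delta}=\delta^{(r)}\cdots\delta^{(1)}$ as defined before Proposition \ref{prop:cd2}. I must check the composition convention: the paper lets operators act on the right for promotion, but writes diagrams with maps on the left. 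I will read $\mathtt{Rvac}$ as first applying $\delta^{(0)}$ and then $\widehat{\delta}$. If the convention is reversed, the same argument goes through with the other order, using $\mathtt{Rvac}\circ\delta=\delta^{-1}\circ\mathtt{Rvac}$ from Proposition \ref{prop:Rvac}. In that case the bottom map becomes $\partial\circ\mathtt{ev}$. Theorem \ref{thrm:relproev}(4) gives $\partial\circ\mathtt{ev}=\mathtt{ev}\circ\partial^{-1}$, so the final answer is the same either way.

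\textbf{Step 2: stack the two squares.} Proposition \ref{prop:cd1} gives $\widehat{RSK}\circ\delta=\partial^{-1}\circ\widehat{RSK}$. Proposition \ref{prop:cd2} gives $\widehat{RSK}\circ\widehat{\delta}=\mathtt{ev}\circ\widehat{RSK}$. Combining them,
\begin{align*}
\widehat{RSK}\circ\mathtt{Dyck}_2=\widehat{RSK}\circ\widehat{\delta}\circ\delta=\mathtt{ev}\circ\widehat{RSK}\circ\delta=\mathtt{ev}\circ\partial^{-1}\circ\widehat{RSK},
\end{align*}
which is exactly the commutativity of diagram (\ref{eq:cd4}).

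\textbf{Expected obstacle.} The only delicate point is the bookkeeping of the order of composition in Step 1. I would fix it by testing on $P=U^nR^n$. There $\mathtt{Dyck}_2(U^nR^n)=(UR)^n$, and the proof of Proposition \ref{prop:cd1} gives $\widehat{RSK}(U^nR^n)$ explicitly. One then compares both sides, using $\mathtt{ev}=\ast$ from Proposition \ref{prop:DyckPMev}.

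Should a mismatch arise, the conjugation identity $\mathtt{ev}\circ\partial=\partial^{-1}\circ\mathtt{ev}$ from Theorem \ref{thrm:relproev} converts between the two possible forms. This closes the argument.
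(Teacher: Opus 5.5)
Your proposal is correct and follows the paper's own argument. The paper's proof likewise notes $\mathtt{Dyck}_2=\mathtt{Rvac}=\widehat{\delta}\circ\delta$, with $\delta=\delta^{(0)}$ applied first, and then stacks the square of Proposition~\ref{prop:cd2} on the square of Proposition~\ref{prop:cd1}, exactly as in your Step~2. Your hedge about the order of composition is unnecessary, since the paper fixes that order, but your claim that the result is ``the same either way'' is not right. If $\widehat{\delta}$ were applied first and all labels were read in the usual way, the same stacking would give $\partial^{-1}\circ\mathtt{ev}=\mathtt{ev}\circ\partial$, which differs from $\mathtt{ev}\circ\partial^{-1}$.
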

\begin{proof}
Note that $\mathtt{Dyck}_2=\widehat{\delta}\circ\delta$.
From Propositions \ref{prop:cd1} and \ref{prop:cd2}, we have the commutative diagram.
\end{proof}

\begin{cor}
We have the commutative diagram:
\begin{align}
\label{eq:cd5}
\tikzpic{-0.5}{[scale=0.8]
\node (0) at (0,0){$P$};
\node (1) at (3,0){$P'$};
\node (2) at (0,-2){$Q$};
\node (3) at (3,-2){$Q'$};
\draw[->,anchor=south] (0) to node {$\mathtt{Dyck}_2$}(1);
\draw[->,anchor=south] (2) to node {$\mathtt{ev}\circ\partial$}(3);
\draw[->,anchor=east] (0) to node {$\mathtt{Mat}$}(2);
\draw[->,anchor=west] (1) to node {$\mathtt{Mat}$}(3);
}
\end{align}
\end{cor}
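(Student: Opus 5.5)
We obtain the diagram (\ref{eq:cd5}) by conjugating Corollary \ref{cor:RSKDyck2} by the relation between $\mathtt{Mat}$ and $\widehat{RSK}$. Throughout we identify a Dyck path $P$ with its $321$-avoiding permutation, so $\widehat{RSK}$ becomes a bijection on Dyck paths. Theorem \ref{thrm:RSKMat} gives $\widehat{RSK}=\partial\circ\mathtt{Mat}\circ\mathtt{ev}$, hence
\begin{align*}
\mathtt{Mat}=\partial^{-1}\circ\widehat{RSK}\circ\mathtt{ev},
\end{align*}
using that $\mathtt{ev}$ is an involution (Theorem \ref{thrm:relproev}).

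Corollary \ref{cor:RSKDyck2} gives $\widehat{RSK}\circ\mathtt{Dyck}_2=\mathtt{ev}\circ\partial^{-1}\circ\widehat{RSK}$. We need $\mathtt{Dyck}_2$ to commute with $\mathtt{ev}$. By Proposition \ref{prop:rv}, $\mathtt{Dyck}_2=\mathtt{Rvac}$ and $\mathtt{DRvac}=\mathtt{ev}\circ\mathtt{Rvac}$. Both $\mathtt{Rvac}$ and $\mathtt{DRvac}$ are involutions (Proposition \ref{prop:Rvac}), so
\begin{align*}
\mathtt{ev}\circ\mathtt{Rvac}=\mathtt{DRvac}=\mathtt{DRvac}^{-1}=\mathtt{Rvac}\circ\mathtt{ev}.
\end{align*}
Hence $\mathtt{Dyck}_2$ commutes with $\mathtt{ev}$.

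Now compute:
\begin{align*}
\mathtt{Mat}\circ\mathtt{Dyck}_2
&=\partial^{-1}\circ\widehat{RSK}\circ\mathtt{ev}\circ\mathtt{Dyck}_2
=\partial^{-1}\circ\widehat{RSK}\circ\mathtt{Dyck}_2\circ\mathtt{ev}\\
&=\partial^{-1}\circ\mathtt{ev}\circ\partial^{-1}\circ\widehat{RSK}\circ\mathtt{ev}.
\end{align*}
By Theorem \ref{thrm:relproev}(4), $\mathtt{ev}\circ\partial=\partial^{-1}\circ\mathtt{ev}$. Therefore
\begin{align*}
\partial^{-1}\circ\mathtt{ev}\circ\partial^{-1}
=\mathtt{ev}\circ\partial\circ\partial^{-1}
=\mathtt{ev}
=\mathtt{ev}\circ\partial\circ\partial^{-1}.
\end{align*}
Substituting, and then using $\mathtt{Mat}=\partial^{-1}\circ\widehat{RSK}\circ\mathtt{ev}$ again,
\begin{align*}
\mathtt{Mat}\circ\mathtt{Dyck}_2
=\mathtt{ev}\circ\partial\circ\bigl(\partial^{-1}\circ\widehat{RSK}\circ\mathtt{ev}\bigr)
=(\mathtt{ev}\circ\partial)\circ\mathtt{Mat}.
\end{align*}
This is exactly the commutativity of (\ref{eq:cd5}).

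The main subtlety is the commutation $\mathtt{Dyck}_2\circ\mathtt{ev}=\mathtt{ev}\circ\mathtt{Dyck}_2$. The argument above takes it from Proposition \ref{prop:rv}, which in turn relies on Lemma \ref{lemma:rmev}, whose proof is deferred. If that dependence is to be avoided, use Proposition \ref{prop:DyckPMev}, which gives $\mathtt{ev}=\ast$: reversing the Dyck word and swapping $U$ and $R$. On the Rothe diagram this is a reflection in the antidiagonal. It preserves which marks lie above, on, or below the main diagonal, so it transports the construction of $q(\omega)$ from $p(\omega)$. The other potential pitfall is composition order, since the paper lets $\partial$ act on the right; it should be checked on the small case $n=2$.
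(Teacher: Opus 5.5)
Your proof is correct and is essentially the paper's argument: you write $\mathtt{Mat}=\partial^{-1}\circ\widehat{RSK}\circ\mathtt{ev}$ from Theorem~\ref{thrm:RSKMat}, conjugate Corollary~\ref{cor:RSKDyck2}, and then use $\mathtt{ev}\circ\mathtt{Dyck}_2\circ\mathtt{ev}=\mathtt{Dyck}_2$ together with $\partial^{-1}\circ\mathtt{ev}=\mathtt{ev}\circ\partial$. The only difference is that you justify the commutation of $\mathtt{Dyck}_2$ with $\mathtt{ev}$, either via Proposition~\ref{prop:rv} and the involutivity of $\mathtt{Rvac}$ and $\mathtt{DRvac}$, or via the antidiagonal reflection; the paper just asserts this commutation.
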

\begin{proof}
From Theorem \ref{thrm:RSKMat}, we have
\begin{align*}
\mathtt{Mat}\circ\mathtt{Dyck}_2\circ\mathtt{Mat}^{-1}
&=\partial^{-1}\circ\widehat{RSK}\circ\mathtt{ev}\circ\mathtt{Dyck}_2\circ\mathtt{ev}\circ\widehat{RSK}^{-1}\circ\partial, \\
&=\partial^{-1}\circ\widehat{RSK}\circ\mathtt{Dyck}_2\circ\widehat{RSK}^{-1}\circ\partial, \\
&=\partial^{-1}\circ\mathtt{ev}\circ\partial^{-1}\circ\partial, \\
&=\mathtt{ev}\circ\partial,
\end{align*}
where we have used $\mathtt{ev}\circ\mathtt{Dyck}_2\circ\mathtt{ev}=\mathtt{Dyck}_2$ and 
Corollary \ref{cor:RSKDyck2}. 
\end{proof}

As an application of $\widehat{RSK}$, we are ready to prove Lemma \ref{lemma:rmev}.
The rowmotion and the evacuation on Dyck paths have the following relation. 
\begin{cor}
\label{cor:rmev}
We have $\delta^{n}=\mathtt{ev}$.
\end{cor}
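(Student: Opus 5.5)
The plan is to transport the question through $\widehat{RSK}$ and use Proposition \ref{prop:cd1}. That proposition gives the conjugation relation
\begin{align*}
\widehat{RSK}\circ\delta=\partial^{-1}\circ\widehat{RSK}.
\end{align*}
Iterating it $n$ times yields $\widehat{RSK}\circ\delta^{n}=\partial^{-n}\circ\widehat{RSK}$. The task therefore reduces to computing $\partial^{-n}$ on Dyck paths and comparing it with the conjugate of $\mathtt{ev}$ under $\widehat{RSK}$.

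For the first half, recall that a Dyck path of size $n$ is a linear extension of a poset with $N=2n$ elements. Proposition \ref{prop:DyckPMev} identifies $\partial$ with $\mathtt{Rot}$, a rotation by $180/n$ degrees of the circular perfect matching. Hence $\partial^{n}$ is rotation by $180$ degrees, i.e.\ $i\mapsto i+n \pmod{2n}$ on the matching. The same conclusion follows from Theorem \ref{thrm:relproev}: there $\partial^{2n}=\mathtt{ev}^{\ast}\circ\mathtt{ev}=\ast\circ\ast=\mathrm{id}$ by Proposition \ref{prop:DyckPMev}, so $\partial^{-n}=\partial^{n}$. So the right-hand side is the half-turn $\partial^{n}$ composed with $\widehat{RSK}$.

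It remains to show that $\widehat{RSK}\circ\mathtt{ev}\circ\widehat{RSK}^{-1}=\partial^{n}$. I would extract this from Proposition \ref{prop:cd2} and Corollaries \ref{cor:cd3} and \ref{cor:RSKDyck2}. Corollary \ref{cor:RSKDyck2} says that $\mathtt{Dyck}_2$ is conjugate to $\mathtt{ev}\circ\partial^{-1}$, and Proposition \ref{prop:rv} says $\mathtt{Dyck}_2=\mathtt{Rvac}$. Proposition \ref{prop:Rvac}(4) with $r=n-2$ gives $\delta^{n}=\mathtt{DRvac}\circ\mathtt{Rvac}$. Conjugating by $\widehat{RSK}$, the identity $\delta^{n}=\mathtt{ev}$ becomes the identity $\partial^{-n}=\widehat{RSK}\circ\mathtt{ev}\circ\widehat{RSK}^{-1}$. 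Thus one needs the conjugate of $\mathtt{ev}$.

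My direct approach uses the definition of $\widehat{RSK}$. The path $\mathtt{ev}(P)=\ast(P)$ reverses and swaps steps, which on the $321$-avoiding permutation $w$ corresponds to $w\mapsto w_0w^{-1}w_0$ or a similar reversal-complement symmetry. By Schützenberger's theorem, this symmetry acts on $(\mathrm{ins},\mathrm{rec})$ by evacuation on both tableaux. Since the tableaux are two-row, tableau evacuation again equals $\ast$. In the $\widehat{RSK}$ path, the first $n$ steps are read from $P$ and the last $n$ from $Q$ reversed. Applying $\ast$ to both halves therefore amounts to exchanging the two halves of the step word with appropriate reversal, which is exactly the half-rotation $i\mapsto i+n$ of the perfect matching.

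The main obstacle is pinning down precisely which symmetry of $w$ corresponds to $\ast$ on $P$ under the paper's Rothe-diagram conventions, and verifying that it yields the half-turn rather than a reflection. I would first check the case $P=U^nR^n$, where $w=12\cdots n$, and then one small non-trivial example, to fix orientations. After that, the general statement follows from the tableau identities above.
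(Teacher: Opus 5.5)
Your reduction is sound, and it is a different route from the paper's. By Proposition~\ref{prop:cd1} (with $\widehat{RSK}$ precomposed with $E_p^{-1}$, as in its proof) and $\partial^{2n}=\mathrm{id}$, the corollary is equivalent to $\widehat{RSK}\circ\mathtt{ev}=\partial^{n}\circ\widehat{RSK}$. The paper instead notes that $\mathtt{ev}$ and $\delta^{n}$ are both involutions commuting with $\delta$, and normalizes at $U^nR^n$.

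However, that identity is the whole content of the proof, and your justification of it fails. The paragraph citing Propositions~\ref{prop:cd2}, \ref{prop:rv}, \ref{prop:Rvac} only restates the goal. Note also that the formula $\mathtt{DRvac}=\mathtt{ev}\circ\mathtt{Dyck}_2$ in Proposition~\ref{prop:rv} is derived from Lemma~\ref{lemma:rmev}, so it is not available to you.

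In the direct argument, the claim ``since the tableaux are two-row, tableau evacuation again equals $\ast$'' is false. The RSK tableaux of $\omega\in\mathcal{S}_n(321)$ have shape $(\lambda_1,\lambda_2)$ with $\lambda_1+\lambda_2=n$. This is rectangular only when $\lambda_1=\lambda_2$, and $\ast$ (swap rows and complement) does not preserve a non-rectangular shape. Already for $\omega=213$ both tableaux have rows $\{1,3\},\{2\}$: $\ast$ would leave only $2$ in the first row, whereas the evacuation has rows $\{1,2\},\{3\}$.

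Your Schützenberger step is also missing a swap. Your guess about the symmetry is right: with the paper's Rothe-diagram conventions, $\ast$ on $p(\omega)$ is reflection of the Rothe diagram in the antidiagonal, i.e.\ $\omega\mapsto w_0\omega^{-1}w_0$. But then $\mathrm{RSK}(w_0\omega^{-1}w_0)=(\mathrm{evac}\,Q,\mathrm{evac}\,P)$, with the two tableaux exchanged. It is this exchange, not evacuation of each tableau in place, that swaps the two halves of the $\widehat{RSK}$ word.

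The missing ingredient is the correct description of evacuation for two-row tableaux. Bracket the row word of a two-row tableau with $m$ cells: each second-row entry $j$ pairs with the nearest unpaired first-row entry $i<j$. Then $\mathrm{evac}$ sends each pair $\{i,j\}$ to $\{m+1-j,m+1-i\}$, and each unpaired first-row entry $i$ to the unpaired first-row entry $m+1-i$. This agrees with $\ast$ only when nothing is unpaired.

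With this description the argument closes. The non-crossing matching of $\widehat{RSK}(\omega)$ consists of three families:
\begin{itemize}
\item the bracket pairs of $P$ inside $[1,n]$;
\item the reflected bracket pairs of $Q$ inside $[n+1,2n]$;
\item nested arcs joining the unpaired entries of $P$ to the points $2n+1-i$, for $i$ an unpaired entry of $Q$.
\end{itemize}
The half-turn $i\mapsto i+n$ carries these three families onto the corresponding families for $(\mathrm{evac}\,Q,\mathrm{evac}\,P)$. The crossing arcs are exactly where you need ``unpaired entries stay in the first row'' rather than $\ast$.

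This gives $\widehat{RSK}\circ\mathtt{ev}=\partial^{n}\circ\widehat{RSK}$ path by path. That is worth having: a $\delta$-commuting $C$ fixing $U^nR^n$ is only forced to be the identity on the $\delta$-orbit of $U^nR^n$. For $n=3$ the orbit $\{UURRUR,URUURR\}$ is not reached from it, so the paper's normalization does not cover it.
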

\begin{proof}
By definition of $\delta$ and $\mathtt{ev}$, it is obvious that we have 
\begin{align}
\label{eq:evdel}
\mathtt{ev}\circ\delta=\delta\circ\mathtt{ev}, \qquad \mathtt{ev}^2=\mathtt{id}.
\end{align}
It is enough to construct a map which has the same property as $\mathtt{ev}$ in	 Eq. (\ref{eq:evdel}).
From (3) in Theorem \ref{thrm:relproev} and Eq. (\ref{eq:evDyck}) in Proposition \ref{prop:DyckPMev},
we have $\partial^{2n}=1$.
From Proposition \ref{prop:cd1}, we have $\delta=\widehat{RSK}^{-1}\circ\partial^{-1}\circ\widehat{RSK}$.
Therefore, we have 
\begin{align*}
\delta^{2n}&=\widehat{RSK}^{-1}\circ\partial^{-2n}\circ\widehat{RSK}, \\
&=1,
\end{align*}
where we have used $\partial^{-2n}=1$. 
Then, it is obvious that $\delta^{n}$ satisfies the same relation as Eq. (\ref{eq:evdel}).
The evacuation $\mathtt{ev}=C\cdot\delta^{n}$ with some $C$ such that $C^{2}=1$ and $\delta\circ C=C\circ\delta$.
To fix $C$, we compare the action of $\mathtt{ev}$ and $\delta^{n}$ on the top Dyck path $U^{n}R^{n}$.
We have $\mathtt{ev}(U^nR^n)=U^nR^n$ and $\delta^{n}(U^nR^n)=U^nR^n$. 
From these, we have $C=1$. This implies that $\mathtt{ev}=\delta^{n}$.
\end{proof}

\section{Non-crossing weighted partitions}
\label{sec:NCPTL}
\subsection{Non-crossing partitions and Dyck paths}
\label{sec:NCPDyck}
We introduce the notion of non-crossing partitions following \cite{Kre72}.
A set partition $\pi$ of the set $[N]:=\{1,2,\ldots,N\}$ is said to be {\it non-crossing} if 
$i$ and $k$, and $j$ and $l$ belong to the same blocks with $i<j<k<l$,
then all indices $i, j, k$ and $l$ belong to the same block.
For example, $\{\{1,2\},\{3,4\}\}$ is a non-crossing partition of $[4]$, and 
$\{\{1,3\},\{2,4\}\}$ is not non-crossing.
We denote by $\mathtt{NC}(N)$ the set of non-crossing partitions of $[N]$.
The cardinality of $\mathtt{NC}(N)$ is given by the $N$-th Catalan number.

Let $B_1,\ldots,B_m$ be the blocks of $\pi\in\mathtt{NC}(N)$, where 
$m$ is the number of blocks in $\pi$.
Suppose $l(B)$ be the size of the block $B$ in $\pi$, i.e., $l(B)=|B|$.
We associate a Dyck path $\mu(B)$ to a block $B$ in $\pi$ as follows.
If $l(B)=1$, we associate the Dyck path $\mu(B):=UR$ to $B$.
If $l:=l(B)\ge2$, we associate the Dyck path 
$\mu(B):=U(UR)^{l-1}R$.
We construct a Dyck path of size $N$ by gluing the 
Dyck paths $\mu(B_{i})$ for the blocks $B_{i}$, $1\le i\le m$ of $\pi$.
We introduce the order of blocks as follows:
\begin{align*}
B_{i}<B_{j} \Leftrightarrow \min(B_i)<\min(B_j).
\end{align*}
Note that the order is well-defined since each integer appears once 
in a block of $\pi$, and blocks $B_i$ and $B_j$ are non-crossing.

First, take Dyck paths $\mu(B_1)$ and $\mu(B_2)$. By the order of blocks,
we have $\min(B_1)=1$ and $\min(B_2)\ge2$.
If $\max(B_1)<\min(B_2)$, we concatenate two Dyck paths $\mu(B_1)$ and 
$\mu(B_2)$ from left to right.
If $\max(B_1)>\min(B_2)$ and $i:=\#\{j\in B_1| j<\min(B_2)\}$, we insert the path $\mu(B_2)$
at the vertex between $2i$-th and $2i+1$-th edges in $\mu(B_1)$.
Then, we obtain a large Dyck path corresponding to the union of two blocks
$\{B_1, B_2\}$. We denote the new Dyck path by $\mu(\{B_1,B_2\})$.
Similarly, we insert $\mu(B_3)$ into $\mu(\{B_1,B_2\})$. 
If $\max(B_1\cup B_2)<\min(B_3)$, we concatenate two Dyck paths $\mu(\{B_1,B_2\})$
and $\mu(B_3)$ from left to right.
If $\max(B_1\cup B_2)>\min(B_3)$ and $i=\#\{j\in B_1\cup B_2 | j<\min(B_3)\}$, we
insert the path $\mu(B_3)$ at the vertex between $2i$-th and $2i+1$-th edges in
$\mu(\{B_1,B_2\})$.
We denote by $\mu(\{B_1,B_2,B_3\})$ the new Dyck path.
We continue to insert the Dyck paths $\mu(B_i), 2\le i\le m$, one-by-one into 
$\mu(\{B_1,\ldots,B_{i-1}\})$. 
In this way, we obtain a Dyck path $\mu(\pi)$ of size $N$ from a non-crossing partition $\pi$.

\begin{defn}
\label{def:NCPDyck}
Let $p:=\mu(\pi)$ be a Dyck path as above.
We define a map from a non-crossing partition $\pi\in\mathtt{NC}(n)$ to a Dyck path $p$ of 
size $n$
as $\mathtt{NCPtoDyck}:\pi\mapsto p$.
\end{defn}

\begin{remark}
The map $\mathtt{NCPtoDyck}$ given in Definition \ref{def:NCPDyck} 
is essentially the same as the bijection $\phi_{n}$ 
studied in \cite[Section 2]{Stu13}. 
Therefore, $\mathtt{NCPtoDyck}$ is a bijection between Dyck paths and non-crossing partitions.
\end{remark}

For example, a Dyck path $U^3R^3U^2RU^2R^3$ corresponds to 
the following non-crossing partition of $[7]$:  
$\{\{1,3\},\{2\},\{4,5,7\},\{6\}\}$.
The pictorial presentation of this non-crossing partition 
is given in Figure \ref{fig:NC}.
\begin{figure}[ht]
\begin{tikzpicture}[scale=0.8]
\draw circle(3cm);
\foreach \a in {90,270/7,-90/7,-450/7,-810/7,-1180/7,-1540/7}
\filldraw [black](\a:3cm)circle(1.5pt);
\draw(90:3cm)node[anchor=south]{$1$};
\draw(270/7:3cm)node[anchor=south west]{$2$};
\draw(-90/7:3cm)node[anchor=north west]{$3$};
\draw(-450/7:3cm)node[anchor=north west]{$4$};
\draw(-810/7:3cm)node[anchor=north east]{$5$};
\draw(-1180/7:3cm)node[anchor=north east]{$6$};
\draw(-1540/7:3cm)node[anchor=south east]{$7$};
\draw(90:3cm)to[bend right=40](-90/7:3cm);
\draw(-450/7:3cm)to[bend right=40](-810/7:3cm)to[bend right=30](-1540/7:3cm)to[bend left=30](-450/7:3cm);
\end{tikzpicture}
\caption{The non-crossing partition $\{\{1,3\},\{2\},\{4,5,7\},\{6\}\}$}
\label{fig:NC}
\end{figure}
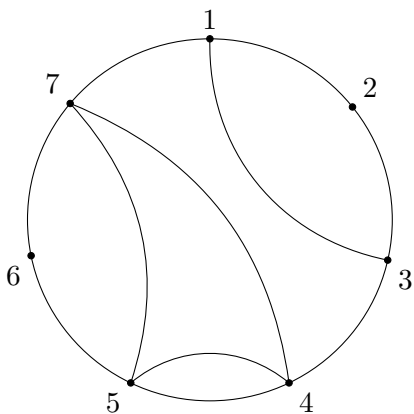
In the pictorial presentation, we connect integers in the same block by diagonals.
However, when a block consists of a single integer, we omit drawing a diagonal line.

We introduce another diagram which we call a {\it chord diagram} to represent a Dyck path.
A chord diagram consists of $2n$ labeled points in line and $n$ non-crossing arches which connect 
two labeled points.
The $2n$ labeled points have labels $1,1',2,2',\ldots,n,n'$ from left to right	.
Let $C$ be a chord diagram. 
We denote by $(i,j)$ if the $i$-th labeled point is connected by an arch with the $j$-th 
labeled point such that $i<j$. 
A Dyck path $P$ corresponding to $C$ is given as follows.
Suppose that $(i,j)$ is an arch. Then, the $i$-th (resp. $j$-th) step in $P$ 
is an up (resp. down) step.
Since we have $n$ arches, we obtain a Dyck path $P$ in this way.

For example, we have the following correspondence:
\begin{align*}
\tikzpic{-0.5}{[scale=0.7]
\foreach \x in {0,1,2,3,4,5} \draw(\x,0)node{$\bullet$};
\draw(0,0)node[anchor=north]{$1$}..controls(0,2)and(5,2)..(5,0)node[anchor=north]{$3'$}
(1,0)node[anchor=north]{$1'$}..controls(1,0.8)and(2,0.8)..(2,0)node[anchor=north]{$2$}
(3,0)node[anchor=north]{$2'$}..controls(3,0.8)and(4,0.8)..(4,0)node[anchor=north]{$3$};
}\qquad
\leftrightarrow \qquad
\tikzpic{-0.5}{[scale=0.7]
\foreach \x in {(0,0),(0,1),(0,2),(1,2),(1,3),(2,3),(3,3)}
\draw \x node{$\bullet$};
\draw(0,0)--(0,2)--(1,2)--(1,3)--(3,3); 
}
\end{align*}

Note that an arch in a chord diagram connects a label without a prime with another 
label with a prime.
We obtain a non-crossing partition $\pi$ from a chord diagram in the following way.
Suppose that $(i,j')$ is an arch in a chord diagram. We may have $j<i$ in general.
Then, $i$ and $j$ modulo $n$ belong to the same block. 
Since we have $n$ arches, we have $n$ 
conditions, and this defines the non-crossing partition $\pi$.
In the above example, we have three arches $(1,3'), (1',2)$ and $(2',3)$, which 
gives the non-crossing partition $\{\{1,2,3\}\}$.

Conversely, suppose we have a non-crossing partition $\pi$ consisting of 
$m$ blocks $B_{1},B_{2},\ldots,B_{m}$.
If $B_{i}$ consists of $p$ integers $i_1,\ldots,i_{p}$.
Then, we have $p$ arches which connects $i'_{q}$ and $i_{q+1}$ for $1\le q\le p$, 
where we define $i_{p+1}:=i_{1}$.
We obtain $n$ arches from the $m$ blocks, and the arches give a chord diagram.

As a summary, we have a correspondence:
\begin{align*}
\text{a non-crossing partition} \leftrightarrow \text{a chord diagram}
\leftrightarrow \text{ a Dyck path}.
\end{align*}

By an explicit construction of a chord diagram, the above map from a non-crossing partition
to a Dyck path via a chord diagram coincides with the map $\mathtt{NCtoDyck}$.

\subsection{Non-crossing weighted partitions and \texorpdfstring{$k$}{k}-Dyck paths}
We generalize the bijection between a Dyck path and non-crossing partition of $[N]$
to the case of a $(1,k)$-Dyck path.
The difference between $k\ge2$ case and Dyck paths is that we have to introduce 
the weight on a non-crossing partition.

For this purpose, we introduce the notion of weighted non-crossing partition.
Let $\pi$ and $\pi'$ be two non-crossing partitions of $[N]$.
Then, we write 
\begin{align*}
\pi\le \pi' \Leftrightarrow \pi' \text{ is a refinement of } \pi. 
\end{align*}
Here, ``refinement" means that a block in $\pi$ is obtained by 
merging several blocks of $\pi'$.
Let $\pi_1,\pi_2,\ldots,\pi_{k}$ be $k$ non-crossing partitions of $[N]$
such that 
\begin{align}
\label{eq:condpi}
\pi_1\le \pi_2\le \ldots \le \pi_{k}.
\end{align}
Then, we define $k$ non-crossing partitions $\pi$ of $[N]$ as an ordered set of 
$k$ non-crossing partitions $\pi_1,\ldots,\pi_{k}$ of $[N]$.
We regard each $\pi_{i}$ as the $i$-th layer of $\pi$.
Consider  a circular pictorial presentation of non-crossing partitions.
Suppose that $i$ and $j$ belong to the same block in $\pi_1$. 
Since all the layers satisfy the condition (\ref{eq:condpi}), 
a diagonal $d$ connecting the nodes $i$ and $j$ appear up to the $p$-th 
layer with $p\ge1$.
Then, we say that, in the pictorial presentation, the diagonal $d$ has weight $p$.
 
\begin{remark}
We define a $k$-chain of non-crossing partitions as in Eq. (\ref{eq:condpi}).
In some literature, we write $\pi\le \pi'$ when $\pi$ is a refinement of $\pi'$. 
However, our definition is more consistent with the graphical presentation, which 
we will introduce next.
\end{remark}  
 
\begin{example}
Consider the following $3$-non-crossing partition $\pi$ of $[4]$.
Suppose that the partition $\pi$ consists of three partitions
\begin{align*}
\pi_1=1234, \qquad \pi_2=234/1, \qquad \pi_3=4/23/1. 
\end{align*}
Note that the diagonal connecting $2$ and $3$ has the weight $3$,
and the diagonals connecting $2$ and $4$, and $3$ and $4$ are the weight $2$.
Other diagonals have weight $1$.
The pictorial presentation is given as follows.
The diagonals in black have the weight $1$, the ones in red have the weight $2$,
and the one in blue has the weight $3$.
\begin{figure}[ht]
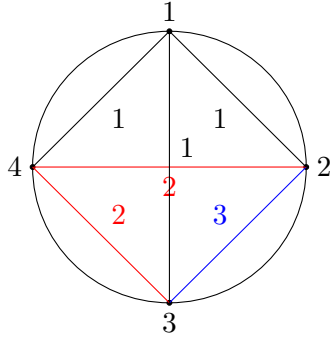

\tikzpic{-0.5}{[scale=0.6]
\draw circle(3cm);
\foreach \a in {0,90,180,270}
\filldraw [black](\a:3cm)circle(1.5pt);
\draw(90:3cm)node[anchor=south]{$1$};
\draw(180:3cm)node[anchor=east]{$4$};
\draw(270:3cm)node[anchor=north]{$3$};
\draw(0:3cm)node[anchor=west]{$2$};
\draw[blue](0:3cm)to[anchor=south east]node{$3$}(270:3cm);
\draw[red](0:3cm)to[anchor=north]node{$2$}(180:3cm);
\draw[red](270:3cm)to[anchor=south west]node{$2$}(180:3cm);
\draw(90:3cm)to[anchor=north east]node{$1$}(0:3cm);
\draw(90:3cm)to[anchor=north west]node{$1$}(180:3cm);
\draw(90:3cm)to[anchor=south west]node{$1$}(270:3cm);
}
\caption{An example of weighted non-crossing partition of $[4]$.}
\label{fig:wNC}
\end{figure}
The condition (\ref{eq:condpi}) uniquely 
fixes the weights of diagonals.
For example, the diagonal connecting $1$ and $3$ has the weight $1$, which 
is fixed by other weights of diagonals.
\end{example}

The total number of $k$-non-crossing partitions is given as follows.
\begin{prop}[\cite{Ede80,Ede82}]
The number of $k$-non-crossing partitions of size $n$ is given by 
the $n$-th Fuss--Catalan number, i.e.
\begin{align}
\#\{\pi_1\le \ldots\le \pi_{k}| \pi_i\in \mathtt{NC}(n)\}
=\genfrac{}{}{}{}{1}{kn+1}\genfrac{(}{)}{0pt}{}{(k+1)n}{n}.
\end{align}
\end{prop}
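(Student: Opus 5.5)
Since this statement is Edelman's enumeration of $k$-chains, I would not recount $k$-chains directly. Instead I would build an explicit bijection between $k$-chains $\pi_1\le\cdots\le\pi_k$ in $\mathtt{NC}(n)$ and $k$-Dyck paths of size $n$, and then invoke the Fuss--Catalan count $|\mathtt{Dyck}_{(1,k)}(n)|=\frac{1}{kn+1}\binom{(k+1)n}{n}$ recorded in Section \ref{sec:Bg}. The bijection will be a layered version of $\mathtt{NCPtoDyck}$.

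\textbf{Step 1 (the map).} Given a chain, I read it through its weighted pictorial presentation: each diagonal carries a weight in $\{1,\ldots,k\}$, and the weights determine the chain. Each vertex $i\in[n]$ contributes one up step, and the $kn$ right steps are distributed over the $k$ layers. In the case $k=1$ this is exactly $\mathtt{NCPtoDyck}$: a block $B$ gives $\mu(B)=U(UR)^{l-1}R$, and the blocks are nested according to their minima.

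For general $k$, I define the map recursively on the first layer $\pi_1$. Fix a block $B=\{i_1<\cdots<i_l\}$ of $\pi_1$. Between consecutive elements $i_q$ and $i_{q+1}$, the restriction of $(\pi_2,\ldots,\pi_k)$ to $B$ is a $(k-1)$-chain, and I encode it by a $(k-1)$-Dyck fragment. Equivalently, I use the $k+1$-row tableau, or perfect-matching description, from Remark \ref{remark:svSYT}. In that description each block of the perfect matching has size $k+1$ and records one vertex together with the layers at which it separates from its predecessor. The blocks are then glued by the same insertion rule as in Definition \ref{def:NCPDyck}, with $2i$ replaced by $(k+1)i$.

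\textbf{Step 2 (inverse and bijectivity).} I would construct the inverse through perfect matchings. A $k$-Dyck path gives $\mathtt{PM}(p)$, a non-crossing collection of blocks $S_j$ with $|S_j|=k+1$. Label the $(k+1)n$ points cyclically by $1,1^{(1)},\ldots,1^{(k)},2,\ldots$, generalizing the chord diagram with $i,i'$. Reading the point of $S_j$ at relative position $r$ then yields the partition $\pi_r$ by the same rule as the chord-diagram rule: $(i,j')$ places $i$ and $j$ in the same block.

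Three checks are needed. First, non-crossingness of $\mathtt{PM}(p)$ gives that each $\pi_r$ is non-crossing. Second, the chain condition $\pi_r\le\pi_{r+1}$ must hold. Third, the two constructions must be mutually inverse; I would prove this by induction on $n$, removing a block of $\pi_1$ containing $n$, which corresponds to removing a prime final factor of the path.

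\textbf{Main obstacle.} The hard step is showing that the chain condition $\pi_1\le\cdots\le\pi_k$ holds exactly when the matching comes from a genuine $k$-Dyck path, that is, when the path stays above $y=x/k$. If this characterization resists direct proof, my fallback is Edelman's route: encode the chain by a $k$-ary tree, using the refinement of a block of $\pi_r$ into blocks of $\pi_{r+1}$ as the children, and count $k$-ary trees with $n$ internal nodes by Lagrange inversion on $T=1+xT^{k+1}$. This gives $\frac{1}{kn+1}\binom{(k+1)n}{n}$ directly.
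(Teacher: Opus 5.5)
\textbf{There is a genuine gap: the bijection is never proved.} You reduce the count to a bijection between $k$-chains and $k$-Dyck paths. Your Step 2 decoding (pair the point $i=i^{(0)}$ of each block of $\mathtt{PM}(p)$ with its point $j^{(r)}$) is in fact the right map. On the example of Section~\ref{sec:ex} it reproduces the listed correspondence; there $\pi^{(r)}$ becomes coarser as $r$ grows, so $\pi_r=\pi^{(k+1-r)}$ in the paper's convention. But the proposal stops exactly where the content of the theorem lies. The paper cannot supply it: it only cites Edelman, and it declares $\mathtt{NCPtoDyck}$ invertible ``by construction''.

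You have also put the obstacle in the wrong place. Every non-crossing partition of $[(k+1)n]$ into blocks of size $k+1$ is $\mathtt{PM}(p)$ for a unique $k$-Dyck path: read the block minima as up steps, and note that each right step is preceded by the minimum of its block. So nothing about staying above $y=x/k$ needs proof. What must be shown is that the decoded tuple is a chain, and that every chain arises exactly once. The induction you name for this is not carried out, and that is precisely where compatibility of all $k$ layers has to be controlled.

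Your non-crossing check is fine once you state the fact it uses. The points strictly between two cyclically consecutive elements of a block form a union of whole blocks. Hence each block has exactly one point in each residue class mod $k+1$, met in cyclic order $0,1,\ldots,k$. Restricting to classes $0$ and $r$ then gives a $k=1$ chord diagram.

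The fallback does not rescue the argument. The tree recording how blocks of $\pi_r$ split in $\pi_{r+1}$ forgets which elements lie in which block: for $k=1$, $12/3$ and $13/2$ give the same tree. It also does not have exactly $k+1$ children at each internal node, which is what $T=1+xT^{k+1}$ counts. So that Lagrange inversion, though correct, counts a different set.

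\textbf{The missing idea is the Kreweras complement.} Let $N=(k+1)n$ and $c=(1\,2\,\cdots\,N)$. Let $\sigma$ be the permutation whose cycles are the blocks of $\mathtt{PM}(p)$ read increasingly. Then $\sigma\le_T c$ in absolute order, and its absolute length is $\ell_T(\sigma)=kn$.

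Set $K=\sigma^{-1}c$. It has $\ell_T(K)=N-1-kn=n-1$, and by the residue fact it maps each residue class to itself. Let $\kappa_t$ be the permutation of $[n]$ induced by $K$ on class $t$, so $K(i^{(t)})=\kappa_t(i)^{(t)}$. From $\sigma=cK^{-1}$ one computes that $\sigma^r$ sends $i^{(0)}$ to $j^{(r)}$ with $j=(\kappa_0\cdots\kappa_{r-1})^{-1}(i)$. Since $\sigma^{k+1}=\mathrm{id}$, this gives $\kappa_0\kappa_1\cdots\kappa_k=(1\,2\,\cdots\,n)$ with $\sum_t\ell_T(\kappa_t)=n-1$.

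The prefixes $\kappa_0\cdots\kappa_{r-1}$ of this minimal factorization form a chain in $[e,(1\,2\,\cdots\,n)]\cong\mathtt{NC}(n)$. Their cycle partitions are exactly your $\pi^{(r)}$, and this is the chain condition.

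Conversely, a chain gives such a factorization, hence $K$, and hence $\sigma=cK^{-1}$. This $\sigma$ raises residues by one, and $\sigma^{k+1}$ fixes every class-$0$ point. So every cycle of $\sigma$ has length $k+1$. Then $\ell_T(\sigma)+\ell_T(\sigma^{-1}c)=kn+(n-1)=N-1$ gives $\sigma\le_T c$, so $\sigma$ is a non-crossing partition into $(k+1)$-element blocks.

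The two constructions are inverse to each other. This completes the bijection and, together with the Fuss--Catalan count of $k$-Dyck paths, the proof.
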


The number of $k$-non-crossing partitions of size $n$ is the same as the number of 
$k$-Dyck paths of size $n$. 
In Section \ref{sec:NCPDyck}, we introduce a bijection between a Dyck path
and a non-crossing partition of $[N]$.
In what follows, we generalize this correspondence to the case of 
a $(1,k)$-Dyck path and a weighted non-crossing partition.

Let $\pi$ be a weighted non-crossing partition of $[N]$, and 
consider a circular pictorial presentation of $\pi$.
If we ignore the weight of diagonals, we have several blocks 
$B_1,\ldots, B_{h}$ in $\pi$ with $\min(B_i)<\min(B_{i+1})$ for 
$1\le i\le h-1$.
Let $\mu_{i}$ be a $(1,k)$-Dyck path corresponding to 
the block $B_{i}$.
Take $\mu_1$ and $\mu_2$ and set $j:=\#\{p\in B_{1}| p<\min(B_{2})\}$. 
We insert $\mu_2$ between $(k+1)j$-th step and $(k+1)j+1$-th step 
of $\mu_1$. We denote by $\mu_{1\cup2}$ the new $(1,k)$-Dyck path.
Next, take $\mu_3$. Since $\mu_{1\cup2}$ corresponds to the 
merged block $B_1\cup B_{2}$, we have $\min(B_{1}\cup B_2)<\min(B_3)$.
Let $j':=\#\{p\in B_1\cup B_{2}| p<\min(B_3)\}$.
We insert $\mu_3$ between $(k+1)j'$-th step and $(k+1)j'+1$-th step 
of $\mu_{1\cup 2}$.
Then, we have a new $(1,k)$-Dyck path $\mu_{1\cup2\cup3}$.
We continue this process to obtain a new path $\mu_{1\cup\ldots\cup h}$.
We define this path $\mu_{1\cup\ldots\cup h}$ as $(1,k)$-Dyck path 
corresponding to $\pi$.

The remaining task is to assign a $(1,k)$-Dyck path to a weighted block 
$B_{i}$ in $\pi$.
Let $B$ be a block of $[r]$ which consists of diagonals with a unique weight $p\le k$.
The $(1,k)$-Dyck path $\mu(B)$ is defined by
\begin{align*}
\mu(B)=UR^{k-p}(UR^{k})^{r-1}R^{p}.
\end{align*}
Suppose that the block $B$ of $[r]$ can be obtained from $B'$ by increasing the weights of 
diagonals by one.
Let $\mu(B')$ be a $(1,k)$-Dyck path corresponding to $B'$. 
We denote $\mathbf{u}(B')$ be the step sequence, {\it i.e.}, the set of positions of up steps 
in $\mu(B')$.
Then, we define 
\begin{align*}
\mathbf{u}(B)=\{1\}\cup\{u-1: 2\le u\in\mathbf{u}(B')\}.
\end{align*}
The set $\mathbf{u}(B)$ uniquely fixes the $(1,k)$-Dyck path $\mu(B)$.

The map from $\pi$ to $\mu$ is invertible by its construction.
We decompose the paths $\mu$ into small paths, we assign a weighted non-crossing 
partition to these small paths.	

\begin{defn}
We define $\mathtt{NCPtoDyck}$ as a map from a non-crossing weighted partition
to a $k$-Dyck path constructed as above.
\end{defn}

The map $\mathtt{NCPtoDyck}$ depends on $k$, but we omit the dependence of $k$
by abuse of notation. Especially, when $k=1$, the above map coincides with 
the map from a non-crossing partition to a Dyck path given in Definition \ref{def:NCPDyck}.

\begin{example}
We consider the weighted partition for $k=3$ as in Figure \ref{fig:wNC}.
We first consider the partition $23$, which is in the third layer. 
This partition corresponds to the $(1,3)$-Dyck path $UR^2UR^4$.
Next, we consider the partition $\{234,23\}$, which is in the second and the third layers.
The partition $\{234,23\}$ corresponds to the path $URUR^4UR^4$.
Finally, the partition $\{1234,234/1,4/23/1\}$ corresponds to 
the path $UR^2URUR^4UR^5$.
\end{example}

\subsection{Maps on non-crossing partitions}
We introduce five maps on a non-crossing partition. 
They are 
\begin{enumerate}
\item rotation $\mathtt{Rot}$,
\item reflection $\mathtt{Ref}$,
\item Kreweras complement map $\mathtt{Kre}$,
\item Simion--Ullman involution $\mathtt{SU}$,
\item Lalanne--Kreweras involution $\mathtt{LK}$.
\end{enumerate}

\paragraph{(1) Rotation}
The map $\mathtt{Rot}$ is a $360/n$ degree counterclockwise rotation of 
the circular presentation of a non-crossing partition. 
For example, the rotation of the non-crossing partition in Figure \ref{fig:NC}
gives $\{\{1\},\{2,7\},\{3,4,6\},\{5\}\}$.

\paragraph{(2) Reflection}
Recall that a non-crossing partition $\pi$ consists of blocks which are a set of 
integers. The reflection of $\pi$ is to replace $i$ by $n+1-i$ in each block.
For example, the reflection of the non-crossing partition in Figure \ref{fig:NC}
gives $\{\{1,3,4\},\{2\},\{5,7\},\{6\}\}$.

\paragraph{(3) Kreweras complement map}
Let $\pi$ be a non-crossing partition and consider its circular presentation $C(\pi)$.
Recall that the points in $C(\pi)$ are enumerated by $1,2,\ldots,n$ clockwise.
We append new $n$ points on $C(\pi)$ by dividing the interval between two points labeled 
$i$ and $i+1$ for $1\le i\le n-1$, or $n$ and $1$.
We put a label $1'$ on the point between $1$ and $2$, and put labels $i'$ on the $n-1$
remaining new points clockwise.
In $C(\pi)$, the points are connected by diagonals. The diagonals divide the inside of $C(\pi)$
into small regions. When $i'$ and $j'$ are in the same region, we connect them by a diagonal.
In this way, we have a new non-crossing partition $\pi^{c}$ with labels with a prime.
We define $\mathtt{Kre}(\pi):=\pi^{c}$.
For example, the Kreweras complement of the non-crossing partition in Figure \ref{fig:NC}
gives $\{\{1,2\},\{3,7\},\{4\},\{5,6\}\}$.

\paragraph{(4) Simion--Ullman involution}
The map $\mathtt{SU}$ is constructed in a similar manner to $\mathtt{Kre}$.
The difference is we put a new label $1'$ between the points labeled $n$ and $1$, 
and put labels $i'$ counterclockwise.
For example, the Simion--Ullman involution of the non-crossing partition in Figure \ref{fig:NC}
gives $\{\{1,5\},\{2,3\},\{4\},\{6,7\}\}$.

\paragraph{(5) Lalanne--Kreweras involution}
Recall that $\mathtt{Dyck}_2$ is an Lalanne--Kreweras involution on a Dyck path, 
$\widehat{RSK}$ is a bijection between $321$-avoiding permutation and a Dyck path,
and $\mathtt{NCPtoDyck}$ is a bijection between a non-crossing partition and 
a Dyck path.
We define an involution $\mathtt{LK}$ on non-crossing partitions as a 
composition of the bijections:
\begin{align}
\label{eq:defLK}
\mathtt{LK}:=\mathtt{NCPtoDyck}^{-1}\circ\widehat{RSK}\circ\mathtt{Dyck}_2\circ\widehat{RSK}^{-1}\circ\mathtt{NCPtoDyck}
\end{align}
For example, the action of the Lalanne--Kreweras involution on the non-crossing partition in Figure \ref{fig:NC}
is given by $\{\{1,2\},\{3\},\{4,7\},\{5,6\}\}$.

We summarize the properties of the maps.
\begin{prop}
\label{prop:Xinv}
We have 
\begin{gather*}
\mathtt{Kre}^{2}=\mathtt{Rot}, \\
\mathtt{Rot}^{n}=\mathtt{Ref}^{2}=\mathtt{SU}^{2}=\mathtt{LK}^{2}=\mathrm{id}.
\end{gather*}
\end{prop}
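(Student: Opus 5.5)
I will verify the five identities separately, working in the circular representation for the first four and using the commutative diagrams of Section~\ref{sec:MatRSK} for the last.

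The easy identities come first. $\mathtt{Rot}$ is a rotation by $360/n$ degrees of $n$ labelled points, so $\mathtt{Rot}^n$ is rotation by $360$ degrees, which is the identity. $\mathtt{Ref}$ replaces $i$ by $n+1-i$, and applying this twice returns every label to itself. Neither map can break the non-crossing property, since both are isometries of the circle.

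For $\mathtt{Kre}^2=\mathtt{Rot}$ I will use the standard interleaving picture. Place the $2n$ points $1,1',2,2',\dots,n,n'$ clockwise on the circle. Then $\mathtt{Kre}(\pi)$ is the coarsest non-crossing partition on the primed points whose diagonals avoid those of $\pi$. Applying the construction a second time to $\pi^c$ produces new points lying between $i'$ and $(i+1)'$. These are exactly the original points $i+1$, relabelled by the rule "first new point after $1'$ is called $1$". The coarsest partition of the unprimed points compatible with $\pi^c$ is $\pi$ itself, because $\mathtt{Kre}$ is an anti-automorphism and maximality holds in both directions. Hence $\mathtt{Kre}^2(\pi)$ is $\pi$ with every label $i+1$ renamed $i$, a shift of labels by one. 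I will check on the example of Figure~\ref{fig:NC} that this shift is counterclockwise, matching $\mathtt{Rot}$.

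For $\mathtt{SU}$ the construction is the same, but the primed labels now run counterclockwise starting between $n$ and $1$. Applying it twice reverses the orientation twice and makes the relabelling shift cancel, so the labels return to their original positions. Equivalently, $\mathtt{SU}=\mathtt{Ref}\circ\mathtt{Kre}$ up to a fixed rotation. Once I fix that rotation $c$, I will check that the resulting word has order two, using $\mathtt{Ref}\circ\mathtt{Rot}\circ\mathtt{Ref}=\mathtt{Rot}^{-1}$ and $\mathtt{Ref}\circ\mathtt{Kre}\circ\mathtt{Ref}=\mathtt{Kre}^{-1}$. The main obstacle I expect is this bookkeeping: pinning down the exact rotation offset and the direction of the shift.

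Finally, $\mathtt{LK}$ is defined in \eqref{eq:defLK} as a conjugate of $\mathtt{Dyck}_2$ by the bijection $\mathtt{NCPtoDyck}^{-1}\circ\widehat{RSK}$. Therefore
\[
\mathtt{LK}^2=\mathtt{NCPtoDyck}^{-1}\circ\widehat{RSK}\circ\mathtt{Dyck}_2^2\circ\widehat{RSK}^{-1}\circ\mathtt{NCPtoDyck}=\mathrm{id}
\]
by Lemma~\ref{lemma:D2inv}. As a consistency check, Corollary~\ref{cor:RSKDyck2} says this conjugate corresponds to $\mathtt{ev}\circ\partial^{-1}$ on the path side. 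Squaring gives $\mathtt{ev}\circ\partial^{-1}\circ\mathtt{ev}\circ\partial^{-1}=\partial\circ\partial^{-1}=\mathrm{id}$ by Theorem~\ref{thrm:relproev}(4), in agreement.
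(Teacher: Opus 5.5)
Your proposal is correct and follows essentially the same route as the paper: $\mathtt{Rot}^n=\mathrm{id}$, $\mathtt{Ref}^2=\mathrm{id}$, $\mathtt{Kre}^2=\mathtt{Rot}$ and $\mathtt{SU}^2=\mathrm{id}$ are read off from the circular construction (the paper just says ``by construction'' and, for $\mathtt{SU}$, points to the counterclockwise labelling), and $\mathtt{LK}^2=\mathrm{id}$ holds because $\mathtt{LK}$ is a conjugate of the involution $\mathtt{Dyck}_2$. Your version gives more detail than the paper; the one step worth stating more cleanly is that the double complement returns $\pi$. This follows from $|\pi|+|\mathtt{Kre}(\pi)|=n+1$: the double complement is compatible with $\mathtt{Kre}(\pi)$, is at least as coarse as $\pi$, and has the same number of blocks, so it equals $\pi$. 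Appealing to the anti-automorphism property is weaker, since that property is usually deduced from $\mathtt{Kre}^2=\mathtt{Rot}$ itself.
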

\begin{proof}
By construction, we have $\mathtt{Kre}^{2}=\mathtt{Rot}$, and 
$\mathtt{Rot}^{n}=\mathtt{Ref}^{2}=\mathrm{id}$. 
Similarly, we have $\mathtt{SU}^2=\mathrm{id}$ since we put the primed integers in the 
circular presentation counterclockwise. 
Finally, since $\mathtt{Dyck}_2$ is an involution on a Dyck path, we have 
$\mathtt{LK}^2=\mathrm{id}$.
\end{proof}

The relations among five maps are given as follows.
\begin{prop}
\label{prop:5relX}
We have 
\begin{gather}
\mathtt{SU}\circ\mathtt{Rot}=\mathtt{Rot}^{-1}\circ\mathtt{SU}, \\
\mathtt{LK}\circ\mathtt{Rot}=\mathtt{Rot}^{-1}\circ\mathtt{LK}, \\
\mathtt{LK}\circ\mathtt{SU}=\mathtt{Rot},\\
\mathtt{Kre}=\mathtt{Ref}\circ\mathtt{SU}=\mathtt{Ref}\circ\mathtt{LK}\circ\mathtt{Rot}. \\
\mathtt{Kre}\circ\mathtt{SU}=\mathtt{SU}\circ\mathtt{Kre}^{-1}, \\
\mathtt{Kre}\circ\mathtt{LK}=\mathtt{LK}\circ\mathtt{Kre}^{-1}, 
\end{gather}
\end{prop}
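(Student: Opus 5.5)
The relations split into those that are purely geometric on the circular presentation and those involving $\mathtt{LK}$, which is defined by conjugation through $\widehat{RSK}$ and $\mathtt{NCPtoDyck}$.

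\textbf{Geometric relations.} The first relation $\mathtt{SU}\circ\mathtt{Rot}=\mathtt{Rot}^{-1}\circ\mathtt{SU}$ is read off directly. $\mathtt{SU}$ is the Kreweras-type complement with the primed labels placed counterclockwise starting between $n$ and $1$. Rotating $\pi$ by $360/n$ degrees counterclockwise shifts the complementary regions in the same direction. Because the primed points are labelled in the opposite orientation, this becomes a clockwise shift of the labels of $\mathtt{SU}(\pi)$. Similarly, $\mathtt{Kre}=\mathtt{Ref}\circ\mathtt{SU}$ holds because the two complements use the same regions and differ only in the labelling of the primed points. One labelling is clockwise from the gap between $1$ and $2$; the other is counterclockwise from the gap between $n$ and $1$. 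These are related by $i'\mapsto (n+1-i)'$ up to an index shift, which I will check on the example of Figure~\ref{fig:NC}. From $\mathtt{Kre}=\mathtt{Ref}\circ\mathtt{SU}$, together with $\mathtt{Ref}\circ\mathtt{Rot}=\mathtt{Rot}^{-1}\circ\mathtt{Ref}$ and $\mathtt{Kre}^2=\mathtt{Rot}$ from Proposition~\ref{prop:Xinv}, the relation $\mathtt{Kre}\circ\mathtt{SU}=\mathtt{SU}\circ\mathtt{Kre}^{-1}$ is formal. Write $\mathtt{SU}=\mathtt{Ref}\circ\mathtt{Kre}$ and use $\mathtt{Ref}\circ\mathtt{Kre}\circ\mathtt{Ref}=\mathtt{Kre}^{-1}$. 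This last identity holds because reflecting the picture reverses the orientation, so the reflected complement is the inverse complement.

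\textbf{Relations involving $\mathtt{LK}$.} I transport everything to Dyck paths. The key input is that $\mathtt{NCPtoDyck}$ intertwines $\mathtt{Rot}$ with a known map on Dyck paths. Via the chord diagram with labels $1,1',\dots,n,n'$, a rotation of $\pi$ corresponds to shifting the $2n$ points by two. Since $\mathtt{PM}\circ\partial=\mathtt{Rot}\circ\mathtt{PM}$ by Proposition~\ref{prop:DyckPMev}, this gives $\mathtt{NCPtoDyck}\circ\mathtt{Rot}=\partial^{\pm2}\circ\mathtt{NCPtoDyck}$; I will fix the sign on a small example. Next, $\widehat{RSK}^{-1}\circ\partial^{-1}\circ\widehat{RSK}=\delta$ by Proposition~\ref{prop:cd1}. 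Conjugating Corollary~\ref{cor:RSKDyck2}, $\mathtt{LK}$ corresponds on the $\widehat{RSK}$ side to $\mathtt{ev}\circ\partial^{-1}$. The relation $\mathtt{LK}\circ\mathtt{Rot}=\mathtt{Rot}^{-1}\circ\mathtt{LK}$ then follows from $\mathtt{ev}\circ\partial=\partial^{-1}\circ\mathtt{ev}$ (Theorem~\ref{thrm:relproev}), since $\mathtt{ev}\partial^{-1}$ conjugates $\partial^{2}$ to $\partial^{-2}$.

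For $\mathtt{LK}\circ\mathtt{SU}=\mathtt{Rot}$, I need to identify $\mathtt{SU}$ on Dyck paths. I expect it to equal $\mathtt{ev}\circ\partial^{c}$ for a suitable $c$: $\mathtt{ev}=\ast$ reverses the chord diagram (Proposition~\ref{prop:DyckPMev}), and a reversal of the circle composed with a shift is exactly what $\mathtt{SU}$ does to the interleaved chord diagram. Then $\mathtt{LK}\circ\mathtt{SU}$ becomes $\mathtt{ev}\partial^{-1}\mathtt{ev}\partial^{c}=\partial^{1+c}$, and I must check that this equals $\partial^{2}$ (or the correct sign), i.e.\ that $c=1$. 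The remaining relations follow formally. First, $\mathtt{Kre}=\mathtt{Ref}\circ\mathtt{SU}=\mathtt{Ref}\circ\mathtt{LK}\circ\mathtt{Rot}$, using $\mathtt{SU}=\mathtt{LK}^{-1}\circ\mathtt{Rot}=\mathtt{LK}\circ\mathtt{Rot}$. Second, $\mathtt{Kre}\circ\mathtt{LK}=\mathtt{LK}\circ\mathtt{Kre}^{-1}$ follows by substituting $\mathtt{LK}=\mathtt{Rot}\circ\mathtt{SU}$ and using the already proven $\mathtt{SU}$ relations together with the commutation of $\mathtt{Kre}$ with $\mathtt{Rot}=\mathtt{Kre}^2$.

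\textbf{Main obstacle.} The hardest step is pinning down the exact powers and signs: the power of $\partial$ representing $\mathtt{Rot}$ and the shift $c$ in $\mathtt{SU}=\mathtt{ev}\circ\partial^{c}$, given the prime/unprimed interleaving and the clockwise/counterclockwise conventions. I would settle these by computing on the single partition with all blocks singletons, whose Dyck path is $(UR)^n$, and on the one-block partition, whose path is $U(UR)^{n-1}R$. Since the candidate identities are of the form $\partial^{m}$ versus $\partial^{m'}$, agreement on these generic test cases, combined with the example in Figure~\ref{fig:NC}, determines the exponents.
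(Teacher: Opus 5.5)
Your overall architecture is sound, and the purely circular arguments are fine. In fact $\mathtt{Kre}=\mathtt{Ref}\circ\mathtt{SU}$ holds with no index shift, since the $\mathtt{SU}$-label of the gap between $j$ and $j+1$ is $(n+1-j)'$.

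The gap is in the step you flag yourself. You never establish $\mathtt{SU}\sim\mathtt{ev}\circ\partial^{c}$ with $c=1$, nor the sign in $\mathtt{Rot}\sim\partial^{\pm2}$. Yet $\mathtt{LK}\circ\mathtt{SU}=\mathtt{Rot}$ depends on exactly these, and your derivations of the last two relations depend on that identity.

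The claim that $\mathtt{SU}$ acts on the interleaved chord diagram by ``reversal composed with a shift'' presupposes that $\mathtt{Kre}$ acts as a one-step shift of the chord diagram. That is the substantive fact here, and you do not justify it.

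Moreover, your proposed way of fixing the exponents cannot work. $(UR)^n$ and $U(UR)^{n-1}R$ are both fixed by $\mathtt{ev}$ (the bar operation) and by $\partial^2$, while $\partial$ swaps them. So any word in $\partial,\mathtt{ev}$ acts on them only through the parity of its $\partial$-exponent. In particular, they cannot distinguish $\mathtt{SU}\sim\mathtt{ev}\circ\partial$ from $\mathtt{SU}\sim\mathtt{ev}\circ\partial^{-1}$, and the latter would give $\mathtt{LK}\circ\mathtt{SU}=\mathrm{id}$. The example of Figure~\ref{fig:NC} does discriminate, but it only covers $n=7$.

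The fix is to do the computation uniformly on chord-diagram positions, with $i\mapsto 2i-1$ and $i'\mapsto 2i$. A block $i_1<\cdots<i_p$ then contributes the arches $\{2i_q,2i_{q+1}-1\}$.
\begin{itemize}
\item $\mathtt{Rot}$ is the shift by $-2$.
\item $\mathtt{Ref}$ is $x\mapsto 2n+1-x$, i.e.\ the bar operation, which is $\mathtt{ev}$ by Proposition~\ref{prop:DyckPMev}.
\item $\mathtt{Kre}$ is the shift by $-1$. This is because the gap just after $i_q$ and the gap just before $i_{q+1}$ are the first and last gaps of the complementary region cut off by the chord $i_qi_{q+1}$. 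They therefore become cyclically consecutive elements of a block of $\mathtt{Kre}(\pi)$.
\end{itemize}
Since $\partial$ is itself the shift by $-1$, this gives $\mathtt{Kre}\sim\partial$, $\mathtt{Rot}\sim\partial^2$, $\mathtt{Ref}\sim\mathtt{ev}$ and $\mathtt{SU}\sim\mathtt{ev}\circ\partial$ for every $n$. That is precisely Proposition~\ref{prop:NCpro}, which the paper establishes before proving this statement.

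With these and $\mathtt{LK}\sim\mathtt{ev}\circ\partial^{-1}$ (your use of Corollary~\ref{cor:RSKDyck2} is correct), the paper translates all six relations into words in $\partial$ and $\mathtt{ev}$. It then checks them using only $\mathtt{ev}^2=\mathrm{id}$ and $\mathtt{ev}\circ\partial\circ\mathtt{ev}=\partial^{-1}$. Once you have these identifications, your separate geometric treatment of the $\mathtt{SU}$ and $\mathtt{Kre}$ relations is no longer needed, though it remains a valid independent check.
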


To prove Proposition \ref{prop:5relX}, we first study the relations between 
maps on non-crossing partitions and maps on Dyck paths.

Let $X$ be an operation on a non-crossing partition and $Y$ be an operation 
on a Dyck path. 
\begin{defn}
We write a relation between $X$ and $Y$ as  
\begin{align*}
X\sim Y \Leftrightarrow X= \mathtt{NCPtoDyck}^{-1}\circ Y\circ \mathtt{NCPtoDyck}.
\end{align*}
\end{defn}

The next proposition shows that the four maps $\mathtt{Rot}$, $\mathtt{Kre}$, $\mathtt{SU}$, and $\mathtt{LK}$ 
are essentially equivalent to operations on Dyck paths.
\begin{prop}
\label{prop:NCpro}
We have 
\begin{gather}
\mathtt{Kre}\sim\partial, \quad \mathtt{Rot}=\mathtt{Kre}^{2}\sim\partial^{2}, \\
\mathtt{SU}\sim\mathtt{ev}\circ \partial, \\
\mathtt{LK}\sim \mathtt{ev}\circ \partial^{-1}.
\end{gather}
\end{prop}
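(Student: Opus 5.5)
The plan is to prove $\mathtt{Kre}\sim\partial$ directly and then get the other three relations from it. We need the key facts already established for Dyck paths: $\mathtt{ev}=\ast$ (Proposition \ref{prop:DyckPMev}), $\mathtt{ev}\circ\partial=\partial^{-1}\circ\mathtt{ev}$ (Theorem \ref{thrm:relproev}), $\mathtt{Dyck}_2=\mathtt{Rvac}$, and the conjugation relations with $\widehat{RSK}$ (Proposition \ref{prop:cd1} and Corollary \ref{cor:RSKDyck2}).

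\emph{Step 1: $\mathtt{Kre}\sim\partial$.} We work through the chord diagram description of $\mathtt{NCPtoDyck}$. A non-crossing partition $\pi$ gives arches $(i'_q,i_{q+1})$ on the $2n$ points $1,1',\dots,n,n'$, and the arches are read as a Dyck path. The regions cut out by the diagonals of $\pi$ are exactly what the primed points $1',\dots,n'$ see. So the same non-crossing chord diagram, read with the roles of primed and unprimed points exchanged, is the chord diagram of $\mathtt{Kre}(\pi)$, with primed labels shifted by one step along the line.

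Concretely, the perfect matching of $\mathtt{NCPtoDyck}(\mathtt{Kre}(\pi))$ should be the perfect matching of $\mathtt{NCPtoDyck}(\pi)$ rotated by one position out of $2n$. By (\ref{eq:PMRotDyck}), $\mathtt{PM}\circ\partial=\mathtt{Rot}\circ\mathtt{PM}$, so this gives $\mathtt{Kre}\sim\partial$. The point to check carefully is the direction of the rotation, clockwise versus counterclockwise, and which primed label sits next to $1$. I would verify both on the example of Figure \ref{fig:NC}, whose Kreweras complement is stated as $\{\{1,2\},\{3,7\},\{4\},\{5,6\}\}$, to make sure we get $\partial$ and not $\partial^{-1}$. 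Then $\mathtt{Rot}=\mathtt{Kre}^2\sim\partial^2$ follows from Proposition \ref{prop:Xinv}.

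\emph{Step 2: $\mathtt{SU}\sim\mathtt{ev}\circ\partial$.} $\mathtt{SU}$ is $\mathtt{Kre}$ followed by reversing the labels, since the primed points are placed counterclockwise starting between $n$ and $1$. So $\mathtt{SU}=\mathtt{Ref}\circ\mathtt{Kre}$ up to checking the index offset. Under $\mathtt{NCPtoDyck}$, reflecting the chord diagram, $i\mapsto 2n+1-i$, is the bar operation on perfect matchings, and this corresponds to $\mathtt{ev}=\ast$ by the commutative diagram (\ref{eq:evPM}). Hence $\mathtt{SU}\sim\mathtt{ev}\circ\partial$, possibly after replacing $\partial$ by $\partial^{-1}$ using $\mathtt{ev}\circ\partial=\partial^{-1}\circ\mathtt{ev}$. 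The offset is pinned down by comparing with the example $\{\{1,5\},\{2,3\},\{4\},\{6,7\}\}$.

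\emph{Step 3: $\mathtt{LK}\sim\mathtt{ev}\circ\partial^{-1}$.} This is immediate from the definition (\ref{eq:defLK}): $\mathtt{LK}\sim\widehat{RSK}\circ\mathtt{Dyck}_2\circ\widehat{RSK}^{-1}$, and Corollary \ref{cor:RSKDyck2} says this conjugate equals $\mathtt{ev}\circ\partial^{-1}$.

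The main obstacle is Step 1. We must show that the complement regions correspond exactly to a one-step rotation of the arch diagram, in the right direction and with the right labelling convention for primed points. Everything else is bookkeeping with already proved identities.
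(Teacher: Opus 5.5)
Your proposal is correct and follows essentially the paper's route. You get $\mathtt{Kre}\sim\partial$ from the one-step shift of the chord diagram (the paper's arch $(i,j')\mapsto((i-1)',j)$ argument), then $\mathtt{Rot}=\mathtt{Kre}^2\sim\partial^2$, then $\mathtt{SU}\sim\mathtt{ev}\circ\partial$ (your $\mathtt{SU}=\mathtt{Ref}\circ\mathtt{Kre}$ with $\mathtt{Ref}\sim\mathtt{ev}$ via the bar operation just spells out the paper's ``by circular representation, it is clear''), and $\mathtt{LK}\sim\mathtt{ev}\circ\partial^{-1}$ from the definition plus Propositions \ref{prop:cd1} and \ref{prop:cd2}, i.e.\ Corollary \ref{cor:RSKDyck2}. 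The direction and offset checks you flag come out as needed: the Kreweras relabelling is the shift $x\mapsto x-1$ on the $2n$ chord-diagram positions, matching $\mathtt{PM}\circ\partial=\mathtt{Rot}\circ\mathtt{PM}$, and $\mathtt{SU}=\mathtt{Ref}\circ\mathtt{Kre}$ holds exactly, so you never need to swap $\partial$ for $\partial^{-1}$.
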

\begin{proof}
We first show that $\mathtt{Kre}\sim\partial$.
Recall that we have a chord diagram $C(P)$ for a given Dyck path $P$.
On one hand, an arch $(i,j')$ in $C(P)$ is a pair of an integer $i$ and a primed integer $j'$.
The correspondence between a chord diagram and a non-crossing partition, 
the arch $(i,j')$ implies that the two integers $i$ and $j$ belong to the same 
block, and there is no integer $i<k<j$ such that $k$ belongs to the same block
as $i$ and $j$.
If we act the promotion $\partial$ on the Dyck path, or equivalently on the chord diagram, 
the arch $(i,j')$ becomes the arch $((i-1)',j)$ where $i-1$ is understood modulo $n$.  

On the other hand, in the circular representation of a non-crossing partition, 
if $i$ and $j$ with $i<j$ belong to the same block, and there is no $k$ such that $i<k<j$ and 
$i,j$ and $k$ belong to the same block, then $i$ and $j-1$ belong to the same block after the 
action of $\partial$, i.e., we have an arch $((j-1)',i)$ in the chord diagram. 
Here, if $j=1$, then we interpret $j-1$ as $n$. 
Similarly, if $i<j$ belong to the same block and there is no $k$ such that $j<k$ and 
$i,j$ and $k$ belong to the same block, then we have an arc $((i-1)',j)$ in the chord diagram 
after the action of $\partial$.
From these, it is clear that the promotion $\partial$ on Dyck paths coincides with 
the map $\mathtt{Kre}$, which gives $\mathtt{Kre}\sim\partial$.

Since the rotation $\mathtt{Rot}$ is equal to $\mathtt{Kre}^2$, we have $\mathtt{Rot}\sim\partial^2$.
By circular representation of $\mathtt{SU}$, it is clear that $\mathtt{SU}\sim\mathtt{ev}\circ\partial$.

We will prove $\mathtt{LK}\sim\mathtt{ev}\circ\partial^{-1}$.
By definition (\ref{eq:defLK}), $\mathtt{Dyck}_2=\widehat{\delta}\circ\delta$, and 
Propositions \ref{prop:cd1} and \ref{prop:cd2},
we have $\mathtt{LK}\sim\mathtt{ev}\circ\partial^{-1}$.
This completes the proof.
\end{proof}

\begin{proof}[Proof of Proposition \ref{prop:5relX}]
From Proposition \ref{prop:NCpro}, we have 
\begin{align*}
\mathtt{SU}\circ\mathtt{Rot}\sim\mathtt{ev}\circ\partial^{3}=\partial^{-2}\circ\mathtt{ev}\circ\partial\sim\mathtt{Rot}^{-1}\circ\mathtt{SU}, \\
\mathtt{LK}\circ\mathtt{Rot}\sim\mathtt{ev}\circ\partial=\partial^{-2}\circ\mathtt{ev}\circ\partial^{-1}\sim\mathtt{Rot}^{-1}\circ\mathtt{LK}.
\end{align*}
Other relations are proven in a similar way by using $\mathtt{Ref}=\mathtt{ev}$.
\end{proof}

The next proposition shows the relation between the maps on Dyck paths and maps on 
non-crossing partitions.
\begin{prop}
We have 
\begin{align}
\label{eq:D1Kre}
\mathtt{Dyck}_{1}
=\widehat{\mathtt{RSK}}^{-1}\circ\mathtt{NCPtoDyck}\circ\mathtt{Kre}^{-1}\circ\mathtt{NCPtoDyck}^{-1}\circ\widehat{\mathtt{RSK}}, \\
\label{eq:rvD3}
\delta\circ\mathtt{Dyck}_3
=\widehat{\mathtt{RSK}}^{-1}\circ\mathtt{NCPtoDyck}\circ\mathtt{SU}\circ\mathtt{NCPtoDyck}^{-1}\circ\widehat{\mathtt{RSK}},
\end{align}
where $\delta$ is the rowmotion on Dyck paths.
\end{prop}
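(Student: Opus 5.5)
Both identities reduce to the dictionary of Proposition \ref{prop:NCpro}, which converts maps on non-crossing partitions into maps on Dyck paths, together with the commutation relations of Propositions \ref{prop:cd1} and \ref{prop:cd2} and Corollary \ref{cor:cd3}. Write $\Phi:=\widehat{\mathtt{RSK}}$ and $N:=\mathtt{NCPtoDyck}$.

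For (\ref{eq:D1Kre}) I conjugate in two steps. Since $\mathtt{Kre}\sim\partial$, we have $N\circ\mathtt{Kre}^{-1}\circ N^{-1}=\partial^{-1}$ on Dyck paths. The right-hand side therefore becomes $\Phi^{-1}\circ\partial^{-1}\circ\Phi$. By Proposition \ref{prop:cd1}, $\Phi\circ\delta=\partial^{-1}\circ\Phi$, so this expression equals $\delta$. Proposition \ref{prop:rmD1} gives $\delta=\mathtt{Dyck}_1$, which proves (\ref{eq:D1Kre}).

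For (\ref{eq:rvD3}) the same conjugation with $\mathtt{SU}\sim\mathtt{ev}\circ\partial$ turns the right-hand side into
\begin{align*}
\Phi^{-1}\circ\mathtt{ev}\circ\partial\circ\Phi
=(\Phi^{-1}\circ\mathtt{ev}\circ\Phi)\circ(\Phi^{-1}\circ\partial\circ\Phi).
\end{align*}
Corollary \ref{cor:cd3} says $\Phi\circ\mathtt{Dyck}_3=\mathtt{ev}\circ\Phi$, so the first factor is $\mathtt{Dyck}_3$. Proposition \ref{prop:cd1} gives $\Phi^{-1}\circ\partial\circ\Phi=\delta^{-1}$, so the whole expression is $\mathtt{Dyck}_3\circ\delta^{-1}$.

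To reach the stated form $\delta\circ\mathtt{Dyck}_3$, I will show $\mathtt{Dyck}_3\circ\delta^{-1}=\delta\circ\mathtt{Dyck}_3$, i.e. $\mathtt{Dyck}_3\circ\delta\circ\mathtt{Dyck}_3=\delta^{-1}$. This follows by conjugating through $\Phi$ again. Corollary \ref{cor:cd3} turns $\mathtt{Dyck}_3$ into $\mathtt{ev}$ and Proposition \ref{prop:cd1} turns $\delta$ into $\partial^{-1}$, so the identity becomes $\mathtt{ev}\circ\partial^{-1}\circ\mathtt{ev}=\partial$. That is item (4) of Theorem \ref{thrm:relproev} combined with $\mathtt{ev}^2=\mathrm{id}$.

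The main obstacle is not these manipulations but the orientation conventions. Maps act on the right, and the paper writes $\Phi$ both as a map from permutations to Dyck paths and as a map on Dyck paths through the identification via $E_p$. The places to check are the direction in which Proposition \ref{prop:cd1} is read (whether $\delta$ conjugates to $\partial^{-1}$ or to $\partial$) and the precise meaning of $\mathtt{SU}\sim\mathtt{ev}\circ\partial$ versus $\partial\circ\mathtt{ev}$.

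If the conventions are flipped, the outcome may appear as $\mathtt{Dyck}_3\circ\delta$ or $\delta^{-1}\circ\mathtt{Dyck}_3$ instead. Because $\mathtt{Dyck}_3$ conjugates $\delta$ to $\delta^{-1}$, all four of these expressions reduce to one of two forms, so the dihedral relation settles the discrepancy. To pin down the sign unambiguously, I will check (\ref{eq:rvD3}) on the top path $U^nR^n$, as the paper does in Corollary \ref{cor:rmev}. There $\Phi(U^nR^n)$ corresponds to the identity permutation, and $\mathtt{Dyck}_3$ fixes it.
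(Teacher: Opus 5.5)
Your proposal is correct and is essentially the paper's proof: both identities follow by conjugating with $\widehat{RSK}$ via Proposition \ref{prop:cd1}, Corollary \ref{cor:cd3}, $\delta=\mathtt{Dyck}_1$, and the relations $\mathtt{Kre}\sim\partial$, $\mathtt{SU}\sim\mathtt{ev}\circ\partial$, with the dihedral relation from Theorem \ref{thrm:relproev}(4) supplying the final rearrangement. The only difference is cosmetic: the paper applies that relation directly as $\partial^{-1}\circ\mathtt{ev}=\mathtt{ev}\circ\partial$, whereas you first obtain $\mathtt{Dyck}_3\circ\delta^{-1}$ and then conjugate back; under the conventions the paper actually states, your fallback discussion of flipped conventions and the top-path check is not needed.
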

\begin{proof}
From Propositions \ref{prop:rmD1} and \ref{prop:cd1}, we have 
$\widehat{RSK}\circ\mathtt{Dyck}_1\circ\widehat{RSK}^{-1}\sim\partial^{-1}=\mathtt{Kre}^{-1}$.

From Proposition \ref{prop:cd1} and Corollary \ref{cor:cd3}, we have 
$\widehat{RSK}\circ\delta\circ\mathtt{Dyck}_3\circ\widehat{RSK}^{-1}=\partial^{-1}\circ\mathtt{ev}
=\mathtt{ev}\circ\partial$.
From Proposition \ref{prop:NCpro}, we have $\mathtt{SU}\sim\mathtt{ev}\circ\partial$, 
which implies Eq. (\ref{eq:rvD3}).
\end{proof}

\begin{example}
We consider the Dyck path $P=URU^2RUR^2UR$.
We have 
\begin{align*}
P\xrightarrow{\mathtt{Dyck}_1}U^2RU^2R^2UR^2,
\end{align*}
and 
\begin{align*}
P&\xrightarrow{\widehat{RSK}}U^2RU^2RUR^3\xrightarrow{\mathtt{NCPtoDyck}^{-1}}125/3/4 \\
&\xrightarrow{\mathtt{Kre}^{-1}}1/2/345\xrightarrow{\mathtt{NCPtoDyck}}URURU^2RUR^2
\xrightarrow{\widehat{RSK}^{-1}}24153=U^2RU^2R^2UR^2.
\end{align*}
Similarly, we have 
\begin{align*}
P\xrightarrow{\mathtt{Dyck}_3}URU^3R^3UR\xrightarrow{\delta}U^2R^2URU^2R^2,
\end{align*}
and
\begin{align*}
P&\xrightarrow{\widehat{RSK}}U^2RU^2RUR^3\xrightarrow{\mathtt{NCPtoDyck}^{-1}}125/3/4 \\
&\xrightarrow{\mathtt{SU}}1/234/5\xrightarrow{\mathtt{NCPtoDyck}}URU^2RUR^2UR
\xrightarrow{\widehat{RSK}^{-1}}21354=U^2R^2URU^2R^2.
\end{align*}

\end{example}

It is well-known that the set of non-crossing partitions form a graded lattice.
Let $\pi$ be a non-crossing partition with $m$ blocks. 
The rank function $\mathtt{rk}:\mathtt{NC}(n)\rightarrow \mathbb{Z}_{\ge0}$ 
is given by $\mathtt{rk}(\pi)=n-m$.
We write the cover relation $\pi\lessdot\pi'$ if 
$\pi$ is a refinement of $\pi'$ and $\mathtt{rk}(\pi')=\mathtt{rk}(\pi)+1$.

\begin{prop}
\label{prop:Xorev}
The maps $\mathtt{Kre}, \mathtt{SU}$ and $\mathtt{LK}$ are order-reversing. 
More precisely, we have 
\begin{align}
\label{eq:Xpi1}
\pi\lessdot\pi'
\Leftrightarrow
X(\pi)\gtrdot X(\pi'),
\end{align} 
where $X\in\{\mathtt{Kre},\mathtt{SU},\mathtt{LK}\}$.
Further, we have 
\begin{align}
\label{eq:Xpi2}
\mathtt{rk}(\pi)+\mathtt{rk}(X(\pi))=n-1.
\end{align}
\end{prop}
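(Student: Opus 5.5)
The plan is to prove the statement first for the Kreweras complement and then transfer it to $\mathtt{SU}$ and $\mathtt{LK}$ through the relations of Proposition \ref{prop:5relX}, using only that $\mathtt{Rot}$ and $\mathtt{Ref}$ are rank-preserving automorphisms of the lattice $\mathtt{NC}(n)$.

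\emph{Step 1: $\mathtt{Kre}$.} In the circular presentation, the blocks of $\mathtt{Kre}(\pi)$ are the regions into which the diagonals of $\pi$ cut the disc, read off on the primed points. I would first establish the block-count identity $m(\pi)+m(\mathtt{Kre}(\pi))=n+1$. This is Euler's formula for the planar map formed by the circle together with the diagonals of $\pi$, each block drawn as a convex polygon. Alternatively, argue by induction: a block of size $l$ contributes $l-1$ to the rank, and it also creates exactly $l-1$ additional regions, since merging two blocks of $\pi$ along a new chord splits one dual region into two. Since $\mathtt{rk}(\pi)=n-m(\pi)$, this gives Eq. (\ref{eq:Xpi2}) for $X=\mathtt{Kre}$.

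For order-reversal, take a cover $\pi\lessdot\pi'$. Here $\pi'$ refines $\pi$, so $\pi$ is obtained from $\pi'$ by merging two blocks that share a common region (this is the non-crossing condition). Adding the joining chord cuts that region into two, and no other region changes. Hence $\mathtt{Kre}(\pi)$ is a refinement of $\mathtt{Kre}(\pi')$ in which exactly one block has been split. By the rank identity the ranks differ by one, so $\mathtt{Kre}(\pi)\gtrdot\mathtt{Kre}(\pi')$. The reverse implication follows by applying the same argument to $\mathtt{Kre}$ again: $\mathtt{Kre}^2=\mathtt{Rot}$ is an order automorphism, so $\mathtt{Kre}$ is bijective and its inverse also reverses covers.

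\emph{Step 2: transfer.} Proposition \ref{prop:5relX} gives $\mathtt{SU}=\mathtt{Ref}\circ\mathtt{Kre}$ (since $\mathtt{Ref}$ is an involution) and $\mathtt{LK}=\mathtt{Ref}\circ\mathtt{Kre}\circ\mathtt{Rot}^{-1}$. Both $\mathtt{Rot}$ and $\mathtt{Ref}$ preserve the number of blocks and refinement. Therefore each of $\mathtt{SU}$ and $\mathtt{LK}$ is a rank-preserving automorphism composed with $\mathtt{Kre}$, and both Eq. (\ref{eq:Xpi1}) and Eq. (\ref{eq:Xpi2}) carry over immediately.

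\emph{Expected obstacle.} The main difficulty is justifying the local claim in Step 1 rigorously: that merging two blocks of $\pi'$ changes the dual partition by splitting exactly one region, with the primed labels ending up consistently on its two sides. I would handle this by describing the blocks of $\mathtt{Kre}$ concretely. Namely, $i'$ and $j'$ lie in the same region if and only if no block of $\pi$ has elements on both arcs $(i,j]$ and $(j,i]$ of the circle. Checking that this condition changes for exactly one pair of regions under a single merge then settles the step.
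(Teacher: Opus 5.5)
Your proof is correct, and its skeleton matches the paper's: everything is reduced to $\mathtt{Kre}$ by composing with the rank-preserving automorphisms $\mathtt{Ref}$ and $\mathtt{Rot}$. The paper does this through Proposition~\ref{prop:NCpro}, using $\mathtt{SU}\sim\mathtt{ev}\circ\partial$ and $\mathtt{LK}\sim\mathtt{ev}\circ\partial^{-1}$ together with the observation that $\mathtt{ev}$ acts as $\mathtt{Ref}$ on partitions. This amounts to $\mathtt{SU}=\mathtt{Ref}\circ\mathtt{Kre}$ and $\mathtt{LK}=\mathtt{Ref}\circ\mathtt{Kre}^{-1}$. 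Your use of Proposition~\ref{prop:5relX} gives the same identities, since $\mathtt{Kre}^{-1}=\mathtt{Kre}\circ\mathtt{Rot}^{-1}$, and the paper itself derives Proposition~\ref{prop:5relX} from Proposition~\ref{prop:NCpro}.

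The genuine difference is the $\mathtt{Kre}$ case. The paper only cites Kreweras and Simion--Ullman for the anti-automorphism and rank-complement properties. You prove them: Euler's formula gives $m(\pi)+m(\mathtt{Kre}(\pi))=n+1$, the cover argument handles order reversal, and $\mathtt{Kre}^2=\mathtt{Rot}$ gives the reverse implication. This makes the proposition self-contained.

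The obstacle you anticipate is lighter than you expect, because your own criterion already settles it. Two primed points $i',j'$ lie in the same block of $\mathtt{Kre}(\pi)$ exactly when no block of $\pi$ has elements on both arcs. If $\pi$ is coarser than $\pi'$, every block of $\pi'$ sits inside a block of $\pi$, so coarsening can only separate primed points. Hence $\mathtt{Kre}(\pi)$ refines $\mathtt{Kre}(\pi')$. The identity $m+m'=n+1$ then forces exactly one extra block, so the cover relation follows without tracking geometrically how a single region splits.

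Finally, a convention point. The paper's $\lessdot$, defined just before the proposition, puts the finer partition below; you used the opposite reading, which matches the paper's $k$-chain convention. This is harmless, because the statement is invariant under reversing the order.
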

\begin{proof}
Let $P=P_1\cdots P_{2n}\in\{U,R\}^{2n}$ be a Dyck path, and $\pi$ be the non-crossing partition 
corresponding to $P$.
From Proposition \ref{prop:DyckPMev}, the evacuation $\mathtt{ev}$ on $P$
gives a Dyck path $P'=P'_1\cdots P'_{2n}$ such that $P'_{i}=\overline{P_{2n+1-i}}$ where 
$\overline{U}=R$ and $\overline{R}=U$. 
In terms of non-crossing partitions, $\mathtt{ev}$ corresponds to exchanging 
$i$ and $n+1-i$ in $\pi$.
This implies $\mathtt{ev}$ is order-preserving.
From Proposition \ref{prop:NCpro}, it is enough to show that 
Eqs. (\ref{eq:Xpi1}) and (\ref{eq:Xpi2}) hold for $\mathtt{Kre}\sim\partial$, or 
equivalently $\mathtt{SU}$.
Equations (\ref{eq:Xpi1}) and (\ref{eq:Xpi2}) for $\mathtt{Kre}$ follow 
from \cite{Kre72,SimUll91}.
\end{proof}

\subsection{Maps on non-crossing weighted partitions}
Let $\pi$ be a non-crossing weighted partition, and $\pi_{i}$, $1\le i\le k$, 
be a non-crossing partition of the $i$-th layer.
By definition of non-crossing weighted partitions, $\pi_{i+1}$ is a refinement of 
$\pi_{i}$ for all $i\in[1,k-1]$.
\begin{defn}
\label{defn:XNCWP1}
We define Kreweras complement map $\mathtt{Kre}$, Simion--Ullman involution $\mathtt{SU}$, 
and Lalanne--Kreweras involution $\mathtt{LK}$ on non-crossing weighted partitions by
\begin{align*}
\mathtt{X}:\pi=(\pi_1,\ldots,\pi_{k})\mapsto (\mathtt{X}(\pi_k),\ldots,\mathtt{X}(\pi_1)),
\end{align*}
where $\pi_{i}$ is the non-crossing partition in the $i$-th layer and 
$\mathtt{X}\in\{\mathtt{Kre, Su, LK}\}$.
\end{defn}

\begin{defn}
\label{defn:XNCWP2}
We define refection $\mathtt{Ref}$ and rotation $\mathtt{Rot}$ on non-crossing weighted 
partitions by 
\begin{align*}
\mathtt{X}:\pi=(\pi_1,\ldots,\pi_{k})\mapsto (\mathtt{X}(\pi_1),\ldots,\mathtt{X}(\pi_k)),
\end{align*}
where $\pi_{i}$ is the non-crossing partition in the $i$-th layer and 
$\mathtt{X}\in\{\mathtt{Ref, Rot}\}$.
\end{defn}

\begin{remark}
The action of $\mathtt{X}\in\{\mathtt{Kre, SU, LK}\}$ on a non-crossing weighted partition can be decomposed into the action 
of $\mathtt{X}$ on non-crossing partitions. 
The above definition is well-defined since the action of $\mathtt{X}$ reverses the order of 
layers of the non-crossing partitions from Proposition \ref{prop:Xorev}. 
\end{remark}

By definitions \ref{defn:XNCWP1} and \ref{defn:XNCWP2}, it is obvious that 
the maps $\{\mathtt{Ref, Rot, Kre, SU, LK}\}$ have the same relations as in
Proposition \ref{prop:Xinv} and Proposition \ref{prop:5relX}. 

In the case of non-crossing weighted partitions, we have the 
following relations, which are a generalization of Proposition \ref{prop:NCpro}.
\begin{prop}
\label{prop:kDyckNC}
We have 
\begin{gather}
\label{eq:kDyckNC1}
\mathtt{Rot}\sim\partial^{k+1}, \\
\label{eq:kDyckNC2}
\mathtt{SU}\sim\mathtt{ev}\circ \partial, \\
\label{eq:kDyckNC3}
\mathtt{LK}\sim\mathtt{ev}\circ \partial^{-k}.
\end{gather}
\end{prop}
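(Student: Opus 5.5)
Our plan is to reduce the three relations to the $k=1$ case (Proposition \ref{prop:NCpro}) together with the description of promotion and evacuation on $k$-Dyck paths via perfect matchings, namely Eq. (\ref{eq:delRot}) and Proposition \ref{prop:abDyckev}.

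\textbf{Step 1: dictionary between $\mathtt{PM}$ and $\mathtt{NCPtoDyck}$.} First we make the composite $\mathtt{PM}\circ\mathtt{NCPtoDyck}$ explicit for a $k$-chain $\pi=(\pi_1,\ldots,\pi_k)$. The points of the circle are $1,\ldots,(k+1)n$. We group them into $n$ consecutive windows of length $k+1$, so that the window of $i\in[n]$ is $\{(k+1)(i-1)+1,\ldots,(k+1)i\}$. Each matching block of size $k+1$ should then be determined by the blocks of the layers $\pi_1,\ldots,\pi_k$. Concretely, the $q$-th smallest element of a matching block lies in the window of some $i$ and links to the next element of the block of $\pi_q$ containing $i$ (cyclically), as in the chord-diagram construction of Section \ref{sec:NCPDyck}. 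We verify this by induction along the insertion procedure defining $\mathtt{NCPtoDyck}$. For a single weighted block with path $\mu(B)=UR^{k-p}(UR^{k})^{r-1}R^{p}$, the perfect matching can be computed directly. Inserting a subpath at position $(k+1)j$ corresponds to inserting the sub-matching into the window gap after the $j$-th point, and this is compatible with the window structure. The case $k=1$ recovers the chord diagram with labels $i,i'$.

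\textbf{Step 2: the three relations.} Rotation of $\pi$ by $360/n$ degrees moves every window by one whole window, that is, by $k+1$ points. By the dictionary, this is $\mathtt{Rot}_{PM}^{k+1}$ on the perfect matching, where $\mathtt{Rot}_{PM}$ is the $360/((k+1)n)$ rotation. Eq. (\ref{eq:delRot}) then gives $\mathtt{Rot}\sim\partial^{k+1}$, which is Eq. (\ref{eq:kDyckNC1}). For Eq. (\ref{eq:kDyckNC2}), Proposition \ref{prop:abDyckev} says that $\mathtt{ev}$ is the bar operation $i\mapsto (k+1)n+1-i$ on perfect matchings. We then check that the composite (bar)$\circ\mathtt{Rot}_{PM}$ acts on the dictionary exactly as $\mathtt{SU}$ does: it applies the reflection to each layer and exchanges the roles inside a window, which reverses the order of the layers. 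This generalizes the $k=1$ identity $\mathtt{SU}\sim\mathtt{ev}\circ\partial$. Since Definition \ref{defn:XNCWP1} defines $\mathtt{SU}$ layerwise with the layers reversed, the check reduces to the $k=1$ statement applied to each $\pi_q$, together with tracking which position in the window each layer occupies after bar and a single rotation.

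\textbf{Step 3: $\mathtt{LK}$.} For $k=1$, Propositions \ref{prop:NCpro} and \ref{prop:5relX} give $\mathtt{LK}=\mathtt{Rot}\circ\mathtt{SU}$. This identity is layerwise: the layerwise $\mathtt{LK}$ and $\mathtt{SU}$ both reverse the layer order, while $\mathtt{Rot}$ preserves it. Hence it persists for weighted partitions, and we obtain
\begin{align*}
\mathtt{LK}\sim\partial^{k+1}\circ\mathtt{ev}\circ\partial=\mathtt{ev}\circ\partial^{-(k+1)}\circ\partial=\mathtt{ev}\circ\partial^{-k},
\end{align*}
using Theorem \ref{thrm:relproev}(4). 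This is Eq. (\ref{eq:kDyckNC3}).

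\textbf{Main obstacle.} We expect the hard part to be Step 1, specifically matching the weight-shifted step sequences $\mathbf{u}(B)=\{1\}\cup\{u-1\}$ with the window offsets of the perfect matching. We would first settle single blocks of each weight by direct computation, and then argue that insertion commutes with forming matchings. After that, the window bookkeeping in Step 2 for $\mathtt{SU}$ is the only remaining delicate check.
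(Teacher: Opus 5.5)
Your Step 3 is the paper's own argument. For (\ref{eq:kDyckNC1}) and (\ref{eq:kDyckNC2}) your plan is different from the paper's and more conceptual: you transport both maps to perfect matchings and invoke Eq. (\ref{eq:delRot}) and Proposition \ref{prop:abDyckev}. The paper instead inducts on $n$ and $k$: it splits off a block, uses a box-adding argument when $\pi_k=12\cdots n$, and decomposes by concatenation after normalizing with $\mathtt{SU}\circ\mathtt{Rot}=\mathtt{Rot}^{-1}\circ\mathtt{SU}$.

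However, the dictionary you state in Step 1, on which everything rests, is false, so your induction cannot establish it. Take the paper's example $(n,k)=(4,3)$, $\pi=(1234,14/23,14/2/3)$, with $\mathtt{PM}(P)=\{\{1,14,15,16\},\{2,3,12,13\},\{4,5,6,11\},\{7,8,9,10\}\}$. The block $\{1,14,15,16\}$ has its smallest element in window $1$ and its next element in window $4$, but the successor of $1$ in $\pi_1=1234$ is $2$. Already for $k=1$ the rule breaks: $\pi_1=123$ gives the arcs $(1,6),(2,3),(4,5)$, and in the chord diagram it is the primed (even) endpoint that links to the successor, not the smaller one. More seriously, the links are not governed by any single layer. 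In each block, follow the point in position $\equiv 2 \pmod 4$ to its cyclic successor. This gives the window map $(2\,3)$, fixing $1$ and $4$, which is the successor permutation of none of $\pi_1,\pi_2,\pi_3$.

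The right bookkeeping is by residue, not by rank inside the block. Write $p=(k+1)(i-1)+r+1$ with $0\le r\le k$. The points strictly between cyclically consecutive elements of a block are covered by whole blocks of size $k+1$. Hence every block contains exactly one point of each residue, and the cyclic successor of its residue-$r$ point has residue $r+1$ (mod $k+1$). This defines permutations $\sigma_0,\dots,\sigma_k$ of $[n]$ with $\sigma_k\cdots\sigma_0=\mathrm{id}$. The statement your insertion induction should prove is $\sigma_{q-1}\cdots\sigma_0=\pi_{k+1-q}^{-1}$ for $1\le q\le k$, reading a partition as the permutation whose cycles are its blocks in increasing order. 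This agrees with the paper's $k=3$ examples and all twelve entries of the table in Section \ref{sec:ex}. Thus $\sigma_0=\pi_k^{-1}$, $\sigma_k=\pi_1$, and $\sigma_{q-1}=\pi_{k+1-q}^{-1}\pi_{k+2-q}$ is built from two consecutive layers.

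With this dictionary, Step 2 becomes a short computation.
\begin{itemize}
\item Rotation: the shift $p\mapsto p-(k+1)$ conjugates every $\sigma_r$ by $c=(1\,2\,\cdots\,n)$, which is $\mathtt{Rot}$ on each layer, so (\ref{eq:kDyckNC1}) follows.
\item Simion--Ullman: bar after one rotation is $p\mapsto N+2-p$ (modulo $N=(k+1)n$, windows modulo $n$). It sends $(i,r)\mapsto(n+1-i,\,k+1-r)$ for $r\ge1$ but $(i,0)\mapsto(n+2-i,\,0)$. With $w(i)=n+1-i$ one gets $\sigma'_s=w\sigma_{k-s}^{-1}w$ for $1\le s\le k-1$ and $\sigma'_0=w\sigma_k^{-1}cw$. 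Hence $\pi'_{k+1-q}=wc^{-1}\pi_q w=\mathtt{SU}(\pi_q)$, which is exactly Definition \ref{defn:XNCWP1}.
\end{itemize}

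This is not a layerwise reduction to the $k=1$ statement, as you suggest. The $k$-matching is not a superposition of the $k=1$ matchings of the layers, and it is the distinct treatment of residue $0$ that produces the factor $c^{-1}$ in $\mathtt{SU}$. Once you prove the corrected dictionary from the definition of $\mathtt{NCPtoDyck}$ (single weighted blocks, the shift $\mathbf{u}(B)=\{1\}\cup\{u-1\}$, and insertion), your route gives a uniform proof that avoids the paper's case analysis.
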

\begin{proof}
We first show Eq. (\ref{eq:kDyckNC1}) by induction on $n$ and $k$.
From Proposition \ref{prop:NCpro} and simple calculations, 
we have Eq. (\ref{eq:kDyckNC1}) holds for $n=1$ and general $k\ge2$, 
and $k=1$ and general $n\ge2$.
We assume that Eq. (\ref{eq:kDyckNC1}) holds up to $n-1$ and $k-1$.
Let $\pi:=(\pi_1,\cdots\pi_{k})$ be an increasing $k$-chain of non-crossing partitions.
We consider two cases: 1) $\pi_{k}\neq12\cdots n$, and 
2) $\pi_{k}=12\cdots n$. 

Case 1). Since $\pi_{k}\neq12\ldots n$, $\pi_{k}$ consists of at least 
two blocks. 
We write $\pi_{k}=B_1/B_2$ such that $B_{1}$ is a block containing 
the integer $1$, and $B_2$ is a non-crossing partition in $[1,n]\setminus B_1$.
By construction of a weighted non-crossing partition, we insert a $k$-Dyck path corresponding 
to $B_2$ into the $k$-Dyck path corresponding to $B_{1}$.
This implies that the $k$-Dyck path corresponding to $\mathtt{Rot}(\pi)$ can be obtained from 
$\mathtt{Rot}(B_{1})$ and $\mathtt{Rot}(B_{2})$ by inserting the latter into the former.
Further, since $\min(B_2)\ge2$, we have $\min(\mathtt{Rot}(B_2))=\min B_2-1\ge1$. 
When we insert $B_2$ into $B_1$ after the $(k+1)(\min(B_2)-1)$-th edge, we insert 
$\mathtt{Rot}(B_2)$ into $\mathtt{Rot}(B_1)$ after the $(k+1)(\min(\mathtt{Rot}(B_2)-1)$-th edge.
By induction hypothesis, we have $\mathtt{Rot}(B_i)=\partial^{k+1}(P_i)$ for $i=1,2$ where
$P_i$ is a $k$-Dyck path corresponding to $B_{i}$.
From these observations, we have $\mathtt{Rot}(\pi)=\partial^{k+1}(P)$ where $P$ is the 
$k$-Dyck path corresponding to $\pi$.

Case 2). Since $\pi_{k}=12\ldots n$, we have $\pi_{j}=12\ldots n$ for all $1\le j\le k-1$. 
Let $\pi'=(\pi_1,\pi_2,\ldots,\pi_{k-1},1/2/\ldots/n)$ be 
a non-crossing weighted partition of $k$ layers, 
and we denote  a non-crossing weighted partition of $k-1$ layers by $\pi_{-}:=(\pi_1,\ldots,\pi_{k-1})$.
Since $\pi_k=12\ldots n$, the Dyck path $P$ corresponding to $\pi$ can be obtained from the $k$-Dyck 
path $P'$ corresponding to $\pi'$ by adding one unit cell to each row of $P'$.
Recall that a $k$-Dyck path $P$ defines a Young diagram $Y^{(k)}(P)$ above it.
Since $\mathtt{Rot}(12\ldots n)=12\ldots n$, the $k$-Dyck path corresponding to $\mathtt{Rot}(\pi)$ 
is obtained from the $k$-Dyck path corresponding to $\mathtt{Rot}(\pi')$ by adding one unit cell to each row 
of $Y^{(k)}(\mathtt{Rot}(\pi'))$.
By construction of $k$-Dyck path from a non-crossing weighted partition, $Y^{(k)}(\pi')$ is obtained 
from $Y^{(k-1)}(\pi_{-})$ by adding $n-i$ boxes in the $i$-th row from top for $1\le i\le n$.
By induction hypothesis for $\pi_{-}$, we have $\mathtt{Rot}(\pi_{-})=\partial^{k}(P_{-})$ where 
$P_-$ is a $k-1$-Dyck path corresponding to $\pi_-$.
Then, $Y^{(k)}(\mathtt{Rot}(\pi'))$ is obtained from $Y^{(k-1)}(\mathtt{Rot}(\pi_-))$ by adding $n-i$ 
boxes in the $i$-th row from top.
These observations imply that $Y^{(k)}(\mathtt{Rot}(\pi'))$ defines the $k$-Dyck path 
which is given by $\partial^{k+1}(P')$.
It is clear that $k$-Dyck path corresponding to $\mathtt{Rot}(\pi)$ can be obtained from 
$\partial^{k+1}(P')$ by adding one box in each row of $P'$.
From these, $\mathtt{Rot}(\pi)=\partial^{k+1}(P)$.

Secondly, we show Eq. (\ref{eq:kDyckNC2}) by induction on $n$ and $k$.
Let $\pi=(\pi_1,\ldots,\pi_{k})$ be a non-crossing weighted partition,  
and $\pi_{-}=(\pi_{1},\ldots,\pi_{k-1})$.
Let $P$ (resp. $P_{-}$) be $k$-Dyck (resp. $k-1$-Dyck) paths corresponding to $\pi$ and $\pi_{-}$,
and $P'$ (resp. $P'_{-}$) be paths corresponding to $\mathtt{SU}(\pi)$ (resp. $\mathtt{SU}(\pi_{-})$).
We consider two cases: 1) $\pi_{k}=12\cdots n$, and 2) $\pi_{k}\neq12\cdots n$

Case 1). We have $\mathtt{SU}(\pi_{k})=1/2/\ldots/n$. 
Since $\pi_{k}$ is a refinement of $\pi_{i}$, $1\le i\le k-1$, all $\pi_{i}=12\ldots n$.
By the correspondence between $k$-Dyck paths and $k$-chains of non-crossing partitions,
the step sequence of $P$ is given by 
$(1,2,3+k,4+2k,\ldots,n+(n-2)k)$.
The promotion on $P$ gives a $k$-Dyck path whose step sequence is 
$(1,2+k,3+2k,\ldots,n+(n-1)k)$.
The action of $\mathtt{ev}$ on this $k$-Dyck path gives the same Dyck path.
On the other hand, $\mathtt{SU}(\pi)$ corresponds to the Dyck path whose 
step sequence is $(1,2+k,3+2k,\ldots,n+(n-1)k)$.
From these, we have $P'=\mathtt{ev}\circ\partial(P)$.

Case 2). 
We prove the claim by induction on $n$. For $n=1,2$, the claim follows from simple calculations.
We assume that the claim holds up to $n-1$ for $k\ge1$.
Since $\pi_{k}\neq 12\cdots n$, $\mathtt{SU}(\pi_{k})$ has at least two blocks.
Since we have $\mathtt{SU}\circ\mathtt{Rot}=\mathtt{Rot}^{-1}\circ\mathtt{SU}$ by Proposition 
\ref{prop:5relX}, we can assume without of loss of generality 
that a block $B$ of $\pi_{k}$ contains integers $1$ and $i$ such that an integer $j$  
is not contained in $B$ where $i\ge2$ and $1<j<i$.
All layers in $\pi$ contain $1$ and $i$ in the same block, and by the definition of $\mathtt{NCPtoDyck}^{-1}$,
the first two steps in $P$ are up steps, and first $(k+1)(i-1)+1$ steps contain $i$ up steps.
Recall that the perfect matching $S(P)$ consists of $n$ blocks, and each block contains $k+1$ integers.
Further, the minimal integer in a block of $S(P)$ corresponds to the position of an up step. 
In $S(P)$, a block, which contains the $i'$-th up step with $1\le i'\le i$, contains only the integers 
in $[1,(k+1)(i-1)+1]$.  
The action of the promotion $\partial$ on $P$ changes $P$ to $Q$ such that $Q$ touches the line $y=x/k$ 
at the vertex $((i-1)k,i-1)$. This means that $Q$ is a concatenation of two $k$-Dyck paths $Q_1$ and $Q_2$
of size $i-1$ and $n-i+1$.
The action of the evacuation $\mathtt{ev}$ changes $Q$ to $Q'$ such that $Q'$ is expressed as a concatenation 
of two $k$-Dyck paths $Q'_2$ and $Q'_1$ of size $n-i+1$ and $i-1$, i.e., $Q'=Q'_2\circ Q'_1$. 

On the other hand, the action of $\mathtt{SU}$ on $\pi_{k}$ implies that the integers $n+2-i,\cdots,n$ form
a non-crossing weighted partition. 
This means that the $k$-Dyck path $P'$ is written as a concatenation of two $k$-Dyck paths $P'_1$ and $P'_2$
of size $n+i-1$ and $i-1$, i.e., $P'=P'_1\circ P'_2$.
By induction hypothesis, we have $P'_1=Q'_2=\mathtt{ev}\circ\partial(Q_2)$ and 
$P'_2=Q'_1=\mathtt{ev}\circ\partial(Q_1)$, which implies that $P'=\mathtt{ev}\circ\partial(P)$.

Finally, from Proposition \ref{prop:5relX} and Eqs. (\ref{eq:kDyckNC1}) and (\ref{eq:kDyckNC2}), 
we have 
\begin{align}
\mathtt{LK}=\mathtt{Rot}\circ\mathtt{SU}
\sim\partial^{k+1}\circ\mathtt{ev}\circ\partial
=\mathtt{ev}\circ\partial^{-k},
\end{align}
which completes the proof.
\end{proof}

\begin{remark}
Proposition \ref{prop:kDyckNC} states the relations between the maps 
$\{\mathtt{Rot},\mathtt{SU},\mathtt{LK}\}$ and $\{\mathtt{ev},\partial\}$.
However, the Kreweras complement map $\mathtt{Kre}$ for $k\ge2$ cannot be simply expressed 
in terms of $\{\mathtt{ev},\partial\}$.
To see this explicitly, we consider the case $k=2$. 
Then, we have $\mathtt{Kre}^2=\mathtt{Rot}$, and $\mathtt{Rot}\sim\partial^{3}$, which implies 
$\mathtt{Kre}$ cannot be expressed in terms of $\partial$.
\end{remark}

\begin{example}
Set $(n,k)=(4,3)$.
Let $P$ be the $3$-Dyck path whose step sequence is $(1,2,4,7)$.
The non-crossing weighted partition for $P$ is 
$\pi=(1234, 14/23, 14/2/3)$.
The perfect matching of $P$ is given by $\{\{1,14,15,16\},\{2,3,12,13\},\{4,5,6,11\},\{7,8,9,10\}\}$.

The rotation gives $\mathtt{Rot}(\pi)=(1234,12/34,1/2/34)$.
On the other hand, we have the $3$-Dyck path with step sequence $(1,3,8,10)$ as $\partial^{4}(P)$. 

The Simion--Ullman involution gives $\mathtt{SU}(\pi)=(1/234,1/24/3,1/2/3/4)$.
$\mathtt{SU}(\pi)$ corresponds to the $3$-Dyck path $P'$ with step sequence $(1,5,7,8)$.
It is easy to verify that $P'=\mathtt{ev}\circ\partial(P)$. 

We compute the action of the Lalanne--Kreweras involution on $P$ 
by use of Eq. (\ref{eq:defLK}).
As a result, we have $\mathtt{LK}(\pi)=(123/4,13/2/4,1/2/3/4)$.
$\mathtt{LK}(\pi)$ corresponds to the $3$-Dyck path $P''$ with 
step sequence $(1,3,4,13)$.
The path $P''$ is obtained as $P''=\mathtt{ev}\circ\partial^{-3}(P)$.
\end{example}

\subsection{Lift of non-crossing weighted partitions}
We introduce a map on non-crossing weighted partitions which we 
call {\it lift of a non-crossing weighted partition} and 
denote it by $\mathtt{Lift}$.
Let $\pi$ be a non-crossing weighted partitions with at most $k$ layers. 
We first increase all the weights by one. We denote it by $\pi^{\uparrow}$
Then, if there are no edges with weight $k+1$, we define $\mathtt{Lift}(\pi)$
as a new non-crossing weighted partition $\pi^{\uparrow}$.

Below, we consider the case where some edges in $\pi^{\uparrow}$ have the weight $k+1$.
Let $E(\pi^{\uparrow})$ be a set of edges which are not in the same block as edges 
with weight $k+1$.
Suppose that $e$ is an edge with weight $k+1$.
By taking another point, we consider a non-crossing partition $\nu$ of size $3$ 
which contains the edge $e$.
By relabeling the labels of points by $1,2$ and $3$, $\nu$ is isomorphic to a non-crossing partition
of size $3$. Further, we consider only the case where $\nu$ contains only one edge with weight 
$k+1$.
We call the points in $\nu$ $1,2$ and $3$ in increasing order.
We consider the following three cases for $\nu$:
1) the two points $1$ and $2$ are connected by an edge with weight $k+1$,
2) the two points $2$ and $3$ are connected by an edge with weight $k+1$,
and 
3) the two points $1$ and $3$ are connected by an edge with weight $k+1$.
In the case of 1), we delete the edge connecting the two points $1$ and $3$.
In the case of 2), we delete the edge connecting $1$ and $2$.
In the case of 3), we delete the edge connecting $2$ and $3$.

By the above operation, we delete some edges from $\pi^{\uparrow}$.
Then, we delete all the edges with weight $k+1$, and delete some 
edges in $E(\pi^{\uparrow})$ if they are not compatible with the remaining edges
and deleted these edges.

An example of $\mathtt{Lift}$ is shown in Figure \ref{fig:Lift}.
The left non-crossing partition is $\{1234,3/24/1\}$. Then, we increase 
all the weight by one and obtain $\{1234,1234,3/24/1\}$ (see the middle picture).
Consider two non-crossing partitions $\{124,124,24/1\}$ and $\{234,234,3/24\}$.
We delete two edges $12$ and $34$. The diagonal $13$ is not in the same block 
as $24$. Since $12$ and $34$ are deleted, $13$ is also deleted.
Finally, we obtain a non-crossing partition $\{23/14,23/14\}$.

\begin{figure}[ht]
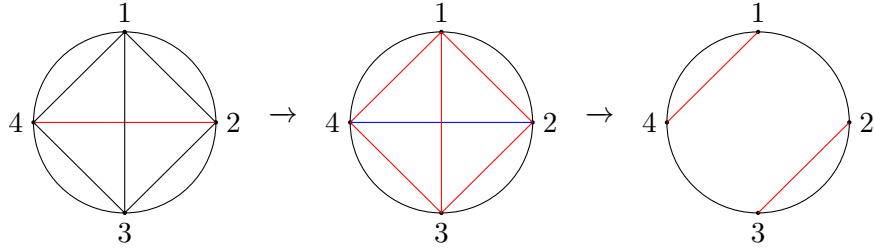

\begin{align*}
\tikzpic{-0.5}{[scale=0.4]
\draw circle(3cm);
\foreach \a in {0,90,180,270}
\filldraw [black](\a:3cm)circle(1.5pt);
\draw(90:3cm)node[anchor=south]{$1$};
\draw(180:3cm)node[anchor=east]{$4$};
\draw(270:3cm)node[anchor=north]{$3$};
\draw(0:3cm)node[anchor=west]{$2$};
\draw(0:3cm)to[anchor=south east](270:3cm);
\draw[red](0:3cm)to[anchor=north](180:3cm);
\draw(270:3cm)to[anchor=south west](180:3cm);
\draw(90:3cm)to[anchor=north east](0:3cm);
\draw(90:3cm)to[anchor=north west](180:3cm);
\draw(90:3cm)to[anchor=south west](270:3cm);
}
\xrightarrow{}
\tikzpic{-0.5}{[scale=0.4]
\draw circle(3cm);
\foreach \a in {0,90,180,270}
\filldraw [black](\a:3cm)circle(1.5pt);
\draw(90:3cm)node[anchor=south]{$1$};
\draw(180:3cm)node[anchor=east]{$4$};
\draw(270:3cm)node[anchor=north]{$3$};
\draw(0:3cm)node[anchor=west]{$2$};
\draw[red](0:3cm)to[anchor=south east](270:3cm);
\draw[blue](0:3cm)to[anchor=north](180:3cm);
\draw[red](270:3cm)to[anchor=south west](180:3cm);
\draw[red](90:3cm)to[anchor=north east](0:3cm);
\draw[red](90:3cm)to[anchor=north west](180:3cm);
\draw[red](90:3cm)to[anchor=south west](270:3cm);
}\xrightarrow{}
\tikzpic{-0.5}{[scale=0.4]
\draw circle(3cm);
\foreach \a in {0,90,180,270}
\filldraw [black](\a:3cm)circle(1.5pt);
\draw(90:3cm)node[anchor=south]{$1$};
\draw(180:3cm)node[anchor=east]{$4$};
\draw(270:3cm)node[anchor=north]{$3$};
\draw(0:3cm)node[anchor=west]{$2$};
\draw[red](0:3cm)to[anchor=south east](270:3cm);
\draw[red](90:3cm)to[anchor=north west](180:3cm);
}
\end{align*}
\caption{Lift on a non-crossing weighted partition with at most $2$ layers.
A diagonal in black, red and blue are in the first, second and third layers respectively.}
\label{fig:Lift}
\end{figure}

\begin{prop}
\label{prop:Lift}
We have $\mathtt{Lift}\sim \partial$.
\end{prop}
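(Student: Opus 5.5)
The plan is to reduce the statement $\mathtt{Lift}\sim\partial$ to the rotation relation (\ref{eq:kDyckNC1}) and to the block decomposition used in the proof of Proposition \ref{prop:kDyckNC}. The reduction runs through the perfect matching $S(P)$, on which promotion acts by the rotation $\mathtt{Rot}$ of angle $360/((k+1)n)$, by Eq. (\ref{eq:delRot}).

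First I would read off the weighted non-crossing partition $\pi=\mathtt{NCPtoDyck}^{-1}(P)$ from $S(P)$. Each block of $S(P)$ has $k+1$ elements, and the $(k+1)n$ points on the circle group into $n$ consecutive windows of length $k+1$, one window for each point $1,\dots,n$ of $\pi$. I would check directly on the elementary paths $\mu(B)=UR^{k-p}(UR^{k})^{r-1}R^{p}$ that a block of $\pi$ whose diagonals have weight $p$ corresponds to matching blocks joining the $j$-th window to a neighbouring window at offset $p$. In other words, the weight of a diagonal records where, inside the window, its matching arcs land.

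Under this dictionary, rotating $S(P)$ by one unit shifts every such offset by one. This is exactly the first step of $\mathtt{Lift}$: every weight increases by one, giving $\pi^{\uparrow}$. For diagonals that stay at weight at most $k$, nothing else changes, which settles the case where $\pi^{\uparrow}$ has no edge of weight $k+1$. I would prove this case by the same induction on $n$ as Case 1 of the proof of (\ref{eq:kDyckNC1}): split $\pi_k=B_1/B_2$, note that both $\mathtt{NCPtoDyck}$ and $\partial$ respect the insertion of the path of $B_2$ into that of $B_1$, and apply the induction hypothesis to each piece.

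The remaining case has some edge $e$ of weight $k+1$. Its arcs have been rotated past the window boundary, so in the new windowing they attach to the adjacent point. I would analyze this locally on the three-point sub-partitions $\nu$ containing $e$, exactly as in the definition of $\mathtt{Lift}$. In each of the three cases I would compute the rotated matching restricted to those three windows (a size $3$ computation), and check that the resulting partition deletes the edge prescribed by $\mathtt{Lift}$ and drops $e$.

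To globalize, I would use the insertion structure again: blocks not containing weight-$(k+1)$ edges are inserted sub-paths and are handled by induction. The incompatible edges of $E(\pi^{\uparrow})$ that $\mathtt{Lift}$ deletes are precisely those forced to split because the layer-$1$ partition is the coarsest one compatible with the rotated matching. As a consistency check I would verify $\mathtt{Lift}^{k+1}=\mathtt{Rot}$, matching $\partial^{k+1}\sim\mathtt{Rot}$ from (\ref{eq:kDyckNC1}), and test the example of Figure \ref{fig:Lift} against $\partial$.

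I expect the main obstacle to be this last case. The definition of $\mathtt{Lift}$ is local, through triples, while the effect of $\partial$ on the windows is global. The key lemma I would try to prove first is that, after rotation, two points $i,j$ lie in a common block of layer $q$ exactly when every triple containing them survives the deletion rule. I would prove it by induction on the number of weight-$(k+1)$ edges, peeling off an innermost such edge using the non-crossing property.
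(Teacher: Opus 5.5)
Your route differs from the paper's. You treat promotion as a one-step rotation of $S(P)$ via (\ref{eq:delRot}) and read weights as positions inside windows of length $k+1$. One step of this fails outright.

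\textbf{The easy case.} When $\pi^{\uparrow}$ has no edge of weight $k+1$ (i.e.\ $\pi_k=1/2/\cdots/n$), you rerun Case 1 of the proof of (\ref{eq:kDyckNC1}). This uses the claim that $\partial$ respects the insertion of the path of $B_2$ into that of $B_1$, and that claim is false. An inserted piece occupies an interval of $S(P)$ starting right after a multiple of $k+1$, and a single rotation pushes it across a window boundary. Only $\partial^{k+1}$, a rotation by a whole window, keeps the windows aligned, which is why the decomposition works for $\mathtt{Rot}$ but not here.

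Dually, $\mathtt{Lift}$ is not blockwise. Raising weights turns every weight-$0$ pair into a weight-$1$ pair, so in this case the first layer of $\mathtt{Lift}(\pi)$ is $12\cdots n$ and distinct blocks of $\pi$ get glued. A small example: take $k=2$, $n=2$, $\pi=(1/2,1/2)$, $P=14$. Then $S(P)=\{\{1,2,3\},\{4,5,6\}\}$ rotates to $\{\{1,2,6\},\{3,4,5\}\}$, so $\partial(P)=13\leftrightarrow(12,1/2)=\mathtt{Lift}(\pi)$. But $\partial$ fixes each size-one piece, and reinserting them gives back $14$. So even this case needs a direct argument: one rotation must create a weight-one join between every two cyclically consecutive windows and otherwise only shift offsets. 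Your induction cannot provide this.

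\textbf{The remaining steps are asserted rather than proved.}
\begin{itemize}
\item The window/offset dictionary is checked only on the single-block paths $\mu(B)$. However, $\mathtt{NCPtoDyck}$ is built by insertion and by the weight-raising rule $\mathbf{u}(B)=\{1\}\cup\{u-1:2\le u\in\mathbf{u}(B')\}$, and in general matching blocks thread through several windows. For the paper's example $\pi=\{1234,3/24/1\}$ with $k=2$ and $P=1356$, one finds $S(P)=\{\{1,2,12\},\{3,4,11\},\{5,9,10\},\{6,7,8\}\}$. The only blocks meeting both windows $2$ and $4$ are $\{3,4,11\}$ and $\{5,9,10\}$, and each also meets a third window. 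So the weight-two diagonal $24$ is not encoded by a block joining just two windows at some offset, and the precise dictionary still has to be formulated before it can be proved.
\item For the same reason, the ``size $3$ computation'' on the windows of a triple $\nu$ is not determined by $\nu$: the blocks meeting those windows also meet windows of points nested between them.
\item Your key lemma (after rotation, $i,j$ share a block of layer $q$ exactly when every triple containing them survives) is the entire content of the hard case. It would first have to make precise which edges of $E(\pi^{\uparrow})$ the definition of $\mathtt{Lift}$ deletes, and you leave it as something you would try to prove.
\item The check $\mathtt{Lift}^{k+1}=\mathtt{Rot}$ is only a necessary condition, not a proof.
\end{itemize}

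\textbf{How the paper avoids this.} The paper never decomposes under $\partial$. It treats $\pi_k=12\cdots n$ and $\pi_k=1/2/\cdots/n$ directly. Otherwise it uses $\mathtt{Rot}$-equivariance to put $1$ in a block $B_1$ of $\pi_k$ of size at least two. It then uses the matching block of $1$, which contains the $((k+1)(i_{\max}-1)+2)$-th step, to write $\mathbf{u}(\partial P)$ explicitly as in (\ref{eq:ssudash}), and reads $\mathtt{Lift}(\pi)$ off that change.
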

\begin{proof}
Let $\pi:=(\pi_1\le \pi_2\le\cdots\le\pi_{k})$ be a non-crossing weighted partition.
We denote by $P$ the $k$-Dyck path corresponding to a non-crossing weighted partition $\pi$.
We consider the two cases 1) $\pi_{k}=12\cdots n$ and 2) $\pi_{k}\neq12\cdots n$.

Case 1). Since $\pi_{i}\le \pi_{k}$ for all $i\in[1,k-1]$, we have 
$\pi_{i}=12\cdots n$ for $i\in[1,k-1]$.
The step sequence $\mathbf{u}(\pi)=(u_1,\ldots,u_{n})$ of $\pi$ is given by 
\begin{align}
u_{i}=\begin{cases}
1, & i=1, \\
2+(k+1)(i-2), & i\ge2.
\end{cases}
\end{align}
The lift of $\pi$ is given by $\pi':=\mathtt{Lift}(\pi)=(\pi_{0},\cdots,\pi_{0})$ where 
$\pi_{0}=1/2/\cdots/n$. The step sequence $\mathbf{u}(\pi')$ of $\pi'$ is given 
by $\mathbf{\pi'}=(1,(k+1)+1,2(k+1)+1,\cdots,(n-1)(k+1)+1)$.
The action of the promotion $\partial$ on $\mathbf{u}(\pi)$ is nothing but 
$\mathbf{u}(\pi')$. Therefore, we have $\mathtt{Lift}\sim\partial$.

Case 2). Since $\pi_k\neq 12\cdots n$, the non-crossing partition $\pi_{k}$ has 
at least two blocks. Since the map $\mathtt{Lift}$ is defined in terms of the circular 
representation of non-crossing weighted partitions, we have 
$\mathtt{Rot}\circ\mathtt{Lift}=\mathtt{Lift}\circ\mathtt{Rot}$.
Without loss of generality, we can assume that $\pi_{k}$ contains a block $B_1$ such that 
$B_1$ contains the integer $1$.
We have two cases: a) $\pi_{k}=1/2/\ldots/n$, and b) the size of $B_1$ is strictly larger than one.

Case 2a). Since the size of all blocks in $\pi_{k}$ is one, we have no edges with label $k+1$ 
after the action of $\mathtt{Lift}$. All the labels in $\pi$ are increased by one,
it is obvious that $\mathtt{Lift}$ is equivalent to the promotion $\partial$. 

Case 2b).
We denote by $\pi_{-}$ the non-crossing weighted partition in $\pi_{k}\setminus B_1$.
Let $i_{\max}$ be the maximal element in the block $B_1$.
Since $B_1$ contains the integer $1$, the step sequence $\mathbf{u}(\pi)=(u_1,\cdots,u_{n})$ 
satisfies $u_1=1$ and $u_2=2$.
Recall the map $\mathtt{PM}$ defined in Definition \ref{defn:PM}. Each up step in a Dyck path 
$P$ has $k$ right steps in the representation of the perfect matching $S(P)$.
The first to the $(k+1)(i_{\max}-1)+1$-th steps in $P$ have $i_{\max}$ up steps and 
$k(i_{\max}-1)$ down steps. This means that the $i$-th up step  with $i\in[2,i_{\max}]$ possesses 
$k$ right steps before the $(k+1)(i_{\max}-1)+2$-th step in $P$.
The $(k+1)(i_{\max}-1)+2$-th step in $P$ is a right step and belongs to the same matching 
as the integer $1$ in $S(P)$.

The non-crossing partition in $\pi_{-}$ may consist of several blocks.
If $\pi_{-}$ contains a block of size larger than one, one can apply the argument similar 
to the case of $B_1$, and identify the positions of up and down steps.

We apply the promotion $\partial$ on $P$. Let $P'=\partial(P)$. 
The above observations imply that the step sequence $\mathbf{u}(P')=(u'_1,\ldots,u'_{n})$
is given by 
\begin{align}
\label{eq:ssudash}
u'_{i}=\begin{cases}
u_{i+1}, & 1\le i\le i_{\max}-1, \\
(k+1)(i_{\max}-1)+2, & i=i_{\max}, \\
u_{i}, & i\ge i_{\max}+1.
\end{cases}
\end{align}

We consider the non-crossing weighted partition $\pi'$ corresponding to the $k$-Dyck path $P'$. 
The step sequence given by Eq. (\ref{eq:ssudash}) implies that 
if $i\in[1,i_{\max}]$ is not contained in $B_1$, then $i-1$ and $i$ belong to the same block
at $k$-th layer. This corresponds to the action of the map $\mathtt{Lift}$ on $\pi$.
Similarly, since $j\in[i_{\max}+1,n]$ is not contained in $B_1$, the action of map $\mathtt{Lift}$
on $\pi$ induces that the integer $j$ belongs to the same block in $i_{\max}$. 
Since the $(k+1)(i_{\max}-1)+2$-th step in $P'$ is an up step by Eq. (\ref{eq:ssudash}), the number of layers 
which contains $j$ and $i_{\max}$ is increased by one compared to that of layers in $\pi$.

From these observations, the $k$-Dyck path $P'$ corresponds to the non-crossing 
weighted partition $\mathtt{Lift}(\pi)$, which completes the proof.
\end{proof}

\begin{example}
Consider the non-crossing partition $\{1234,3/24/1\}$ for $k=2$
in Figure \ref{fig:Lift}.
By the map $\mathtt{NCPtoDyck}$, we have a $2$-Dyck path $p_1$ whose 
step sequence is $1356$. 
The right non-crossing partition corresponds to the $2$-Dyck path $p_2$
whose step sequence is $1245$. 
It is straightforward to see that  $p_2=\partial(p_1)$.
\end{example}

The following corollary is a direct consequence of 
Proposition \ref{prop:NCpro} and Proposition \ref{prop:Lift}.
\begin{cor}
We have $\mathtt{Kre}=\mathtt{Lift}$ on non-crossing partitions if $k=1$.
\end{cor}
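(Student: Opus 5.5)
The statement to prove is the corollary: for $k=1$, $\mathtt{Kre}=\mathtt{Lift}$ on non-crossing partitions. The plan is to chain two equivalences that are already established, both taken with respect to the same bijection $\mathtt{NCPtoDyck}$ (for $k=1$ this is the map of Definition \ref{def:NCPDyck}).

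\textbf{Step 1: reduce both maps to promotion.} For $k=1$, Proposition \ref{prop:NCpro} gives $\mathtt{Kre}\sim\partial$, that is,
\[
\mathtt{Kre}=\mathtt{NCPtoDyck}^{-1}\circ\partial\circ\mathtt{NCPtoDyck}.
\]
Proposition \ref{prop:Lift}, specialized to $k=1$, gives $\mathtt{Lift}\sim\partial$, that is, $\mathtt{Lift}=\mathtt{NCPtoDyck}^{-1}\circ\partial\circ\mathtt{NCPtoDyck}$. The two right-hand sides are identical, so $\mathtt{Kre}=\mathtt{Lift}$.

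\textbf{Step 2: check that the two statements really concern the same objects.} This is the only point needing care.

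For $k=1$, a non-crossing weighted partition has a single layer $\pi_1$. Every diagonal therefore has weight $1$, and the weighted partition is just an ordinary non-crossing partition.

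The weighted version of $\mathtt{NCPtoDyck}$ assigns to a block of size $r$ with weight $p=k=1$ the path $UR^{0}(UR)^{r-1}R^{1}=U(UR)^{r-1}R$. It then inserts paths after the $2j$-th step, exactly as in Definition \ref{def:NCPDyck}. This agrees with the remark in the text that the two constructions coincide.

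The $\mathtt{Lift}$ map lives on weighted partitions with at most $k=1$ layers. Raising all weights by one produces weight $2=k+1$ on every nontrivial edge, and those edges are then deleted according to the rule. So $\mathtt{Lift}$ is a well-defined self-map of $\mathtt{NC}(n)$, and Proposition \ref{prop:Lift} applies with $k=1$.

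\textbf{Expected obstacle.} There is no substantive difficulty: once the two conjugation relations are seen to use the same bijection and the same $\partial$ on Dyck paths, the identity follows immediately. A sanity check on small $n$ (for example $n=3$) is optional.
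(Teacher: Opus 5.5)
Your argument is exactly the paper's: the corollary is obtained by combining $\mathtt{Kre}\sim\partial$ from Proposition~\ref{prop:NCpro} with the $k=1$ case of $\mathtt{Lift}\sim\partial$ from Proposition~\ref{prop:Lift}, and your check that the weighted $\mathtt{NCPtoDyck}$ reduces to Definition~\ref{def:NCPDyck} spells out what the paper leaves implicit. One small correction to your description of $\mathtt{Lift}$ in Step~2: raising the weights must also give weight $1$ to the diagonals absent from $\pi_1$ (in effect prepending the layer $12\cdots n$), because otherwise every edge would reach weight $2=k+1$ and be deleted, whereas it is exactly the surviving weight-$1$ diagonals that make up $\mathtt{Kre}(\pi_1)$.
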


\section{Promotion and rowmotion in rational Dyck paths}
\label{sec:prorow}
\subsection{Matching map for \texorpdfstring{$k$}{k}-Dyck paths}
\label{sec:Matk}
We introduce a map called {\it matching map} on $k$-Dyck paths.
Let $p\in\mathtt{Dyck}_{(1,k)}(n)$.
A valley of $p$ is a pattern $RU$, that is, a right step followed 
by an up step.
We denote by $\overline{i}:=N+1-i$, $1\le i\le n$, where $N=(k+1)n$.

In this section, we consider only $k$-Dyck paths, i.e., $(a,b)=(1,k)$.
However, for later, we define an integer sequence $K(p)$
fro an $(a,b)$-Dyck path $p$.
\begin{defn}
\label{defn:Kp}
We define an integer sequence $K(p):=(k_1,\ldots,k_{an})$ for an $(a,b)$-Dyck path $p$ as follows.
\begin{enumerate}
\item Set $i=1$.
\item If the $i$-th row contains a valley, then go to (a). Otherwise, go to (b).
\begin{enumerate}
\item Suppose that the valley is in the $j$-th column from left.
We define $k_{i}:=bn-j+1$.
\item We  define $k_{i}:=\overline{i}$.
\end{enumerate}
\item Increase $i$ by one, and go to (2). Algorithm stops when $i=an$.
\end{enumerate}
\end{defn}
By definition, $K(p)$ is an integer sequence consisting of integers and bared integers.
The length of $K(p)$ is $an$.

We construct a $k$-Dyck path $q$ from $K(p)$ via a perfect matching.	
A perfect matching $S(q):=\{S_1,\ldots,S_{n}\}$, where $S_{i}$ is the set of $k+1$ integers, 
is defined as follows.
\begin{enumerate}
\item Set $i=1$ and $I=[1,(k+1)n]$. 
\item Recall that $k_i$ is an integer with or without a bar.
Take $k+1$ successive decreasing (resp. increasing) integers in $I$ starting from $k_{1}$ 
if $k_1$ is an integer without (resp. with) a bar. 
Here, integers are considered modulo $(k+1)n$.
Then, define $S_{i}$ as the set of these $k+1$ integers.
\item Replace $I$ by $I\setminus S_{i}$, and increase $i$ by one. Then, 
go to (2). The algorithm stops when $i=n$.	
\end{enumerate}
From construction, $S(q)$ is a perfect matching for a $k$-Dyck path.

By applying $\mathtt{PM}^{-1}$ to $S(P)$, we obtain an $(a,b)$-Dyck path $q$.
As a summary, we define as follows.
\begin{defn}
We define a map $\mathtt{Mat}:\mathtt{Dyck}_{(1,k)}(n)\rightarrow\mathtt{Dyck}_{(1,k)}(n)$, 
$p\mapsto q$.	
We call $\mathtt{Mat}$ matching map.
\end{defn}

\begin{remark}
The matching map $\mathtt{Mat}$ coincides with the Armstrong--Stump--Thomas map \cite{ArmStuTho13} on Dyck paths when $k=1$.
\end{remark}

\begin{example}
Consider a path $p$ whose step sequence is $147$ for $(a,b)=(1,2)$ and $n=3$.
We have two valleys in the first and second row, and there is no valley in the third row.
The valley in the first (resp. second) row is in the fourth (resp. second) column.
Thus, we have $K(p)=(3,5,\overline{3})=(3,5,7)$.
Then, we have $S(q):=\{\{3,2,1\},\{5,4,9\},\{7,8,6\}\}$
The perfect matching gives a $2$-Dyck path $q$ whose step sequence is $146$. 
As a consequence, we have $\mathtt{Mat}(147)=146$.
\end{example}

The inverse map $\mathtt{Mat}^{-1}:\mathtt{Dyck}_{(1,k)}\rightarrow\mathtt{Dyck}_{(1,k)}, q\mapsto p$ 
can be constructed as follows.
We consider a perfect matching $P(q)$ obtained from $q$ by $\mathtt{PM}$.
We replace $N+1-i$ in $P(q)$ by $\overline{i}$ for $1\le i\le n$.
We define a function $H$ from $[1,kn]\cup\{\overline{i}:1\le i\le n\}$ to $\mathbb{Z}$ by 
\begin{align}
\label{eq:Hi}
\begin{aligned}
H(i)&:=i, \quad \text{ if } i\in[1,kn], \\
H(\overline{i})&:=ki,\quad \text{ if } i\in[1,n].
\end{aligned}
\end{align}
Recall that $P(q)$ is a collection of sets of integers in $[1,kn]\cup\{\overline{i}:1\le i\le n\}$.
From each set in $P(q)$, we take the integer $i$ such that $H(i)$ is maximum.
Here, if $H(i)=H(\overline{j})$ for some $i\in[1,kn]$ and $j\in[1,n]$, we take $\overline{j}$ rather
than $i$.
We take $n$ elements from $P(q)$, and these $n$ integers are regarded as the set $K(p)$.
It is obvious that $p$ can be recovered from $K(p)$. 
In this way, we have the inverse map $\mathtt{Mat}^{-1}:q\mapsto p$.

\begin{example}
Suppose $(k,n)=(2,3)$ and $q=146$. 
The perfect matching is given by 
\begin{align*}
\{\{1,2,3\},\{4,5,9\},\{6,7,8\}\}=\{\{1,2,3\},\{4,5,\overline{1}\},\{6,\overline{3},\overline{2}\}\}.
\end{align*}
Form the first set, we take $3$  since we have $H(3)=3>H(2)=2>H(1)=1$.
Form the second set, we take $5$ since we have $H(5)=5>H(4)=4>H(\overline{1})=2$.
Finally, from the third set, we take $\overline{3}$ since we have $H(\overline{3})=H(6)=6>H(\overline{2})$.
As a consequence, we have $\{3,5,\overline{3}\}$.
It is easy to see that $p=147$ if $K(p)=\{3,5,\overline{3}\}$.  
\end{example}

\begin{prop}
\label{prop:Matinvk}
The two maps $\mathtt{Mat}$ and $\mathtt{Mat}^{-1}$ are inverse to each other.
\end{prop}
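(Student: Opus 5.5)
Since both maps go between finite sets of the same size $|\mathtt{Dyck}_{(1,k)}(n)|$, it suffices to prove $\mathtt{Mat}^{-1}\circ\mathtt{Mat}=\mathrm{id}$. Because $p$ is recovered from $K(p)$, the task reduces to the following claim. Let $S(q)=\{S_1,\ldots,S_n\}$ be the blocks built from $K(p)=(k_1,\ldots,k_n)$. Then the $H$-maximal element of $S_i$, with ties broken in favour of the barred integer, is exactly $k_i$. This claim also covers the implicit fact that the perfect matching of $q=\mathtt{PM}^{-1}(S(q))$ is the collection $S(q)$ itself.

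\textbf{Step 1: invariants of the sequence $K(p)$.} For unbarred entries, the valley in row $i$ lies in column $j\le k(n-i)$, since the path stays above $y=x/k$. Hence $k_i=kn-j+1\ge ki+1$. Moreover, the unbarred values strictly increase with $i$ among the rows carrying a valley. For a barred entry $\overline{i}$ we have $H(\overline{i})=ki$, by (\ref{eq:Hi}).

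\textbf{Step 2: blocks built from unbarred entries.} When $k_i$ is unbarred, $S_i$ consists of $k_i$ together with the next $k$ available integers below it (cyclically). I would show by induction on $i$ that none of these wraps around. More precisely, I claim that at stage $i$ the available integers below $k_i$ are plentiful enough. This is a counting argument: earlier unbarred blocks occupy intervals ending at smaller tops $k_{i'}<k_i$, and earlier barred blocks start at $\overline{i'}>kn$. Granting this, every element of $S_i$ is smaller than $k_i$ and lies in $[1,kn]$, while any element of the form $\overline{m}$ would have to exceed $kn$. So $k_i$ is the unique $H$-maximum of $S_i$.

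\textbf{Step 3: blocks built from barred entries.} When $k_i=\overline{i}$, the block $S_i$ is $\overline{i}$ together with the next $k$ available integers upward, taken cyclically. Upward from $\overline{i}$ there are only the barred values $\overline{i-1},\ldots,\overline{1}$, and their $H$-values $k(i-1),\ldots,k$ are all smaller than $ki$. After wrapping around, the block continues with small integers $1,2,\ldots$ that are still available. I would show these are all at most $ki$; equality $ki$ is allowed, and that tie is exactly what the tie-breaking rule in favour of barred integers resolves. The bound should follow from counting the integers in $[1,ki]$ already consumed by blocks $S_{i'}$ with $i'<i$, combined with the fact that row $i$ has no valley. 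In the absence of a valley, the first $i$ rows of $p$ force $k i$ right steps to appear after a certain point, which pins down how many small integers remain free.

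\textbf{Main obstacle.} The main difficulty is the counting in Steps 2 and 3, namely controlling how the blocks chosen greedily and in order interleave, so as to guarantee that no wrap-around enters the $H$-range above $k_i$. I would first test this on the worked example $(k,n)=(2,3)$ and on the extreme paths $U^nR^{kn}$ and $(UR^k)^n$. I would then phrase the required inequality as a statement about the number of available integers in $[1,m]$ after stage $i$, and prove it by induction on $i$ using the geometry of the valleys.
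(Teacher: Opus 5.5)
Your overall strategy matches the paper's. You reduce by finiteness, show that the $H$-maximal element of each $S_i$ is $k_i$, and use Step~1's monotonicity to re-sort the recovered set. The paper likewise argues by counting the $(i-1)(k+1)$ integers consumed before stage $i$.

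However, the central claim of Step~2 is false: unbarred blocks \emph{do} wrap around. Take the paper's own worked example $(k,n)=(2,3)$, $p=147$. There $K(p)=(3,5,\overline{3})$ and $S_1=\{1,2,3\}$. Then $S_2$, built downward from $5$, takes $5,4$ and, since $3,2,1$ are used, wraps to $9=\overline{1}$, giving $S_2=\{4,5,9\}$. The same happens for $p=146$, where $S_2=\{1,5,9\}$. So ``every element of $S_i$ is smaller than $k_i$ and lies in $[1,kn]$'' cannot be established by any induction. Your sketched justification also misdescribes the earlier blocks, which need not be intervals and include barred blocks wrapping into $1,2,\ldots$. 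More basically, $k_i\ge ki+1$ still allows $k_i<(k+1)i$, and then earlier blocks may occupy enough of $[1,k_i-1]$ to force a wrap.

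What is true, and what your ``Main obstacle'' paragraph correctly targets, is that the wrap never reaches $H$-values $\ge k_i$. The inequality must be strict, since ties go to the barred integer. This follows from the crude count alone. Suppose $\overline{m}\in S_i$ with $k_i$ unbarred. Every available integer among the $(k_i-1)+(m-1)$ integers cyclically between $k_i$ and $\overline{m}$ must lie in $S_i$, so at most $k-1$ of them are available. At most $(i-1)(k+1)$ of them are used. Hence $m\le (k+1)i-k_i\le i-1$, and $k_i-H(\overline{m})=k_i-km\ge (k+1)(k_i-ki)>0$.

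The analogous count for a barred block runs over $\{\overline{i-1},\ldots,\overline{1}\}\cup[1,j-1]$. It gives $j\le ki$ for every unbarred $j\in S_i$, which is exactly the paper's inequality $i+j-2\ge i(k+1)-1$. So Step~3 needs no appeal to the absence of a valley.

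The same counting, together with $k_{i'}<k_i$ for earlier unbarred rows, also shows that $k_i$ (resp.\ $\overline{i}$) has not been consumed before stage $i$. Your plan does not check this, and the construction needs it. Finally, the $H$-maximum claim does not show that $S(q)$ lies in the image of $\mathtt{PM}$, so it does not ``cover'' that fact; the paper simply asserts it from the construction.
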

\begin{proof}
Let $P$ be a $k$-Dyck path and $S(P):=\mathtt{PM}(P)$ is the perfect matching of $P$.
It is enough to show that a perfect matching $S(P)$ defines the integer sequence 
$K(P)$ where $K(P)$ consists of integers with or without a bar, 
and conversely $K(P)$ defines $S(P)$.

Suppose that a block $B$ of $S(P)$ contains a bared integer $\overline{i}$, $1\le i\le n$,
and an integer $j$ such that $j\ge ki+1$.
Since the block $B$ contains $\overline{i}$, we already have $i-1$ blocks.
Since each block contains $k+1$ integers, we used $(i-1)(k+1)$ integers for blocks in $S(P)$.
Note that we have $i+j-2\ge i(k+1)-1$. If the block $B$ is represented by the integer $\overline{i}$,
the maximal integer in $B$ is at most $ki$. 
The condition $j\ge ki+1$ means that $\overline{i}$ cannot be a representative of the block $B$, and 
the maximal integer without a bar can be a representative.
This implies that $K(P)$ is obtained from $S(P)$ by use of Eq. (\ref{eq:Hi}). 
By reversing the above correspondence, it is easy to show that $K(P)$ defines $S(P)$. 
This completes the proof.
\end{proof}

\subsection{Matching map for \texorpdfstring{$(a,b)$}{(a,b)}-Dyck paths}
We consider the case where $a$ and $b$ are coprime.
Define $\widetilde{k}:=\lfloor b/a\rfloor$ where $\lfloor x\rfloor$ is the floor function.
For an $(a,b)$-Dyck path $p$, let $K(p)$ be an integer sequence given by Definition \ref{defn:Kp}.
We construct an $(a,b)$-Dyck path $q$ of length $(a+b)n$ from the sequence $K(p)$.

We first introduce the notion of admissibility for a path.
Let $\mathbf{k}=(k_1,\ldots, k_{l})$ be a consecutive increasing integer sequence such that
$1\le k_1$ and $k_l\le (a+b)n$.
We consider the following $(a,b)$-Dyck path $P$ for $\mathbf{k}$: the first step is an up step, and 
the $l$-th step is either an up step or a right step.
If $l=1$ and $k_1\le an+\lfloor b(an-1)/a\rfloor$, 
then we associate an up step to $\mathbf{k}$.
Suppose that the path $P$ has the following expression:
\begin{align*}
Uw_1R^{r_1}w_2R^{r_2}\ldots w_{t}R^{r_t},
\end{align*}
where $w_1,\ldots,w_t$ are $(a,b)$-Dyck paths.
Let $n_i$ (resp. $m_i$) be the number of up (resp. right) steps in $w_i$, $1\le i\le t$.
We say that an $(a,b)$-Dyck path $P$ for $\mathbf{k}$ is a {\it admissible} if 
\begin{align}
\label{eq:adm1}
\begin{aligned}
&\sum_{i=1}^{u}(r_i+m_i)\le \left\lceil b(\sum_{i=1}^{u}n_i+1)/a \right\rceil-1, \quad 1\le u\le t-1, \\
&\sum_{i=1}^{t}(r_i+m_i)=\left\lfloor b(\sum_{i=1}^{t}n_i+1)/a\right\rfloor, \\
&w_i \text{ is admissible for } 1\le i\le t, 
\end{aligned}
\end{align}
where $\lceil x\rceil$ is the ceiling function.
Especially, $P$ is above the line with a slope of $a/b$.

We consider an admissible path which has a single up step.
If $a>b$, then $P=U$ is admissible. If $a\le b$, then $P=UR^{\tilde{k}}$ 
with $\tilde{k}=\lfloor b/a\rfloor$ is also admissible.
Other up-right paths with a single up step are not admissible.

Similarly, we consider an integer sequence 
$\mathbf{k}=(k_1,\ldots,k_{r},k_{r+1},\ldots,k_{l})$ such that 
$k_1=1$, $k_{l}=(a+b)n$, and subsequences $(k_1,\ldots,k_r)$ and $(k_{r+1},\ldots,k_{l})$
are consecutive increasing integer sequences. 
We associate an $(a,b)$-Dyck path $P'$, which consists of two connected components, 
to the sequence $\mathbf{k}$.
The first component starts from the first step and ends at the $k_r$-th step. It is expressed as
\begin{align*}
w_0Uw_1R^{r_1}w_2R^{r_2}\ldots w_{t}R^{r_t},
\end{align*}
where $w_1,\ldots,w_t$ are admissible in the sense of Eq. (\ref{eq:adm1}).
The second component starts from the $k_{r+1}	$-th step and ends with the $k_l$-th step.
It is expressed as 
\begin{align*}
R^{r_{t+1}}w_{t+1}R^{r_{t+2}}w_{t+2}\ldots R^{r_s}w_{s},
\end{align*}
where the paths $w_{t+1},\ldots,w_{s-1}$ are admissible in the sense of Eq. (\ref{eq:adm1}).
Let $n_i$ (resp. $m_i$) be the number of up (resp. right) steps in $w_{i}$, $0\le i\le s$.
We say that the path $P'$ consisting of two components is {\it admissible} if 
the following conditions are satisfied: 
\begin{align}
\label{eq:adm2}
\begin{aligned}
&\sum_{i=1}^{u}(r_i+m_i)\le \left\lceil b(1+\sum_{i=1}^{u}n_i)/a\right\rceil-1, \quad 1\le u\le t, \\
&\sum_{i=1}^{u}r_i+\sum_{i=1}^{u-1}m_i+\left\lfloor b(an-\sum_{i=0}^{s}n_i-1)/a\right\rfloor
\le \left\lceil b(an-n_{0}-\sum_{i=u}^{s}n_i)/a \right\rceil-1, \quad t+1\le u\le s-1,\\
&\sum_{i=1}^{s}r_i+\sum_{i=1}^{s-1}m_i=\left\lfloor b(an-n_0-n_{s})/a\right\rfloor
-\left\lfloor b(an-\sum_{i=0}^{s}n_i-1)/a\right\rfloor, \\
&\text{The path for }(k_1,\ldots,k_{n_0+m_0},k_{l-n_s-m_s+1},\ldots,k_{l}) \text{ is admissible},
\end{aligned}
\end{align}
and $P'$ is above the line with a slope of $a/b$.

We map the integer sequence $K(p)=(k_1,\ldots, k_{an})$ to a perfect matching $S(q):=\{S_1,\ldots,S_{an}\}$.
In what follows, we regard integers modulo $(a+b)n$. That is, the maximal integer which is smaller than $1$
is $(a+b)n$, and the minimal integer which is larger than $(a+b)n$ is $1$.
The perfect matching $S(q)$ is obtained from $K(p)$ as follows.
\begin{enumerate}
\item Set $i=1$ and $I=[1,(a+b)n]$. 
\item We have two cases: (a) $k_i$ has no bar, and (b) $k_i$ has a bar. 
\begin{enumerate}
\item $k_i$ has no bar. 
We take $\tilde{k}+1$ consecutive decreasing integers $\mathbf{k}$ starting from $k_i$ in $I$. 
Let $k_{\min}$ be the minimal integer in the sequence $\mathbf{k}$. 
We have three cases:
\begin{enumerate}
\item 
If $k_i-k_{\min}=\tilde{k}$ and $k_{\min}-1\in I$, 
we define a block of the perfect matching $S_{i}=\{k_{\min},k_{\min}+1,\ldots,k_{i}\}$. 
\item 
 $k_{i}-k_{\min}=\tilde{k}$ and $k_{\min}-1\not\in I$. 
This means that $k_{\min}-1$ belong to a block $S_{j}$ of the perfect matching for some 
$j$ satisfying $1\le j\le i-1$.
In each block $S_{l}$, $1\le l\le i-1$, of the perfect matching, we associate an up step 
to the minimal integer, and right steps to the remaining integers.
Similarly, we associate an up step to the minimal integer $k_{\min}$ and right steps 
to the other remaining integers in $\mathbf{k}$.
If the up-right path for the integers in  
$\mathbf{k}$ and blocks $S_{l}$, $1\le l\le i-1$, is not admissible 
in the sense of Eq. (\ref{eq:adm1}) or (\ref{eq:adm2}), 
we add an integer $k'=\max\{j<\min\{\mathbf{k}\}: j\in I\}$ to $\mathbf{k}$.
We consider the up-right path for the integers in the new $\mathbf{k}$ and blocks $S_l$. 
If this path is not admissible, we continue to add integers one-by-one to $\mathbf{k}$ until we have an admissible 
$(a,b)$-Dyck path. 
By the above procedures, we have the integer sequence $\mathbf{k}$ corresponding to an admissible path.
Finally, even if the up-right path for the integer sequence $\mathbf{k}$ is admissible, we 
try adding an integer $k'$ to $\mathbf{k}$, and check if an up-right path is admissible or not.
We continue this process as far as we have an admissible path. 
Let $q'$ be the admissible path of maximum size.
We take $\mathbf{k}$ as a set of integers which yields the path $q'$.
We define $S_{i}=\mathbf{k}$.
\item 
$k_i-k_{\min}>\tilde{k}$. As in the case (ii), we add integers one-by-one to $\mathbf{k}$
until we have an admissible path of maximum size. Then, we define $S_i=\mathbf{k}$.
\end{enumerate}
\item $k_i$ has a bar. We have $1\le k_i\le an$. Let $k'_{i}=(a+b)n-k_i+1$.
We take $\tilde{k}+1$ consecutive increasing integers $\mathbf{k}$ starting from $k'_i$ in $I$.
As in the case (a), we prolong the sequence $\mathbf{k}$ by adding integers if necessary until 
we have an admissible sequence of maximum size. 
The difference from case (a) is that we take consecutive increasing integers in $I$ instead of decreasing integers. 
We define $S_{i}=\mathbf{k}$ where $\mathbf{k}$ is an integer sequence for the admissible path of maximum size.
\end{enumerate}
\item 
In (2), we define a block of the perfect matching as $S_i=\mathbf{k}$.
We replace $I$ by $I\setminus S_{i}$, and increase $i$ by one. 
Then, go to (2). The algorithm stops when $I=\emptyset$.
\end{enumerate}

We use the admissibility conditions (\ref{eq:adm1}) and (\ref{eq:adm2}) to obtain a block 
of the perfect matching $S(q)$.
The meaning of the admissibility is to take a block with maximum size starting from 
the integer $k_i$ or $\overline{k_i}$.

\begin{defn}
We define the matching map $\mathtt{Mat}:\mathtt{Dyck}_{(a,b)}(n)\rightarrow\mathtt{Dyck}_{(a,b)}(n)$ 
by $p\mapsto q$.
\end{defn}

\begin{example}
Consider the $(2,3)$-Dyck path $p$ of size $2$ whose step sequence is given by
$\mathbf{u}(p)=(1,3,5,6)$. Note that $\widetilde{k}=1$.
Then, we have $K(p)=(\overline{1},5,6,\overline{4})=(10,5,6,7)$.
First, we take $2$ consecutive increasing integers from $I=[1,10]$, which are 
$\{10,1\}$. This sequence is not admissible by Eq. (\ref{eq:adm2}). 
We prolong $\{10,1\}$ to $\{10,1,2\}$. 
The perfect matching $\{10,1,2\}$ is admissible by Eq. (\ref{eq:adm2}), 
and $\{10,1,2,3\}$ is not.
Therefore, we have $S_{1}=\{10,1,2\}$.
Secondly, we take $2$ consecutive decreasing integers starting from $5$ in $I=[1,10]\setminus S_1$, 
which are $\{5,4\}$. This sequence is admissible by Eq. (\ref{eq:adm1}), and 
$\{5,4,3\}$ is not admissible.
We have $S_{2}=\{5,4\}$.
Thirdly, we take $2$ consecutive decreasing integers starting from $6$ in $I\setminus\{S_1,S_2\}$, 
which are $\{6,3\}$. The integers in the perfect matching $S_2$ are in $[3,6]$.
We consider an $(2,3)$-Dyck path $UURR$ corresponding to $[3,6]$. 
This Dyck path is not admissible, hence we prolong the sequence by adding an integer. 
The new sequence is $\{6,3,9\}$, and the path for $[1,6]\cup[9,10]$ is admissible by Eq. (\ref{eq:adm2}).
The sequence $\{6,3,9,8\}$ is not admissible by Eq. (\ref{eq:adm2}).
Hence, we have $S_{3}=\{6,3,9\}$.
Fourth, we take $2$ consecutive increasing integers from $\bar{4}=7$ 
in $I\setminus\{S_1,S_2,S_3\}$. This gives an admissible sequence $\{7,8\}$.
As a result, we have a perfect matching $S(q)=\{\{1,2,10\},\{3,6,9\},\{4,5\},\{7,8\}\}$.
The step sequence for the $(2,3)$-Dyck path $q$ is $\mathbf{u}(q)=(1,3,4,7)$, which 
means $\mathtt{Mat}((1,3,5,6))=(1,3,4,7)$.
\end{example}

\begin{example}
We have an obvious bijection between $(a,b)$-Dyck paths and $(b,a)$-Dyck paths.
We consider $(1,2)$- and $(2,1)$-Dyck paths for $n=2$.
The bijection is given by
\begin{align*}
(1,4)\leftrightarrow(1,2,4,5),\qquad (1,3)\leftrightarrow(1,2,3,5), \qquad (1,2)\leftrightarrow(1,2,3,4),
\end{align*}
where we express Dyck paths in terms of step sequences.
 
The map $\mathtt{Mat}$ does not compatible with this bijection in general.
The matching map on $(1,2)$-Dyck paths gives 
\begin{align*}
(1,4)\rightarrow(1,4), \qquad (1,3)\rightarrow(1,2)\rightarrow(1,3).
\end{align*}
On the other hand, the matching map on $(2,1)$-Dyck paths gives
\begin{align*}
(1,2,4,5)\rightarrow(1,2,3,4)\rightarrow(1,2,3,5)\rightarrow(1,2,4,5).
\end{align*}
\end{example}

The inverse map $\mathtt{Mat}^{-1}:\mathtt{Dyck}_{(a,b)}\rightarrow\mathtt{Dyck}_{(a,b)}, q\mapsto p$ 
can be constructed in a similar manner to the case $(a,b)=(1,k)$.
We consider a perfect matching $P(q)$ obtained from $q$ by $\mathtt{PM}$.
We replace $N+1-i$ in $P(q)$ by $\overline{i}$ for $1\le i\le an$.
We define a function $H$ from $[1,bn]\cup\{\overline{i}:1\le i\le an\}$ to $\mathbb{Z}$ by 
\begin{align}
\label{eq:Hab}
\begin{aligned}
H(i)&:=i, \quad \text{ if } i\in[1,bn], \\
H(\overline{i})&:=\lceil bi/a\rceil,\quad \text{ if } i\in[1,an],
\end{aligned}
\end{align}
where $\lceil x\rceil$ is the ceiling function.
Recall that $P(q)$ is a collection of sets of integers in $[1,bn]\cup\{\overline{i}:1\le i\le an\}$.
From each set in $P(q)$, we take the integer $i$ such that $H(i)$ is maximum.
Here, if $H(i)=H(\overline{j})$ for some $i\in[1,bn]$ and $j\in[1,an]$, we take $\overline{j}$ rather
than $i$.
We take $n$ elements from $P(q)$, and these $n$ integers are regarded as the set $K(p)$.
It is obvious that $p$ can be recovered from $K(p)$. 
In this way, we have the inverse map $\mathtt{Mat}^{-1}:q\mapsto p$.

\begin{example}
Suppose that $q$ is a $(2,3)$-Dyck path of size $2$ whose step sequence is 
$\mathbf{u}(q)=(1,3,4,7)$.
By applying $\mathtt{PM}$, we have 
\begin{align*}
S(q)=\{\{1,2,10\},\{3,6,9\},\{4,5\},\{7,8\}\}
=\{\{1,2,\overline{1}\},\{3,6,\overline{2}\},\{4,5\},\{\overline{4},\overline{3}\}\}.
\end{align*}
We take $\overline{1}$ from the first set since $H(\overline{1})=H(2)=2>H(1)=1$.
We take $6$ from the second set since $H(6)=6>H(\overline{2})=H(3)=3$.
We take $5$ and $\overline{4}$ from the third and fourth sets since $H(5)=5>H(4)=4$ and 
$H(\overline{4})=6>H(\overline{3})=5$.
This gives $K(p)=\{\overline{1},6,5,\overline{4}\}$, which yields the 
$(2,3)$-Dyck path $p$ with the step sequence $\mathbf{u}(p)=(1,3,5,6)$.
\end{example}

By an argument similar to the proof of Proposition \ref{prop:Matinvk}, we have the 
following proposition.
\begin{prop}
The maps $\mathtt{Mat}$ and $\mathtt{Mat}^{-1}$ on $(a,b)$-Dyck paths are inverse to each other.
\end{prop}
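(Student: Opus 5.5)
We follow the strategy of Proposition \ref{prop:Matinvk}: both $\mathtt{Mat}$ and the proposed inverse factor through the encoding $p\mapsto K(p)$ and through $\mathtt{PM}$. The map $p\mapsto K(p)$ of Definition \ref{defn:Kp} is injective, since it records for each row either the column of its valley or the absence of a valley. The map $\mathtt{PM}$ is a bijection onto perfect matchings. So it suffices to show two things. First, for every $p$, the matching $S(q)$ built from $K(p)$ contains, in each block $S_i$, the entry $k_i$ as the unique element maximizing $H$ of Eq. (\ref{eq:Hab}), with the stated tie-break in favour of barred entries. Second, for every $q$, the set of $H$-maximizers of the blocks of $\mathtt{PM}(q)$ is $K(p)$ for a genuine $(a,b)$-Dyck path $p$, and rebuilding the matching from it returns the blocks of $\mathtt{PM}(q)$.

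For the first direction, fix $i$ and treat the two cases of step (2) of the construction.

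\emph{Unbarred $k_i$.} The block $S_i$ is obtained by taking decreasing elements of $I$ starting from $k_i$, so every unbarred element of $S_i$ is smaller than $k_i$. Any barred elements $\overline{j}$ can enter $S_i$ only through the prolongation step, where wrapping modulo $(a+b)n$ occurs. We must show $H(\overline{j})=\lceil bj/a\rceil<k_i$ for each such $\overline{j}$. Admissibility (\ref{eq:adm2}) bounds the number of right steps preceding the wrap by $\lceil b(\cdot)/a\rceil-1$. Since $k_i=bn-j_0+1$ comes from a valley in column $j_0$ of row $i$, and the path lies above $y=ax/b$, this yields $k_i\ge\lceil bj/a\rceil+1$.

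\emph{Barred $k_i=\overline{i}$.} Here row $i$ has no valley. The block starts at $(a+b)n+1-i$ and increases, wrapping to small integers $1,2,\dots,m$. The admissible path of maximal size has exactly $\lfloor b(\cdot)/a\rfloor$ right steps, by the equality clause of (\ref{eq:adm1})/(\ref{eq:adm2}). From this we get $m\le\lceil bi/a\rceil$, with equality allowed, which is exactly why the tie-break prefers the barred entry. Any other barred entries $\overline{i'}$ in the block satisfy $i'<i$, so $H(\overline{i'})\le H(\overline{i})$.

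This mirrors the key inequality "$j\ge ki+1$ forces an unbarred representative" in the proof of Proposition \ref{prop:Matinvk}, now with $k$ replaced by $b/a$ and with floors and ceilings. For the second direction, we argue by induction on the order in which blocks are produced. Reading the blocks of $\mathtt{PM}(q)$ in the order given by their representatives, the greedy construction of step (2) reproduces each block, because maximality of the admissible path is equivalent to the non-crossing block structure of $\mathtt{PM}(q)$. We also check that the resulting sequence satisfies the defining constraints of a $K(p)$, namely that row $i$ carries either $\overline{i}$ or a valley column compatible with staying above the line.

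The main obstacle is the arithmetic comparison between $H(\overline{j})=\lceil bj/a\rceil$ and the extent of a prolonged block, including the exact boundary case where the two are equal. For $a=1$ this comparison is trivial because blocks have fixed size $k+1$; for general $(a,b)$ block sizes vary. Our first attempt will be to reduce admissibility to the statement that the lattice path reaches height $j$ before column $\lceil bj/a\rceil$. We would then verify the boundary case on small examples, such as those with $(a,b)=(2,3)$ given above, before writing the general inequality.
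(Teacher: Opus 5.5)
Your plan follows the paper's route; the paper's proof is only the remark that the argument is similar to Proposition~\ref{prop:Matinvk}. The reduction itself is sound. Since $p\mapsto K(p)$ is injective, everything comes down to showing that $k_i$ is the $H$-representative of the block $S_i$ it generates. That step is the entire content of the proposition, and the proposal does not carry it out. The two inequalities are asserted with ``this yields'' and ``from this we get'' and then explicitly deferred: $\lceil bj/a\rceil<k_i$ in the unbarred case, and $x\le\lceil bi/a\rceil$ for every unbarred $x\in S_i$ in the barred case.

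The mechanism of Proposition~\ref{prop:Matinvk} does not transfer verbatim. There the bound comes from every earlier block using exactly $k+1$ integers, whereas for $a\ge 2$ block sizes depend on the path (see the remark after Eq.~(\ref{eq:delRot})). The missing ingredient is a counting lemma read off from the equality clauses of (\ref{eq:adm1}) and (\ref{eq:adm2}). For example, a non-wrapping block together with the blocks nested inside it, containing $u$ up steps, occupies exactly $u+\lfloor bu/a\rfloor$ consecutive integers; wrapping blocks need an analogous, messier count. The comparison with $H(\overline{j})=\lceil bj/a\rceil$ has to be derived from that, including the equality case the tie-break is designed for. Your appeal to ``the path lies above $y=ax/b$'' concerns $p$. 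What actually matters is which integers $S_1,\dots,S_{i-1}$ have already removed from $I$, and you give no link between that and the geometry of $p$.

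Your sketch of the barred case is also inaccurate in a way that matters. After wrapping, $S_i$ collects whatever unbarred integers remain in $I$, not an initial segment $1,\dots,m$, and its elements need not be consecutive. For $(a,b)=(1,2)$, $n=3$, $p=147$, the block grown from $\overline{3}=7$ is $\{7,8,6\}$. It passes over $9$ and $1,\dots,5$ and ends at $6$, with $H(6)=H(\overline{3})=6$. So a right-step count of the block alone cannot give the bound; the nested blocks must be counted too. Similarly, for two barred entries you only obtain $H(\overline{i'})\le H(\overline{i})$. When $a>b$ the map $j\mapsto\lceil bj/a\rceil$ is not injective, and the tie-break rule does not cover ties between two barred entries, so a unique maximizer needs a strict inequality.

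Finally, your second direction can be dropped. Once $\mathtt{Mat}$ is known to map $\mathtt{Dyck}_{(a,b)}(n)$ into itself, $\mathtt{Mat}^{-1}\circ\mathtt{Mat}=\mathrm{id}$ on this finite set already forces bijectivity and the other composition. The unproved equivalence between maximal admissibility and the block structure of $\mathtt{PM}(q)$ is therefore unnecessary. What you do need, and have not addressed, is that the greedy admissible construction always produces $\mathtt{PM}(q)$ for some $(a,b)$-Dyck path $q$; your first direction presupposes this.
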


\subsection{Equivalence of promotion and rowmotion on rational Dyck paths}
In this section, we show that the matching map $\mathtt{Mat}$ for
$(a,b)$-Dyck paths satisfies the commutative diagram similar to Proposition \ref{prop:cd1}.
From Theorem \ref{thrm:RSKMat}, the map $\widehat{RSK}$ can be expressed in terms 
of the matching map. For general $(a,b)$, we have no analogue of $\widehat{RSK}$.
However, the matching map $\mathtt{Mat}$ plays a role similar to $\widehat{RSK}$ for
$(a,b)$-Dyck paths.

\begin{defn}
Let $X$ and $Y$ be operations on rational Dyck paths.
We write a relation between $X$ and $Y$ as 
\begin{align*}
X\cong Y \Leftrightarrow X=\mathtt{Mat}\circ Y\circ\mathtt{Mat}^{-1}.
\end{align*}
If $X\cong Y$, we say that $X$ is congruent to $Y$ up to $\mathtt{Mat}$.
\end{defn}

On one hand, we have two operations on Dyck paths, the promotion $\partial$
and the evacuation $\mathtt{ev}$.
On the other hand, we have two kinds of operations on the poset of Dyck paths, 
the rowmotion $\delta$ and the rowvacuations $\mathtt{Rvac}$ and 
$\mathtt{DRvac}$.
These two families of operations are related through the matching 
map.

\begin{theorem}
\label{thrm:rowMat}
The rowmotion is congruent to promotion up to $\mathtt{Mat}$, 
that is, $\mathtt{Mat}\circ \delta=\partial^{-1}\circ\mathtt{Mat}$.
\end{theorem}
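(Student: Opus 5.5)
Since $\mathtt{Mat}$ is built from the sequence $K(p)$ and the greedy block construction, I will compare both sides on $K(p)$. The approach is to compute what rowmotion does to $K(p)$, compute what $\partial^{-1}=\partial^{\ast}$ does to the perfect matching $S(q)$ of $q=\mathtt{Mat}(p)$, and check that the two computations agree.

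\emph{Step 1 (rowmotion on $K$).} By the argument of Proposition \ref{prop:rmD1}, which carries over verbatim to the poset of boxes between the lowest and highest $(a,b)$-Dyck paths, $\delta(p)$ is obtained by removing from the order filter the boxes just above the valleys of $p$. Equivalently, the peaks of $\delta(p)$ sit at the valleys of $p$. Row by row this gives an explicit rule for $K(\delta(p))$. A valley entry $k_i=bn-j+1$ is replaced by the neighbouring valley datum, namely $j$ shifts and the row index moves by one. A barred entry $\overline{i}$ (a row without a valley) becomes $\overline{i+1}$ or turns into an unbarred entry. This is the $(a,b)$ analogue of the description $k\mapsto k-1$, $\overline{k}\mapsto\overline{k+1}$ used in the proof of Theorem \ref{thrm:RSKMat}.

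\emph{Step 2 (promotion on $S(q)$).} For $(1,k)$ I will use Eq. (\ref{eq:delRot}), $\mathtt{PM}\circ\partial=\mathtt{Rot}\circ\mathtt{PM}$. Hence $\partial^{-1}$ relabels $i\mapsto i+1$ modulo $(k+1)n$ in $S(q)$. For general $(a,b)$ that relation fails, so I will instead write $\partial^{-1}=\partial^{\ast}=t_1t_2\cdots t_{N-1}$ (Proposition \ref{prop:Pevtoggle}, Theorem \ref{thrm:relproev}) and track the toggles through blocks, as in the proof of Proposition \ref{prop:DyckPMev}. I expect that all labels shift by $+1$ except near the block containing $N$ (resp. $1$), whose size is re-adjusted. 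This matches the way the admissibility conditions (\ref{eq:adm1}) and (\ref{eq:adm2}) choose blocks of maximal size.

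\emph{Step 3 (compatibility).} In $\mathtt{Mat}$, the block $S_i$ is generated greedily from the representative $k_i$. Under the shift of Step 1, the unbarred representatives move down by one and the barred ones move up by one in the cyclic labelling. This is exactly a cyclic shift of all labels by one, so $\mathtt{Mat}(\delta(p))$ is the blocks of $\mathtt{Mat}(p)$ rotated by one position. The boundary cases, where a valley is created or destroyed in the first or last row and an entry switches between barred and unbarred, need the function $H$ of Eq. (\ref{eq:Hab}) and the inverse construction of Proposition \ref{prop:Matinvk}. Concretely, I will check that the representative chosen by $H$ in the rotated matching is the updated $k_i$ from Step 1. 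I will organise this by induction on $n$, splitting off the first up step as in Proposition \ref{prop:cd1}, and verify the top path $U^{an}R^{bn}$ directly as the base case.

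\emph{Main obstacle.} The hard part is the general $(a,b)$ case. There the block sizes of $S(q)$ vary, so $\partial^{-1}$ is not a pure rotation. I must show that the greedy admissible-maximal block construction commutes with the toggle sweep, in particular that the block absorbing the wrap-around label gains or loses exactly the elements dictated by (\ref{eq:adm2}). For $(1,k)$ this step is trivial because every block has $k+1$ elements.
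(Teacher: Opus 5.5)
\textbf{Gap 1: the shift in Steps 1 and 3 has the wrong sign.} You transport the rule $k\mapsto k-1$, $\overline{k}\mapsto\overline{k+1}$ from the proof of Theorem~\ref{thrm:RSKMat} to rowmotion. That rule describes how $\partial$, i.e.\ rotation by $-1$, moves block representatives. The statement needs $\delta$ to correspond to $\partial^{-1}$, i.e.\ $k\mapsto k+1$ and $\overline{i}\mapsto\overline{i-1}$.

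Take $(a,b)=(1,1)$, $n=3$. For $p=URURUR$ one has $K(p)=(2,3,\overline{3})$, and $\delta(p)=UURURR$ has $K(\delta(p))=(3,\overline{2},\overline{3})$. So $2\mapsto 3$ and $\overline{3}\mapsto\overline{2}$, not $\overline{3}\mapsto\overline{4}$. Run with your signs, the argument would prove $\mathtt{Mat}\circ\delta=\partial\circ\mathtt{Mat}$, which is false here: $\mathtt{Mat}(UURURR)=UURRUR$, while $\partial(\mathtt{Mat}(URURUR))=\partial(URUURR)=UUURRR$. Your Step 3 description (``unbarred labels down by one, barred labels up by one'') is not a rotation at all.

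A smaller point: removing the valley boxes from the filter is not equivalent to placing the peaks of $\delta(p)$ at the valleys of $p$. For $p=UURRUR$ the first gives $UURURR$, but $\delta(p)=URUURR$. Only the second description is correct.

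\textbf{Gap 2: the central step is not proved.} Even with the signs fixed, ``the representatives shift by one, hence the greedy blocks rotate'' does not follow, already for $(1,k)$. $\mathtt{Mat}$ processes $k_1,k_2,\ldots$ in row order, and each bar fixes the direction of that greedy search. So the output is equivariant under relabelling only if processing order and bar status are preserved, and under $\delta$ they are not.

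For $p=URUURR$: $K(p)=(\overline{1},3,\overline{3})$ and $K(\delta(p))=K(UURRUR)=(2,\overline{2},\overline{3})$. The wrap-around representative $\overline{1}$ becomes $2$, not the label $1$. The unbarred $3$ becomes the barred $\overline{3}$ and moves from row~2 to row~3. Meanwhile $\overline{3}$ becomes $\overline{2}$ and moves to row~2. Why the greedy output is nonetheless the rotated matching is the real content of the theorem.

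For general $(a,b)$ you leave even more open. Your description of $\partial^{-1}$ on $\mathtt{PM}$ is only an expectation. Its compatibility with the admissible-maximal blocks of (\ref{eq:adm1})--(\ref{eq:adm2}) is explicitly deferred as the ``main obstacle''.

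\textbf{How the paper handles this.} It never computes $\partial^{-1}$ on perfect matchings via toggles. It works with $K(\partial^{-1}(P))$: shift the entries, then re-select each block's representative with $H$ from (\ref{eq:Hab}). It then inducts on the number $N$ of up steps. The cases are $k_1=\overline{1}$ versus $k_1=r\ge 2$, with subcases governed by $M=\lfloor Nb/a\rfloor-\lfloor (N-1)b/a\rfloor$ and by whether $k_2=\overline{2}$. In each case it matches the single block generated by the first entry on both sides and identifies the remaining entries with $K$ of a path with $N-1$ up steps.

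\textbf{The induction itself.} Your induction ``on $n$, splitting off the first up step'' leaves $an-1$ up steps. For $a\ge 2$ this is not a path in $\mathtt{Dyck}_{(a,b)}(n-1)$, so the induction leaves the class. Checking the single top path is also not a base case for such an induction.
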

\begin{proof}
We prove the claim by induction on the number $N$ of up steps.
For $N=1$, we have at most one $(a,b)$-Dyck path and the claim follows.

We first consider the case $N=2$.
An $(a,b)$-Dyck path $P$ for $N=2$ consists of two up steps and several down steps.
A path $P$ define a Young diagram $Y(P)$ above $P$ and below $U^{2}R^{m}$ with $m=\lfloor 2b/a\rfloor$.
We denote by $|Y(P)|$ the number of boxes in $Y(P)$.
We consider two cases: 1) $|Y(P)|=0$, and 2) $1\le |Y(P)|\le \lfloor b/a\rfloor$. 

Case 1). Since $|Y(P)|=0$, the step sequence of $P$ is $(1,2)$. 
We traces the step sequences of $(a,b)$-Dyck paths.
On one hand, we have 
\begin{align*}
(1,2)\xrightarrow{\delta}(1,\lfloor b/a\rfloor+2)\xrightarrow{\mathtt{Mat}}(1,2).
\end{align*}
On the other hand, we have 
\begin{align*}
(1,2)\xrightarrow{\mathtt{Mat}}(1,\lfloor b/a\rfloor+2)\xrightarrow{\partial^{-1}}(1,2).
\end{align*}
From these observations, the claim follows.

Case 2). Let $l=|Y(P)|+2$. The integer $l$ satisfies $3\le l\le \lfloor b/a\rfloor+2$.
As in the Case 1), we trace the step sequences of $(a,b)$-Dyck paths.
We have 
\begin{align*}
(1,l)\xrightarrow{\delta}(1,l-1)\xrightarrow{\mathtt{Mat}}(1,\lfloor b/a\rfloor-l+3),
\end{align*}
and 
\begin{align*}
(1,l)\xrightarrow{\mathtt{Mat}}(1,\lfloor b/a\rfloor-l+2)\xrightarrow{\partial^{-1}}(1,\lfloor b/a\rfloor-l+3).
\end{align*}
From these, the claim follows.

We assume that the claim holds up to $N-1$.
Let $P$ be an $(a,b)$-Dyck path with $N$ up steps, and $K(P):=(k_1,\ldots,k_N)$ be a sequence 
of integers with or without a bar defined in Definition \ref{defn:Kp}.
We consider the two cases: A) $k_1=\overline{1}$, and B) $k_{1}=r$ with some integer $r\ge2$.

Case A). Since $k_1=\overline{1}$, $K(\delta(P)):=(k'_1,\ldots,k'_{N})$ is given by 
$k'_1=\lfloor Nb/a\rfloor-\lfloor (N-1)b/a\rfloor+1$,
and we write $(k'_2,\ldots,k'_{N})=\delta((k_2,\ldots,k_{N}))$.
The action of $\mathtt{Mat}$ on $K(\delta(P))$ gives 
a matching block corresponding to $k'_1$. 
For other matching blocks, we write $\mathtt{Mat}\circ\delta(k_2,\ldots,k_{N})$.

The action of $\partial^{-1}$ on $K(P)$ increases (resp. decreases) the integers with (resp. without)
a bar. The newly obtained integer sequence $K(\partial^{-1}(P))$ may not be admissible as 
a sequence for an $(a,b)$-Dyck path $\partial^{-1}(P)$. 
In this case, by considering the perfect matching of $\partial^{-1}(P)$, we pick 
a maximal element for each matching block by Eq. (\ref{eq:Hab}).
By definition of the matching map, the matching block corresponding to $k_1=\overline{1}$
contains $z:=\lfloor Nb/a\rfloor-\lfloor(N-1)b/a\rfloor+1$ increasing integers starting from $\overline{1}$.
By increasing integers by one, we have a matching block which contains the integers $[1,z]$ without a bar.
As a result, the integer sequence $K(\partial^{-1}(P)):=(k''_1,\ldots,k''_{N})$ can be written as 
$k''_1=z$ and $(k''_2,\ldots,k''_{N})$ is expressed as $\partial^{-1}(k_2,\ldots,k_{N})$.

The induction hypothesis implies that we have $\mathtt{Mat}\circ\delta(k_2,\ldots,k_{N})=\partial^{-1}(k_2,\ldots,k_{N})$.
From the previous two paragraphs, $\mathtt{Mat}\circ\delta(P)$ and $\partial^{-1}\circ\mathtt{Mat}(P)$ have 
the same matching block consisting of $z$ integers $[1,z]$, and the remaining blocks are the same by induction.
From these, the claim follows.  

Case B). We consider the three cases: 
i) $\lfloor Nb/a\rfloor-\lfloor (N-1)b/a\rfloor+1\le r\le \lfloor Nb/a\rfloor-\lfloor (N-2)b/a\rfloor$,
ii) $r\ge\lfloor Nb/a\rfloor-\lfloor (N-2)b/a\rfloor+1$ and $k_2$ does not have a bar, and
iii)  $r\ge\lfloor Nb/a\rfloor-\lfloor (N-2)b/a\rfloor+1$ and $k_2=\overline{2}$.

Case i). The sequence $K(\delta(P))=(k'_1,\ldots,k'_{N})$ satisfies
$(k'_1,k'_2)=(\overline{1},r+1)$ if $r$ is maximal and $k_2=\overline{2}$, or $k'_1=r+1$ otherwise. 
Note that the integer $r+1$ without a bar appears in the sequence $K(\delta(P))$.
The other elements $(k'_1,\ldots,k'_{N})\setminus \{r+1\}$ are given by $K(Q)$ where an $(a,b)$-Dyck path $Q$ 
of size $N-1$. If we write $(k'_1,\ldots,k'_{N})\setminus \{r+1\}=(\widetilde{k'_1},\ldots,\widetilde{k'_{N-1}})$ and 
$K(Q)=(l_{1},\ldots,l_{N-1})$, we have 
\begin{align}
l_i=\begin{cases}
\overline{i}, & \text{ if } \widetilde{k'_{i}} \text{ has a bar}, \\
k'_{i+1}-\lfloor Nb/a\rfloor+\lfloor (N-1)b/a\rfloor, & \text{ if }  \widetilde{k'_{i}} \text{ does not have a bar}. 
\end{cases}
\end{align}
The map $\mathtt{Mat}$ on $\delta(P)$ gives a matching block $B$ starting from $k'_{1}=r+1$, and 
other matching blocks are obtained as $\mathtt{Mat}\circ\delta(Q)$. Then, to have matching blocks for 
$\mathtt{Mat}\circ\delta(P)$, we insert the matching block $B$ into those of $\mathtt{Mat}\circ\delta(Q)$.

The promotion $\partial^{-1}$ on $K(P)$ increases the integers in $K(P)$ by one, and decreases 
the integers with a bar in $K(P)$ by one. Since $k_1=r$, the action of $\partial^{-1}$ yields
the matching block starting from $r+1$. Then, it is clear that the elements of $\partial^{-1}(K(P))$
except the first one are given by $K(\partial^{-1}Q)$.
By induction hypothesis, we have $\mathtt{Mat}\circ\delta(Q)=\partial^{-1}\circ\mathtt{Mat}(Q)$, which implies
that $\mathtt{Mat}\circ\delta=\partial^{-1}\circ\mathtt{Mat}$ on an $(a,b)$-Dyck path $P$.

Case ii). The sequence $K(\delta(P))=(k'_1,\ldots,k'_{N})$ satisfies $k'_1=r+1$. Since $k_2$ does 
not have a bar, $K(\mathtt{Mat}\circ\delta(P))\setminus\{r+1\}$ is given by $K(\mathtt{Mat}\circ\delta(Q))$ 
where $Q$ is an $(a,b)$-Dyck path as in the case of i).
By an argument similar to Case i), we have $\mathtt{Mat}\circ\delta=\partial^{-1}\circ\mathtt{Mat}$ 
on an $(a,b)$-Dyck path $P$.

Case iii).
We assume that the integers $k_2$ to $k_{m}$ have a bar for $m\ge2$. 
By the action of $\delta$ on $P$, the integer sequence $K(\delta(P))=(k'_1,\ldots,k'_{N})$ has an integer $p$
such that $k'_{p}=r+1$, $2\le p\le m$, and the integers $k'_{q}$ has a bar for $1\le q<p$. 
Especially, we have $k'_1=\overline{1}$.
By the matching map $\mathtt{Mat}$, we have a matching block which contains $\overline{1}$.

We express the other integers $K':=(k'_2,\ldots,k'_{N})$ in terms of an $(a,b)$-Dyck path of size $N-1$.
More precisely, $K'$ is identified with the path $\mathtt{Mat}\circ\delta(P')$
where $K(P'):=(\widetilde{k_1},\ldots,\widetilde{k_{N-1}})$ satisfies
\begin{align}
\label{eq:KPdash}
\widetilde{k_i}=\begin{cases}
r-M, & \text{ if } i=1,\\
\overline{i}, & \text{ if } i\ge2 \text{ and } k_{i+1} \text{ has a bar},\\
k_{i+1}-M, & \text{ if } i\ge2 \text{ and } k_{i+1} \text{ does not have a bar},	
\end{cases}
\end{align}
where $M=\lfloor Nb/a\rfloor-\lfloor (N-1)b/a\rfloor$.

We consider the action of $\partial^{-1}$ on $K(P)$. Since $k_1=r$, then we have $L=\partial^{-1}(K(P))=(l_1,\ldots,l_{N})$ 
such that $l_1=r+1$. Since $k_2=\overline{2}$, the action of $\partial^{-1}$ yields a matching block which contains $\overline{1}$.
Then the remaining integers $(l_1,l_3,\ldots,l_{N})$ are expressed as 
$\partial^{-1}(k_1,k_3,\ldots,k_{N})$. 
We claim that $\partial^{-1}(k_1,k_3,\ldots,k_{N})$ is $\partial^{-1}(P')$ where $P'$ satisfies Eq. (\ref{eq:KPdash}).

To show the claim, note that $\partial^{-1}$ increases the integer without a bar by one, and decreases the integer
with a bar by one. Especially, $\overline{1}$ is mapped to $1$ and $\lfloor Nb/a\rfloor$ is mapped to $\overline{N}$. 
Because of increments or decrements of integers, the integer $\partial^{-1}(k_{i})$ may not be a maximal 
integer in a matching block with respect to the function $H$ given by Eq. (\ref{eq:Hab}).
We replace $\partial^{-1}(k_{i})$ with the maximal element of each matching block. 
Then, we have an $(a,b)$-Dyck path $P'$ of size $N-1$ which satisfies Eq. (\ref{eq:KPdash}).  

The integer sequences $K(\mathtt{Mat}\circ\delta(P))$ and $K(\partial^{-1}\circ\mathtt{Mat}(P))$ 
have the same matching block containing the integer $\overline{1}$ with a bar. 
The other matching blocks are the same 
by the induction hypothesis, which implies that the two integer sequences coincide with each other.
As a result, we have $\mathtt{Mat}\circ\delta(P)=\partial^{-1}\circ\mathtt{Mat}(P)$,
which completes the proof.
\end{proof}

\begin{example}
We consider the $(2,3)$-Dyck path whose step sequence is $(1,3,5,6)$.
Then, we have the following diagram for the $(2,3)$-Dyck paths:
\begin{align}
\tikzpic{-0.5}{[scale=0.8]
\node (0) at (0,0){$(1,3,5,6)$};
\node (1) at (4,0){$(1,2,4,8)$};
\node (2) at (0,-2){$(1,3,4,7)$};
\node (3) at (4,-2){$(1,2,5,8)$};
\draw[->,anchor=south] (0) to node {$\delta$}(1);
\draw[->,anchor=south] (2) to node {$\partial^{-1}$}(3);
\draw[->,anchor=east] (0) to node {$\mathtt{Mat}$}(2);
\draw[->,anchor=west] (1) to node {$\mathtt{Mat}$}(3);
}
\end{align}
\end{example}

\subsection{Rowmotion on \texorpdfstring{$k$}{k}-Dyck paths}
Recall that a $k$-Dyck path is a path from $(0,0)$ to $(kn,n)$.
A $k$-Dyck path $P$ defines a Young diagram above $P$ and below the top $k$-Dyck path.
To define the rowmotion on $k$-Dyck paths, we need to introduce a poset structure
on Young diagrams.
Let $Y=((n-1)k,(n-2)k,\ldots,k)$ be a Young diagram above the lowest $k$-Dyck path.
We define the rank of the leftmost cell in the top row of $Y$ is $(n-1)k-1$.
Then, we define the rank $\mathtt{rk}(c)$ of the cell $c\in Y$ as 
$\mathtt{rk}(c)=\mathtt{rk}(c')-1$ where $c'$ is a cell just above or right to $c$.
\begin{figure}[ht]
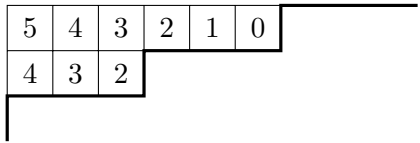

\tikzpic{-0.5}{[scale=0.6]
\draw[very thick](0,0)--(0,1)--(3,1)--(3,2)--(6,2)--(6,3)--(9,3);
\draw(0,1)--(0,3)--(7,3)(1,1)--(1,3)(2,1)--(2,3)(3,2)--(3,3)(4,2)--(4,3)(5,2)--(5,3)(0,2)--(5,2);
\foreach \x in {4,3,2} \draw(4.5-\x,1.5)node{$\x$};
\foreach \x in {5,4,3,2,1,0} \draw(5.5-\x,2.5)node{$\x$};
}
\caption{The rank of cells for $(1,3)$-Dyck paths of size three.}
\label{fig:rkcell}
\end{figure}
Figure \ref{fig:rkcell} gives the rank of cells for $3$-Dyck paths of size three.
Suppose two cells $c_1,c_2\in Y$. 
The cell $c_2$ covers $c_1$ if and only if $\mathtt{rk}(c_2)=\mathtt{rk}(c_1)+1$,
and two cells $c_2$ and $c_1$ shares an edge.
Then, one can apply the general theory developed in Section \ref{sec:GTrow} to this poset.

Recall that the rowmotion $\delta$, the rowvacuation $\mathtt{Rvac}$, and 
the dual rowvacuation $\mathtt{DRvac}$ are expressed by use of filter toggles 
$\mathbf{t}_{0},\mathbf{t}_1,\ldots,\mathbf{t}_{r}$.
For example, let $P$ be a $3$-Dyck path whose step sequence is given by $(1,3,6)$.
Then, we have 
\begin{align*}
\mathbf{t}_{i}(P)=
\begin{cases}
P, & \text{ if } i=0,1,5, \\
(1,3,7), & \text{ if } i=2, \\
(1,4,5), & \text{ if } i=3, \\
(1,2,6), & \text{ if } i=4. 
\end{cases}
\end{align*}
In this way, we can apply $\delta,\mathtt{Rvac}$ and $\mathtt{DRvac}$ on $k$-Dyck paths.

\subsection{Equivalence of operations on \texorpdfstring{$k$}{k}-Dyck paths}
In this subsection, we consider the case $(a,b)=(1,k)$.
In the previous subsection, we show that the promotion is congruent to the 
rowmotion for the generic $(a,b)$.

In the case of $(a,b)=(1,k)$, the evacuation and the rowvacuation are related as follows.
\begin{theorem}
\label{thrm:evrvac}
The evacuation $\mathtt{ev}$ is congruent to $\delta^{-k}\circ\mathtt{Rvac}$,
that is, $\mathtt{ev}\cong \delta^{-k}\circ\mathtt{Rvac}$.
\end{theorem}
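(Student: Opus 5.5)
We must show $\mathtt{Mat}\circ\mathtt{ev}=\delta^{-k}\circ\mathtt{Rvac}\circ\mathtt{Mat}$ on $k$-Dyck paths, i.e. $\mathtt{ev}=\mathtt{Mat}\circ\delta^{-k}\circ\mathtt{Rvac}\circ\mathtt{Mat}^{-1}$. We will not attack this directly. Instead we reproduce the rigidity argument used for Proposition \ref{prop:rv} and Corollary \ref{cor:rmev}: pin down an involution by its twisting relation with $\delta$, then fix the remaining ambiguity by evaluating at a single path.

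\textbf{Step 1: transport the evacuation relation.} By Theorem \ref{thrm:rowMat}, $\mathtt{Mat}\circ\delta\circ\mathtt{Mat}^{-1}=\partial^{-1}$. Set
\[
E:=\mathtt{Mat}^{-1}\circ\mathtt{ev}\circ\mathtt{Mat}.
\]
By Theorem \ref{thrm:relproev}(4) and (2), $\mathtt{ev}\circ\partial^{-1}=\partial\circ\mathtt{ev}$ and $\mathtt{ev}^2=\mathrm{id}$. Conjugating gives
\[
E\circ\delta=\delta^{-1}\circ E, \qquad E^2=\mathrm{id}.
\]
By Proposition \ref{prop:Rvac}(1),(2), $\mathtt{Rvac}$ satisfies the same two relations. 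Hence $C:=E\circ\mathtt{Rvac}$ commutes with $\delta$, and $C\circ\mathtt{Rvac}=\mathtt{Rvac}\circ C^{-1}$.

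\textbf{Step 2: reduce to a power of $\delta$.} We argue as in the proof of Proposition \ref{prop:rv} that $C=\delta^{X}$ for some integer $X$. This step is where the paper's heuristic is weakest: commuting with $\delta$ does not by itself force $C$ to be a power of $\delta$. We would try to strengthen it by a direct construction. We would show that $\delta^{j}\circ\mathtt{Rvac}$, for $j$ ranging over one period, are exactly the conjugates of reflection-type involutions. Concretely, by Proposition \ref{prop:kDyckNC}, $\mathtt{ev}\circ\partial^{m}$ corresponds to $\mathtt{SU}$ and $\mathtt{LK}$ composed with powers of $\mathtt{Rot}$. On the $\mathtt{Mat}$ side we would compute $\mathtt{Rvac}$ in the same spirit as Proposition \ref{prop:rv}, where $\mathtt{Rvac}=\mathtt{Dyck}_2$ for $k=1$. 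If a clean identification is not available, we fall back on the paper's convention: accept $C=\delta^{X}$ and determine $X$.

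\textbf{Step 3: fix $X=-k$ on a test path.} Use the top path $P_{\mathrm{top}}=U^{n}R^{kn}$, whose order filter is empty, so $\mathtt{Rvac}(P_{\mathrm{top}})$ is the lowest path $(UR^{k})^{n}$. Then compute $\mathtt{Mat}$ of both paths from $K(p)$:
\begin{itemize}
\item[] For the top path, every row except the last has no valley, so $K=(\overline{1},\dots,\overline{n})$ up to the valley structure.
\item[] For the lowest path, every row has a valley.
\end{itemize}
Next, evaluate $\mathtt{ev}$ on the resulting path using Proposition \ref{prop:evP}, which reads off $(k+1)n+1-\max S_i$. Finally, count how many applications of $\delta$ separate the two sides; since $\delta$ corresponds to $\partial^{-1}$, this is equivalently a count of rotations of the perfect matching. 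We expect the answer $X=-k$, consistent with $\mathtt{LK}\sim\mathtt{ev}\circ\partial^{-k}$ in Proposition \ref{prop:kDyckNC}.

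\textbf{Step 4: make the test unambiguous.} The orbit of $\delta$ has order dividing $(k+1)n$, by $\partial^{(k+1)n}=\mathrm{id}$ from Theorem \ref{thrm:relproev}(3) and Proposition \ref{prop:abDyckev}. So $X$ is determined by one test path only if that path's $\delta$-orbit is free, or if we also check $\mathtt{Rvac}\circ\mathtt{Mat}$ on the bottom path. We would choose test paths with full orbit, or check two paths, to pin down $X$ modulo $(k+1)n$.

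We expect Step 2 to be the main obstacle, and the explicit $\mathtt{Mat}$ computation in Step 3 to be routine bookkeeping.
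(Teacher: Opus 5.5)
\textbf{Steps 1--2.} These reproduce the opening of the paper's proof. The paper likewise sets $\widehat{\mathtt{ev}}=\mathtt{Mat}^{-1}\circ\mathtt{ev}\circ\mathtt{Mat}$ and then simply asserts $\widehat{\mathtt{ev}}=\delta^{X}\circ\mathtt{Rvac}$. You are right to distrust that assertion, but you leave it unrepaired: the identification you sketch via Proposition \ref{prop:kDyckNC} is never carried out. Your Step 4 also misdiagnoses what is missing.

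The relations $C\circ\delta=\delta\circ C$ and $C\circ\mathtt{Rvac}=\mathtt{Rvac}\circ C^{-1}$ allow $C$ to be a different power $\delta^{X_O}$ on each $\delta$-orbit $O$. Indeed, if $\mathtt{Rvac}(O)=O$, then $\delta^{X_O}\circ\mathtt{Rvac}=\mathtt{Rvac}\circ\delta^{-X_O}$ holds on $O$ for any $X_O$. So a single test path, even one with a free orbit, fixes the exponent only on its own orbit. What the strategy actually requires is $C(x)=\delta^{-k}(x)$ for one representative $x$ of \emph{every} $\delta$-orbit; commutation with $\delta$ then propagates this to all paths. Already for $(k,n)=(2,3)$ there are two orbits, of sizes $3$ and $9$ (Section \ref{sec:ex}), and your test path lies in the larger one.

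\textbf{Step 3.} This is the more concrete failure. It is where the content of the theorem lies, and you do not actually perform it (``we expect $X=-k$''). Its starting point is also false for $k\ge2$. The cell poset of Section \ref{sec:prorow} is not graded in the sense of Section \ref{sec:GTrow}: the last cell of the $i$-th row from the top is minimal but has rank $(i-1)(k-1)>0$ for $i\ge2$. Hence the identity $\mathtt{Rvac}(\emptyset)=\mathcal{P}$, which is what gives $\mathtt{Rvac}(U^nR^n)=(UR)^n$ when $k=1$, breaks down.

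For $(k,n)=(2,3)$ the poset is $Y=(4,2)$, with ranks $3,2,1,0$ in the top row and $2,1$ in the second. Running the toggles gives $\mathtt{Rvac}(123)=127$ (the filter is the top row), in agreement with the table in Section \ref{sec:ex}, not $147$. With your value you would solve $\delta^{X}(147)=\widehat{\mathtt{ev}}(123)=125$ and obtain $X\equiv 2\pmod 9$. The correct equation $\delta^{X}(127)=125$ gives $X\equiv -2$.

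The paper avoids computing $\mathtt{Rvac}(P_{\max})$ head-on. It tests at $\delta^{-k}(P_{\max})$, splits $\mathtt{Rvac}(P_{\max})=\mathtt{Rvac}(\delta^{k-2}(P'_{\min}))\oplus Q$, applies induction on $n$ to the size-$(n-1)$ piece, and compares with the explicit values of $\widehat{\mathtt{ev}}$ from Lemmas \ref{lemma:evrvac1} and \ref{lemma:evrvac2}. Nothing in your proposal plays that role, so the exponent is never determined.
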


Before proceeding to the proof of Theorem \ref{thrm:evrvac}, we introduce two lemmas regarding the 
action of $\widehat{\mathtt{ev}}:=\mathtt{Mat}^{-1}\circ\mathtt{ev}\circ\mathtt{Mat}$ on two 
$k$-Dyck paths.

Let $P_{\max}$ (resp. $P_{\min}$) be the top (resp. bottom) $k$-Dyck path, i.e., 
$P_{\max}$ (resp. $P_{\min}$) has an integer sequence $K(P_{\max})=(\overline{1},\overline{2},\ldots,\overline{n})$
(resp. $K(P_{\min})=(k+1,2k+1,\ldots,(n-1)k+1,\overline{n})$), where $K(P)$ is defined in Definition \ref{defn:Kp}.

We first study the action of $\widehat{\mathtt{ev}}$ on $\delta^{-k}(P_{\max})$ and 
$\delta^{-2}(P_{\min})$.

\begin{lemma}
\label{lemma:evrvac1}
The action of $\widehat{\mathtt{ev}}$ on $Q:=\delta^{-k}(P_{\max})$ gives the set of integers with or without a bar
\begin{align}
\label{eq:KQmax}
K(Q)=\{k+1,n+2k,n+3k,\ldots,n+(n-r_{\max}-1)k,\overline{k},\overline{2k},\ldots,\overline{r_{\max}k},\overline{n}\}
\end{align}
where $r_{\max}:=\lfloor n/k-1\rfloor$.
The $k$-Dyck path $Q$ is obtained from $K(Q)$ by sorting the entries in the right order, and applying 
$\mathtt{Mat}^{-1}$.	
\end{lemma}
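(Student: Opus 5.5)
Since $\widehat{\mathtt{ev}}=\mathtt{Mat}^{-1}\circ\mathtt{ev}\circ\mathtt{Mat}$, I plan to compute the three maps one after another, starting from the explicit path $P_{\max}$ with $K(P_{\max})=(\overline{1},\ldots,\overline{n})$. I read the lemma as describing $\widehat{\mathtt{ev}}(\delta^{-k}(P_{\max}))$ through the sequence $K$ of that image.

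The first step is to avoid computing $\delta^{-k}$ directly. By Theorem \ref{thrm:rowMat} we have $\mathtt{Mat}\circ\delta^{-k}=\partial^{k}\circ\mathtt{Mat}$. It is therefore enough to compute $\mathtt{Mat}(P_{\max})$ and then apply $\partial^{k}$.

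For $K=(\overline{1},\ldots,\overline{n})$ the matching algorithm (case (b), increasing runs modulo $N=(k+1)n$) produces consecutive blocks. The block for $\overline{1}=N$ is $\{N,1,\ldots,k\}$, the block for $\overline{2}$ is $\{N-1,k+1,\ldots,2k\}$, and so on. These are blocks of size $k+1$, taken in order, so $\mathtt{Mat}(P_{\max})$ is the bottom path $P_{\min}$ rotated. Its perfect matching is explicit.

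By \eqref{eq:delRot}, $\mathtt{PM}\circ\partial=\mathtt{Rot}\circ\mathtt{PM}$. So $\partial^{k}$ rotates this matching by $k$ positions, and the matching of $\mathtt{Mat}(Q)$ is an explicit rotation of the matching above.

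Next I apply $\mathtt{ev}$ using Proposition \ref{prop:abDyckev}: the matching of $\mathtt{ev}(\mathtt{Mat}(Q))$ is the bar image $i\mapsto N+1-i$ of the rotated matching. Equivalently, by Proposition \ref{prop:evP}, its step sequence consists of the values $N+1-\max(S_i)$. This gives an explicit perfect matching $S'$.

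Finally I apply $\mathtt{Mat}^{-1}$ to $S'$ as described after Proposition \ref{prop:Matinvk}. In each block I relabel $N+1-i$ as $\overline{i}$ and pick the entry maximizing $H$, where $H(i)=i$ and $H(\overline{i})=ki$, with ties going to the barred entry.

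I expect this selection to be the main obstacle. Each block of $S'$ is a short arc of consecutive integers, possibly wrapping around $N$. For a block containing both a large unbarred integer $j\le kn$ and a barred $\overline{i}$, the choice depends on comparing $j$ with $ki$. This comparison is what splits the blocks into two families:
\begin{itemize}
\item those represented by $\overline{k},\overline{2k},\ldots,\overline{r_{\max}k}$;
\item those represented by $n+2k,\ldots,n+(n-r_{\max}-1)k$.
\end{itemize}
The threshold $r_{\max}=\lfloor n/k-1\rfloor$ should come from the inequality $n+(m+1)k\le kn$ versus its barred competitor. The two special blocks, containing the wrap point and the last block, give $k+1$ and $\overline{n}$.

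I would carry out this bookkeeping by indexing the blocks of $S'$ by $m=1,\ldots,n$ and writing each block's endpoints as linear functions of $m$. I would check the boundary cases $m=1$, $m=n$, and $m$ near $r_{\max}$ separately, and sanity-check the result against small cases such as $(n,k)=(3,2)$ and $(4,3)$.

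The last sentence of the lemma, recovering $Q$ by sorting $K(Q)$ and applying $\mathtt{Mat}^{-1}$, then follows from Proposition \ref{prop:Matinvk}.
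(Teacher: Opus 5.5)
Your opening reduction is sound and takes a different route from the paper. The paper computes $K(\delta^{-k}(P_{\max}))$ directly, splits into the cases $k\le n-1$ and $k\ge n$, and then runs the matching algorithm. You instead use Theorem~\ref{thrm:rowMat} in the form $\mathtt{Mat}\circ\delta^{-k}=\partial^{k}\circ\mathtt{Mat}$ and rotate.

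With $N=(k+1)n$, the blocks of $\mathtt{Mat}(P_{\max})$ are the nested sets $\{N+1-i\}\cup[(i-1)k+1,ik]$. So, contrary to your remark, for $n\ge3$ this path is not a rotation of $P_{\min}$: for $(n,k)=(3,2)$, $\mathtt{Mat}(123)=135$ is not in the promotion orbit $\{147,136,125\}$. This is harmless, since you then use the explicit blocks. Subtracting $k$ modulo $N$ gives $[N-k,N]$ and $\{N+1-i-k\}\cup[(i-2)k+1,(i-1)k]$ for $2\le i\le n$. This is exactly the paper's list of blocks in both of its cases, obtained uniformly. After the bar operation the blocks are $[1,k+1]$ and $\{i+k\}\cup[(n-i+1)k+n+1,\,(n-i+2)k+n]$.

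\textbf{The gap is the step you defer.} It is the entire content of the lemma, and carrying it out does not give the printed threshold.

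In $\mathtt{Mat}^{-1}$, exactly the entries $x>kn$ become barred, as $\overline{N+1-x}$. The $k$-interval of block $i$ consists of the numbers $N+1-j$ with $j\in[(i-2)k+1,(i-1)k]$, and such a number is barred exactly when $j\le n$. This gives three cases:
\begin{itemize}
\item If $(i-1)k\le n$, the interval is entirely barred and the winner is $\overline{(i-1)k}$. It beats $i+k$ because $(i-1)k^2\ge i+k$; the only tie, at $i=k=2$, goes to the bar.
\item If $(i-2)k\ge n$, the interval is unbarred and the winner is $n+(n-i+2)k$.
\item For the single $i$ with $(i-2)k<n<(i-1)k$, which exists only when $k\nmid n$, the block contains both $kn$ and $kn+1=\overline{n}$. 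The tie $H=kn$ gives $\overline{n}$.
\end{itemize}
So $\overline{n}$ comes from the block straddling $kn$, not from the last block. Hence the barred entries are $\overline{k},\ldots,\overline{rk},\overline{n}$ with $r=\lceil n/k\rceil-1=\lfloor (n-1)/k\rfloor$, not $\lfloor n/k-1\rfloor$. The two values agree only when $k\mid n$.

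The test cases you propose already show this:
\begin{itemize}
\item For $(n,k)=(3,2)$: $\delta^{-2}(123)=135$, $\mathtt{Mat}(135)=137$, $\mathtt{ev}(137)=145$, $\mathtt{Mat}^{-1}(145)=127$, so $K=(3,\overline{2},\overline{3})$. The printed formula gives $\{3,7,\overline{3}\}$, whose unbarred entry $7$ exceeds $kn=6$.
\item For $(4,3)$ you get $\{4,10,\overline{3},\overline{4}\}$, versus the printed $\{4,10,13,\overline{4}\}$.
\end{itemize}

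So no completion of your plan can yield the formula as stated. What it proves is the corrected version with $r_{\max}=\lfloor (n-1)/k\rfloor$. That is also what the paper's own subcases actually produce before they are merged into $\lfloor n/k-1\rfloor$.
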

\begin{proof}
The $k$-Dyck path $\delta^{-k}(P_{\max})$ has a sequence $K(\delta^{-k}(P_{\max}))$ of integers with or without a bar:
\begin{align*}
K(\delta^{-k}(P_{\max}))
=\begin{cases}
(k(n-1)+1,k(n-1)+2,\ldots, kn,\overline{k+1},\overline{k+2},\ldots,\overline{n}), & \text{ for } k\le n-1, \\
(k(n-1)+1,k(n-2)+2,\ldots,(n-1)k+n-1,\overline{n}), & \text{ for } k\ge n.
\end{cases}
\end{align*}
We consider two cases: A) $k\le n-1$, and B) $k\ge n$.

Case A).
The integer sequence $K(\delta^{-k}(P_{\max}))$ gives $n$ matching blocks
\begin{align*}
&\{(n-1)k+1,k(n-1),\ldots,(n-2)k+1\}, \\
&\{(n-1)k+r,(n-r)k,(n-r)k-1,\ldots,(n-r-1)k+1\}, \text{ for } 2\le r\le k, \\
&\{\overline{k+1},\overline{k},\ldots,\overline{1}\}, \\
&\{\overline{k+r},(r-2)k+1,(r-2)k+2\ldots,(r-1)k\}, \text{ for } 2\le r\le n-k.
\end{align*}
The action of $\mathtt{ev}$ on these $n$ matching blocks gives 
\begin{align*}
&\{n+k,n+k+1,\ldots,n+2k\}, \\
&\{n+k-r+1,n+kr+1,n+kr+2,\ldots,n+(r+1)k\}, \text{ for } 2\le r\le k, \\
&\{1,2,\ldots,k+1\}, \\
&\{k+r,n+(n-r+2)k,n+(n-r+2)k-1,\ldots,n+(n-r+1)k+1\}, \text{ for } 2\le r\le n-k.
\end{align*}
The map $\mathtt{Mat}^{-1}$ gives the following integer sequence $K(Q)$ 
for a $k$-Dyck path $Q$.
We have the following two subcases:
\begin{enumerate}[(a)]
\item Suppose 
$n\ge k(k+1)/(k-1)$. Then, we have 
\begin{align*}
K(Q)=&\{k+1,n+2k,n+3k,\ldots,n+(n-r_{\min}+2)k,
\overline{k},\overline{2k},\ldots,\overline{(r_{\min}-3)k},\overline{n}\}
\end{align*}
where 
\begin{align*}
r_{\min}:=\min\left\{r\in\mathbb{N} : 2+\genfrac{}{}{}{}{n}{k}\le r \right\}.
\end{align*}

\item Suppose $n< k(k+1)/(k-1)$.
We have 
\begin{align*}
K(Q)=\{k+1,n+2k,n+3k,\ldots,n+(r_{\max}+1)k,
\overline{k},\overline{2k},\ldots,\overline{(n-r_{\max}-2)k},\overline{n}\},
\end{align*}
where 
\begin{align*}
r_{\max}:=\max\left\{r\in\mathbb{N}: r\le n-1-\genfrac{}{}{}{}{n}{k}\right\}.
\end{align*}
\end{enumerate}
From a) and b), we have 
\begin{align*}
K(Q)=\{k+1,n+2k,\ldots, n+(n-r_{\max}-1)k,
\overline{k},\overline{2k},\ldots,\overline{r_{\max}k},\overline{n}
\}
\end{align*}
where $r_{\max}=\lfloor n/k-1\rfloor$.

Case B).
The integer sequence $K(\delta^{-k}(P_{\max}))$ gives $n$ matching blocks
\begin{align*}
&\{(n-1)k+1,k(n-1),\ldots, (n-2)k+1\}, \\
&\{(n-1)k+r,(n-r)k,(n-r)k-1,\ldots,(n-r-1)k+1\}, \text{ for } 2\le r\le n-1, \\
&\{\overline{n},\overline{n-1},\ldots,\overline{1},(n-1)k+n,(n-1)k+n+1,\ldots,nk\}.
\end{align*}
The action of $\mathtt{ev}$ on these $n$ matching blocks gives 
\begin{align*}
&\{n+k,n+k+1,\ldots,n+2k\}, \\
&\{n+k-r+1,n+kr+1,n+kr+2,\ldots,n+(r+1)k\}, \text{ for } 2\le r\le n-1, \\
&\{1,2,\ldots,n,n+1,\ldots,k+1\}
\end{align*}
The map $\mathtt{Mat}^{-1}$ gives the following integer sequence $K(Q)$ for a $k$-Dyck path $Q$.
We have the following two subcases:
\begin{enumerate}[(a)]
\item If $n=2$, we have $K(Q)=(k+1,\overline{2})$.
\item If $n\ge3$, we have 
\begin{align*}
K(Q)=\{n+2k,n+3k,\ldots,n+(r_{\max}+1)k,\overline{n},\overline{(n-r_{\max}-2)k},\overline{(n-r_{\max}-3)k},\ldots,\overline{k},k+1\},
\end{align*}
where $r_{\max}:=\lfloor (n-1)(k-1)/k\rfloor$.
Since $n\ge k$, $r_{\max}=n-2$. 
As a result, we have 
\begin{align*}
K(Q)=\{k+1,n+2k,n+3k,\ldots,n+(n-1)k,\overline{n}\}.
\end{align*}
\end{enumerate}
From Cases A) and B),  we obtain the set $K(Q)$ of integers with or without a bar
as in Eq. (\ref{eq:KQmax}). 
\end{proof}

\begin{lemma}
\label{lemma:evrvac2}
Let $Q=\widehat{\mathtt{ev}}(\delta^{-2}(P_{\min}))$. The sequence $K(Q)$ of integers with or without
a bar is obtained from the following set of integers:
\begin{align*}
S=\{n+k+1,n+2k+1,\ldots,n+(n-r_{0}+1)k+1, 
\overline{k-1},\overline{2k-1},\ldots,\overline{(r_{0}-2)k-1},\overline{n}\},
\end{align*}
with the integer $r_{0}$ such that $(n+k+1)/k\le r_{0}\le(n+2k)/k$.
$K(Q)$ is obtained from $S$ by sorting the entries in the right order.
\end{lemma}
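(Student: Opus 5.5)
The plan is to run the same three-stage computation as in the proof of Lemma \ref{lemma:evrvac1}. First determine $K(\delta^{-2}(P_{\min}))$ explicitly. Then pass to its perfect matching via $\mathtt{Mat}$ and apply $\mathtt{ev}$ as the bar operation (Proposition \ref{prop:abDyckev}). Finally read off $K(Q)$ via $\mathtt{Mat}^{-1}$, that is, by choosing from each block the element maximizing the function $H$ of Eq. (\ref{eq:Hi}).

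For the first stage, $P_{\min}$ has $K(P_{\min})=(k+1,2k+1,\ldots,(n-1)k+1,\overline{n})$. Its filter is the whole poset of cells of rank $0,\ldots,(n-1)k-1$ described in the rowmotion subsection. I will compute $\delta^{-1}=\mathbf{t}_r\cdots\mathbf{t}_0$ twice directly on the Young diagram above the path. Starting from the empty order ideal, each application of $\delta^{-1}$ adds the minimal cells not yet covered. So $\delta^{-2}(P_{\min})$ is the path whose Young diagram above $P_{\min}$ consists of all cells of rank $\le 1$, namely two cells at the right end of each row of the staircase $((n-1)k,\ldots,k)$ that abut the bottom path. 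From this I will read off the valley columns row by row, which gives $K(\delta^{-2}(P_{\min}))$. I expect each non-bar entry $(i-1)k+1$ to shift to $(i-1)k+2$ or $(i-1)k+3$, with the last row still giving $\overline{n}$.

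For the second stage, I apply $\mathtt{Mat}$ to this sequence. Each non-bar entry produces a block of $k+1$ consecutive available integers going downward, and the bar entry produces an upward block taken modulo $(k+1)n$. As in Case A of Lemma \ref{lemma:evrvac1}, the blocks come out as explicit near-arithmetic families, and they split according to whether the downward run wraps past already used integers. Applying the bar operation $i\mapsto (k+1)n+1-i$ then gives the perfect matching of $\mathtt{Mat}(Q)$ in closed form.

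The main obstacle is the third stage. I must apply $\mathtt{Mat}^{-1}$, that is, for each block decide whether the $H$-maximum is an unbarred integer $j\le kn$ or a barred $\overline{i}$ with $H(\overline{i})=ki$. I will compare $j$ with $ki$ block by block. Blocks whose top unbarred entry exceeds $ki$ contribute $n+jk+1$ type entries, and the others contribute $\overline{jk-1}$. The crossover index is where $n+1+jk$ and $k(r-j)$ balance, which gives $r_0$ in the interval $[(n+k+1)/k,(n+2k)/k]$. The last block contributes $\overline{n}$. The floor ambiguity in $r_0$ is exactly the case split analogous to subcases (a) and (b) of Lemma \ref{lemma:evrvac1}, so I will treat $k\le n-1$ and $k\ge n$ separately, the second reducing again to short explicit lists. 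Sorting the chosen representatives into row order then gives $K(Q)$ from $S$.
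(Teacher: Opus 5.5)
\paragraph{Review.} Your three-stage plan is the same as the paper's: compute $K$ of the input, apply $\mathtt{Mat}$ and the bar operation, then take $H$-maximal representatives. However, the first stage is wrong, and the other two stages are bookkeeping on its output, so the argument does not go through.

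\paragraph{The gap: the direction of rowmotion.} From $\delta=\Theta\circ\Delta^{-1}\circ\nabla$ one gets $\delta(\emptyset)=\mathcal{P}$, i.e.\ $\delta(P_{\max})=P_{\min}$. Hence $\delta^{-1}(P_{\min})=P_{\max}$, and
\[
\delta^{-2}(P_{\min})=\delta^{-1}(P_{\max})=\nabla^{-1}(\Delta(\mathcal{P})),
\]
which is the filter consisting of the unique maximal cell, the top-left one. In toggle terms, $\mathbf{t}_r\cdots\mathbf{t}_0$ first removes the rank-$0$ cells. That makes the rank-$1$ cells removable, and so on, so a single application empties the full filter. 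Peeling off one layer of minimal cells per step, as you describe, is what the forward map $\delta$ does; for instance $\delta(\mathcal{P})=\mathcal{P}\setminus\min\mathcal{P}$.

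Consequently $\delta^{-2}(P_{\min})=U^{n-1}RUR^{kn-1}$ and $K(\delta^{-2}(P_{\min}))=(kn,\overline{2},\ldots,\overline{n})$. This has a single unbarred entry; it is not a path near $P_{\min}$ with $n-1$ unbarred entries shifted to $ik+2$ or $ik+3$. (The paper prints $nk-1$ for the first entry, but the blocks it lists are the ones generated by $nk$.)

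A concrete check with $(k,n)=(2,3)$: the table in Section~\ref{sec:ex} gives $\delta^{-2}(147)=124$. Under $\mathtt{Mat}$, $\mathtt{ev}$, $\mathtt{Mat}^{-1}$ one gets $124\mapsto 124\mapsto 134\mapsto 134$, so $K(Q)=(\overline{1},6,\overline{3})$, which is exactly $S$. Your recipe (removing the cells of rank $\le 1$) gives $135$ instead, and the same chain then yields $K(Q)=(3,\overline{2},\overline{3})\neq S$.

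\paragraph{The selection step with the correct input.} Once the input is fixed, the later stages work as in the paper, but your description of the selection should also be revised.
\begin{itemize}
\item $\mathtt{Mat}$ produces the blocks $\{(n-1)k,\ldots,nk\}$, $\{\overline{2},\overline{1},1,\ldots,k-1\}$, and $\{\overline{r},(r-2)k,\ldots,(r-1)k-1\}$ for $3\le r\le n$.
\item After the bar operation these become $\{n+1,\ldots,n+k+1\}$, together with blocks made of one small integer ($1,2$, respectively $r$) plus a run of consecutive integers in $[n+k+2,(k+1)n]$.
\item A run lying inside $[1,kn]$ contributes its largest entry, which has the form $n+jk+1$.
\item A run lying inside $[kn+1,(k+1)n]$ contributes its smallest entry, which is $\overline{jk-1}$. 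This is because $H(\overline{i})=ki$ with $\overline{i}=(k+1)n+1-i$, so $H$ decreases as the integer grows.
\item The run containing $kn+1=\overline{n}$ contributes $\overline{n}$. If that run also contains $kn$, this happens through the tie $H(\overline{n})=kn=H(kn)$, which is resolved in favour of the barred entry.
\end{itemize}
So $\overline{n}$ does not come from ``the last block''. The index $r_0=\lceil (n+k+1)/k\rceil$ simply marks the block whose run contains $kn+1$, which explains the interval in the statement. No split into the cases $k\le n-1$ and $k\ge n$ is needed.
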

\begin{proof}
The $k$-Dyck path $\delta^{-2}(P_{\min})$ has a sequence of integers with or without a bar
\begin{align*}
K(\delta^{-2}(P_{\min}))=
(nk-1,\overline{2},\ldots,\overline{n}).
\end{align*}
This integer sequence give the following $n$ matching blocks
\begin{align*}
&\{nk,nk-2,\ldots, (n-1)k\}, \\
&\{\overline{2},\overline{1},1,2,\ldots,k-1\}, \\
&\{\overline{r},(r-2)k,(r-2)k+1,\ldots,(r-1)k-1\}, \text{ for } 3\le r\le n.
\end{align*}
The action of $\mathtt{ev}$ on these $n$ matching blocks gives 
\begin{align*}
&\{n+1,n+2,\ldots,n+k+1\}, \\
&\{1,2,n+nk,n+nk-1,\ldots,n+(n-1)k+2\}, \\
&\{r,n+(n-r+2)k+1,n+(n-r+2)k,\ldots,n+(n-r+1)k+2\}, \text{ for } 3\le r\le n.
\end{align*}
The action of $\mathtt{Mat}^{-1}$ on $n$ matching blocks gies the 
set $K(Q)$ of integers with or without a bar for a $k$-Dyck path $Q$.
We have 
\begin{align*}
K(Q)=\{n+k+1,n+2k+1,\ldots,n+(n-r_{0}+1)k+1, 
\overline{k-1},\overline{2k-1},\ldots,\overline{(r_{0}-2)k-1},\overline{n}\},
\end{align*}
where $r_{0}$ is the integer such that 
\begin{align*}
\genfrac{}{}{}{}{n}{k}+1+\genfrac{}{}{}{}{1}{k}\le r_{0}\le\genfrac{}{}{}{}{n}{k}+2.
\end{align*}
\end{proof}

\begin{proof}[Proof of Theorem \ref{thrm:evrvac}]
Let $\widehat{\mathtt{ev}}:=\mathtt{Mat}^{-1}\circ\mathtt{ev}\circ\mathtt{Mat}$.
We have $\widehat{\mathtt{ev}}^{2}=\mathtt{id}$, 
and $\delta\circ\widehat{\mathtt{ev}}=\widehat{\mathtt{ev}}\circ\delta^{-1}$
by Theorem \ref{thrm:rowMat}.
By Proposition \ref{prop:Rvac}, the rowvacuation $\mathtt{Rvac}$ satisfies $\mathtt{Rvac}^2=\mathtt{id}$,
and $\delta\circ\mathtt{Rvac}=\mathtt{Rvac}\circ\delta^{-1}$.
From these, $\widehat{\mathtt{ev}}$ is written as $\widehat{\mathtt{ev}}=c\circ\mathtt{Rvac}$ 
such that $c$ satisfies $c\circ\mathtt{Rvac}=\mathtt{Rvac}\circ c^{-1}$.
Then, $c$ is written as $\delta^{X}$ with some integer $X$.
In what follows, we show that $X=-k$.

We make use of the induction on $n$. For $n=1,2$, it is straightforward to see $X=-k$.
The previous paragraph implies that $\widehat{\mathtt{ev}}=\delta^{X}\circ\mathtt{Rvac}$.
To fix $X$, we consider the $k$-Dyck path $P$ such that $P=\delta^{-k}(P_{\max})$.
Then, $\widehat{\mathtt{ev}}(P)=\delta^{X+k}\circ\mathtt{Rvac}(P_{\max})$.
By the definition of $\mathtt{Rvac}$, we have 
\begin{align*}
\mathtt{Rvac}(P_{\max})=\mathtt{Rvac}(\delta^{k-2}(P'_{\min}))\oplus Q,
\end{align*}
where $P'_{\min}$ is the lowest $k$-Dyck path of size $n-1$, $Q$ is the $k$-Dyck path of size one,
and $\oplus$ stands for a concatenation of two $k$-Dyck path. 
We have 
\begin{align*}
\mathtt{Rvac}(\delta^{k-2}(P'_{\min}))=&\delta^{-(k-2)}\circ\mathtt{Rvac}(P'_{\min}), \\
&=\delta^{2}\circ\widehat{\mathtt{ev}}(P'_{\min}), \\
&=\widehat{\mathtt{ev}}(\delta^{-2}(P'_{\min})),
\end{align*}
where we have used the induction hypothesis in the second line.
From these, we have 
\begin{align}
\label{eq:evPXk}
\mathtt{ev}(P)=\delta^{X+k}\left(
\widehat{\mathtt{ev}}(\delta^{-2}(P'_{\min}))\oplus Q
\right).
\end{align}
From Lemmas \ref{lemma:evrvac1} and \ref{lemma:evrvac2}, it is easy to see that 
\begin{align*}
\mathtt{ev}(P)=\widehat{\mathtt{ev}}(\delta^{-2}(P'_{\min}))\oplus Q.
\end{align*}
By comparing this with Eq. (\ref{eq:evPXk}), we have $X+k=0$, which implies $X=-k$.
This completes the proof.
\end{proof}

Since we consider $k$-Dyck paths, which are bijective to non-crossing weighted 
partitions, we consider operations congruent to $\mathtt{Kre}$, $\mathtt{SU}$ and 
$\mathtt{LK}$.

\begin{prop}
\label{prop:Matdelta}
We have the following congruences:
\begin{enumerate}
\item $\partial^{-nk+k-1}\cong\mathtt{DRvac}\circ\mathtt{Rvac}$, 
\item $\mathtt{SU}\cong\delta^{-(k-1)}\circ\mathtt{Rvac}$,
\item $\mathtt{LK}\cong\delta^{-2k}\circ\mathtt{Rvac}$,
\item $\mathtt{Kre}^2\cong\delta^{-(k+1)}$.
\end{enumerate}
\end{prop}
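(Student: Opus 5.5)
Each congruence will be reduced to a word in $\partial$ and $\mathtt{ev}$ (via Propositions \ref{prop:kDyckNC} and \ref{prop:Xinv}, read on $k$-Dyck paths through $\sim$). That word is then conjugated by $\mathtt{Mat}$ using two dictionaries: $\partial^{-1}\cong\delta$ from Theorem \ref{thrm:rowMat}, and $\mathtt{ev}\cong\delta^{-k}\circ\mathtt{Rvac}$ from Theorem \ref{thrm:evrvac}. Here $X\cong Y$ means $X=\mathtt{Mat}\circ Y\circ\mathtt{Mat}^{-1}$, so these read $\partial^{-1}=\mathtt{Mat}\circ\delta\circ\mathtt{Mat}^{-1}$ and $\mathtt{ev}=\mathtt{Mat}\circ\delta^{-k}\mathtt{Rvac}\circ\mathtt{Mat}^{-1}$. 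Since $\cong$ is conjugation, it is multiplicative: $X_1X_2\cong Y_1Y_2$ whenever $X_i\cong Y_i$. Hence $\partial^{m}\cong\delta^{-m}$ for all $m$. For simplification we use the relation $\mathtt{Rvac}\circ\delta=\delta^{-1}\circ\mathtt{Rvac}$ from Proposition \ref{prop:Rvac}.

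For (4), Proposition \ref{prop:Xinv} gives $\mathtt{Kre}^2=\mathtt{Rot}$, and Proposition \ref{prop:kDyckNC} gives $\mathtt{Rot}\sim\partial^{k+1}$. So $\mathtt{Kre}^2$ corresponds to $\partial^{k+1}\cong\delta^{-(k+1)}$.

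For (2), $\mathtt{SU}\sim\mathtt{ev}\circ\partial$. This gives $\mathtt{ev}\circ\partial\cong\delta^{-k}\mathtt{Rvac}\,\delta^{-1}=\delta^{-k}\delta\,\mathtt{Rvac}=\delta^{-(k-1)}\circ\mathtt{Rvac}$.

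For (3), $\mathtt{LK}\sim\mathtt{ev}\circ\partial^{-k}$. This gives $\mathtt{ev}\circ\partial^{-k}\cong\delta^{-k}\mathtt{Rvac}\,\delta^{k}=\delta^{-2k}\circ\mathtt{Rvac}$.

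For (1), Proposition \ref{prop:Rvac}(4) gives $\mathtt{DRvac}\circ\mathtt{Rvac}=\delta^{r+2}$, where $r$ is the rank of the cell poset for $k$-Dyck paths. The top cell has rank $(n-1)k-1$, so $r=(n-1)k-1$ and $\delta^{r+2}=\delta^{nk-k+1}$. Pulled back through $\partial^{m}\cong\delta^{-m}$, this is $\partial^{-(nk-k+1)}=\partial^{-nk+k-1}$, as claimed.

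The main obstacle is bookkeeping rather than new combinatorics. First, the congruences in (2)–(4) for $\mathtt{SU}$, $\mathtt{LK}$ and $\mathtt{Kre}^2$ have to be read as statements about the operations on paths that correspond to them under $\mathtt{NCPtoDyck}$ (i.e.\ via $\sim$). Second, the orientation conventions must be kept consistent: whether $\cong$ conjugates by $\mathtt{Mat}$ or by $\mathtt{Mat}^{-1}$, and the side on which $\partial$ acts (the paper lets it act on the right). I would fix these conventions once and check the resulting exponents numerically on the example $(n,k)=(4,3)$ given after Proposition \ref{prop:kDyckNC}.
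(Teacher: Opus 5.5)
Your proposal is correct and follows the paper's own route. Item (1) comes from $\mathtt{DRvac}\circ\mathtt{Rvac}=\delta^{r+2}$ with $r=(n-1)k-1$ together with $\partial^{-1}\cong\delta$ from Theorem \ref{thrm:rowMat}, and items (2)--(4) come from writing $\mathtt{SU}$, $\mathtt{LK}$, $\mathtt{Kre}^2$ as words in $\partial$ and $\mathtt{ev}$ via Proposition \ref{prop:kDyckNC}, then conjugating by $\mathtt{Mat}$ with Theorems \ref{thrm:rowMat} and \ref{thrm:evrvac}; your exponent bookkeeping, using $\mathtt{Rvac}\circ\delta^{m}=\delta^{-m}\circ\mathtt{Rvac}$, spells out what the paper's one-line proof leaves implicit, and it is right.
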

\begin{proof}
(1) follows from (4) in Proposition \ref{prop:Rvac}, $r=(n-1)k-1$, and Theorem \ref{thrm:rowMat}.

(2), (3), and (4) follows from Proposition \ref{prop:kDyckNC}, Theorems \ref{thrm:rowMat} and
\ref{thrm:evrvac}.
\end{proof}

\subsection{The map \texorpdfstring{$\widehat{RSK}^{-1}$}{RSK-1} on \texorpdfstring{$k$}{k}-Dyck paths}
The map $\widehat{RSK}^{-1}$ can be expressed in terms of the map $\mathtt{DT}$ as in Proposition \ref{prop:RSKDT}.
We use this correspondence to define $\widehat{RSK}^{-1}$ since we have no analogue of $\widehat{RSK}$ for 
$(a,b)\neq(1,1)$.
In the case of $(a,b)=(1,k)$, one can define the cover-inclusive $k$-Dyck tilings as a generalization 
of the cover-inclusive Dyck tilings.
We briefly recall the definition of $k$-Dyck tilings following \cite{JosVerKim16,Shi21}.

A $k$-Dyck tiling is a tiling of the region which is above a $k$-Dyck path and below the top $k$-Dyck 
path $U^{n}R^{kn}$.
The difference between a $k$-Dyck tiling and a Dyck tiling is that we use a $k$-Dyck tile instead 
of a Dyck tile. A $k$-Dyck tile is a tile consisting of several unit boxes such that the centers of the 
boxes form a $k$-Dyck path.
The cover-inclusiveness of a $k$-Dyck tiling is the same as that of a Dyck tiling, i.e., the sizes of  
$k$-Dyck tiles are weakly decreasing from bottom to top.
A cover-inclusive $k$-Dyck tiling is said to be maximal if it consists of tiles with the largest size.

Let $P$ be a $k$-Dyck path and $D_{\max}(P)$ be the maximal $k$-Dyck tiling above $P$.
We denote by $\mathcal{D}(P)$ the set of $k$-Dyck tiles in $D_{\max}(P)$.
As in the case of a Dyck tiling studied in Section \ref{sec:cPMRSK}, we associate a transposition $t_{i,j}$ with a $k$-Dyck 
tile in $\mathcal{D}(P)$.
That is, if the south-most (resp. rightmost) edge of a $k$-Dyck tile $d$ is $i$-th (resp. $j$-th)
up (resp. right) step, we associate $t_{i,j}$ to $d$.
Let $w:=w(P)$ be the product of the transpositions $t_{i,j}$ such that the order of $t_{i,j}$ 
is compatible with the order of $k$-Dyck tiles.

Let $S(P)$ be the perfect matching corresponding to $P$. 
The action of $t_{i,j}$ on $S(P)$ is given by acting a transposition on $S(P)$ 
as the exchange of $i$ and $j$.
Then, we have the action of $w$, which is a permutation obtained from $P$, on $S(P)$, 
and obtain a new collection $\widehat{S}(P)$ of sets of integers. 
Note that $\widehat{S}(P)$ is not a perfect matching in general. 
By construction, the integers $1,\ldots,n$ belong to distinct blocks in $\widehat{S}(P)$.

Since we consider the maximal $k$-Dyck tiling above $P$, the integers $i\in[1,n]$ belong 
to distinct blocks in $\widehat{S}(P)$.
Let $S_{i}$, $1\le i\le n$, be the block which the integer $i$ belongs to, and 
$S'_{i}:=S_{i}\setminus\{i\}$.
We define a sequence of integers $\kappa(P):=(\kappa_1,\ldots,\kappa_{n})$ by 
\begin{align*}
\kappa_{i}:=\#\left\{   
j\in \bigcup_{1\le k\le n-i}S'_{k} \quad \Big| \quad 
j<\min S'_{n+1-i}
 \right\}.
\end{align*}
The sequence $\kappa(P)$ corresponds to a kind of the inversion number of $\widehat{S}(P)$.
We define the sequence of integers $\widehat{Y}=(y_1,y_2,\ldots,y_{n})$ by
\begin{align*}
y_{i}=(n-i)k-\kappa_{i}, 
\end{align*}
for $1\le i\le n$.
Note that, in general, the sequence $\widehat{Y}$ does not give a Young diagram.
Let $Y_{\max}$ be the maximal Young diagram inside $\widehat{Y}$.
More precisely, $Y_{\max}=(y_1^{\max},\ldots,y_{n}^{\max})$ is given by 
$y_{1}^{\max}=y_{1}$ and $y_{i}^{\max}=\min\{y_{i-1}^{\max},y_{i}\}$ for $2\le i\le n$.
The Young diagram $Y_{\max}$ characterizes 
a $k$-Dyck path $Q$ such that the region above $Q$ and below the top $k$-Dyck path is 
the Young diagram $Y_{\max}$.

\begin{defn}
We define the above map  as $\widehat{RSK}^{-1}:P\mapsto Q$.
\end{defn}

\begin{example}
We consider two examples.
Let $P$ be a $2$-Dyck path whose step sequence is $(1,4,7)$. 
The maximal $2$-Dyck tiling above $P$ is given by
\begin{center}
\tikzpic{-0.5}{[scale=0.6]
\draw[very thick](0,0)--(0,1)--(2,1)--(2,2)--(4,2)--(4,3)--(6,3);
\draw(1,1)--(1,3)--(4,3)(0,1)--(0,3)--(1,3)(0,2)--(1,2);
}
\end{center}
Then, this $2$-Dyck tiling gives $w=t_{3,4}t_{2,3}t_{3,7}$.
Here, the transpositions $t_{3,4}$, $t_{2,3}$ and $t_{3,7}$ correspond to
the top cell, the cell below the top cell, and the $2$-Dyck tile of size one.

The perfect matching 
is $S(P)=\{\{1,2,3\},\{4,5,6\},\{7,8,9\}\}$. The action of $w$ on $S(P)$ is given by
\begin{align*}
S(P)&\xrightarrow{t_{3,7}}\{\{1,2,7\},\{4,5,6\},\{3,8,9\}\}
\xrightarrow{t_{2,3}}\{\{1,3,7\},\{4,5,6\},\{2,8,9\}\}, \\
&\xrightarrow{t_{3,4}}\{\{1,4,7\},\{3,5,6\},\{2,8,9\}\}=\widehat{S}(P).
\end{align*}
We have $\kappa(P)=(1,2,0)$, and $\widehat{Y}=(3,0,0)$.
The maximal Young diagram inside $\widehat{Y}$ is $(3,0,0)$.
The Young diagram which characterizes a $2$-Dyck path is $Y=(3,0,0)$, which 
implies that we have the $2$-Dyck path whose step sequence is $126$.
Therefore, $\widehat{RSK}^{-1}(147)=126$.

Let $P'$ be a $2$-Dyck path with the step sequence $127$, and we have $\kappa(P')=(4,0,0)$.
From this, we have $\widehat{Y}=(0,2,0)$. Note that $\widehat{Y}$ is not a Young diagram.
The maximal Young diagram inside $\widehat{Y}$ is $(0,0,0)$, and this maximal Young diagram
gives the $2$-Dyck path with the step sequence $123$. We have $\widehat{RSK}^{-1}(127)=(123)$.
\end{example}

The above algorithm for $\widehat{RSK}^{-1}$ with $(a,b)=(1,k)$ can be seen as 
a generalization of the map $\widehat{RSK}^{-1}$ for $(a,b)=(1,1)$ defined 
in Section \ref{sec:MatRSK}.
We will characterize the integer sequence $\kappa(P)$ for a $k$-Dyck path 
in terms of the maximal Dyck tiling $D_{\max}(P)$.
For this purpose, we introduce the notion of an {\it Hermite history}
of a Dyck tiling.
A Dyck tiling consists of Dyck tiles, and Dyck tiles consist of 
north, south, east and west edges which surround the tile.
Let $d$ be a $k$-Dyck tile.
We put a line from the west edge at the bottom to the east edge at the top 
in $d$.
We concatenate the lines of Dyck tiles $d_1$ and $d_2$ if the east edge at the top of $d_1$ 
is the west edge at the bottom of $d_2$.
By definition, the lines start from the up steps in the top $k$-Dyck path. Therefore, we have 
$n$ lines in $D_{\max}(P)$, the bottom line is of length zero.
We call the set of these lines an Hermite history.
Let $H(P):=(h_1,\ldots,h_{n})$ be the numbers of $k$-Dyck tiles in the $i$-th line from top 
in the Hermite history of $D_{\max}(P)$.

\begin{lemma}
We have $\kappa(P)=H(P)$.
\end{lemma}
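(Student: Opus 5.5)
We prove $\kappa(P)=H(P)$ by tracking each Hermite-history line through the product $w$ of transpositions, one line at a time. The guiding observation is that a line of the Hermite history starts at an up step of the top $k$-Dyck path and passes through a chain of $k$-Dyck tiles $d_1,\ldots,d_h$. Consecutive tiles on the line share an edge: the top east edge of $d_s$ is the bottom west edge of $d_{s+1}$. So the transpositions $t_{i_s,j_s}$ attached to these tiles form a chain in which one label is carried from tile to tile. Tiles on different lines are separated by cover-inclusiveness, and we expect their transpositions to act on disjoint carried labels; the interleaving of the removal order should therefore not matter for which labels end up together in a block.

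\emph{Step 1 (localisation).} Fix the $i$-th line from the top, with $h_i$ tiles. Using the removal order $d_1\lessdot\cdots\lessdot d_r$, show by induction on the number of removed tiles that each application of a tile's transposition moves exactly one integer of $S(P)$ one position further along the chain for that line. The base point is the block containing the integer $n+1-i\in[1,n]$ in $\widehat{S}(P)$, so after all transpositions the block $S_{n+1-i}$ has been altered exactly $h_i$ times. This is the same bookkeeping used in the proof that $\mathtt{DT}$ is well-defined and in Proposition \ref{prop:RSKDT} for $k=1$. The $k$-Dyck tile has one south-most edge and one rightmost edge, so the same argument should go through.

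\emph{Step 2 (counting).} Each displacement in Step 1 moves an integer $j$ of $S'_{n+1-i}$ past exactly one integer lying in some $S'_m$ with $m\le n-i$, namely a block belonging to a line above. Because the matching $S(P)$ is non-crossing and the tiling is maximal cover-inclusive, no other integer of the earlier blocks falls below $\min S'_{n+1-i}$. We therefore get $\kappa_i=\#\{j\in\bigcup_{m\le n-i}S'_m : j<\min S'_{n+1-i}\}=h_i$. We then induct on $n$: remove the top line, which is the top row of the diagram together with the tiles meeting it, and reduce to a $k$-Dyck path of size $n-1$, on which the lemma holds by the induction hypothesis.

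The main obstacle is the counting step. Its crux is showing that each tile contributes exactly one ``inversion'' relative to the earlier blocks, and that tiles on lower lines create no extra inversions. This needs a careful description of how a non-trivial $k$-Dyck tile's transposition $t_{i,j}$ reorders the $k+1$ integers in two matching blocks. My first attempt would be to treat separately the trivial tiles (single cells, where $t_{i,i+1}$ obviously creates one inversion) and the non-trivial tiles. For the non-trivial tiles I would reduce via maximality: such a tile spans a prime sub-path whose matching blocks lie strictly between $i$ and $j$, so these blocks contribute nothing to the count.
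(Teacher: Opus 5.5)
Your plan has a genuine gap, and it lies in the independence heuristic on which Steps 1 and 2 rest. Consider what a single tile does. Its transposition $t_{i,j}$ has $j$ equal to the current minimum of some block $B$ (an up step of the current path), and $i$ equal to a non-minimal element of a \emph{different} block $C$. So every tile has two effects:
\begin{itemize}
\item it lowers $\min B$ from $j$ to $i$ and leaves $B\setminus\{\min B\}$ untouched;
\item it replaces the element $i$ of $C$ by the larger integer $j$.
\end{itemize}
Read literally, ``$S_{n+1-i}$ is altered exactly $h_i$ times'' is false, since every tile alters two blocks. Read charitably, Step 1 tracks only the first effect, and your heuristic throws away the second. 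But $\kappa$ sees only the sets $S'_m=S_m\setminus\{m\}$, that is, only the second effect.

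This has two consequences. First, the tiles of line $i$ never move any element of $S'_{n+1-i}$, contrary to Step 2. Second, the elements counted by $\kappa_i$ are integers deposited into lower blocks, and these deposits are shared and overwritten across lines.

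The paper's example $P=147$, $k=2$, shows this. The transpositions are $t_{3,7}$ and $t_{2,3}$ (line 2) and $t_{3,4}$ (line 1).
\begin{itemize}
\item The block $S_1$ is changed by all three transpositions: its non-minimal part goes from $\{2,3\}$ to $\{4,7\}$, although $h_3=0$.
\item $\kappa_2=2$ counts $S'_1=\{4,7\}$. Here $7$ was deposited by line 2, but $4$ was deposited by line 1's tile $t_{3,4}$, which at the same moment expelled the $3$ deposited by line 2's tile $t_{2,3}$.
\item This same $4$ is also the element counted by $\kappa_1$.
\end{itemize}
So different lines do not act on disjoint labels: $t_{2,3}$ and $t_{3,4}$ both involve $3$ and do not commute. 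There is no line-local bijection ``tiles of line $i$ $\leftrightarrow$ elements counted in $\kappa_i$'' of the kind Step 2 asserts.

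Your closing induction has a related problem. Removing the top row together with the tiles meeting it also removes the tile of $t_{3,7}$, which belongs to line 2, so $h_2$ is not preserved.

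What is missing is a global invariant describing how all the transpositions together reorder the blocks. The paper tracks the relative order of the block minima:
\begin{itemize}
\item $j_l$ is always a block minimum;
\item a single cell gives $t_{i,i+1}$ and does not change that order;
\item a tile of size $s$ moves its block from the $t$-th to the $(t-s)$-th smallest minimum.
\end{itemize}
$\kappa$ is then read off from these reorderings.

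If you want to keep the line-by-line viewpoint, you must follow each deposited integer through its later replacements (for example $3\mapsto 4$ above). You would then need to prove that the resulting element ends in some $S'_m$ with $m\le n-i$ and lies below $\min S'_{n+1-i}$, while every other element of those blocks exceeds $\min S'_{n+1-i}$. That argument is the actual content of the lemma, and your proposal does not supply it.
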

\begin{proof}
In $D_{\max}(P)$, a Dyck tile corresponds to a transposition $t_{i,j}$.
To obtain $\widehat{S}(P)$, we apply the transpositions $t_{i,j}$ to $S(P)$.
Then, we have a sequence of collections of sets:
\begin{align}
\label{eq:seqSP}
S(P)\xrightarrow{(i_1,j_1)}S_{1}(P)\xrightarrow{(i_2,j_2)}S_{2}(P)\xrightarrow{(i_3,j_3)}
\cdots \xrightarrow{(i_r,j_r)}S_{r}(P)=\widehat{S}(P),
\end{align}
where $(i,j)$ stands for the transposition $t_{i,j}$.
By construction, $j_{l}$ in Eq. (\ref{eq:seqSP}) is a minimum element 
in a block of $S_{l-1}(P)$.
If a $k$-Dyck tile is of size zero, the transposition exchanges $i_{l}$ and $i_{l}+1$ in $S_{l-1}(P)$.
These imply that if $D_{\max}(P)$ does not contain a $k$-Dyck tile of size greater than zero,
then $\kappa(P)=H(P)$.

We consider the case where $D_{\max}(P)$ contains a $k$-Dyck tile $d$ of size greater than zero.
The $k$-Dyck tile $d$ corresponds to a transposition $(i,j)$ with $j\neq i+1$.
Suppose that $j$ is the minimum element of a block and the $t$-th smallest element among 
the minimum elements in blocks. If the $k$-Dyck tile $d$ has the size $s$, then, by the transposition $(i,j)$, 
$i$ becomes the minimum element of a block and the $t-s$-th smallest element among the minimum elements in blocks.
Note that the width of a $k$-Dyck tile of size $s$ is $sk+1$, and each block in $S_{l}(P)$ consists of $k+1$ integers.
The transposition $(i,j)$ corresponds to the change of the order of blocks. 
Then, it is easy to see that we have $\kappa(P)=H(P)$.
\end{proof}

\begin{prop}
\label{prop:RSKk}
We have the following commutative diagram:
\begin{align}
\label{eq:cd6}
\tikzpic{-0.5}{[scale=0.8]
\node (0) at (0,0){$P$};
\node (1) at (3,0){$P'$};
\node (2) at (0,-2){$Q$};
\node (3) at (3,-2){$Q'$};
\draw[->,anchor=south] (0) to node {$\delta$} (1);
\draw[->,anchor=south] (2) to node {$\partial^{-1}$} (3);
\draw[->,anchor=east] (0) to node {$\widehat{RSK}$}(2);
\draw[->,anchor=west] (1) to node {$\widehat{RSK}$}(3);
}
\end{align}
\end{prop}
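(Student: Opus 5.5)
The plan is to reduce the commutative diagram (\ref{eq:cd6}) to Theorem \ref{thrm:rowMat} via a relation of the form $\widehat{RSK}=\partial^{-c}\circ\mathtt{Mat}$ for $k$-Dyck paths. This is the $k$-analogue of Eq. (\ref{eq:RSKMat0}), which is announced in the introduction as Proposition \ref{prop:RSKMatk}. Powers of $\partial$ commute with $\partial^{-1}$. So once $\widehat{RSK}^{-1}=\mathtt{Mat}^{-1}\circ\partial^{c}$ is known, Theorem \ref{thrm:rowMat} ($\mathtt{Mat}\circ\delta=\partial^{-1}\circ\mathtt{Mat}$) gives
\begin{align*}
\widehat{RSK}\circ\delta=\partial^{-c}\circ\mathtt{Mat}\circ\delta=\partial^{-c}\circ\partial^{-1}\circ\mathtt{Mat}=\partial^{-1}\circ\widehat{RSK},
\end{align*}
which is exactly the diagram.

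The work is therefore to identify the Dyck-tiling map $\widehat{RSK}^{-1}$ of the previous definition with $\mathtt{Mat}^{-1}\circ\partial^{c}$ for a suitable $c$. For $k=1$, Theorem \ref{thrm:RSKMat} together with $\partial\circ\mathtt{ev}$ manipulations (and $\mathtt{ev}=\ast$ from Proposition \ref{prop:DyckPMev}) gives $c=n-1$. I will guess $c=(n-1)k$ in general and argue by induction on $n$.

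By the preceding lemma, $\kappa(P)=H(P)$, so $\widehat{RSK}^{-1}(P)$ is determined by the Hermite history of $D_{\max}(P)$. I will compare how the Hermite history changes under removal of the first block of $S(P)$ (the block containing $1$) with how $K(\cdot)$ and the blocks produced by $\mathtt{Mat}$ change under the same deletion. This follows the case analysis used in the proof of Theorem \ref{thrm:rowMat}: the case $k_1=\overline{1}$, and the case $k_1=r$ split according to where $r$ lies. A step of $\partial$ shifts the perfect matching by one (Eq. (\ref{eq:delRot})).

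An alternative, possibly cleaner, route is to prove the diagram directly. Here $\delta$ is the product of rank toggles $\mathbf{t}_0\cdots\mathbf{t}_r$, and $\partial^{-1}=\partial^{\ast}$ is $t_1\cdots t_{N-1}$ (Proposition \ref{prop:Pevtoggle}, Theorem \ref{thrm:relproev}). One would then check that removing the antichain of minimal cells of the filter, i.e.\ the cells above valleys, changes $\kappa(P)$ by a rotation of the Hermite-history lines. This amounts to checking that $y_i$ transforms like a cyclic shift of the step sequence, with the truncation $Y_{\max}$ compatible with the shift. I will first try the toggle route in the base cases $P=P_{\max}$ and $P=P_{\min}$, where $D_{\max}(P)$ is explicit, and use them to fix $c$.

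The main obstacle is the truncation step $\widehat{Y}\mapsto Y_{\max}$. It makes $\widehat{RSK}^{-1}$ nonlinear, and for $k\ge2$ it is not obviously injective, so one must verify that $\delta$ commutes with it. I expect to handle this by showing that the rows where $y_i^{\max}<y_i$ correspond to Hermite lines that rowmotion shifts rigidly. If that fails, I will fall back on proving $\widehat{RSK}=\partial^{-(n-1)k}\circ\mathtt{Mat}$ pathwise by induction, and deduce the diagram from Theorem \ref{thrm:rowMat} as above.
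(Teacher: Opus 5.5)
There is a genuine gap. Your primary route runs the paper's logic backwards and leaves the actual work undone.

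In the paper, $\widehat{RSK}=\partial^{-(n-1)}\circ\mathtt{Mat}$ (Proposition \ref{prop:RSKMatk}) is deduced \emph{from} this diagram together with Theorem \ref{thrm:rowMat}. Even for $k=1$, the identity comes from Theorem \ref{thrm:RSKMat} and $\mathtt{ev}=\delta^{n}$ (Corollary \ref{cor:rmev}), not from $\mathtt{ev}=\ast$. The proof of Corollary \ref{cor:rmev} uses Proposition \ref{prop:cd1}, which is the $k=1$ case of this very diagram.

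So you would have to prove the identity from scratch, and it is genuinely stronger than what is asked. The diagram plus Theorem \ref{thrm:rowMat} only says that $\widehat{RSK}\circ\mathtt{Mat}^{-1}$ commutes with $\partial$. Your plan gives no mechanism for the identity: nothing connects the Hermite history of $D_{\max}(P)$ to the blocks that $\mathtt{Mat}$ builds from $K(\cdot)$.

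Worse, your guessed exponent is wrong, so both the induction and the fallback at the end aim at a false identity. Take $(k,n)=(2,3)$ and the data of Section \ref{sec:ex}. There $\widehat{RSK}^{-1}(147)=126$ and $\mathtt{Mat}(126)=125=\partial^{2}(147)$. But $\partial^{4}(147)=136=\mathtt{Mat}(145)$, and $145\neq 126$. The correct exponent is $n-1$ for every $k$, not $(n-1)k$.

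Your second route is the one the paper actually takes, but you stop exactly where the content begins. You name the truncation $\widehat{Y}\mapsto Y_{\max}$ as the obstacle and then only state an expectation. There is also a bookkeeping slip: $\kappa$ lives on the path to which $\partial^{-1}$ is applied, while $\delta$ acts on the output $T=\widehat{RSK}^{-1}(P)$. What must be shown is that $\kappa(\partial^{-1}P)$ produces $\delta(T)$.

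The missing ingredients are these:
\begin{enumerate}
\item Split according to the block of $S(P)$ containing $(k+1)n$. This makes $\partial^{-1}$ explicit: either $P'\circ UR^{k}\mapsto U\circ P'\circ R^{k}$, or $P'\circ UR^{a}\circ P''\circ R^{k-a}\mapsto U\circ P'\circ R^{a+1}\circ P''\circ R^{k-a-1}$.
\item Compute $\kappa(\partial^{-1}P)$ from $\kappa(P)$. In the first case, $\kappa'_i=0$ for $i\le\alpha$ and $\kappa'_i=\kappa_i+1$ otherwise; maximality of $D_{\max}(P)$ gives $\kappa_i<\kappa_{i-1}+k$ and $\kappa_\alpha=(\alpha-1)k$. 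In the second case, reconnect Hermite-history lines through the size-zero tiles in column $\alpha-p$, row $p$.
\item Encode the truncation as a running extremum (the paper's $\widehat{\kappa}_i$). Its strict drops locate the valleys of $T$, i.e.\ exactly the cells at which $\delta$ acts. Then match these with the reconnection points.
\end{enumerate}
Without these steps, neither of your routes yields the diagram.
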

\begin{proof}
We will show that $\widehat{RSK}^{-1}\circ\partial^{-1}=\delta\circ\widehat{RSK}^{-1}$.
Let $P$ be a $k$-Dyck path and $S(P)$ be a perfect matching of $P$.
Let $l$ be the minimum element in a block $B$ of $S(P)$ such that $B$ contains the integer $(k+1)n$.
We consider two cases: 1) $l=(k+1)(n-1)+1$, and 2) $l\le(k+1)(n-1)$.

Case 1). We consider the maximal $k$-Dyck tiling $D_{\max}(P)$ above $P$.
Let $\kappa(P)=(\kappa_1,\ldots,\kappa_{n})$ be the numbers of $k$-Dyck tiles in the Hermite history of $D_{\max}(P)$.
Let $\alpha$ be the minimum integer such that $\kappa_{\alpha}\neq0$ and $\kappa_{\beta}=0$ for $\beta<\alpha$.
We consider the path $Q=\partial^{-1}(P)$.  Since $l=(k+1)(n-1)+1$, 
$P$ is expressed as a concatenation of two words $P'\circ UR^{k}$ where $P'$ is a $k$-Dyck 
path of size $n-1$. We have a $k$-Dyck tile of size $n-\alpha$ in $P$. 
Then, by promotion, we have $Q=U\circ P'\circ R^{k}$. 
The sequence $\kappa(Q)=(\kappa'_1,\ldots,\kappa'_{m})$ is given by 
\begin{align}
\label{eq:kappaQ}
\kappa'_{i}=
\begin{cases}
0, & \text{ if } i\le \alpha, \\
\kappa_{i}+1, & \text{ otherwise}.
\end{cases}
\end{align}
We consider the action of $\delta$ on $T=\widehat{RSK}^{-1}(P)$.
Since $D_{\max}(P)$ is maximal, we have 
$\kappa_{i}<\kappa_{i-1}+k$ for $\alpha+1\le i$.
Further, we have $\kappa_{\alpha}=(\alpha-1)k$ due to the existence 
of the $k$-Dyck tile of size $n-\alpha$.
These imply that we have a valley in the $j$-th row with $\alpha+1\le j$,
and the $\alpha$-th step sequence is zero in $T$.
Therefore, we can add a unit box in the $j$-th row as an action of $\delta$,
and we have $\kappa(\delta(T))=\kappa(Q)$.

Case 2).
Let $Q=\partial^{-1}(P)$, and $T=\widehat{RSK}^{-1}(P)$.
Since the $k$-Dyck path $P$ is written as $P=P'\circ UR^{a}\circ P''\circ R^{k-a}$, where 
$P'$ and $P''$ are $k$-Dyck path of smaller size, 
$Q$ is expressed as $Q=U\circ P'\circ R^{a+1}\circ P''\circ R^{k-a-1}$ where $a\in[0,k-1]$.
We denote by $\kappa(P)=(\kappa_1,\ldots,\kappa_{n})$ and $\kappa(Q)=(\kappa'_1,\ldots,\kappa'_{n})$.
We will show that $\kappa(Q)=\kappa(\delta(T))$.
Recall that $\kappa_{i}$ is the number of $k$-Dyck tiles in the $i$-th line in the Hermite history
of $D_{\max}(P)$. 
To obtain $\kappa'_{i}$ from $\kappa_{i}$, we consider the following reconnection of the lines in 
the Hermite history.
We start from the $i$-th up step in the top $k$-Dyck path, and go right or up along Dyck tiles.
If we arrive at the unit box which is in the $\alpha-p$-th column from left and $p$-th row 
from bottom where $p\in[1,\alpha]$, 
we go up one unit, and move to the $i+1$-th line in the Hermite history.
We continue to go right or up along the Dyck tiles in the $i+1$-th line.
Then, $\kappa'_{i}$ is the number of $k$-Dyck tiles in the $i$-th new line of the Hermite history.
This reconnection is well-defined since the unit box in the $\alpha-p$-th column and $p$-th row
forms a $k$-Dyck tile of size zero.

We consider the sequence $\kappa(\delta(T))$.
The action of $\delta$ on $T$ is characterized by the position of valleys in $T$.
Let $\widehat{\kappa}'_{i}=(n-i)k+\kappa_{i}$.
We consider the integer sequence $\widehat{\kappa}:=(\widehat{\kappa}_1,\ldots,\widehat{\kappa}_{n})$
such that 
\begin{align}
\widehat{\kappa}_{i}=\max\{\widehat{\kappa}'_{j} : j\ge i\}.
\end{align}
We have a valley at $i+1$-th row from bottom if $\widehat{\kappa}_{i}>\widehat{\kappa}_{i+1}$.
Then, the action of $\delta$ on the $i$-th row in $T$ is given by $\widehat{\kappa}_{i+1}\mapsto\widehat{\kappa}_{i+1}+1$.
The position of the valley in the $i$-th row can be detected by $\kappa_{i+1}$ which corresponds to the $i+1$-th row.
The reconnection of lines in the Hermite history for $P$ corresponds to the detection of the positions of the valleys 
in $T$. From these, we have $\kappa(\delta(T))=\kappa(Q)$.

From Cases 1) and 2), we have $\widehat{RSK}^{-1}\circ\partial^{-1}=\delta\circ\widehat{RSK}^{-1}$.
\end{proof}

The map $\widehat{RSK}$ can be expressed in terms of the matching map $\mathtt{Mat}$ and 
the promotion $\partial$.
\begin{prop}
\label{prop:RSKMatk}
We have $\widehat{RSK}=\partial^{-(n-1)}\circ\mathtt{Mat}$.
\end{prop}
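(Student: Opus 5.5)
The plan is to prove $\widehat{RSK}=\partial^{-(n-1)}\circ\mathtt{Mat}$ on $k$-Dyck paths by reducing it to an identity between two bijections that both intertwine rowmotion with inverse promotion, and then pinning down a single exponent. Here $\widehat{RSK}$ is the inverse of the Dyck-tiling map just defined. By Proposition \ref{prop:RSKk} we have $\widehat{RSK}\circ\delta=\partial^{-1}\circ\widehat{RSK}$. By Theorem \ref{thrm:rowMat} we have $\mathtt{Mat}\circ\delta=\partial^{-1}\circ\mathtt{Mat}$. Set
\begin{align*}
\Phi:=\widehat{RSK}\circ\mathtt{Mat}^{-1}.
\end{align*}
Combining the two relations gives $\Phi\circ\partial^{-1}=\partial^{-1}\circ\Phi$, so $\Phi$ commutes with promotion. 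The goal is therefore to show that $\Phi=\partial^{-(n-1)}$.

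Commuting with $\partial$ alone does not force $\Phi$ to be a power of $\partial$, unlike the argument used in Corollary \ref{cor:rmev}. To upgrade this I would exploit that $\Phi$ is determined by its values on one representative of each $\partial$-orbit. The natural route is to show that $\Phi$ also respects the grading carried by $\kappa$ and by the Hermite history, and then compare orbit representatives. Concretely, I would first check the identity on $P_{\max}$ and $P_{\min}$, whose sequences $K(P_{\max})=(\overline{1},\ldots,\overline{n})$ and $K(P_{\min})=(k+1,2k+1,\ldots,(n-1)k+1,\overline{n})$ are explicit. Computing $\mathtt{Mat}$ on these is straightforward from the block construction. Computing $\widehat{RSK}^{-1}$ requires the maximal $k$-Dyck tiling, which for $P_{\min}$ is the full staircase, with $\kappa$ explicitly $(\,(n-1)k,\ldots)$-type, and for $P_{\max}$ is empty. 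Verifying $\widehat{RSK}^{-1}(\partial^{-(n-1)}\mathtt{Mat}(P))=P$ there fixes the exponent $n-1$, in the same spirit as the determination of $X=-k$ in Theorem \ref{thrm:evrvac}.

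For a general orbit, I would argue by induction on $n$, splitting as in Proposition \ref{prop:RSKk} according to whether the block of $S(P)$ containing $(k+1)n$ has minimum $(k+1)(n-1)+1$. In that case $P=P'\circ UR^k$, and the perfect matching, the $K$-sequence and the Dyck tiling all restrict compatibly to $P'$. Using $\partial$-equivariance, every orbit contains such a decomposable path. Indeed, rotating the perfect matching via Eq. (\ref{eq:delRot}) brings any block to the last position, so it suffices to treat $P=P'\circ UR^k$. For these paths, $\mathtt{Mat}(P)$ is obtained from $\mathtt{Mat}(P')$ by inserting the block generated by $\overline{n}$. Meanwhile $\widehat{RSK}^{-1}(P)$ is obtained from $\widehat{RSK}^{-1}(P')$ by appending an empty bottom row in the $\kappa$ description. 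The induction hypothesis gives the identity for $P'$ with exponent $n-2$, and the extra factor of $\partial^{-1}$ must come from the shift of labels caused by the new block.

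The main obstacle is this last bookkeeping step. One has to show that inserting the last block accounts for exactly one extra $\partial^{-1}$ on the $\mathtt{Mat}$ side, while on the $\widehat{RSK}^{-1}$ side the truncation to the maximal Young diagram inside $\widehat{Y}$ does not interfere. I would first test it on $(k,n)=(2,3)$ using the worked examples $\widehat{RSK}^{-1}(147)=126$ and $\mathtt{Mat}(147)=146$. If the uniform label-shift claim fails, the fallback is to prove directly that $\Phi$ commutes with $\mathtt{ev}$ via Proposition \ref{prop:abDyckev}. Then $\Phi$ lies in the group generated by $\partial$ and $\mathtt{ev}$ restricted to its centralizer, and the check on $P_{\max}$ settles the exponent.
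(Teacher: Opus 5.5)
\textbf{Verdict: the proposal has a genuine gap.} The orbit-by-orbit verification it calls for is never carried out, the inductive facts it plans to use are false, and the fallback targets a false statement.

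\textbf{What is right, and how it relates to the paper.} You are right that $\Phi=\widehat{RSK}\circ\mathtt{Mat}^{-1}$ commuting with $\partial$ does not make $\Phi$ a power of $\partial$. Already for $(k,n)=(2,3)$ there are $\partial$-orbits of sizes $3$ and $9$. The paper's proof makes exactly this inference (``$C=\partial^{X}$'') and then fixes $X=-(n-1)$ from the single path with step sequence $(1,k+1,\ldots,(n-1)k+1)$. So an orbit-by-orbit check is genuinely what is needed.

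\textbf{The inductive step rests on false claims.} The inductive step carries all the content, and you leave it open. Both structural facts you intend to use fail.
\begin{enumerate}
\item On the $\mathtt{Mat}$ side: for $P=P'\circ UR^k$ the top row has its valley in column $k(n-1)$, so $k_1=k+1$ and the first block of $\mathtt{Mat}(P)$ is $\{1,\ldots,k+1\}$. The other entries of $K(P)$ are those of $K(P')$ shifted by $k$. This gives $\mathtt{Mat}(P'\circ UR^k)=UR^k\circ\partial(\mathtt{Mat}(P'))$, not $\mathtt{Mat}(P')$ with an $\overline{n}$-block inserted. For example, $\mathtt{Mat}(127)=145$ has matching $\{\{1,2,3\},\{4,8,9\},\{5,6,7\}\}$. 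Deleting the $\overline{3}$-block $\{5,6,7\}$ leaves, after relabelling, the matching of $14$, whereas $\mathtt{Mat}(12)=13$.
\item On the tiling side: $\widehat{RSK}^{-1}(147)=126$ has $\kappa=(1,2,0)$ and diagram $(3,0,0)$. By contrast, $\widehat{RSK}^{-1}(14)=12$ has $\kappa=(2,0)$ and empty diagram. So a nonempty top row is added, not an empty bottom row.
\end{enumerate}

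\textbf{The orbit bookkeeping uses the wrong action.} Agreement of $\widehat{RSK}$ and $\partial^{-(n-1)}\circ\mathtt{Mat}$ at an input $P$ propagates along the $\delta$-orbit of $P$, not its $\partial$-orbit. Hence rotating $S(P)$ via Eq.~(\ref{eq:delRot}) justifies nothing. The reduction can be repaired by rotating $\mathtt{Mat}(P)$ instead, using that $\mathtt{Mat}(P)$ begins with $UR^k$ exactly when $P$ ends with $UR^k$. Separately, comparing $\mathtt{Mat}(P)$ with $\widehat{RSK}^{-1}(P)$ at the same $P$ does not test the identity. You need $\widehat{RSK}^{-1}$ evaluated at $\partial^{-(n-1)}\mathtt{Mat}(P)$.

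\textbf{The fallback aims at a false statement.} Since $\mathtt{ev}\circ\partial\circ\mathtt{ev}=\partial^{-1}$, the target $\Phi=\partial^{-(n-1)}$ commutes with $\mathtt{ev}$ only if $\partial^{2(n-1)}=\mathrm{id}$. This fails for $(k,n)=(2,3)$, where the $\partial$-orbit of $146$ has length $9$. Directly from the tables of Section~\ref{sec:ex}:
\begin{itemize}
\item $\Phi(146)=\widehat{RSK}(147)=123$ and $\mathtt{ev}(123)=135$;
\item $\Phi(\mathtt{ev}(146))=\Phi(127)=\widehat{RSK}(136)=134$.
\end{itemize}
Moreover, even commuting with both $\partial$ and $\mathtt{ev}$ would not place $\Phi$ in the dihedral group they generate, for the same multiple-orbit reason.

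\textbf{What is actually missing.} To finish, you need a correct recursion for the Dyck-tiling map $\widehat{RSK}^{-1}$ evaluated at $\partial^{-(n-1)}\mathtt{Mat}(P)$ for $P=P'\circ UR^k$. It must be compatible with $\mathtt{Mat}(P'\circ UR^k)=UR^k\circ\partial(\mathtt{Mat}(P'))$.
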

\begin{proof}
From Theorem \ref{thrm:rowMat} and Proposition \ref{prop:RSKk}, the map $\widehat{RSK}$ is  
written as $\widehat{RSK}=C\circ\mathtt{Mat}$ such that 
$\partial\circ C=C\circ\partial$. This implies that $C=\partial^{X}$ with some $X$.
We will show that $X=-(n-1)$.

We compare the actions of $\widehat{RSK}^{-1}$ and $\delta^{-(n-1)}\circ\mathtt{Mat}^{-1}$ on
a $k$-Dyck path $P$ whose step sequence $\mathbf{u}(P)$ is 
\begin{align*}
\mathbf{u}(P)=(1,k+1,2k+1,\ldots,(n-1)k+1).
\end{align*}
Then, we have a sequence of $k$-Dyck paths:
\begin{align}
\label{eq:Matdel}
P\xrightarrow{\mathtt{Mat}^{-1}}P'\xrightarrow{\delta^{-(n-1)}}P'',
\end{align}
where the step sequences of $P'$ and $P''$ are given by 
\begin{align*}
\mathbf{u}(P')&=(1,2,\ldots,n), \\
\mathbf{u}(P'')&=(1,3,5,\ldots,2n-1).
\end{align*}
On the other hand, we have 
\begin{align}
\label{eq:RSKkinv}
P\xrightarrow{\widehat{RSK}^{-1}}P''.
\end{align}
By comparing Eq. (\ref{eq:Matdel}) with Eq. (\ref{eq:RSKkinv}),
we have $\widehat{RSK}=\partial^{-(n-1)}\circ\mathtt{Mat}$.
\end{proof}

\begin{remark}
From Theorem \ref{thrm:RSKMat}, Proposition \ref{prop:cd1} and Corollary \ref{cor:rmev},
we have $\widehat{RSK}=\partial^{-(n-1)}\circ\mathtt{Mat}$ for $(a,b)=(1,1)$.
Recall that we have no analogue of RSK correspondence for $(a,b)=(1,k)$ with $k\ge2$, 
however, one can define $\widehat{RSK}^{-1}$ by use of the maximal $k$-Dyck tilings.
Proposition \ref{prop:RSKMatk} implies that the definition of $\widehat{RSK}^{-1}$ 
through the $k$-Dyck tiling is compatible with $\widehat{RSK}=\partial^{-(n-1)}\circ\mathtt{Mat}$
for $(a,b)=(1,1)$. Therefore, it is natural to define $\widehat{RSK}$ by $\partial^{-(n-1)}\circ\mathtt{Mat}$
for general $(a,b)$.
\end{remark}

The evacuation $\mathtt{ev}$ can be expressed in terms of the rowmotion 
and the rowvacuation through the map $\widehat{RSK}$.
\begin{cor}
We have the following commutative diagram:
\begin{align}
\tikzpic{-0.5}{[scale=0.8]
\node (0) at (0,0){$P$};
\node (1) at (2,0){$P''$};
\node (4) at (4,0){$P'$};
\node (2) at (0,-2){$Q$};
\node (3) at (4,-2){$Q'$};
\draw[->,anchor=south] (0) to node {$\delta^{X}$} (1);
\draw[->,anchor=south] (1) to node {$\mathtt{Rvac}$} (4);
\draw[->,anchor=south] (2) to node {$\mathtt{ev}$} (3);
\draw[->,anchor=east] (0) to node {$\widehat{RSK}$}(2);
\draw[->,anchor=west] (4) to node {$\widehat{RSK}$}(3);
}
\end{align}
where $X=2(n-1)+k$.
\end{cor}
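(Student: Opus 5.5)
The plan is a direct conjugation computation that combines Proposition \ref{prop:RSKMatk}, Theorem \ref{thrm:rowMat} and Theorem \ref{thrm:evrvac} with the general identity $\mathtt{ev}\circ\partial=\partial^{-1}\circ\mathtt{ev}$ from Theorem \ref{thrm:relproev}(4). The commutativity of the diagram is the identity
\begin{align*}
\widehat{RSK}\circ\mathtt{Rvac}\circ\delta^{X}=\mathtt{ev}\circ\widehat{RSK},
\end{align*}
which is equivalent to $\widehat{RSK}^{-1}\circ\mathtt{ev}\circ\widehat{RSK}=\mathtt{Rvac}\circ\delta^{X}$. I would compute the left-hand side step by step.

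First, substitute $\widehat{RSK}=\partial^{-(n-1)}\circ\mathtt{Mat}$ from Proposition \ref{prop:RSKMatk}. This gives
\begin{align*}
\widehat{RSK}^{-1}\circ\mathtt{ev}\circ\widehat{RSK}=\mathtt{Mat}^{-1}\circ\partial^{n-1}\circ\mathtt{ev}\circ\partial^{-(n-1)}\circ\mathtt{Mat}.
\end{align*}
Iterating $\mathtt{ev}\circ\partial=\partial^{-1}\circ\mathtt{ev}$ gives $\partial^{n-1}\circ\mathtt{ev}=\mathtt{ev}\circ\partial^{-(n-1)}$. The middle therefore becomes $\mathtt{ev}\circ\partial^{-2(n-1)}$, and the whole expression splits as
\begin{align*}
\bigl(\mathtt{Mat}^{-1}\circ\mathtt{ev}\circ\mathtt{Mat}\bigr)\circ\bigl(\mathtt{Mat}^{-1}\circ\partial^{-2(n-1)}\circ\mathtt{Mat}\bigr).
\end{align*}

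Second, evaluate each factor. Theorem \ref{thrm:rowMat} gives $\mathtt{Mat}\circ\delta=\partial^{-1}\circ\mathtt{Mat}$, hence $\mathtt{Mat}^{-1}\circ\partial^{-2(n-1)}\circ\mathtt{Mat}=\delta^{2(n-1)}$. Theorem \ref{thrm:evrvac}, read through the definition of $\cong$, gives $\mathtt{Mat}^{-1}\circ\mathtt{ev}\circ\mathtt{Mat}=\delta^{-k}\circ\mathtt{Rvac}$. The left-hand side is therefore $\delta^{-k}\circ\mathtt{Rvac}\circ\delta^{2(n-1)}$.

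Finally, Proposition \ref{prop:Rvac}(2), $\mathtt{Rvac}\circ\delta=\delta^{-1}\circ\mathtt{Rvac}$, lets me move $\delta^{-k}$ past $\mathtt{Rvac}$. This turns it into $\mathtt{Rvac}\circ\delta^{k}$, so the expression equals $\mathtt{Rvac}\circ\delta^{2(n-1)+k}$, which is exactly the diagram with $X=2(n-1)+k$.

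The main point to be careful about is bookkeeping of conventions rather than any real obstacle. I need to check three things:
\begin{itemize}
\item that $\cong$ is read as $X=\mathtt{Mat}\circ Y\circ\mathtt{Mat}^{-1}$ with $X=\mathtt{ev}$ and $Y=\delta^{-k}\circ\mathtt{Rvac}$, so that conjugating back by $\mathtt{Mat}$ is legitimate;
\item that the sign of the exponent comes out as $+X$ in the diagram's order, where $\delta^X$ is applied before $\mathtt{Rvac}$;
\item that the general identity $\mathtt{ev}\circ\partial=\partial^{-1}\circ\mathtt{ev}$ indeed applies to $k$-Dyck paths, viewed as linear extensions of the poset underlying $\mathtt{SYT}_{(1,k)}(n)$. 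This holds by Theorem \ref{thrm:relproev}.
\end{itemize}
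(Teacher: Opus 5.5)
Your proposal is correct and is the same argument as the paper's, which just says the claim follows from Theorem~\ref{thrm:evrvac} and Proposition~\ref{prop:RSKMatk}. You write out the conjugation bookkeeping the paper leaves implicit: Theorem~\ref{thrm:rowMat}, $\mathtt{ev}\circ\partial=\partial^{-1}\circ\mathtt{ev}$, and $\mathtt{Rvac}\circ\delta=\delta^{-1}\circ\mathtt{Rvac}$. With these the exponent correctly comes out as $X=2(n-1)+k$.
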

\begin{proof}
The claim directly follows from Theorem \ref{thrm:evrvac} and Proposition \ref{prop:RSKMatk}.
\end{proof}

\section{Example}
\label{sec:ex}
We write the step sequence $(u_1,\ldots,u_n)$ as $u_1\ldots u_{n}$ in one-line notation. 
We consider $(a,b)=(1,2)$ and $n=3$, and have twelve $2$-Dyck paths.

We have the bijection between the non-crossing weighted partitions and $2$-Dyck paths.
\begin{align*}
&147\leftrightarrow\{1/2/3,1/2/3\},\quad 146\leftrightarrow\{1/2/3,1/23\}, \quad 
145\leftrightarrow\{1/23,1/23\}, \quad 137\leftrightarrow\{1/2/3,12/3\}, \\
&136\leftrightarrow\{1/2/3,123\},\quad 135\leftrightarrow\{1/23,123\}, \quad
134\leftrightarrow\{1/2/3,13/2\},\quad 127\leftrightarrow\{12/3,12/3\}, \\
&126\leftrightarrow\{12/3,123\},\quad 125\leftrightarrow\{123,123\}, \quad
124\leftrightarrow\{13/2,13/2\},\quad 123\leftrightarrow\{13/2,123\}.
\end{align*}

The action of the promotion $\partial$ on these $2$-Dyck paths is given by
\begin{gather*}
\partial: 147\rightarrow 136\rightarrow 125\rightarrow 147, \\
\partial: 146\rightarrow 135\rightarrow 124\rightarrow 137\rightarrow 126\rightarrow 145
\rightarrow 134\rightarrow 123\rightarrow 127\rightarrow 146.
\end{gather*}
The action of $\mathtt{ev}$ is given by 
\begin{align*}
\mathtt{ev}: 
147\leftrightarrow 147, \quad 146\leftrightarrow 127, \quad 145\leftrightarrow 137, \quad
136\leftrightarrow 125, \quad 135\leftrightarrow 123, \quad 134\leftrightarrow 124, \quad
126\leftrightarrow 126.
\end{align*}

The action of $\mathtt{Kre}$ on non-crossing partitions is given by
\begin{align*}
\mathtt{Kre}: 1/2/3\rightarrow 123\rightarrow 1/2/3, \quad 12/3\rightarrow 1/23\rightarrow 13/2\rightarrow 12/3. 
\end{align*}
Then, the action of $\mathtt{Kre}$ on $2$-Dyck paths is given by
\begin{align*}
&\mathtt{Kre}: 147\rightarrow 125\rightarrow 147, \ 136\rightarrow 136, \\ 
&\mathtt{Kre}: 146\rightarrow 123\rightarrow 137\rightarrow 135\rightarrow 134\rightarrow 126 \rightarrow 146, \
145\rightarrow 124\rightarrow 127 \rightarrow 145.
\end{align*}

The action of $\mathtt{SU}$ on non-crossing partitions of size $3$ is given by
\begin{align*}
\mathtt{SU}: 1/2/3\leftrightarrow 123, \quad 12/3\leftrightarrow 12/3, \quad 1/23\leftrightarrow 13/2, \quad
13/2\leftrightarrow 1/23.
\end{align*}
Therefore, the action of $\mathtt{SU}$ on $2$-Dyck paths are given by
\begin{align*}
\mathtt{SU}: 
147\leftrightarrow 125, \quad 146\leftrightarrow 123, \quad 145\leftrightarrow 124, \quad
137\leftrightarrow 126, \quad 136\leftrightarrow 136, \quad 134\leftrightarrow 135, \quad 
127\leftrightarrow 127.
\end{align*}

The action of $\mathtt{LK}$ on non-crossing partitions of size $3$ is given by
\begin{align*}
\mathtt{LK}: 1/2/3\leftrightarrow 123, \quad 12/3\leftrightarrow 13/2, \quad 1/23\leftrightarrow 1/23.
\end{align*}
From these, we have the following action of $\mathtt{LK}$ on $2$-Dyck paths.
\begin{align*}
\mathtt{LK}: 147\leftrightarrow 125, \quad 146\leftrightarrow 135, \quad 145\leftrightarrow 145, \quad
137\leftrightarrow 123, \quad 136\leftrightarrow 136, \quad 134\leftrightarrow 126, \quad
127\leftrightarrow 124.
\end{align*}

The action of the rowmotion $\delta$ gives two obits:
\begin{gather*}
\delta: 145\rightarrow 137 \rightarrow 126 \rightarrow 145, \\
\delta: 147\rightarrow 136 \rightarrow 125 \rightarrow 134 \rightarrow 127 \rightarrow
146\rightarrow 135 \rightarrow 124 \rightarrow 123 \rightarrow 147.
\end{gather*}

The action of $\mathtt{Rvac}$ on $2$-Dcyk paths is given by 
\begin{gather*}
\mathtt{Rvac}: 
147\leftrightarrow 134 \quad 146\leftrightarrow 124 \quad 145\leftrightarrow 137 \quad
136\leftrightarrow 125 \quad 135\leftrightarrow 135 \quad 127\leftrightarrow 123 \quad 
126\leftrightarrow 126 
\end{gather*}
Similarly, the action of $\mathtt{DRvac}$ on the paths is given by 
\begin{gather*}
\mathtt{DRvac}: 
147\leftrightarrow 123 \quad 146\leftrightarrow 134 \quad 145\leftrightarrow 145 \quad
137\leftrightarrow 126 \quad 136\leftrightarrow 124 \quad 135\leftrightarrow 125 \quad 
127\leftrightarrow 127 
\end{gather*}
The action of $\mathtt{Mat}$ on these $2$-Dyck paths is given by
\begin{align*}
&\mathtt{Mat}: 
147\rightarrow 146\rightarrow 126 \rightarrow 125 \rightarrow 123 \rightarrow 135 \rightarrow 137 \rightarrow 147, \\
&\mathtt{Mat}:  
145\rightarrow 136\rightarrow 127\rightarrow 145, \qquad 134\rightarrow 134, \qquad 124\rightarrow 124. 
\end{align*}

The action $\widehat{RSK}^{-1}$ on $2$-Dyck paths is given by
\begin{align*}
&\widehat{RSK}^{-1}: 
147\rightarrow 126\rightarrow 134\rightarrow 136\rightarrow 137\rightarrow 127\rightarrow 123\rightarrow 147, \\
&\widehat{RSK}^{-1}: 146\rightarrow 124\rightarrow 146, \quad 145\rightarrow 125\rightarrow 145, \quad
135\rightarrow 135, \quad 
\end{align*}

\bibliographystyle{amsplainhyper} 
\bibliography{biblio}

\end{document}